\documentclass[12pt]{article}
\usepackage[utf8x]{inputenc}
\usepackage{amsmath}
\usepackage{amsfonts}
\usepackage{latexsym}
\usepackage{amsthm}
\usepackage{xargs}
\usepackage{graphicx}
\usepackage{stmaryrd}
\usepackage{color}
\usepackage{caption}
\usepackage{subcaption}
\usepackage{euscript}
\usepackage{url}
\usepackage{emptypage}

\newtheorem{theorem}{Theorem}[section]

\newtheorem{corollary}[theorem]{Corollary}

\newtheorem{lemma}[theorem]{Lemma}

\newtheorem{proposition}[theorem]{Proposition}
\newtheorem{remark}[theorem]{Remark}

\setlength{\oddsidemargin}{0in}
\setlength{\evensidemargin}{0.3in}
\setlength{\textwidth}{15.5cm}
\setlength{\textheight}{20cm}
\setlength{\topmargin}{0.45in}
\setlength{\baselineskip}{2mm}
%\renewcommand{\baselinestretch}{2.0}

%%%%%%%%%%%%%%%%%%%%%%%%%%%%%%%%%     Macros   %%%%%%%%%%%%%%%%%%%%%%%%%%%%%%%%%%%%%%%%%%%%%%%

%%%%%%%%%%%%%%%%%%%%%%% Constants

\newcommand{\dist}{\kappa}
\newcommand{\coef}{\textup{d}}
\newcommand{\param}{\sigma}
\newcommand{\ost}{\omega}

%%%%%%%%%%%%%%%%%%%%%%% Notation

\newcommand{\re}{\mathrm{Re}\,}
\newcommand{\im}{\mathrm{Im}\,}
\newcommand{\uns}{\mathrm{u}}
\newcommand{\sta}{\mathrm{s}}
\newcommand{\us}{*}

%%%%%%%%%%%%%%%%%%%%%%% Variables

\newcommand{\vu}{u}
\newcommand{\vv}{v}
\newcommand{\vw}{w}

\newcommand{\xb}{\bar{x}}
\newcommand{\yb}{\bar{y}}
\newcommand{\zb}{\bar{z}}
\newcommand{\tb}{\bar{t}}

%%%%%%%%%%%%%%%%%%%%%%% Functions

\newcommand{\hetz}{Z_0}
\newcommand{\hetr}{R_0}
\newcommand{\hetth}{\Theta_0}
\renewcommand{\r}{r}
\newcommand{\Fb}{\mathbf{F}}
\newcommand{\Gb}{\mathbf{G}}
\newcommand{\Hb}{\mathbf{H}}
\newcommand{\pu}{\eta}
\newcommand{\vt}{\vartheta}
\newcommand{\Diffw}{\mathcal{E}}

%%%%%%%%%%%%%%%%%%%%%%% Operators

\newcommand{\Fout}{\mathcal{F}}
\newcommand{\Lout}{\mathcal{L}}
\newcommand{\Lpart}{\hat{\mathcal{L}}}
\newcommand{\Gout}{\mathcal{G}}
\newcommand{\Foutfp}{\tilde{\Fout}}

\newcommand{\Gpart}{\hat{\mathcal{G}}}
\newcommand{\Apart}{\mathcal{B}}

\newcommand{\Nconstnew}{\mathcal{A}_1}

%%%%%%%%%%%%%%%%%%%%%%% Domains

\newcommandx{\Dout}[3][1=\dist, 2=\beta]{D_{#1,#2}^{#3}}
\newcommandx{\Doutinf}[1]{D_{\dist,\beta,\infty}^{#1}}
\newcommandx{\DoutT}[4][1=\dist, 2=T,3=\beta]{D_{#1,#3,#2}^{#4}}
\newcommand{\Doutinter}{D_{\dist,\beta}}
\newcommand{\Doutintertilde}{\tilde{D}_{\dist,\beta}}
\newcommandx{\Tout}[1][1=\ost]{\mathbb{T}_{#1}}

%%%%%%%%%%%%%%%%%%%%%%% Banach spaces

\newcommandx{\Bsout}[3][3=\ost]{\mathcal{X}^{#1}_{#2,#3}}
\newcommandx{\Bsoutdiff}[3][3=\ost]{\tilde{\mathcal{X}}^{#1}_{#2,#3}}
\newcommandx{\Bsoutceil}[3][3=\ost]{\bar{\mathcal{X}}^{#1}_{#2,#3}}

%%%%%%%%%%%%%%%%%%%%%%%%%%%%%%%%%%%%%%%%%%%%%%%%%%%%%%%%%%%%%%%%%%%%%%%%%%%%%%%%%%%%%%%%%%%%%%

\title{Breakdown of a 2D heteroclinic connection in the Hopf-zero singularity (I)}
\author{I. Baldom\'a, O. Castej\'on, T. M. Seara}

\begin{document}
\maketitle
%\tableofcontents
%\cleardoublepage

\section*{Summary}

In this paper we study a beyond all order phenomenon which appears in the analytic unfoldings of the Hopf-zero singularity.
It consists in the breakdown of a two-dimensional heteroclinic surface which exists in the truncated normal form of this singularity at any order.
The results in this paper are twofold: on the one hand we give results for generic unfoldings which
lead to sharp exponentially small upper bounds of the difference between these manifolds. On the other hand, we provide asymptotic formulas for
this difference by means of the Melnikov function for some non-generic unfoldings.

{\sl Keywords:} Exponentially small splitting, Hopf-zero bifurcation,
Melnikov function, Borel transform.

\newpage
\section{Introduction}
The Hopf-zero singularity (also called central singularity) is a vector field $X^*:\mathbb{R}^3\rightarrow\mathbb{R}^3$, having the origin as a critical point,
and such that the eigenvalues of the linear part at this point are $0$, $\pm i\alpha^*$, for some $\alpha^*\neq0$
so that $DX^*(0)$ has zero trace. This singularity has codimension two in the sense that
it can be met by a generic family of vector fields depending on at least two parameters. However, since $DX^*(0)$ has zero trace,
when one considers $X^*$ in the context of conservative vector fields,
it has codimension one. The generic families which meet the singularity $X^*$ for some value of the parameter, which we assume is $(0,0)$,
are called the versal unfoldings of the Hopf-zero singularity. That is, they are families
$X_{\mu,\nu}$ of vector fields on $\mathbb{R}^3$ depending on two parameters $(\mu,\nu)\in\mathbb{R}^2$, such that $X_{0,0}=X^*$,
the vector field described above. The conservative setting can be seen as a particular one taking $\nu=0$.
The generic case (with two parameters) is usually called the dissipative case.

The versal unfoldings of the Hopf-zero singularity have been widely studied in the past, see for example~\cite{BV84,G78,GR83,G85,Guc81,GH90,Tak73,Tak74}.
{In these works, }for generic singularities, depending on the region in the parameter space where $(\mu,\nu)$ belongs to,
the qualitative behavior of $X_{\mu,\nu}$ is studied.
However, there is one open region in the parameter space (see~\eqref{defU}) in  which the behavior of $X_{\mu,\nu}$
is not completely understood. These unfoldings  $X_{\mu,\nu}$ are the candidates to possess chaotic behavior.
In this work we study these unfoldings and prove, as a direct consequence of our results,
the existence of heteroclinic transversal curves between two equilibrium points of the phase space.

Let us to explain how this phenomenon is encountered in generic families $X_{\mu,\nu}$. Assume generic conditions on both, the
singularity $X^*$ and the unfolding $X_{\mu,\nu}$.
We follow the works~\cite{Guc81},~\cite{GH90} (see also~\cite{BCS13}) and we perform the normal form procedure
up to order two, some scalings and redefinitions of the parameters, and we obtain that
the normal form terms $X_{\mu,\nu}^2$ up to order two of these unfoldings in cylindric coordinates $(r,\theta,z)$ have the form:
\begin{equation}\label{FNO2polar}
\frac{d\bar{r}}{d\tb} =\bar{r}(\nu-\beta_1 \zb),\qquad \frac{d\theta}{d\tb}= \alpha_0+\alpha_1 \mu+\alpha_2 \nu + \alpha_3 z,
\qquad \frac{d\zb}{d\tb} = -\mu+\zb^2 +\gamma_2 \bar{r}^2.
\end{equation}
Assuming the generic condition on $X^*$ that $\beta_1,\gamma_2\neq0$, and rescaling variables if necessary one can assume that $\beta_1,\gamma_2>0$.
When $(\mu,\nu) \in U$, being
\begin{equation}\label{defU}
U=\{ (\mu,\nu)\in \mathbb{R}^2 : \mu>0, \; |\nu| <\beta_1 \sqrt{\mu}\},
\end{equation}
the truncation vector field $X_{\mu,\nu}^2$ has $(\xb,\yb,\zb)=(0,0,\pm \sqrt{\mu})$ as critical points of saddle-focus type connected by a heteroclinic orbit
\begin{equation}\label{W1}
W_1=\{ \xb=\yb=0,\; |\zb|\leq \sqrt{\mu}\}
\end{equation}
and, only when $\nu=0$, also by a two dimensional heteroclinic surface (see Figure~\ref{figNF}):
\begin{equation}\label{W1W2}
\qquad
W_2=\left \{ \zb^2 + \frac{\gamma_2}{\beta_1 +1} \bar{r}^2=\mu \right \}.
\end{equation}
\begin{figure}
	\centering
	\includegraphics[width=7cm]{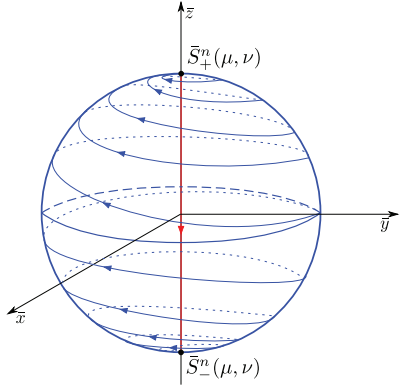}
       \caption[Phase portrait of the vector field $X_{\mu,\nu}^n$ for $(\mu,\nu)\in\Gamma_n$, for any $n\in\mathbb{N}$.]{Phase portrait of the vector field $X_{\mu,\nu}^n$ for $(\mu,\nu)\in\Gamma_n$, for any $n\in\mathbb{N}$. In red and blue, the one- and two dimensional heteroclinic connections respectively. The domain bounded by the two dimensional heteroclinic connection has size $\mathcal{O}(\sqrt{\mu})$.}\label{figNF}
\end{figure}

%With respect to
Concerning the whole vector field $X_{\mu,\nu}=X_{\mu,\nu}^2+F_{\mu,\nu}^2$, it also has two critical points
close to the ones of $X_{\mu,\nu}^2$, which  also are of saddle-focus type. However it is reasonable to expect that the heteroclinic
connections will no longer persist in $X_{\mu,\nu}$, see Figure~\ref{figdist1d2d}.
Performing classical perturbation theory, namely, considering the vector field $X_{\mu,\nu}$ as a perturbation of $X_{\mu,\nu}^2$
and following Poincar\'e-Melnikov method~\cite{Mel63},
one could try to measure the size of the breakdown of these heteroclinic connections. Unfortunately, the Melnikov function turns out to be
exponentially small in $\sqrt{\mu}$ when $\mu, \nu$ are small and therefore we can not directly use this technique in our case.
Let us explain the heuristic reason for this exponentially small behavior.

\begin{figure}
       \centering
	    \begin{subfigure}[b]{0.47\textwidth}
     	\centering
      	\includegraphics[width=6cm]{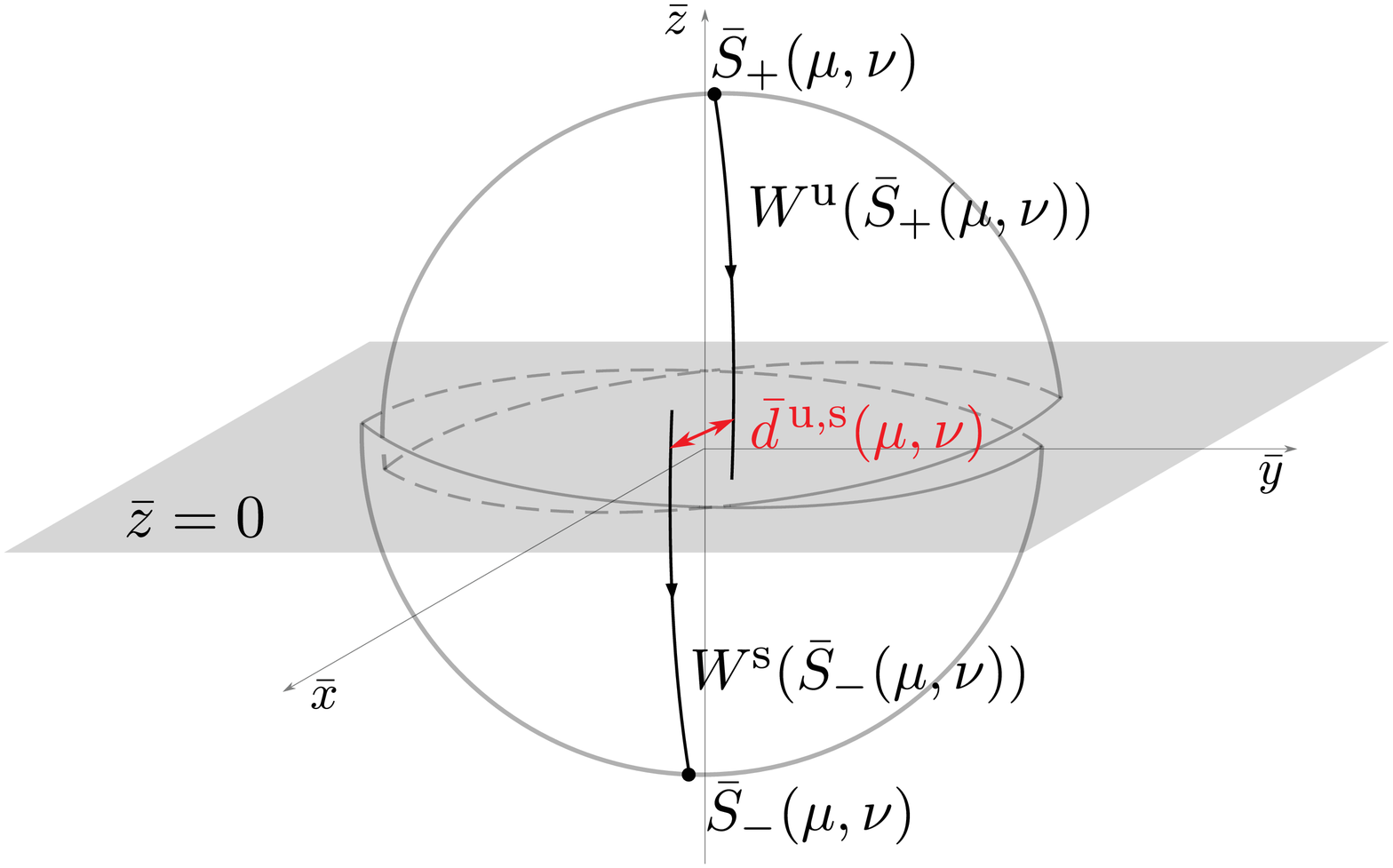}
        \caption{The distance $\bar d^{\uns,\sta}(\mu,\nu)$ between the one-dimensional invariant manifolds of $\bar{S}_+(\mu,\nu)$ and $\bar{S}_-(\mu,\nu)$.}
				\label{figdist1d}
        \end{subfigure}
			   \quad
        \begin{subfigure}[b]{0.47\textwidth}
	      \centering
		     \includegraphics[width=6cm]{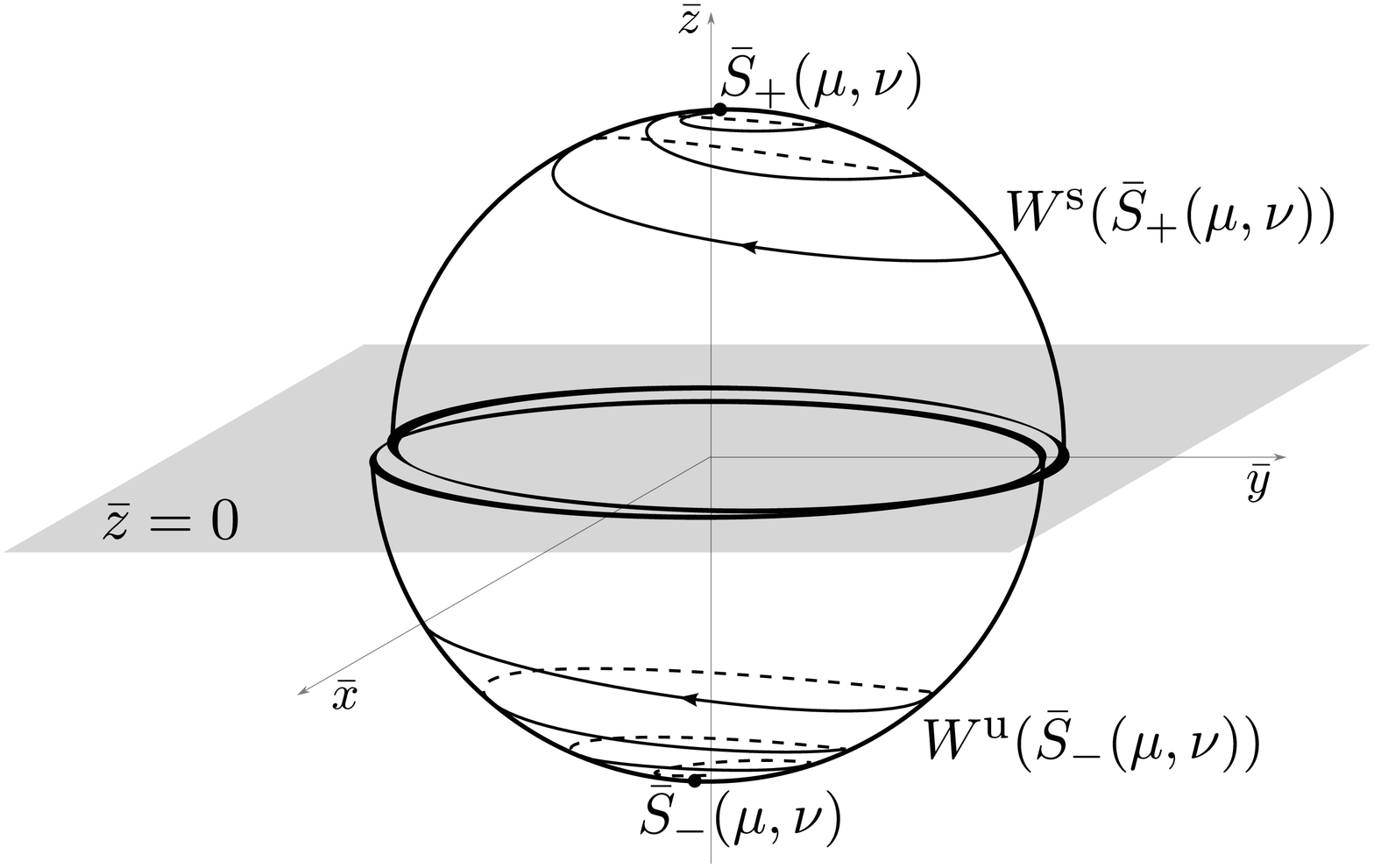}
	     	\caption{The two dimensional invariant manifolds of $\bar{S}_+(\mu,\nu)$ and $\bar{S}_-(\mu,\nu)$ until they reach the plane $\zb=0$.}\label{figdist2d-xyz}
        \end{subfigure}%
				\caption{The distance between the invariant manifolds}\label{figdist1d2d}
\end{figure}

%What is happen here is that,
If we denote by $X_{\mu,\nu}^n$, the truncation of the normal form of order $n$, which is a polynomial of degree $n$, then
\begin{equation}\label{VectorField}
X_{\mu,\nu}=X_{\mu,\nu}^n+F_{\mu,\nu}^n, \quad \mbox{where } \ F_{\mu,\nu}^n(\xb,\yb,\zb)=\mathcal{O}_{n+1}(\xb,\yb,\zb,\mu,\nu).
\end{equation}
If $(\mu,\nu)\in U$ the truncation of the normal
form $X^n_{\mu,\nu}$ has again two saddle-focus critical points $\bar{S}_{\pm}^n=(0,0,\mathcal{O}(\sqrt{\mu}))$
connected by an heteroclinic orbit and a two dimensional heteroclinic surface
(see again Figure~\ref{figNF}) for \emph{any} finite order $n$. The existence of the heteroclinic surface is only guaranteed
if the parameters $(\nu,\mu)$ belong to a curve $\Gamma_n$ of the form $\nu=m\mu+\mathcal{O}(\mu^{3/2})$ in the non conservative case whereas
it always exists in the conservative case.

Therefore, when $(\mu,\nu)$ are close to $\Gamma_n$ (or $\nu=0$ in the conservative case),
classical perturbation theory assures that the vector field $X_{\mu,\nu}=X_{\mu,\nu}^n+F_{\mu,\nu}^n$
has two critical points $\bar{S}_\pm(\mu,\nu) =\mathcal{O}(\sqrt{\mu})$ close to the ones of $X_{\mu,\nu}^n$, of saddle-focus type but the heteroclinic
connections will be generically destroyed in $X_{\mu,\nu}$. Obviously, the breakdown of these heteroclinic connections cannot be detected
in the truncation of the normal form at any finite order and therefore, as it is usually called, is a phenomenon \emph{beyond all orders}.

Since $X_{\mu,\nu}=X_{\mu,\nu}^n+F_{\mu,\nu}^n$, the breakdown of the heteroclinic connections, when $\nu=\mathcal{O}(\mu)$
close to some appropriate curve,
must be caused by the remainder $F_{\mu,\nu}^n$, which is of order
$\mathcal{O}_{n+1}(\xb,\yb,\zb,\mu,\nu)$. On the one hand, the heteroclinic connections are inside a domain in $\mathbb{R}^3$ of size $\mathcal{O}(\sqrt{\mu})$,
so that in this region we have: $F_{\mu,\nu}^n(\xb,\yb,\zb)=\mathcal{O}_{n+1}(\sqrt{\mu},\nu)=\mathcal{O}_{n+1}(\sqrt{\mu})$.
On the other hand, since this is valid for all $n$, the distance
between the invariant manifolds should be smaller than any finite power of the perturbation parameter $\sqrt{\mu}$. For this reason, one expects this
distance to be exponentially small in one of the perturbation parameters ($\sqrt{\mu}$ in fact) when the analytic case is considered.
Note that, in the dissipative case (that is, when two parameters are considered) one expects that the distance between the two dimensional
invariant manifolds is exponentially small only when $(\mu,\nu)$ is close to a certain curve.

The one-dimensional heteroclinic connection was studied (in the conservative setting) in~\cite{BaSe06} for some non-generic unfoldings
and the generic case in both the conservative and  the dissipative setting, in~\cite{BCS13}. In both cases, an asymptotic formula for the distance
between the one-dimensional invariant manifolds of the critical points $\bar{S}_+(\mu,\nu)$ and $\bar{S}_-(\mu,\nu)$ (see Figure~\ref{figdist1d}), is derived.

Let $\bar D^{\uns,\sta}(\theta,\mu,\nu)$, be the distance between the two dimensional invariant manifolds of the critical points
$\bar{S}_{\pm}(\mu,\nu)$ at the plane $\zb=0$ (see Figures~\ref{figdist2d-xyz} and~\ref{figdist2d-z0}).
Our final goal is to provide asymptotic formulas for this quantity.
However, due to the technical complications to deal with this exponentially small phenomenon, we have
split the whole proof in two papers, the present work and~\cite{BCS16b}. In the former, we provide asymptotic formulas for $\bar D^{\uns,\sta}(\theta,\mu,\nu)$,
when non-generic analytic unfoldings are considered whereas for generic unfoldings we provide
sharp upper bounds. In the later we give the asymptotic formula in the generic case.
It is worth mentioning that all the proofs in this work are also true for the generic case thus, in~\cite{BCS16b}, some results
derived in this work will be used.

\begin{figure}
	\centering
	\includegraphics[width=6cm]{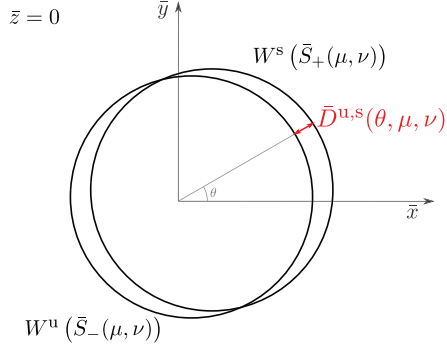}
	\caption{The intersection between the invariant manifolds and the plane $\zb=0$, and the distance
	$\bar D^{\uns,\sta}(\theta,\mu,\nu)$ between them.}\label{figdist2d-z0}
\end{figure}

\subsection{The regular vs. the singular case. Main result}
As it is well known by experts in the field, in order to obtain  asymptotic formulas for the breakdown of the two dimensional invariant manifolds
of $\bar{S}_+(\mu,\nu)$ and $\bar{S}_-(\mu,\nu)$, one needs to obtain suitable parameterizations of these invariant manifolds not only on real domains,
but also over complex ones.
These domains need to be $\mathcal{O}(\sqrt{\mu})$--close to the singularities of the corresponding heteroclinic connection of the unperturbed
system $X^2_{\mu,\nu}$. We recall that the one dimensional heteroclinic connection $W_1$ in~\eqref{W1}, exists for any $(\mu,\nu)$ whereas
the two dimensional one $W_2$, given in~\eqref{W1W2}, only exists when $\nu=0$.
Let us notice that the heteroclinic surface $W_2$ can be parameterized by
$(t,\theta)$ (in cylindric coordinates),
%with $t=\bar{t} \sqrt{\mu}$,
by:
\begin{equation}\label{firstparameterization}
\bar{r}=\bar{r}(t,\theta)=\frac{\sqrt{\mu}}{\cosh^2 \big(\beta_1 t \big)},\quad \theta=\theta, \quad \zb = \zb(t,\theta)=\tanh \big(\beta_1 t\big),
\qquad t\in \mathbb{R},\;\theta \in [0,2\pi].
\end{equation}
Observe that, $\bar{r}(\bar{t}\sqrt{\mu},\theta),\bar{z}(\bar{t}\sqrt{\mu},\theta)$ are solutions of~\eqref{FNO2polar}.

Clearly this parameterization has singularities at $\beta_1 t = \pm i \pi/2$.
Far from these singularities, we will find parameterizations of the invariant manifolds for $X_{\mu,\nu}$
which are well approximated by the unperturbed
heteroclinic connection~\eqref{firstparameterization}, but this will not be the case close to the singularities. This yields some technical difficulties.

A good way to start the study of the invariant manifolds in these complex domains is considering smaller perturbations of the vector field
$X_{\mu,\nu}^2$.
That is, one introduces a new parameter $q\geq 0$ and considers the following (artificial) vector field,
$X_{\mu,\nu}^2 + (\sqrt\mu)^{q} F_{\mu,\nu}$. Unfortunately, this is not
the good choice.
Indeed, we need to consider as  perturbation terms also the homogeneous terms of order two depending only on the
parameters $\mu,\nu$.
We call these terms $P_{\mu,\nu}^2$ and we write $X_{\mu,\nu}^2 = \widetilde{X}_{\mu,\nu}^2 + P_{\mu,\nu}^2$.
Then we define
\begin{equation}\label{Xreg}
X_{\mu,\nu}^{\rm reg}:=\widetilde{X}_{\mu,\nu}^2 + (\sqrt\mu)^q (P_{\mu,\nu}^2+ F_{\mu,\nu}^2).
\end{equation}
Clearly, for $q=0$ we recover~\eqref{VectorField}, while for $q>0$ the perturbation terms are smaller than those
in~\eqref{VectorField}. We call the case $q>0$ the \emph{regular case}, while $q=0$ is the \emph{singular} one. The first case represents
just a special subset of unfoldings of $X^*$, while the latter one represents a generic family of unfoldings of $X^*$.

Imposing the
%artificial
condition $q>0$, one can see that the heteroclinic connections of the unperturbed system $\widetilde{X}_{\mu,\nu}^2$
give good approximations of the invariant manifolds, even close to their singularities.
The asymptotic formulas measuring the breakdown of the heteroclinic surface in this case consist on suitable versions of the so-called Melnikov
integrals (see~\cite{GH90,Mel63}). Thus, one can start studying the regular case to gain some intuition without getting lost with technical problems and,
after that, one can proceed with the singular case. This is what we have done in the present paper.

More precisely we have proven:
\begin{theorem}\label{mainthm-intro}
Consider unfoldings $X_{\mu,\nu}$ of the form~\eqref{Xreg} with $q\geq 0$ and $(\mu,\nu) \in U$ defined in~\eqref{defU}.
Then $X_{\mu,\nu}$ has two critical points $\bar S_\pm(\mu,\nu)$ of saddle-focus type of the form
$$
\bar{S}_{\pm}(\mu,\nu) = (0,0,\pm \sqrt{\mu})+ \mathcal{O}(\mu^2 + \nu^2)^{\frac{q+1}{2}}.
$$
In addition, $\bar{S}_{+}$ has a two dimensional stable manifold and $\bar{S}_-$ has a two dimensional unstable manifold.

For any $\vu \in \mathbb{R}$, let $\bar D^{\uns,\sta}(\vu,\theta,\mu,\nu)$ ($\bar D^{\uns,\sta}(\vu,\theta,\mu)$ in the conservative case)
be the distance between the two dimensional unstable manifold of $\bar S_-(\mu,\nu)$ and the two dimensional stable manifold of
$\bar S_+(\mu,\nu)$ when they meet the plane $\zb=\sqrt{\mu} \tanh(\beta_1\vu)$ (see~\eqref{firstparameterization}).

Then, there exist constants $\mathcal{C}_1$, $\mathcal{C}_2$ (see their formula in Theorem~\ref{thmasyformulaMelnikov}) and $L_0$
(see its formula in Remark~\ref{rmkL0L}) in such a way that, given $T_0>0$, for all $\vu\in[-T_0,T_0]$ and $\theta\in\mathbb{S}^1$, introducing
the function:
$$
\bar\vt(\vu,\mu)=\frac{\alpha_0\vu}{\sqrt{\mu}}+\frac{1}{\beta_1}\left[\alpha_3 \log\cosh(\beta_1\vu)-
\left (\alpha_3+\alpha_0 L_0 (\sqrt \mu )^{q} \right )\log \sqrt \mu\right],
$$
the following holds:
\begin{enumerate}
\item In the conservative case, which corresponds to $\beta_1=1$ and $\nu=0$, as $\mu\to 0^+$,
\begin{eqnarray*}
 \bar D^{\uns,\sta}(\vu,\theta,\mu)&=&\sqrt{\frac{\gamma_2}{2}}  \frac{e^{-\frac{\alpha_0\pi}{2\sqrt\mu}}}{(\sqrt \mu)^{3-q}}\cosh^{3}(\vu)
\Bigg[\mathcal{C}_1\cos\Big(\theta+\bar\vt(\vu,\mu)\Big)\\
&&+\mathcal{C}_2\sin\Big(\theta+\bar\vt(\vu,\mu)\Big)+\mathcal{O}\left( (\sqrt \mu)^{q}+(\sqrt \mu)^{3}\right)\Bigg].
\end{eqnarray*}

\item In the dissipative case, there exists a function $\nu=\nu_0(\mu)=\mathcal{O}((\sqrt{\mu})^{q+2})$, such that,
as $\mu\to 0^+$,
\begin{align*}
 \bar D^{\uns,\sta}(\vu,\theta,\,\mu,\nu_0(\mu))=&\sqrt{\frac{\gamma_2}{\beta_1+1}}
\frac{e^{-\frac{\alpha_0\pi}{2\beta_1\sqrt\mu}}}{(\sqrt\mu)^{\frac{2}{\beta_1}+1-q}}
\cosh^{1+\frac{2}{\beta_1}}(\beta_1\vu)
\Bigg[\mathcal{C}_1\cos\Big(\theta+\bar\vt(\vu,\mu)\Big)\\
&+\mathcal{C}_2\sin\Big(\theta+\bar\vt(\vu,\mu)\Big)+\mathcal{O}\left((\sqrt \mu)^{q}+(\sqrt \mu)^{3}\right)\Bigg].
\end{align*}
\end{enumerate}
\end{theorem}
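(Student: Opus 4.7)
I would first apply the implicit function theorem near the unperturbed equilibria $(0,0,\pm\sqrt{\mu})$ of $\widetilde X_{\mu,\nu}^2$. Since the perturbation in~\eqref{Xreg} carries a factor $(\sqrt\mu)^q$ multiplying terms that vanish at these equilibria only up to order $\mathcal{O}(\mu+\nu)$, a direct contraction gives the perturbed equilibria $\bar S_\pm(\mu,\nu)$ with the stated correction of size $\mathcal{O}(\mu^2+\nu^2)^{(q+1)/2}$. The linearization at each equilibrium inherits from $\widetilde X_{\mu,\nu}^2$ a complex-conjugate pair together with one real eigenvalue of opposite sign, so the standard stable manifold theorem produces the 2-dimensional stable manifold at $\bar S_+$ and the 2-dimensional unstable manifold at $\bar S_-$ asserted in the statement.

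\textbf{Global parameterizations on a complex strip.} To detect the exponentially small splitting I would write $K^{\uns,\sta}(t,\theta;\mu,\nu)=K_0(t,\theta)+K_1^{\uns,\sta}(t,\theta;\mu,\nu)$, where $K_0$ is the unperturbed heteroclinic surface~\eqref{firstparameterization}, substitute this ansatz into the invariance equation for $X_{\mu,\nu}^{\mathrm{reg}}$, cancel the unperturbed part, and recast the resulting equation for $K_1^{\uns,\sta}$ as a fixed-point problem on a Banach space of functions analytic in $t$ on a strip $\{|\im t|<\pi/(2\beta_1)-\dist\sqrt\mu\}$ and $2\pi$-periodic in $\theta$, with weighted norms controlling both the exponential decay as $\re t\to\pm\infty$ and the blow-up near $t=\pm i\pi/(2\beta_1)$. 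The unstable parameterization is built by backward integration from the local unstable manifold of $\bar S_-$, the stable one symmetrically, so that both are defined on a common real-$t$ interval reaching the section $\bar z=\sqrt\mu\tanh(\beta_1 u)$. In the regular case $q>0$ the smallness factor $(\sqrt\mu)^q$ in front of the perturbation is what makes the associated operator a contraction uniformly up to the boundary of the strip.

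\textbf{Melnikov computation, dissipative adjustment and main obstacle.} Setting $\Delta=K^{\uns}-K^{\sta}$ on the section $\bar z=\sqrt\mu\tanh(\beta_1 u)$, $\Delta$ satisfies to leading order the variational equation along the unperturbed heteroclinic, which yields an integral representation of Melnikov type. The crucial step is to deform the integration contour from the real line up to $\im t=\pi/(2\beta_1)-\mathcal{O}(\sqrt\mu)$: the rotation term $\alpha_0/\sqrt\mu$ in the $\theta$-equation of~\eqref{FNO2polar} then extracts the factor $e^{-\alpha_0\pi/(2\beta_1\sqrt\mu)}$, the residue-type contribution from the complex singularities furnishes the constants $\mathcal{C}_1,\mathcal{C}_2$ of Theorem~\ref{thmasyformulaMelnikov}, and the phase $\bar\vt(u,\mu)$ records the accumulated rotation together with the logarithmic correction from the $\alpha_3 z$ term. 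In the dissipative case the unperturbed system has no 2-dimensional heteroclinic surface, so I would first determine the curve $\nu=\nu_0(\mu)$ by applying the implicit function theorem to the scalar equation expressing the vanishing of the $\theta$-independent Fourier mode of $\Delta$; balancing sizes gives $\nu_0(\mu)=\mathcal{O}((\sqrt\mu)^{q+2})$, and the previous analysis applies verbatim along this curve. The main difficulty throughout is matching the width $\mathcal{O}(\sqrt\mu)$ of the excised neighborhood of each singularity against the error produced by the fixed-point step: in the regular case $q>0$ the factor $(\sqrt\mu)^q$ leaves just enough room for the exponentially small Melnikov principal part to dominate, whereas for $q=0$ the principal part and the error coincide in order, so the present scheme yields only a sharp upper bound and the genuine asymptotics in the generic singular case is postponed to~\cite{BCS16b}, where inner equations and matched asymptotics near each singularity are required.
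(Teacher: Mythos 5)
Your proposal correctly identifies the overall scaffolding: parameterize the two-dimensional manifolds on a complex strip approaching the singularities $\pm i\pi/(2\beta_1)$ of the unperturbed heteroclinic, evaluate the Melnikov integral by contour deformation to extract $e^{-\alpha_0\pi/(2\beta_1\sqrt\mu)}$, and in the dissipative case use the implicit function theorem to select the curve $\nu_0(\mu)$ that kills the constant Fourier mode. But the proposal stops exactly where exponentially small splitting problems begin. You never explain how to control the remainder $\Delta - M$: a direct estimate after the fixed-point step is polynomial in $\sqrt\mu$ and swamps the exponentially small Melnikov term, and ``$\Delta$ satisfies to leading order the variational equation'' is not a proof that the remainder is subdominant. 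The paper resolves this through Theorem~\ref{thmdifpartsolinjective}, which shows the \emph{full} difference admits the exact product form $\Delta(\vu,\theta)=\cosh^{2/\coef}(\coef\vu)\,(1+P_1(\vu,\theta))\,\tilde k(\xi(\vu,\theta))$ with $\tilde k$ $2\pi$-periodic and $\xi$ a perturbed characteristic variable; this structure is what lets one read the Fourier coefficients $\Upsilon^{[l]}$ of $\tilde k$ off the values of $\Delta$ near the complex singularity (Lemma~\ref{lemUpsilonlexpsmall}) and compare them to the Melnikov coefficients (Proposition~\ref{properrorUpsilons}).

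That same structural theorem is also the only source of the term $\alpha_0 L_0(\sqrt\mu)^{q}\beta_1^{-1}\log\sqrt\mu$ in $\bar\vt$: the constant $L_0$ sits inside the correction $C(\vu,\theta)$ of the characteristic variable $\xi$ (see \eqref{formaCtheorem} and Remark~\ref{rmkL0L}), and is entirely invisible to the Melnikov function, whose natural phase is $\vt(\vu,\delta)$ with no $L_0$ (Theorem~\ref{thmasyformulaMelnikov}). Since that correction has size $(\sqrt\mu)^{q}|\log\sqrt\mu|$, strictly larger than the claimed error $\mathcal{O}((\sqrt\mu)^{q})$, a Melnikov-only argument produces the wrong phase at leading order. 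A further gap is the conservative case: there is no free parameter $\nu$ to adjust, so $\Upsilon^{[0]}=0$ cannot be obtained by your implicit-function-theorem step; the paper establishes it separately via a first-integral identity for the average (Proposition~\ref{propaveragemelni}) and a divergence-theorem computation over a region bounded by the two manifolds (Proposition~\ref{propintegral0}), neither of which your proposal addresses.
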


\begin{remark}
Notice that, if we take $q=0$ in Theorem~\ref{mainthm-intro}, the error terms are not small but $\mathcal{O}(1)$ so our result provides sharp upper bounds
even in this case. Later, in Section~\ref{subsec-upperbound} these upper bounds are proven in a different (and easier) way.
To obtain asymptotic formulas when $q=0$, one needs to deal with the so called \emph{inner equation} as it is done in~\cite{BCS16b}.
\end{remark}

\begin{remark}\label{rmkmthm}
In the dissipative case, a more general result is indeed proven: for given $a_1,a_2\in\mathbb{R}$ and $a_3>0$, there exists a function $\nu=\nu(\mu)$
(depending on $a_1,a_2$ and $a_3$) satisfying $\nu(\mu)-\nu_0(\mu) = \mathcal{O}\big (\mu^{a_2}
e^{-\frac{a_3\pi}{2\beta_1\sqrt\mu}}\big)$, such that,
as $\mu\to 0^+$,
\begin{align*}
 \bar D^{\uns,\sta}(\vu,\theta,\,\mu,\nu(\mu))= &\bar D^{\uns,\sta}(\vu,\theta,\mu,\nu_0(\mu)) \\ &+ a_1\cosh^{1+\frac{2}{\beta_1}}(\beta_1\vu)\mu^{a_2}
e^{-\frac{a_3\pi}{2\beta_1\sqrt\mu}}\left(1+\mathcal{O}\left( (\sqrt \mu )^{q+1}\right)\right).
\end{align*}
The result in Theorem~\ref{mainthm-intro} corresponds to $a_1=0$.
\end{remark}
\subsection{Exponentially small splitting of invariant manifolds}
The formulas given in Theorem~\ref{mainthm-intro} prove that the breakdown of the heteroclinic connection is exponentially small
in the perturbation parameter $\sqrt \mu$ when $\nu = \nu_0(\mu)$, therefore this work deals with
the so-called exponentially small splitting of separatrices problem.

This problem was already considered the
\emph{fundamental problem of mechanics} by Poincar\'e in his famous work~\cite{Poincare}. There he studied Hamiltonian systems with
two and a half degrees of freedom and realized that this phenomenon was responsible for the creation of chaotic behavior.
He considered a model which, after reduction, became the perturbed pendulum:
$$\ddot y=2\mu\sin y+2\mu\varepsilon\cos y\cos t.$$
Using what later has become known as the Melnikov method (although Poincar\'e was actually the first one to use it, being rediscovered
by Melnikov more than 70 years later) he proved that the splitting of the separatrices is exponentially small in $\mu$, provided that
$\varepsilon$ is smaller than some exponentially small quantity. Of course, this latter assumption is enormously restrictive, but many
years had to go by until it could be removed.

When studying exponentially small phenomena, one cannot use classical perturbation methods. Over the last decades more
sophisticated techniques have been developed mainly for Hamiltonian systems and area preserving maps.
Indeed, this problem was not studied from a rigorous point of view until the end of the 80s and during the 90s.
Neishtadt~\cite{Neishtadt} gave upper bounds for the splitting in Hamiltonian systems of one and a half and two degrees of freedom
and Lazutkin~\cite{Lazutkin} (see
also~\cite{Gel99}) was the first
to give an asymptotic expression of the splitting angle between the stable and unstable manifolds for the standard map.

After Lazutkin's paper, some works gave bounds for the splitting
for rapidly forced systems, \cite{HMS88}, and \cite{FS90-2,FS90}, for area-preserving diffeomorphisms close to the
identity.

Later on,
%exponentially small
asymptotic formulas for several examples were obtained being the first ones~\cite{KS91, DS92,Gel94}.
After these pioneering works, partial results for general Hamiltonian systems were given
in~\cite{DS97,Gel97,BF04,BF05}. A new approach that has had much influence in posterior studies of exponentially small splitting was introduced
in~\cite{Sau01,LMS03}. It is important to note that, besides~\cite{Lazutkin} and~\cite{KS91},
all the examples cited above deal with the so-called regular case, in which some artificial
condition about the smallness of the perturbation is required. In this case the Melnikov method
gives the correct size of the splitting.

In the singular case one often has to study a certain equation independent of parameters, usually called the \emph{inner equation}.
There are a few works dealing with this kind of equations in different settings, see~\cite{Gel97-2, GS01, OSS03, BaldomaInner, BaSe08, BM12}, but,
besides the works of Lazutkin and Kruskal and Segur, there are very few works with rigorous proofs in the singular case for Hamiltonian systems
(see for instance~\cite{Tre97, Gel00, GOS10, BFGS12,Guar13}) or conservative maps, \cite{GB08, MSS11}.
Numerical results about the splitting in the Hamiltonian setting can be found in~\cite{BO93,Gel97-2}, for two dimensional symplectic maps
in~\cite{DR-R99,GS08,SV09,MSV13} and in~\cite{GSV13} the splitting is computing for two-dimensional manifolds in four dimensional symplectic maps. 
In~\cite{GG11}, the authors study the Hamiltonian-Hopf bifurcation (a Hamiltonian version of the singularity studied in this paper) combining numerical and analytical techniques.

All these works deal with either Hamiltonian systems or symplectic maps. To the best of our knowledge,
the papers~\cite{La03} and~\cite{Lom00} are
the only works concerning  exponentially small splitting of separatrices in a non-Hamiltonian setting, where reversible systems are considered
and~\cite{Fo95} gives results for dissipative perturbations of Hamiltonian systems. Also in~\cite{BCS13}
an asymptotic formula to measure the breakdown of the one-dimensional heteroclinic connection ($W_1$ in~\eqref{W1}) is proven in the singular case
for any value of $(\mu,\nu)$ small enough.

It is worth mentioning that the setting of this paper is not similar to any of the works computing exponentially small splitting of invariant manifolds.
Indeed, here we do not deal with a Hamiltonian system, the flow of the vector field might not be volume preserving (since we consider not only the
conservative setting but also the dissipative one) and it is not a reversible system. For this reason,
some new ideas had to be used in order to prove the results found in this paper.

%\label{IntroHopfZero}
\subsection{The Shilnikov bifurcation and the Hopf-zero singularity}
To finish this introduction, let us to mention that our results can lead to prove the existence of Shilnikov bifurcations, \cite{Shil65},
in suitable unfoldings $X_{\mu,\nu}$.
Indeed, the existence of such Shilnikov bifurcations for $\mathcal{C}^\infty$ unfoldings of the Hopf-zero singularity is studied in~\cite{BV84}.
Doing the normal form procedure up to order infinity and using Borel-Ritt's theorem, the vector
field $X_{\mu,\nu}$ can be decomposed as $X_{\mu,\nu}=X_{\mu,\nu}^\infty+F_{\mu,\nu}^\infty$,
where $X_{\mu,\nu}^\infty$ has the same phase portrait as the vector field $X_{\mu,\nu}^n$ described above
(Figure~\ref{figNF}) and
$F_{\mu,\nu}^\infty=F_{\mu,\nu}^\infty(x,y,z)$ is a flat function at the origin.
Their strategy consists in constructing suitable perturbations $p_{\mu,\nu}^\infty$,
which are also flat functions,
such that the heteroclinic connections of the family $X_{\mu,\nu}^\infty$ are destroyed
and some homoclinic ones appear giving rise to the so-called Shilnikov bifurcation.
Therefore, an existence theorem is given, but the results do not provide conditions to check whether a
concrete family $X_{\mu,\nu}$ possesses or not a Shilnikov bifurcation.

The case of real analytic unfoldings of the singularity $X^*$ has been open since then.
It is possible that the strategy of Broer and Vegter can be adapted to the analytic case.
Of course one cannot consider flat perturbations, but suitable perturbations could be constructed (although not straightforwardly)
following~\cite{BT86} and~\cite{BT89}.
However, another strategy must be followed if given \textit{any} unfolding $X_{\mu,\nu}$ one wants to determine
whether it will or not possess a sequence of Shilnikov bifurcations. The key point, as in the $\mathcal{C}^{\infty}$ case, is to check if the given
unfolding $X_{\mu,\nu}$ does not have the aforementioned heteroclinic connections.

Progress was made recently in~\cite{diks}, where the authors prove the equivalent result as~\cite{BV84}
in the real analytic context assuming some upper and lower bounds of the distance between the invariant manifolds of $\bar S_+(\mu,\nu)$ and $\bar S_-(\mu,\nu)$.
In particular, the authors assume that the heteroclinic connections are destroyed and quantitative information about the splitting is required among other assumptions.
Our work computes asymptotic formulas of the splitting of these invariant manifolds which, as a consequence, allow to check if the corresponding assumptions
in~\cite{diks} are satisfied. We leave the complete proof of the existence of Shilnikov bifurcations for a future work.

The paper is organized as follows.
In Section~\ref{ChapterHopfZero2D-outer}, we expose the strategy we will follow to prove Theorem~\ref{mainthm-intro} by
enunciating, without proving, the main results we will need, namely,
i) existence of suitable parameterizations of the invariant manifolds in complex domains,
ii) derivation and computation of the Melnikov function,
iii) expression, in complex domains, for the difference between the invariant manifolds and
iv) the exponentially small formulas for the difference in real domains. After that, still in Section~\ref{ChapterHopfZero2D-outer}, we prove Theorem~\ref{mainthm-intro}
as a consequence of Theorem~\ref{mainthm}. We postpone all the technical demonstrations to Sections~\ref{sec:exponentially}-\ref{secproofDif}.

\section{The regular case. Heuristic of the proof}\label{ChapterHopfZero2D-outer}
Following the same strategy as the one presented in~\cite{BCS13}, which involves normal form changes, scalings and redefinitions
of parameters we can write $X^{\rm reg}_{\mu,\nu}$, in~\eqref{Xreg}, in its normal form of order three, namely:
\begin{align}\label{sistema-NForder3}
 \frac{d\xb}{d\tb}&=\xb\left(\nu-\beta_1\zb\right)+\yb\left(\alpha_0+\alpha_1\nu+\alpha_2\mu+\alpha_3\zb\right)+ (\sqrt{\mu})^q\bar{f}(\xb,\yb,\zb,\mu,\nu),\medskip\nonumber\\
\frac{d\yb}{d\tb}&=-\xb\left(\alpha_0+\alpha_1\nu+\alpha_2\mu+\alpha_3\zb\right)+\yb\left(\nu-\beta_1\zb\right)+(\sqrt{\mu})^q\bar{g}(\xb,\yb,\zb,\mu,\nu),\medskip\\
\frac{d\zb}{dt}&=-\mu+\zb^2+\gamma_2(\xb^2+\yb^2)+(\sqrt{\mu})^q\bar{h} (\xb,\yb,\zb,\mu,\nu)\nonumber
\end{align}
with $\bar{f}, \bar{g}$ and $\bar{h}$ analytic functions in $B(\bar r_0)^3 \times B(\bar{\mu}_0) \times B(\bar{\nu}_0)\subset \mathbb{C}^3 \times \mathbb{C}^2$,
\begin{align*}
\bar{f}(\xb,\yb,\zb,\mu,\nu)
&=\xb \bar{A}(\xb^2+\yb^2,\zb,\mu,\nu) + \yb \bar{B}(\xb^2+\yb^2,\zb,\mu,\nu)
+ \mathcal{O}_4(\xb,\yb,\zb,\mu,\nu)
\\
\bar{g}(\xb,\yb,\zb,\mu,\nu) &= \yb \bar{A}(\xb^2+\yb^2,\zb,\mu,\nu)-\xb \bar{B}(\xb^2+\yb^2,\zb,\mu,\nu)
+ \mathcal{O}_4(\xb,\yb,\zb,\mu,\nu)
\\
\bar{h}(\xb,\yb,\zb,\mu,\nu)&=\gamma_3\mu^2+\gamma_4\nu^2+\gamma_5\mu\nu+ \bar{C}(\xb^2+\yb^2,\zb,\mu,\nu) + \mathcal{O}_4(\xb,\yb,\zb,\mu,\nu)
\end{align*}
and $\bar{A}$, $\bar{B}$ and $\bar{C}$ some functions satisfying:
\begin{equation*}
%\label{orderABC}
 \xb \bar{A},\,\xb \bar{B},\, \yb \bar{A},\,\yb \bar{B},\,\bar{C}=\mathcal{O}_3(\xb,\yb,\zb,\mu,\nu),
\end{equation*}
when they are evaluated in their arguments.

\begin{remark}\label{remarkFormanormal3}
In~\cite{BCS13} when the breakdown of the one-dimensional heteroclinic connection was considered, we performed the normal form up to order two.
In this work we need to perform an additional step of the normal procedure for technical reasons, which will be totally
understood later on, see Section~\ref{subsubsec:settingparam}, even when the terms of order three do not appear explicitly nor in our hypotheses neither in our results.
\end{remark}

In the remaining part of this section we give the main ideas of the proof of Theorem~\ref{mainthm-intro}.
The rest of the paper is devoted to prove the results stated in this section.
We now summarize the subsections that can be found
in this Section, each one consisting in one step of the proof of Theorem~\ref{mainthm-intro}.
The first step, explained in detail in Section~\ref{subsecpreliminary},
consists on scale variables and introduce the new parameters  $\delta=\sqrt{\mu}$, $\param = \delta^{-1} \nu$ and call $p=q-2$ as in~\cite{BCS13}.
Still in this preliminary section, we give a parameterization of the heteroclinic connection of the unperturbed system which corresponds to the normal
form of order two.
In Section~\ref{secthmoutloc} we give parameterizations of the 2-dimensional invariant manifolds adequate to our purposes.
In Section~\ref{subsecintromelnikov}, we introduce and study the Melnikov function adapted to this problem.
This Melnikov function will be the dominant term in the difference between the invariant manifolds.
After that, in Section~\ref{secDifference}, we give some properties of this difference which allows us, in Section \ref{subsec-upperbound}, to give a sharp
upper bound of this difference.
Finally, in Section~\ref{subsec:firstorderdiff-outer}, we state and prove Theorem~\ref{mainthm} which is equivalent to Theorem~\ref{mainthm-intro}.

\subsection{Preliminary considerations}\label{subsecpreliminary}
This subsection is mainly devoted to fix notation and perform straightforward changes of variables
to put the vector field $X^{\rm reg}_{\mu,\nu}$, in~\eqref{Xreg}, in a suitable way to work with.
Moreover, we also study what we call the unperturbed system.
\subsubsection{Notation, scalings and set up}
We scale system~\eqref{sistema-NForder3} as in~\cite{BCS13}, namely
we define the new parameters $p=q-2$, $\delta=\sqrt{\mu}$, $\param=\delta^{-1}\nu$ and
we rename the coefficients $b=\gamma_2$, $c=\alpha_3$ and $\coef=\beta_1$.
We also introduce the constant $h_3$ from $\bar{h}$ given by
$$\bar{h}(0,0,\zb,0,0)= h_3 \zb^3 + \mathcal{O}(\zb^4).$$
In the new variables
$x=\delta^{-1}\xb$, $y=\delta^{-1}\yb$, $z=\delta^{-1}\zb+\delta^{p+3}h_3 /2$ and $t=\delta\tb$, system~\eqref{sistema-NForder3} becomes:
\begin{equation}\label{initsys-outer2D}
 \begin{array}{rcl}
  \displaystyle\frac{dx}{dt}&=&\displaystyle x\left(\param-\coef z\right)+\left(\frac{\alpha(\delta^2,\delta\param)}{\delta}+cz\right)y+\delta^{p}f (\delta x,\delta y, \delta z, \delta,\delta\param),\medskip\\
  \displaystyle\frac{dy}{dt}&=&\displaystyle-\left(\frac{\alpha(\delta^2,\delta\param)}{\delta}+cz\right)x+y\left(\param-\coef z\right)+\delta^{p}g (\delta x,\delta y, \delta z, \delta,\delta\param),\medskip\\
  \displaystyle\frac{dz}{dt}&=&-1+b(x^2+y^2)+z^2+\delta^{p}h (\delta x, \delta y, \delta z, \delta,\delta\param),
 \end{array}
\end{equation}
where
$\alpha(\delta^2,\delta\param)=\alpha_0+\alpha_1\delta\param+\alpha_2\delta^2$ with $\alpha_0\neq 0$ and $f,g$ and $h$ are
the corresponding ones to $\bar{f},\bar{g}$ and $\bar{h}$.
To shorten the notation we write system~\eqref{initsys-outer2D} as
\begin{equation}\label{initsys-outer2DX}
\frac{d \zeta }{dt} = X(\zeta,\delta,\param)= X_{0}(\zeta,\delta,\param) + \delta^p X_1(\delta \zeta, \delta, \delta \param),\qquad \zeta=(x,y,z).
\end{equation}

From now on, we will omit the dependence of $\alpha$ with respect to $\delta$ and $\param$.
\begin{remark}
Recall that $b>0$, $\coef>0$, $\delta>0$ is a small parameter and $|\param|<\coef$. Without loss of generality, we assume that $\alpha_0$ and $c$ are positive constants.
In particular, for $\delta$ small enough, $\alpha(\delta^2,\delta\param)$ will be also positive.
\end{remark}

Since the functions $\bar{f},\bar{g}$ and $\bar{h}$ are real analytic the same happens for $X_1$. We
call $B^3(r_0)\times B(\delta_0)\times B(\param_0) \subset \mathbb{C}^3 \times \mathbb{C}^2$ its analyticity domain.

\subsubsection{Unperturbed system: $\param=0$, $X_1\equiv 0$}\label{sec:unperturbed}
Consider system~\eqref{initsys-outer2D} with $\param=0$, $f=g=h=0$. It is clear that it has
rotational symmetry. For our purposes it will be very useful to consider ``symplectic'' cylindric coordinates:
\begin{equation}\label{cylindriccoordsunpertub}
x=\sqrt{2r}\cos\theta,\qquad y=\sqrt{2r}\sin\theta,\qquad z=z.
\end{equation}
The main reason is that this change of variables is divergence free, therefore in the conservative case, after this change of variables
the new vector field will be conservative too.
The unperturbed system  writes out as:
\begin{equation}\label{syspolarunperturb}
\frac{dr}{dt}=-2\coef rz,\qquad
\frac{d\theta}{dt}=-\frac{\alpha}{\delta}-cz, \qquad
\frac{dz}{dt}=-1+2br+z^2.
\end{equation}
Since $b>0$, the unperturbed system~\eqref{syspolarunperturb}, has a 2-dimensional heteroclinic manifold $\Gamma$ connecting
$S_+(\delta,0)=(0,0,1)$ and $S_-(\delta,0)=(0,0,-1)$ given by:
$$
\Gamma:=\left\{(r,z)\in\mathbb{R}^2\,:\,-1+\frac{2br}{\coef+1}+z^2=0\right\}.
$$
This manifold can be parameterized with $t\in\mathbb{R}$ by the solutions of the unperturbed system starting at time $t=0$ on the plane $z=0$ and with angular variable $\theta=\theta_0\in [0,2\pi)$ by:
\begin{eqnarray}
 r&=&\displaystyle\hetr(t):=\frac{(\coef+1)}{2b}\frac{1}{\cosh^2(\coef t)},\label{hetr}\medskip\\
 \theta&=&\hetth(t,\theta_0):=\theta_0-\frac{\alpha}{\delta}t-\frac{c}{\coef}\log\cosh(\coef t),\notag\medskip\\
z&=&\hetz(t):=\tanh(\coef t)\label{hetz}.
\end{eqnarray}

\begin{remark} For bounded $\Vert \xi \Vert$, with $\xi=(x,y,z,1,\param)$,
$f(\delta \xi), g(\delta \xi), h(\delta \xi)= \mathcal{O}(\delta^3)$. Thus, using classical perturbation methods,
one can easily see that the difference between the 2-dimensional invariant manifolds is of order $\mathcal{O}(\param)+\mathcal{O}(\delta^{p+3})$.
Therefore, if $\param$ is not of order $\delta^{p+3}$, this difference is not exponentially small in $\delta$.
For this reason, in the rest of the paper we assume that $|\param|\leq \param^*\delta^{p+3}$, for some constant
$\param^*$, since the exponentially small case is the only one where the Shilnikov phenomenon can occur, see~\cite{diks}.
\end{remark}

%\subsection{Parameterizations of the 2-dimensional manifolds}\label{subsecintroparams2D}
From now on, we will omit the dependence of $\alpha$ with respect to $\delta$ and $\param$.

\subsection{Local parameterizations of the invariant manifolds}\label{secthmoutloc}

System~\eqref{initsys-outer2DX} has two critical points $S_{\pm}(\delta,\param)$ of saddle-focus type, see~\cite{BCS13} for instance, and also
Lemma~\ref{lemchangCuCs}.
The goal in this subsection is to provide good parameterizations for the two dimensional invariant manifold associated to $S_{\pm}(\delta,\param)$.

It is useful to write system~\eqref{initsys-outer2DX} in symplectic cylindric coordinates~\eqref{cylindriccoordsunpertub}:
\begin{equation}\label{syspolar}
 \begin{array}{rcl}
  \displaystyle \frac{dr}{dt}&=&\displaystyle 2r(\param-\coef z)+\delta^p \Fb (\delta r,\theta,\delta z,\delta,\delta\param),\medskip\\
  \displaystyle \frac{d\theta}{dt}&=&\displaystyle-\frac{\alpha}{\delta}-cz+\delta^p \Gb (\delta r,\theta,\delta z,\delta,\delta\param),\medskip\\
  \displaystyle \frac{dz}{dt}&=&\displaystyle-1+2br+z^2+\delta^p \Hb (\delta r,\theta,\delta z,\delta,\delta\param),
 \end{array}
\end{equation}
where $\mathbf{X}_1=( \Fb, \Gb, \Hb)$ is defined as
\begin{equation}\label{defFGH}
\mathbf{X}_1 (\delta r, \theta, \delta z,\delta,\delta \param) = \left (
\begin{array}{ccc} \sqrt{2r} \cos \theta & \sqrt{2r} \sin \theta & 0 \\
-\frac{1}{\sqrt{2r}} \sin \theta & \frac{1}{\sqrt{2r}} \cos \theta & 0 \\
0 & 0 & 1 \end{array}\right ) X_1(\delta \zeta,\delta,\delta \param)
\end{equation}
being $\zeta = (\sqrt{2r} \cos \theta, \sqrt{2r}\sin \theta, z)$.

Let us explain how we construct good parameterizations of the invariant manifolds which will be solutions of the same equation.
As the experts in the field know this is a key point to prove the exponentially smallness of their difference.
Due to the geometry of the unperturbed system, it seems natural
to write them as graphs over $z$ and the angular variable $\theta$ (see Figure~\ref{figdist1d2d}).
However, we will not do exactly that, but instead we will introduce a new variable $\vu$ defined by:
$\vu=\hetz^{-1}(z)=\coef^{-1}\mathrm{atanh}(z)$, or equivalently $z=\hetz(\vu)$ (recall that $\hetz$ was defined in~\eqref{hetz}).
The invariant manifolds in symplectic polar coordinates will be parameterized by:
\begin{equation}\label{defrunssta}
r=r^{\uns,\sta} (\vu,\theta),\quad z=\hetz(\vu),
\end{equation}
or in Cartesian coordinates
$$
x=\sqrt{2r^{\uns,\sta}(\vu,\theta)}\cos \theta,\qquad y=\sqrt{2r^{\uns,\sta}(\vu,\theta)}\sin \theta,\qquad z=\hetz(\vu).
$$
This method, being very useful for our purposes, has some drawbacks. For example, it is obvious
that $z = \hetz(\vu)\to \pm 1$ as $\vu\to \pm \infty$. Thus, if the $z$-component of the critical points $S_{\pm}(\delta,\param)$ is not equal
to $\pm 1$ respectively, these parameterizations will not work for large values of $|\vu|$. Nevertheless, we will
prove that these parameterizations exist for bounded values of $u$.

Now we give the invariance equation that our parameterizations $r^{\uns,\sta}$ satisfy. To simplify the notation,
we introduce
\begin{equation}\label{notationFGHbis}
\bar{X}_1(r)(\vu,\theta)
=\mathbf{X}_1 (\delta (\hetr(\vu)+r(\vu,\theta)),\theta,\delta\hetz(\vu),\delta,\delta\param), \quad \bar{X}_1=(F,G,H)
\end{equation}
for a given function $r(\vu,\theta)$.
To avoid cumbersome notations, if there is not danger of confusion, we will omit the dependence on variables $(\vu,\theta)$.
Using this notation, the parameterizations $r^{\uns,\sta}$ have to satisfy the following PDE:
$$
\frac{d\theta}{dt}\partial_\theta r +\frac{d\vu}{dt}\partial_\vu r =2(\param-\coef\hetz(\vu))r +\delta^pF (r-\hetr(\vu)),
$$
and, using equations~\eqref{syspolar} and that $\frac{d\vu}{dt}=\coef^{-1}(1-\hetz^2(\vu))^{-1}\frac{dz}{dt}$:
\begin{align}\label{PDEinisimp}
\big(-\frac{\alpha}{\delta}-c\hetz(\vu) +\delta^p G(r-\hetr(\vu))\big)&\partial_\theta r+\left(\frac{-1+2br+\hetz^2(\vu)+\delta^p H(r-\hetr(\vu))}{\coef(1-\hetz^2(\vu))}\right)\partial_\vu r \notag\\
=&2(\param-\coef\hetz(\vu))r +\delta^p F(r-\hetr(\vu)).
\end{align}

Since it is reasonable to consider system~\eqref{syspolar} as a perturbation of the unperturbed system~\eqref{syspolarunperturb} ($\param=0$ and $\mathbf{X}_1=0$)
studied in Section~\ref{sec:unperturbed},
we impose that $r^{\uns,\sta} (\vu,\theta)=\hetr(\vu)+r_1^{\uns,\sta} (\vu,\theta)$,
where $\hetr$ is given in~\eqref{hetr}.
Using the relations
$$\hetr'(\vu)=-2\coef\hetr(\vu)\hetz(\vu),\qquad
-1+2b\hetr(\vu)+\hetz^2(\vu)=\coef(1-\hetz^2(\vu)),
$$
and putting all terms which are either small or
non-linear in $r_1^{\uns,\sta}$ in the right-hand side of the equality and the remaining terms in the left, equation~\eqref{PDEinisimp} writes out as
\begin{equation}\label{PDEequal}
\Lout(r_1)=\Fout (r_1),
\end{equation}
where $\Lout$ and $\Fout$ are the differential operators defined by:
\begin{align}
 \Lout(r):=&\left(-\delta^{-1}\alpha-c\hetz(\vv)\right)\partial_\theta r +\partial_\vu r -2\hetz(\vu) r, \label{defopL} \\
\Fout(r):=&2\param(\hetr(\vu)+r)+\delta^pF(r)+\delta^p\frac{\coef+1}{b}\hetz(\vu)H(r)\nonumber\\
&-\delta^pG(r)\partial_\theta r-\left(\frac{2br+\delta^pH(r)}{\coef(1-\hetz^2(\vu))}\right)\partial_\vu r.\label{defopF}
\end{align}

We now define the complex domains in which $r^{\uns,\sta}_1$ (and therefore $r^{\uns,\sta}$) will be defined.
We first deal with the unstable case.
We want these domains to be close to the singularities of the heteroclinic connection of the unperturbed system (see~\eqref{hetr}--\eqref{hetz}) closest to the real line.
These are $\pm\frac{i\pi}{2\coef}$. Moreover, it will be convenient that these domains have a triangular shape. To this aim, let $0<\beta<\pi/2$ and $\dist^*>0$
be two constants independent of $\delta$ and $\param$. Take $\dist=\dist(\delta)$ any function satisfying that for $0<\delta<1$:
\begin{equation}\label{conddist}
 \dist^*\delta\leq\dist\delta<\frac{\pi}{8\coef}.
\end{equation}
Then we define the domain (see Figure~\ref{figureDoutuns}):
\begin{equation}\label{defdoutuns}
\Dout{\uns}=\left\{\vv\in\mathbb{C} \,:\, |\im\vv|\leq \frac{\pi}{2\coef}-\dist\delta-\tan\beta\re\vv\right\}.
\end{equation}
We will split the domain $\Dout{\uns}$ in two subsets. Let $T>0$ be any constant independent of $\beta$, $\dist^*$, $\delta$ and $\param$. Then we define (see Figure~\ref{figureDoutuns}):
$$
  \Doutinf{\uns}=\left\{\vv\in \Dout{\uns}\, : \, \re\vv\leq-T\right\},\quad  \DoutT{\uns}=\left\{\vv\in \Dout{\uns}\, : \, \re\vv\geq-T\right\}.
$$
\begin{figure}
       \centering
        \begin{subfigure}[b]{0.45\textwidth}
	  \centering
		\includegraphics[width=6cm]{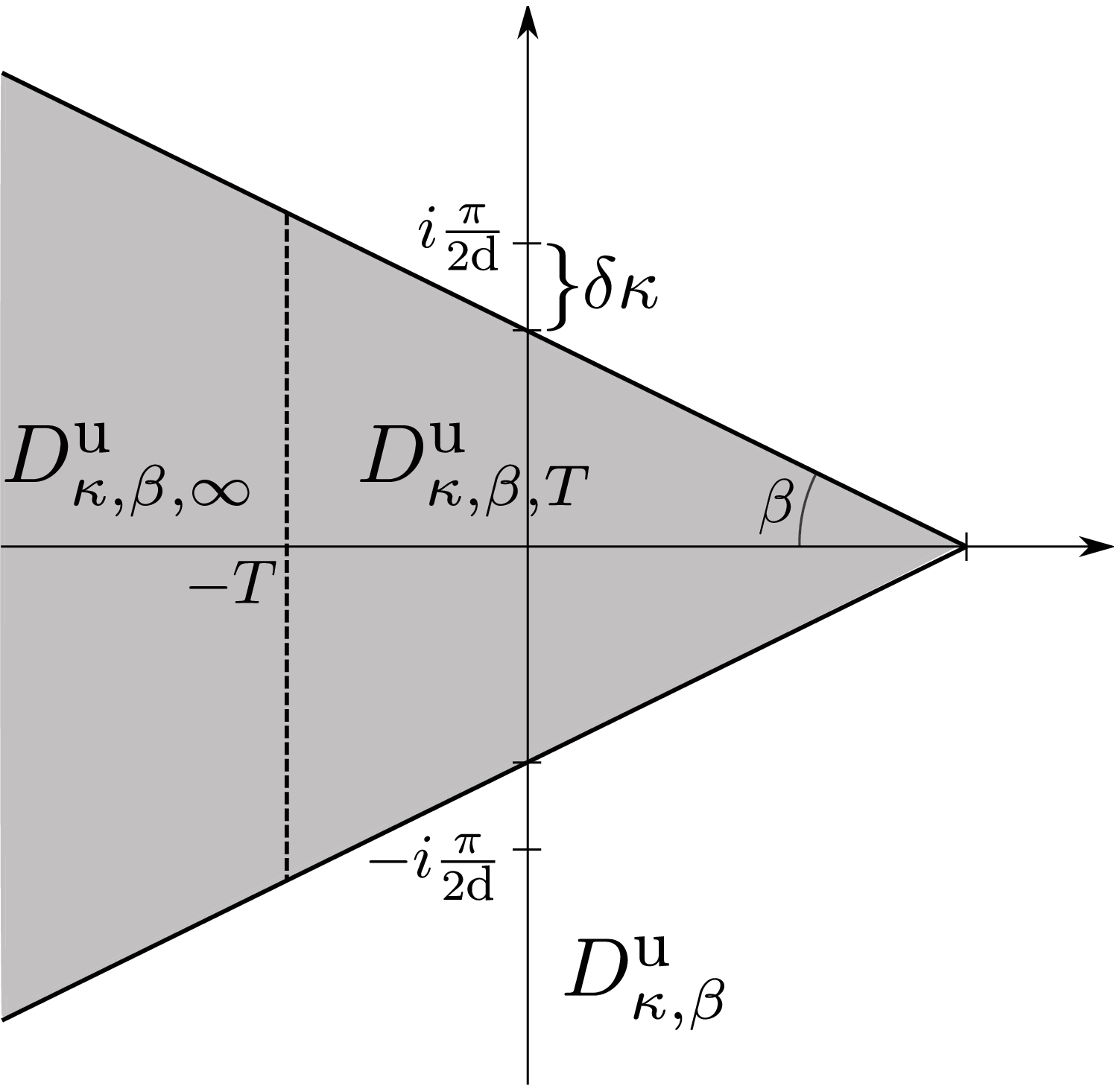}
		\caption{Outer domain $\Dout{\uns}$ for the unstable case with subdomains $\DoutT{\uns}$ and $\Doutinf{\uns}$.}
		 \label{figureDoutuns}
        \end{subfigure}%
         \qquad
        \begin{subfigure}[b]{0.45\textwidth}
	  \centering
		\includegraphics[width=6cm]{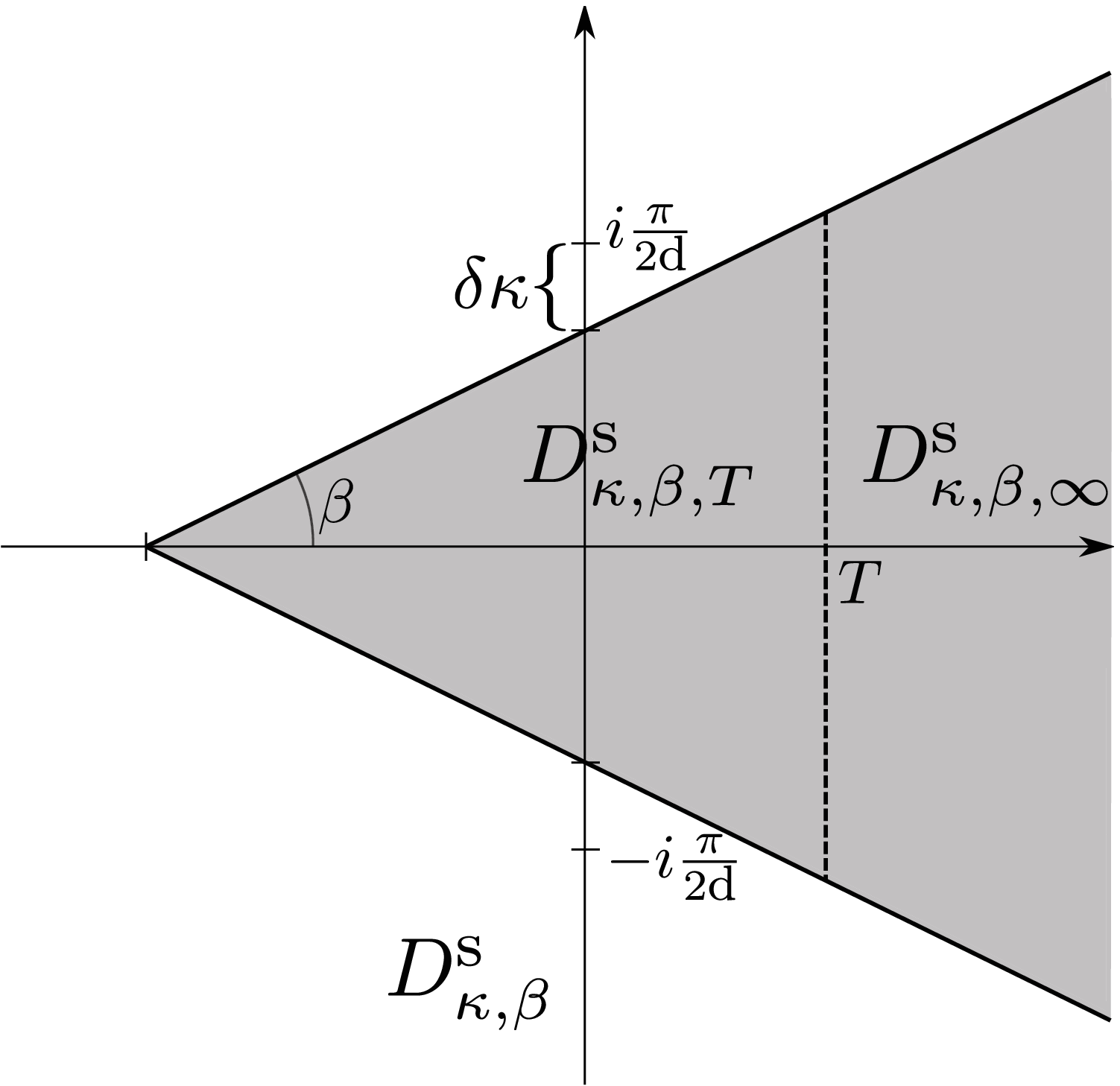}
		\caption{Outer domain $\Dout{\sta}$ for the stable case with subdomains $\DoutT{\sta}$ and $\Doutinf{\sta}$.}
		\label{figureDoutsta}
        \end{subfigure}
	\caption{The outer domains $\Dout{\uns}$ and $\Dout{\sta}$.}\label{figureDout}
\end{figure}
Analogously, for the stable case we define (see Figure~\ref{figureDoutsta}):
$$\Dout{\sta}=-\Dout{\uns},\qquad \Doutinf{\sta}=-\Doutinf{\uns},\qquad \DoutT{\sta}=-\DoutT{\uns}.$$
For any fixed real $\omega>0$, we also define the complex domains:
\begin{equation}\label{defTout}
\Tout:=\left\{\theta\in\mathbb{C}/(2\pi\mathbb{Z})\,:\,|\im\theta|\leq\ost\right\}.
\end{equation}

Next result gives the main properties of the functions $r_1^{\uns,\sta}$.
\begin{theorem}\label{thmoutloc}
Let $p\geq-2$ and $0<\beta<\pi/2$ be any constants. There exist $\dist^*\geq1$, $\param^*,\delta^*>0$, such that for all $0<\delta\leq\delta^*$,
if $\dist=\dist(\delta)$ satisfies
condition~\eqref{conddist} and $|\param|\leq\param^*\delta^{p+3}$, the unstable manifold of $S_-(\delta,\param)$ and the stable manifold of $S_+(\delta,\param)$
are given respectively by:
$$
\zeta^{\uns,\sta}(\vu,\theta)=(\sqrt{2r^{\uns,\sta}(\vu,\theta)}\cos\theta,\sqrt{2r^{\uns,\sta}(\vu,\theta)}\sin\theta,\hetz(\vu)),
\;\;(\vu,\theta)\in \DoutT{\uns,\sta} \times\Tout
$$
with
$r^{\uns,\sta}(\vu,\theta)=\hetr(\vu)+r_1^{\uns,\sta}(\vu,\theta)$
and $r_1^{\uns,\sta}$ satisfying equation~\eqref{PDEequal}.

Let us introduce
$$
\pu^{\mp}(w) = \alpha w \mp \delta( cw \mp c \coef^{-1} \log(1+ e^{\pm 2 \coef w})).
$$
We decompose $\r_1^{\uns,\sta}$ into
$r_1^{\uns,\sta}=r_{10}^\uns+r_{11}^{\uns,\sta}$
being
$$
r_{10}^{\uns,\sta}(\vu,\theta) = \cosh^{\frac{2}{\coef}}(\coef \vu) \int_{\mp \infty}^{u}
\frac{\Fout(0)\left (w,\theta-\delta^{-1} \big (\pu^{\mp}(w) - \pu^{\mp}(u)\big )\right)}{\cosh^{\frac{2}{\coef}}(\coef w)}dw,
$$
with $\Fout$ in~\eqref{defopF} and we take $-$ in the unstable case and $+$ in the stable one.

Then, there exists
$M>0$ such that for all $(\vu,\theta)\in \DoutT{\uns,\sta} \times\Tout$:
$$|r^{\uns,\sta}_{10}(\vu,\theta)|\leq M\delta^{p+3}|\cosh(\coef\vu)|^{-3} $$
$$|r^{\uns,\sta}_{11}(\vu,\theta)|\leq M\left(\delta^{2p+6}|\cosh(\coef\vu)|^{-4}+\delta^{p+4}|\cosh(\coef\vu)|^{-1}\right),$$
and:
$$
|\partial_\vu r_1^{\uns,\sta}(\vu,\theta)|\leq M\delta^{p+3}|\cosh(\coef\vu)|^{-4},\qquad |\partial_\theta r_1^{\uns,\sta}(\vu,\theta)|\leq
M\delta^{p+4}|\cosh(\coef\vu)|^{-4}.
$$
In addition,
the function $r_{10}^{\uns,\sta}$ is defined in the full domain $\Dout{\uns,\sta}\times \Tout$.
\end{theorem}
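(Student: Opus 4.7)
The plan is to rewrite the invariance PDE $\Lout(r_1)=\Fout(r_1)$ as a fixed-point problem $r_1=\linopdiff(\Fout(r_1))$ for a right-inverse $\linopdiff$ of $\Lout$, and to solve it by contraction in suitable weighted Banach spaces. I first construct the right-inverse by the method of characteristics. The linear operator $\Lout$ in~\eqref{defopL} has characteristics $\dot\vu=1$, $\dot\theta=-\delta^{-1}\alpha-c\hetz(\vu)$, which integrate explicitly and produce the phase shift $\delta^{-1}(\pu^{\mp}(w)-\pu^{\mp}(\vu))$ displayed in the theorem; combining this with the integrating factor $\cosh^{2/\coef}(\coef\vu)$, which kills the zeroth-order term $-2\hetz(\vu)r$, variation of constants gives
\[
\linopdiff^{\uns,\sta}(g)(\vu,\theta)=\cosh^{2/\coef}(\coef\vu)\int_{\mp\infty}^{\vu}\frac{g\bigl(w,\theta-\delta^{-1}(\pu^{\mp}(w)-\pu^{\mp}(\vu))\bigr)}{\cosh^{2/\coef}(\coef w)}\,dw,
\]
with $-$ in the unstable case and $+$ in the stable one, so that $\linopdiff^{\uns,\sta} g$ decays at $\re\vu\to\mp\infty$. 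Since the integrand is analytic and decays exponentially in that limit, this integral converges on the whole of $\Dout{\uns,\sta}\times\Tout$, so identifying $r_{10}^{\uns,\sta}=\linopdiff^{\uns,\sta}(\Fout(0))$ already yields the last assertion of the theorem.

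Next I introduce the scale of weighted Banach spaces
\[
\Bsout{n}{\uns,\sta}=\bigl\{r:\DoutT{\uns,\sta}\times\Tout\to\mathbb{C}\ \text{analytic}\,:\,\|r\|_n:=\sup|\cosh(\coef\vu)|^{n}|r(\vu,\theta)|<\infty\bigr\}
\]
and prove the key operator bound $\linopdiff^{\uns,\sta}:\Bsout{n}{\uns,\sta}\to\Bsout{n-1}{\uns,\sta}$ with a norm independent of $\delta$ (for $n$ large enough and $\dist^{*}$ large enough). Using that $X_1=\mathcal{O}_3$ at the origin and that $|\param|\leq\param^{*}\delta^{p+3}$, a termwise inspection of~\eqref{defopF} at $r=0$ gives $\Fout(0)\in\Bsout{4}{\uns,\sta}$ with norm $\mathcal{O}(\delta^{p+3})$: the contributions of $2\param\hetr$, $\delta^{p}F(0)$ and $\delta^{p}\hetz H(0)$ all pick up the correct power of $|\cosh|^{-1}$ from either the factor $\hetr=\mathcal{O}(|\cosh|^{-2})$ or the $\sqrt{2\hetr}$ emerging from~\eqref{defFGH} combined with the cubic vanishing of $X_1$. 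Applying $\linopdiff^{\uns,\sta}$ yields $r_{10}^{\uns,\sta}\in\Bsout{3}{\uns,\sta}$ with the stated bound $M\delta^{p+3}|\cosh(\coef\vu)|^{-3}$.

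For the remainder $r_{11}^{\uns,\sta}:=r_1^{\uns,\sta}-r_{10}^{\uns,\sta}$, the equation becomes $r_{11}=\mathcal{T}(r_{11}):=\linopdiff\bigl(\Fout(r_{10}+r_{11})-\Fout(0)\bigr)$. I split $\Fout(r)-\Fout(0)=\mathcal{L}_{\param}(r)+\mathcal{N}(r)$ into the part linear in $r$ (coming from $2\param r$, from the linearizations of $F,G,H$ in $r$ and from the $\partial_{\theta},\partial_{\vu}$ terms in~\eqref{defopF}) and the purely nonlinear part $\mathcal{N}$. The operator $\mathcal{L}_{\param}$ has Lipschitz constant $\mathcal{O}(\delta^{p+3})$ thanks to the smallness of $\param$ and of the $\delta^{p}$-coefficients when evaluated on the unperturbed heteroclinic, so $\linopdiff\mathcal{L}_{\param}(r_{10})$ has size $\delta^{p+4}|\cosh|^{-1}$, whereas $\linopdiff\mathcal{N}(r_{10})$, being quadratic in $r_{10}$, has size $\delta^{2p+6}|\cosh|^{-4}$: exactly the two summands in the announced bound on $r_{11}^{\uns,\sta}$. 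Banach's fixed-point theorem applied to $\mathcal{T}$ in the intersection $\Bsout{1}{\uns,\sta}\cap\Bsout{4}{\uns,\sta}$ with radius $M(\delta^{p+4}+\delta^{2p+6})$ then yields $r_{11}^{\uns,\sta}$ uniquely. The derivative estimates follow by Cauchy estimates on a slightly shrunk triangular subdomain of $\DoutT{\uns,\sta}$, which costs at most one factor of $|\cosh(\coef\vu)|^{-1}$ per derivative and, in the $\theta$-direction, gains the extra $\mathcal{O}(\delta)$ coming from the fast phase $\delta^{-1}\alpha$ in the characteristic flow of $\Lout$.

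The main difficulty is establishing the uniform boundedness of $\linopdiff^{\uns,\sta}$ between these weighted spaces on a domain that reaches $\mathcal{O}(\dist\delta)$-close to the singularities of the unperturbed heteroclinic at $\vu=\pm i\pi/(2\coef)$, where the integrating factor $\cosh^{-2/\coef}(\coef w)$ blows up along the path of integration. The triangular shape of $\Dout{\uns,\sta}$ with opening $0<\beta<\pi/2$, together with the buffer $\dist\delta$ and the freedom to enlarge $\dist^{*}$, is precisely what makes this integral converge with a $\delta$-independent norm. A secondary subtlety is that the $\theta$-shift $\delta^{-1}(\pu^{\mp}(w)-\pu^{\mp}(\vu))$ carries the large factor $\delta^{-1}\alpha$, so its imaginary part must be kept within the strip $\Tout$ of width $\ost$ throughout the integration; this is guaranteed by the explicit structure of $\pu^{\mp}$ and an appropriate compatibility between $\ost$, $\beta$, $\dist^{*}$ and $T$.
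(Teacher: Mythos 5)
You correctly identify the architecture: integrate $\Lout$ along its characteristics with the integrating factor $\cosh^{2/\coef}(\coef\vu)$, obtain a right inverse $\Gout^{\uns,\sta}$ integrating from $\re w=\mp\infty$, set up weighted spaces, and split $r_1 = r_{10}+r_{11}$ with $r_{10}=\Gout^{\uns,\sta}(\Fout(0))$. However, there is a genuine gap at the center of your fixed-point setup, and it is precisely the obstruction the paper takes pains to remove.

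Your right inverse integrates from $\re w=\mp\infty$, so it requires the function it acts on to be defined on the unbounded wings $\Doutinf{\uns,\sta}$, hence it requires $r_1^{\uns,\sta}$ itself to extend to $\re\vu\to\mp\infty$. But as the paper stresses right after~\eqref{defrunssta} and again at the start of Section~\ref{secproofthmout}, the parameterization $r=\hetr(\vu)+r_1(\vu,\theta)$, $z=\hetz(\vu)$ forces $z\to\mp1$ as $\vu\to\mp\infty$, while the true equilibria $S_\mp(\delta,\param)$ sit at $z=\mp1+\mathcal{O}(\delta^{p+4})$ (Lemma~\ref{lemchangCuCs}), so the graph does not accumulate at the critical point and the parameterization simply is not valid for large $|\re\vu|$. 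You declare your Banach spaces on $\DoutT{\uns,\sta}$, so $\Gout^{\uns,\sta}(\Fout(r_{10}+r_{11}))$ cannot even be evaluated for $r_{11}$ in those spaces; if you instead declare them on $\Dout{\uns,\sta}$, the solution you seek does not live there. The paper's fix is the preliminary linear normalizations $C^{\uns,\sta}$ of Lemma~\ref{lemchangCuCs}, which move the critical points exactly to $(0,0,\mp1)$ and put the linear part in Jordan form, so that the transformed problems $\Lout(R_1^{\uns,\sta})=\Fout^{\uns,\sta}(R_1^{\uns,\sta})$ (Proposition~\ref{propout}) do admit solutions decaying at $\mp\infty$; the contraction is then run on the unbounded domain with a two-exponent norm, one exponent near the singularity and one at infinity, which your single-weight norm also misses. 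Undoing the changes afterwards produces $r_1^{\uns,\sta}$ on $\DoutT{\uns,\sta}$ only, together with two extra corrections, $r_2^{\uns,\sta}$ from the reparameterization and $\Gout^{\uns,\sta}\big(\Fout(0)-\Fout^{\uns,\sta}(0)\big)$, both of order $\delta^{p+4}$; that is where the second summand in the bound for $r_{11}^{\uns,\sta}$ actually originates. Without this normalization step your fixed-point problem for $r_1^{\uns,\sta}$ is not well posed, so the argument does not go through as written.
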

The proof of this Theorem can be found in Section~\ref{secproofthmoutloc}. We stress that this result is also valid in the singular case $p=-2$.

\subsection{The Melnikov function}\label{subsecintromelnikov}

Our final aim is to find an asymptotic formula of the difference $\Delta=r^\uns-r^\sta=r_{1}^{\uns}-r_{1}^{\sta}$. Recall that by Theorem~\ref{thmoutloc} we have:
$$
 \Delta(\vu,\theta)=r_{10}^\uns(\vu,\theta)-r_{10}^\sta(\vu,\theta)+r_{11}^\uns(\vu,\theta)-r_{11}^\sta(\vu,\theta).
$$
Also by Theorem~\ref{thmoutloc}, we know that $\r_{10}^\uns$ and $\r_{10}^\sta$ are larger than $\r_{11}^\uns$ and $\r_{11}^\sta$. Hence, it is natural to think that the first order of the difference is given by the difference of these dominant terms. That is, we expect that:
$$
\Delta(\vu,\theta)=\r_{10}^\uns(\vu,\theta)-\r_{10}^\sta(\vu,\theta)+\mathrm{h.o.t}.
$$
We will see that this approach is valid for $p>-2$, that is, for non-generic unfoldings. We postpone the study of the case $p=-2$
to~\cite{BCS16b}, where we will see that this assumption is not true.

Let us consider the difference $r_{10}^\uns-r_{10}^\sta$, which is $2\pi-$periodic in $\theta$, so that we can write its Fourier series:
\begin{equation}\label{defMelnikov}
M(\vu,\theta):=\r_{10}^\uns(\vu,\theta)-\r_{10}^\sta(\vu,\theta)=\sum_{l\in\mathbb{Z}}M^{[l]}(\vu)e^{il\theta}.
\end{equation}
We introduce $\pu(w)= \alpha w + \delta c\coef^{-1} \log(\cosh \coef w)$. We observe that for real values of $\vu,w$,
$\pu(w) - \pu(\vu)=\pu^{\mp}(w) - \pu^{\mp}(\vu)$,
with $\pu^{\mp}$ introduced in Theorem~\ref{thmoutloc}.
Therefore, using the formula for $r_{10}^{\uns,\sta}$ in the mentioned result, we have that
\begin{equation}\label{defMelnikovequiv}
 M(\vu,\theta)=\cosh^{\frac{2}{\coef}}(\coef\vu)\int_{-\infty}^{+\infty}\frac{\Fout(0)\left (w,\theta-
\delta^{-1} \big (\pu(w) - \pu(u)\big )
\right)}{\cosh^{\frac{2}{\coef}}(\coef w)}dw,
\end{equation}
which is the Melnikov function adapted to this problem.
As $\Fout(0)(\vu,\theta)$ is periodic in $\theta$, the coefficients $M^{[l]}(\vu)$ for $\vu\in\mathbb{R}$ are:
\begin{equation}\label{defcoefMelnikovtilde}
M^{[l]}(\vu)=
\cosh^{\frac{2}{\coef}}(\coef\vu)e^{il\delta^{-1} \pu(u)}\int_{-\infty}^{+\infty}\frac{e^{-il\delta^{-1} \pu(w)}
\Fout^{[l]}(0)(w)}{\cosh^{\frac{2}{\coef}}(\coef w)}dw.
\end{equation}
Moreover, from~\eqref{defcoefMelnikovtilde} it is clear that we can write series~\eqref{defMelnikov} as:
\begin{equation}\label{melnifourier2}
 M(\vu,\theta)=\cosh^{\frac{2}{\coef}}(\coef\vu)\sum_{l\in\mathbb{Z}}\Upsilon_0^{[l]}e^{il(\theta+\delta^{-1}\alpha\vu+c\coef^{-1}\log\cosh(\coef\vu))},
\end{equation}
where $\Upsilon_0^{[l]}$ are the constants:
\begin{equation}\label{defcoefMelnikov}
 \Upsilon_0^{[l]}=\int_{-\infty}^{+\infty}\frac{e^{-il(\delta^{-1}\alpha w+c\coef^{-1}\log\cosh(\coef w))}\Fout^{[l]}(0)(w)}{\cosh^{\frac{2}{\coef}}(\coef w)}dw.
\end{equation}
In addition
$M^{[l]}(\vu)=\cosh^{\frac{2}{\coef}}(\coef\vu)e^{il(\delta^{-1}\alpha\vu+c\coef^{-1}\log\cosh(\coef\vu))}\Upsilon_0^{[l]}.$

In the following theorem we provide upper bounds for $\Upsilon_0^{[l]}$ for $|l|\ge 2$ and
closed formulas for $\Upsilon_0^{[1]}$ and $\Upsilon_0^{[-1]}$ in terms of Borel transform of some functions depending
on the perturbation terms. We also prove that (besides the average $\Upsilon_0^{[0]}$) they are the dominant coefficients of $M$.
To this purpose, we recall that given a function $m(w,\theta)=\sum_{n\geq0}m_n(\theta)w^{n+1+ik}$, periodic in $\theta$,
we define its Borel transform $\hat{m}(\zeta,\theta)$ as:
\begin{equation}\label{defBorel}
\hat{m}(\zeta,\theta)=\sum_{n\geq0}m_n(\theta)\frac{\zeta^{n+ik}}{\Gamma(n+1+ik)}.
\end{equation}
To avoid a cumbersome notation, we introduce
$$
\textbf{w}(w,\theta) = \left(\sqrt{\frac{\coef+1}{b}}w\cos \theta ,\sqrt{\frac{\coef+1}{b}}w\sin \theta ,-iw,0,0\right )
$$
and $\tilde{F}(w,\theta)=
\cos \theta f(\textbf{w}(w,\theta))+\sin \theta g(\textbf{w}(w,\theta))$ with $f$ and $g$ the perturbation terms in system~\eqref{initsys-outer2D}.
\begin{theorem}\label{thmasyformulaMelnikov}
Consider the $2\pi$-periodic in $\theta$ function
\begin{equation}\label{defm}
m(w,\theta)= \sqrt{\frac{\coef + 1}{b}} w^{1+\frac{2}{\coef}+i\frac{c}{\coef}}
\left (\tilde{F}(w,\theta)
-i \sqrt{\frac{\coef + 1}{b}} h(\mbox{\rm \textbf{w}}(w,\theta))\right).
\end{equation}
Let $\hat{m}(\zeta,\theta)$ be its Borel transform as defined in~\eqref{defBorel} and  $\hat{m}^{[1]}$ its first Fourier coefficient.
Then, writing
$\mathcal{C}=\mathcal{C}_1 - i\mathcal{C}_2 = \frac{4\pi}{\coef}\hat{m}^{[1]}\left(\frac{\alpha}{\coef}\right)$, $\mathcal{C}_1, \mathcal{C}_2\in \mathbb{R}$,
\begin{equation*}
%\Upsilon_0^{[1]}=\frac{2\pi}{\coef}\delta^{p-\frac{2}{\coef}-i\frac{c}{\coef}}e^{-\frac{\alpha\pi}{2\coef\delta}}\left(\hat{m}^{[1]}\left(\frac{\alpha}{\coef}\right)+
%\mathcal{O}(\delta)\right),
\Upsilon_0^{[1]} = \overline{\Upsilon_0^{[-1]}}=\delta^{p-\frac{2}{\coef}-i\frac{c}{\coef}}e^{-\frac{\alpha\pi}{2\coef\delta}}\left( \frac{\mathcal{C}}{2} + \mathcal{O}(\delta)\right).
\end{equation*}
%with $\hat{m}^{[1]}$ the first Fourier coefficient of $\hat{m}$.
Moreover, there exists a constant $K$ such that:
\begin{equation}\label{boundcoeffl}
 \left|\Upsilon_0^{[l]}\right|\leq K\delta^{p-\frac{2}{\coef}}e^{-\frac{\alpha\pi}{2\coef\delta}\frac{3|l|}{4}},\qquad |l|\geq 2.
\end{equation}

In conclusion, defining
$\vt(\vu,\delta)=\delta^{-1}\alpha\vu+c\coef^{-1}\left[\log\cosh(\coef\vu)-\log\delta\right]$
for $\vu\in\mathbb{R}$ and $\theta\in\mathbb{S}^1$ one has that:
\begin{align}\label{asyMelni}
M(\vu,&\theta)=\cosh^{\frac{2}{\coef}}(\coef\vu)\bigg[\Upsilon_0^{[0]}+\\
&\delta^{p-\frac{2}{\coef}}e^{-\frac{\alpha\pi}{2\coef\delta}}\Big(\mathcal{C}_1\cos(\theta+\vt(\vu,\delta))+
\mathcal{C}_2\sin(\theta+\vt(\vu,\delta))+\mathcal{O}(\delta)\Big)\bigg].\notag
\end{align}
%where $\mathcal{C}_1-i \mathcal{C}_2 = \frac{4\pi}{\coef}\hat{m}^{[1]}\left(\frac{\alpha}{\coef}\right)$, $\mathcal{C}_1, \mathcal{C}_2\in \mathbb{R}$.
\end{theorem}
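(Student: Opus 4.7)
The plan consists of three independent pieces — a closed formula for $\Upsilon_0^{[1]}$, a tail bound for $|l|\ge 2$, and the final assembly — all based on contour deformation in the complex $w$-plane for the integral~\eqref{defcoefMelnikov}. As a preliminary I would make $\Fout^{[l]}(0)(w)$ explicit from~\eqref{defopF}: at $r = 0$ the two derivative terms vanish identically, and the $2\param\hetr$ contribution is $\theta$-independent, so for $l \ne 0$ only $\delta^p F + \delta^p\tfrac{\coef+1}{b}\hetz\,H$ survives, with $F,H$ from~\eqref{defFGH} evaluated along the complexified unperturbed heteroclinic. Crucially, both $\sqrt{2\hetr(w)}$ and $\hetz(w)$ have simple poles at $w_0 = -i\pi/(2\coef)$, with $\sqrt{2\hetr(w)}\sim i\sqrt{(\coef+1)/b}/(\coef(w-w_0))$ and $\hetz(w)\sim 1/(\coef(w-w_0))$. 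Hence the inversion $u = 1/(\coef(w-w_0))$ maps the complexified heteroclinic at $w_0+\tilde w$ onto the curve $\mathbf{w}(i\delta u,\theta)$ of~\eqref{defm}, the $i$'s being absorbed into the argument $-iw$ of $\mathbf{w}$ and into the branch of $\sqrt{\cdot}$. This geometric coincidence is why $m$ in~\eqref{defm} is the right object to consider.

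For the closed formula I would deform the real contour in~\eqref{defcoefMelnikov} (with $l=1$) downward to a Hankel-type loop encircling the branch cut starting at $w_0$. The horizontal decay $|\cosh^{-2/\coef}(\coef w)|\sim e^{-2|\re w|}$ and the analyticity of $f,g,h$ on $B^3(r_0)$ (preserved along the new contour for $\delta\le\delta^*$) make the deformation legitimate, and the exponential $e^{-i\alpha w/\delta}$ evaluated at $w_0$ produces the universal prefactor $e^{-\alpha\pi/(2\coef\delta)}$. Near $w_0$, $\cosh(\coef w)\sim -i\coef\tilde w$ linearises the singular factor to $(-i\coef\tilde w)^{-2/\coef-ic/\coef}$, while the inversion $u = 1/(\coef\tilde w)$ turns the local integrand into a series whose coefficients match term-by-term those of $m(u,\theta)$ in~\eqref{defm}. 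Computing the resulting Hankel integrals via the classical Gamma representation $1/\Gamma(s) = (2\pi i)^{-1}\int_{\mathcal{H}} e^t t^{-s}\,dt$ (together with the rescaling dictated by the factor $e^{-i\alpha\tilde w/\delta}$) re-assembles the series into the Borel transform~\eqref{defBorel} of $m$ evaluated at $\zeta = \alpha/\coef$. Collecting all algebraic prefactors — the $\sqrt{(\coef+1)/b}$ from $\sqrt{2\hetr}$, the branch $(-i\coef)^{-2/\coef-ic/\coef}$, and the Jacobian of the inversion — these combine into the claimed $\mathcal{C}=(4\pi/\coef)\hat m^{[1]}(\alpha/\coef)$, while the $\delta$-prefactors produce $\delta^{p-2/\coef-ic/\coef}$; the $O(\delta)$ error absorbs the subleading Taylor terms, the $\delta,\delta\param$ dependence of $f,g,h$, and the $\alpha_1\delta\param,\alpha_2\delta^2$ terms inside $\alpha$. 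The formula for $\Upsilon_0^{[-1]}$ follows by conjugation, since $f,g,h$ are real analytic.

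For $|l|\ge 2$ I would shift the contour only to $\im w = -(3\pi/(8\coef))\,\mathrm{sgn}(l)$, which stays at uniform distance $\pi/(8\coef)$ from every singularity. On this line $|e^{-il\alpha w/\delta}|\le e^{-3\alpha\pi|l|/(8\coef\delta)}$ and the rest of the integrand is bounded by $C\delta^{p-2/\coef}$ uniformly in $l$, giving~\eqref{boundcoeffl}. The asymptotic formula~\eqref{asyMelni} then follows by substituting the three estimates into~\eqref{melnifourier2}: the $|l|\ge 2$ tail is absorbed into the $O(\delta)$ error (being smaller by $e^{-\alpha\pi/(4\coef\delta)}$), the factor $\delta^{-ic/\coef}=e^{-ic\coef^{-1}\log\delta}$ in $\Upsilon_0^{[\pm 1]}$ combines with $c\coef^{-1}\log\cosh(\coef\vu)$ to rebuild exactly the phase $\theta+\vt(\vu,\delta)$, and the identity $\re((\mathcal{C}_1-i\mathcal{C}_2)e^{i\varphi})=\mathcal{C}_1\cos\varphi+\mathcal{C}_2\sin\varphi$ produces the final combination.

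The main obstacle is the second step: making rigorous the Hankel deformation and the identification of the local Taylor coefficients of the integrand with those of $m$ in~\eqref{defm}, so that the Gamma factors produced by the Hankel integrals cancel exactly against those in the Borel definition~\eqref{defBorel}. Book-keeping of the non-integer branches $(-i\coef)^{-2/\coef-ic/\coef}$ and $\tilde w^{-2/\coef-ic/\coef}$ along the cut is delicate, and the precise constant $4\pi/\coef$ emerges only after all the $\sqrt{(\coef+1)/b}$ prefactors coming from $\sqrt{2\hetr}$ and the Jacobian of the inversion have been tracked through.
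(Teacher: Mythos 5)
Your proposal is sound in outline and would eventually deliver the theorem, but it takes a genuinely different route from the paper's. The paper never performs a Hankel deformation: it expands $\Fout^{[l]}(0)(w)$ in the Taylor coefficients $f_{qkmn},g_{qkmn},h_{qkmn}$ of the perturbation, which reduces the coefficients $\Upsilon_0^{[l]}$ to an explicit triple series whose building blocks are the integrals $I_{n,Q}^{l,C}$ of~\eqref{defintegralInqlC}. For $l=1$, the change $w=\tanh(\coef s)$ turns $I_{0,Q}^{1,C}$ into an \emph{exact} Beta integral (formula~\eqref{I0q1C}), the recurrence of Lemma~\ref{lemrecurrence} handles $n\geq 1$, and only at the very end is Stirling-type asymptotics of the Gamma ratio (Lemma~\ref{lempropsGamma}, items 2--5) invoked; the series then visibly re-sums into the Borel transform~\eqref{defBorel}. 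For $|l|\geq 2$ the paper also shifts the contour (Lemma~\ref{lemboundInql2}), but does so on the \emph{explicit} integrals $I_{n,Q}^{l,C}$, which involve only $\cosh$ and $\sinh$, not the perturbation $f,g,h$; the contour comes within $\rho\delta$ of the singularity, with the $(\rho\delta)^{-Q}$ cost absorbed into the geometric convergence of the Taylor series. What your approach buys is conceptual clarity: the Hankel loop explains directly why the singularity type of the complexified heteroclinic at $w_0=\mp i\pi/(2\coef)$ produces a Borel transform evaluated at $\alpha/\coef$, and why the prefactors $\sqrt{(\coef+1)/b}$ and $(-i)^{-2/\coef-ic/\coef}$ arrange into $4\pi/\coef$. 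What the paper's approach buys is that everything stays exact until the last step — no branch-cut bookkeeping, no truncation of the Hankel loop at radius $\sim\delta$ (which in your scheme is forced because $f,g,h$ must stay in their analyticity ball $B(r_0)$ while the heteroclinic blows up like $1/(w-w_0)$), and no separate estimate for the subleading terms in the local expansion of $\sqrt{2\hetr}$ and $\hetz$ near $w_0$. The delicacy you flag at the end — rigorously matching the local Taylor coefficients of the integrand to those of $m$ and cancelling the Gamma factors — is precisely the step the paper avoids by computing the Beta integrals in closed form first and introducing the Gamma functions only through the exact identity~\eqref{I0q1C}.
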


The proof of this result can be found in Section~\ref{sectionMelnikov}.

Due to the exponential smallness of $\Upsilon_0^{[l]}$, $|l|\geq1$, the dominant term of the Melnikov function for real values of $\vu$ is
its average $\Upsilon_0^{[0]}$.
%However, we are interested in the case where the difference $\Delta$ is exponentially small, because it is the case when the Shilnikov phenomenon
%is expected to occur.
We will give more details about this coefficient in Section~\ref{subsec:firstorderdiff-outer}, Theorem~\ref{thmktilde0}.

\subsection{The difference}\label{secDifference}
In this section we study the difference $\Delta(\vu,\theta)=r_1^\uns(\vu,\theta)-r_1^\sta(\vu,\theta)$.
We give only the main result and some intuitive ideas of the proof. For all the details we refer the reader to Section~\ref{secproofDif}.

First, we find an equation for the difference $\Delta$.
To this aim, we subtract the PDEs~\eqref{PDEequal} for
$r_1^{\uns}$ and $r_1^{\sta}$, and then using the mean value theorem, we obtain an equation of the following form:
\begin{align}\label{PDE_difference_intro}
 \big(-\delta^{-1}\alpha-c\hetz(\vu)\big)\partial_\theta\Delta+\partial_\vu\Delta-2\hetz(\vu)\Delta=&(2\param+l_1(\vu,\theta))\Delta\notag \\
&+l_2(\vu,\theta)\partial_\vu\Delta+l_3(\vu,\theta)\partial_\theta\Delta.
\end{align}
Here the functions $l_1, l_2, l_3$, are functions which are ``small'' in the appropriate sense. More precisely,
denoting $r_\lambda=(r_1^\uns+r_1^\sta)/2+\lambda(r_1^\uns-r_1^\sta)/2$ and applying the mean value theorem, the functions $l_i$ are:
\begin{align}
 l_1(\vu,\theta)=&\frac{\delta^p}{2}\int_{-1}^1\partial_rF(r_\lambda)d\lambda+
\frac{\delta^{p}(\coef+1)}{2b}\hetz(\vu)\int_{-1}^1\partial_r H(r_\lambda)d\lambda\notag\\
&-
\frac{\delta^p}{2}\int_{-1}^1\partial_rG(r_\lambda)\partial_\theta r_\lambda d\lambda
-\frac{\delta^p}{2\coef(1-\hetz^2(\vu))}\int_{-1}^1\partial_rH(r_\lambda)\partial_\vu r_\lambda d\lambda\notag\\
&-\frac{b}{\coef(1-\hetz^2(\vu))}(\partial_\vu r_1^\uns+\partial_\vu r_1^\sta),\label{defl1}\\
l_2(\vu,\theta)=&-\frac{b}{\coef(1-\hetz^2(\vu))}(r_1^\uns+r_1^\sta)-\frac{\delta^p}{2\coef(1-\hetz^2(\vu))}\int_{-1}^1H(r_\lambda)d\lambda\label{defl2},\\
l_3(\vu,\theta)=&-\frac{\delta^p}{2}\int_{-1}^1G(r_\lambda)d\lambda.\label{defl3}
\end{align}
The precise meaning of ``small'' will be given in Lemma~\ref{lemnormint}.

Recall that $r_1^{\uns}$ and $r_1^{\sta}$ are defined respectively in the domains $\DoutT{\uns}\times\Tout$ and $\DoutT{\sta}\times\Tout$.
Thus, their difference will be defined in the intersection of these two domains.
So, from now on we will consider $(\vu,\theta)\in\Doutinter\times\Tout$, where we define $\Doutinter$ as (see Figure~\ref{figureDoutinter}):
\begin{equation}\label{Doutinter}
\Doutinter=\DoutT{\uns}\cap\DoutT{\sta}.
\end{equation}
\begin{figure}
\center
 \includegraphics[width=5cm]{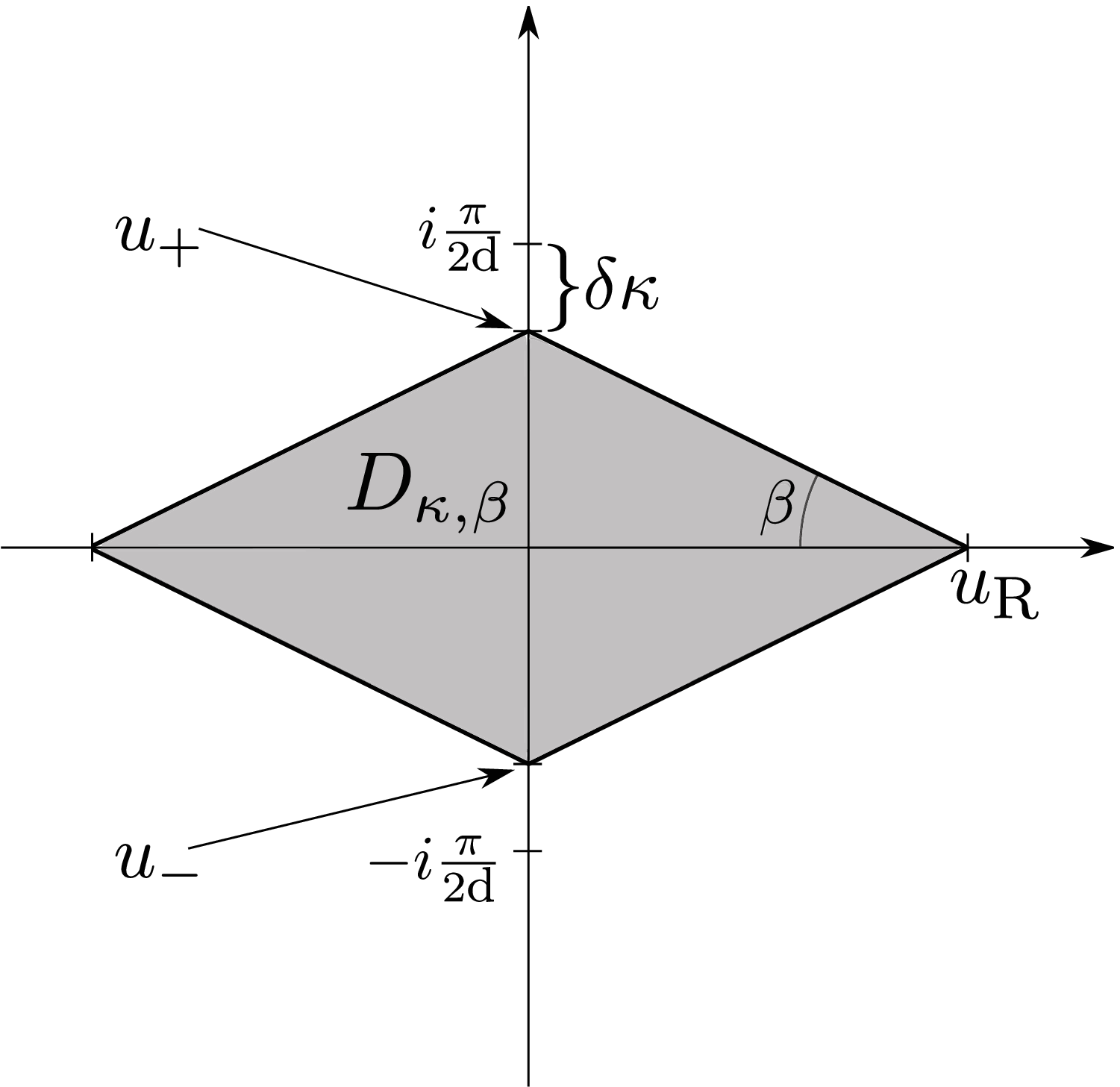}
 \caption{The domain $\Doutinter$.}
 \label{figureDoutinter}
\end{figure}

Now, we study \textit{all} the solutions of equation~\eqref{PDE_difference_intro}.
First we notice that, by the so-called method of variation of constants,
every solution $\Delta$ of~\eqref{PDE_difference_intro} can be written as:
\begin{equation}\label{Deltaforma1}
\Delta(\vu,\theta)=P(\vu,\theta)k(\vu,\theta),
\end{equation}
where $P$ is a particular solution of this same equation satisfying $P(\vu,\theta)\neq0$, and $k(\vu,\theta)$ satisfies the associated homogeneous PDE:
\begin{equation}\label{PDE_k}
 \left(-\delta^{-1}\alpha-c\hetz(\vu)\right)\partial_\theta k+\partial_\vu k=l_2(\vu,\theta)\partial_\vu k+l_3(\vu,\theta)\partial_\theta k.
\end{equation}
%This result is stated more precisely in Lemma~\ref{propvarconsts}.
Let us now mention some properties of these functions $k$ and $P$.

To study the function $k$ we shall rely on the form of equation~\eqref{PDE_k}.
One of its main features is that if $\xi$ is a particular solution of~\eqref{PDE_k} such that $(\xi(\vu,\theta),\theta)$
is injective in $\Doutinter\times\Tout$, then \textit{any} solution $k$ of~\eqref{PDE_k} can be written as:
$$k(\vu,\theta)=\tilde{k}(\xi(\vu,\theta)),$$
for some function $\tilde k(\tau)$. As a consequence of~\eqref{Deltaforma1} and of the above equality
\begin{equation}\label{Deltaforma2}
\Delta(\vu,\theta) = P(\vu,\theta) \tilde{k}(\xi(\vu,\theta))
\end{equation}
with $P$ a particular solution of~\eqref{PDE_difference_intro} and $\xi$ a particular solution of~\eqref{PDE_k}.

Since the functions $l_i$ are ``small'', equation~\eqref{PDE_k} is a perturbation of:
$$ \left(-\delta^{-1}\alpha-c\hetz(\vu)\right)\partial_\theta k+\partial_\vu k=0.$$
A solution of this equation is given by $\xi_0(\vu,\theta)=\theta+\delta^{-1}\alpha\vu+c\coef^{-1}\log\cosh(\coef\vu)$.
Then, we look for a solution of~\eqref{PDE_k} of the form:
\begin{equation}\label{formaxi_intro}
\xi(\vu,\theta)=\theta+\delta^{-1}\alpha\vu+c\coef^{-1}\log\cosh(\coef\vu)+C(\vu,\theta),
\end{equation}
where, as expected, $C$ will be a ``small'' function.

Notice that, if the existence of $\xi$ of the form~\eqref{formaxi_intro} can be proven then the function $\tilde{k}(\tau)$ has
to be $2\pi$-periodic in its argument. Indeed, since $k$ is $2\pi$-periodic in $\theta$ one has that
$\tilde{k}(\xi(\vu,\theta+2\pi))= \tilde{k}(\xi(\vu,\theta))$. The claim follows from the fact that
$\xi(\vu,\theta+2\pi)=\xi(\vu,\theta)+2\pi$.

To study the particular solution $P$ of~\eqref{PDE_difference_intro} we note that, being $\param=\mathcal{O}(\delta^{p+3})$
and $l_i$ ``small'', equation~\eqref{PDE_difference_intro} is a perturbation of:
$$\left(-\delta^{-1}\alpha-c\hetz(\vu)\right)\partial_\theta \Delta+\partial_\vu\Delta-2\hetz(\vu)\Delta=0.$$
A solution of this equation is given by $P_0(\vu)=\cosh^{2/\coef}(\coef\vu)$. Therefore, we look for a particular
solution of~\eqref{PDE_difference_intro} of the form:
\begin{equation*}
P(\vu,\theta)=\cosh^{2/\coef}(\coef\vu)(1+P_1(\vu,\theta)),
\end{equation*}
where $P_1(\vu,\theta)$ will be ``small''.

As a conclusion of all the previous considerations, one obtains the following result, which characterizes the form of the difference $\Delta$ as well as the sizes of the functions
$P_1$ and $C$ described above.
\begin{theorem}\label{thmdifpartsolinjective}
Let $p\geq-2$ and $|\param|\leq\delta^{p+3}\param^*$. The difference $\Delta$ can be written as:
\begin{equation}\label{defDelta}
\Delta(\vu,\theta)=P(\vu,\theta) \tilde{k}(\xi(\vu,\theta))=\cosh^{2/\coef}(\coef\vu)(1+P_1(\vu,\theta))\tilde{k}(\xi(\vu,\theta)),
\end{equation}
where $\tilde k(\tau)$ is a $2\pi-$periodic function, the function $\xi$ is a solution of~\eqref{PDE_k} defined as:
\begin{equation}\label{defxitheorem}
\xi(\vu,\theta)=\theta+\delta^{-1}\alpha\vu+c\coef^{-1}\log\cosh(\coef\vu)+C(\vu,\theta),
\end{equation}
and it is such that $(\xi(\vu,\theta),\theta)$ is injective in $\Doutinter\times\Tout$. In addition, $P$ is a solution of~\eqref{PDE_difference_intro} and
$P_1$ and $C$ are real analytic functions, defined in $\Doutinter\times\Tout$ such that:
\begin{enumerate}
 \item There exist $L_0\in \mathbb{R}$ and functions $L(\vu)$ and $\chi(\vu,\theta)$ such that
\begin{equation}\label{formaCtheorem}
C(\vu,\theta) = \delta^{p+2} \coef^{-1} \alpha L_0 \log \cosh (\coef \vu) + \alpha L(\vu) + \chi(\vu,\theta),
\end{equation}
where, for all $(\vu,\theta) \in \Doutinter\times\Tout$:
\begin{equation}\label{boundLchi-prop}
|L(\vu)| \leq M\delta^{p+2},\quad |L'(\vu)| \leq M\delta^{p+2}, \quad |\chi(\vu,\theta)| \leq \frac{M\delta^{p+3}}{|\cosh (\coef \vu)|},
\end{equation}
for some constant $M$. $L_0$ and $L(u)$ are determined by a finite number of Taylor coefficients of the functions $f$, $g$ and $h$
appearing in~\eqref{initsys-outer2D}.  Formulae for $L_0$
and $L(\vu)$ are given in Remark~\ref{rmkL0L}.
Moreover, $L(0)=0$ and $L(\vu)$ is defined on the limit $\vu\to \pm i\pi/(2\coef)$.
\item There exists a constant $M$ such that for all  $(\vu,\theta)\in\Doutinter\times\Tout$:
\begin{equation}\label{boundP1-prop}
|P_1(\vu,\theta)|\leq \frac{M\delta^{p+3}}{|\cosh(\coef\vu)|}.
\end{equation}

Moreover, in the conservative case $P_1$ can be chosen as:
$$P_1(\vu,\theta)=\frac{\partial_\vu C(\vu,\theta)-l_3(\vu,\theta)}{\delta^{-1}\alpha+c\hetz(\vu)+l_3(\vu,\theta)},$$
where $l_3(\vu,\theta)$ is given by~\eqref{defl3}.
\end{enumerate}
\end{theorem}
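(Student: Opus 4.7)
I would proceed in four steps. First, one derives equation~\eqref{PDE_difference_intro} by subtracting the invariance equations $\Lout(r_1^{\uns})=\Fout(r_1^{\uns})$ and $\Lout(r_1^{\sta})=\Fout(r_1^{\sta})$ and applying the mean value theorem to the nonlinear terms of $\Fout$; this yields the explicit expressions~\eqref{defl1}--\eqref{defl3} for $l_1,l_2,l_3$. Using Theorem~\ref{thmoutloc} (which controls $r_1^{\uns,\sta}$ and their derivatives in $\Doutinter\times\Tout$), each $l_i$ is ``small'' in the sense of a norm weighted by powers of $|\cosh(\coef\vu)|^{-1}$, as made precise in Lemma~\ref{lemnormint}. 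Once this is done, the variation-of-constants Ansatz $\Delta = P\cdot k$ reduces matters to producing (i) a nonvanishing particular solution $P$ of the inhomogeneous equation~\eqref{PDE_difference_intro}, and (ii) a particular solution $\xi$ of the homogeneous equation~\eqref{PDE_k} such that $(\xi(\vu,\theta),\theta)$ is injective; any solution of the homogeneous equation is then necessarily of the form $\tilde{k}\circ\xi$.

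Second, I would construct $\xi$ as a perturbation of $\xi_0(\vu,\theta) = \theta + \delta^{-1}\alpha\vu + c\coef^{-1}\log\cosh(\coef\vu)$, which solves the unperturbed homogeneous equation. Substituting $\xi=\xi_0+C$ into~\eqref{PDE_k} yields a transport equation of the form
\begin{equation*}
\bigl(-\delta^{-1}\alpha-c\hetz(\vu)\bigr)\partial_\theta C+\partial_\vu C = R(\vu,\theta)[C],
\end{equation*}
with source $R$ built from $l_2,l_3$ and the derivatives of $\xi_0+C$. I would invert the left-hand side by integrating along the characteristics $w\mapsto\theta-\delta^{-1}(\pu(\vu)-\pu(w))$ already used in the definition of $r_{10}^{\uns,\sta}$ in Theorem~\ref{thmoutloc}, and set up a Banach fixed-point argument in a suitable space of analytic functions on $\Doutinter\times\Tout$. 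The key subtlety is that the source is \emph{not} decaying at infinity in $\vu$: its zero-Fourier coefficient produces precisely the non-oscillatory, non-decaying piece $\delta^{p+2}\coef^{-1}\alpha L_0\log\cosh(\coef\vu)$ together with an $\alpha L(\vu)$ of order $\delta^{p+2}$ with $L(0)=0$, while the nonzero Fourier modes integrate to a decaying oscillatory remainder $\chi$ of order $\delta^{p+3}/|\cosh(\coef\vu)|$. The explicit formulae for $L_0$ and $L(\vu)$ promised in Remark~\ref{rmkL0L} arise by reading off the dominant contributions of $l_2,l_3$ evaluated on the unperturbed heteroclinic, via the Taylor expansions of $\bar f,\bar g,\bar h$ in~\eqref{initsys-outer2D}.

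Third, I would construct $P$ with the Ansatz $P=\cosh^{2/\coef}(\coef\vu)(1+P_1)$. Since $P_0(\vu)=\cosh^{2/\coef}(\coef\vu)$ satisfies $\Lout(P_0)=0$, substitution into~\eqref{PDE_difference_intro} yields a linear equation for $P_1$ whose right-hand side is small: its size is governed by $\param=\mathcal{O}(\delta^{p+3})$, the $l_i$, and the derivatives of $P_0$. Inverting the transport operator along the same characteristics and running a fixed-point iteration in the same scale of weighted norms produces $|P_1|\leq M\delta^{p+3}|\cosh(\coef\vu)|^{-1}$. In the conservative case ($\param=0$, $\coef=1$), a direct computation shows that the closed-form expression $P_1=(\partial_\vu C-l_3)/(\delta^{-1}\alpha+c\hetz+l_3)$ is an \emph{exact} solution, which I would verify by substitution using that $\xi$ solves~\eqref{PDE_k} and that the flow is divergence-free (so $l_1$ has the special algebraic structure needed to close the identity).

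Finally, the injectivity of $(\vu,\theta)\mapsto(\xi(\vu,\theta),\theta)$ reduces to injectivity in $\theta$ of $\theta\mapsto\theta+\chi(\vu,\theta)$ (the other pieces of $C$ being $\theta$-independent), which follows from $|\partial_\theta\chi|\ll 1$ by the implicit function theorem. The $2\pi$-periodicity of $\tilde{k}$ is then immediate: since $\xi_0(\vu,\theta+2\pi)=\xi_0(\vu,\theta)+2\pi$ and $C$ is $2\pi$-periodic in $\theta$, one has $\xi(\vu,\theta+2\pi)=\xi(\vu,\theta)+2\pi$, and the $2\pi$-periodicity of $\Delta$ transfers to $\tilde{k}$. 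The main obstacle I anticipate is step two: one cannot solve the transport equation for the non-decaying/non-oscillatory contributions by plain integration from $\pm\infty$, so a separate treatment of the average and the relevant low Fourier modes is required to extract the $L_0$ and $L(\vu)$ terms \emph{before} the oscillatory remainder $\chi$ can be handled by a contractive fixed-point scheme. Tracking the regularity of $L(\vu)$ at $\vu=\pm i\pi/(2\coef)$ (needed later for the exponentially small analysis) will demand careful Taylor expansion of $\bar f,\bar g,\bar h$ evaluated along the singular heteroclinic.
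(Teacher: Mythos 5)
Your overall strategy mirrors the paper's: subtract the two invariance equations and linearize by the mean value theorem to get~\eqref{PDE_difference_intro} with small coefficients $l_1,l_2,l_3$ (Lemma~\ref{lemnormint}); write $\Delta=P\,\tilde k(\xi)$ with $\xi=\xi_0+C$ and $P=\cosh^{2/\coef}(\coef\vu)(1+P_1)$; construct $C$ and $P_1$ by a contraction after extracting the non-contractive contribution of the average $l_2^{[0]}$; and, in the conservative case, close $P_1$ with the explicit formula via the divergence-free identity $\partial_zH+\partial_\theta G=-\partial_r F$. Two corrections are in order.

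The substantive one is that your final injectivity reduction points in the wrong direction. For the map $(\vu,\theta)\mapsto(\xi(\vu,\theta),\theta)$, equality of images forces $\theta_1=\theta_2$ from the second component, so what must actually be shown is that for each fixed $\theta$ the map $\vu\mapsto\xi(\vu,\theta)$ is injective on $\Doutinter$ --- not that $\theta\mapsto\theta+\chi(\vu,\theta)$ is injective for fixed $\vu$. The mechanism is the dominance of $\delta^{-1}\alpha\vu$: assume $\xi(\vu_1,\theta)=\xi(\vu_2,\theta)$, isolate $\vu_1-\vu_2$, and bound the $\log\cosh$ and $C$ differences by the mean value theorem against $K\dist^{-1}|\vu_1-\vu_2|$; taking $\dist$ large forces $\vu_1=\vu_2$. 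Secondarily, your diagnosis of ``step two'' is slightly off: $\Doutinter$ is a bounded rhombus, so there is nothing ``at infinity''; the real obstacle is the blow-up of the source like $|\cosh(\coef\vu)|^{-1}$ near the complex vertices $\vu_\pm=\pm i(\pi/(2\coef)-\dist\delta)$, whose integral is $\mathcal{O}(|\log\delta|)$ and hence too large. Correspondingly, the right inverse of $\Lpart$ needed here is not the $\pm\infty$-based $\Gout^{\uns,\sta}$ of Theorem~\ref{thmoutloc} but the operator $\Gpart$, whose $l$-th Fourier coefficient integrates from $\vu_+$ if $l<0$, from $\vu_-$ if $l>0$, and from the rightmost real point $\vu_{\textup{R}}$ if $l=0$; this choice is what buys the extra factor of $\delta$ on the nonzero Fourier modes (Lemma~\ref{lemGpart}) and lets the contraction for $C_1$ close once the term $\delta^{-1}\alpha\int_0^{\vu}l_2^{[0]}$ has been pulled out of $C$.
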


The proof of this result can be found in Section~\ref{secproofDif}.
\begin{remark}
Notice that, if $p=-2$, the logarithmic term in the function $C$, (see~\eqref{formaCtheorem}) has the same size as the corresponding one
in definition~\eqref{defxitheorem} of $\xi$. However, when $p>-2$, the function $C$ is indeed a perturbation term of order
$\mathcal{O}(\delta^{p+2}|\log \delta|)$ over complex values of $\vu$.

In fact, when $p>-2$, we do not need the exact form~\eqref{formaCtheorem} of $C$, we only need
to know that $|C(u,\theta)| \leq K \delta^{p+2}$ when $\vu\in \mathbb{R}$ which is easier to check. However it is
mandatory in the generic case $p=-2$.
\end{remark}

\subsection{Sharp upper bound}\label{subsec-upperbound}
Even when this is not the final goal of this work, which deals with asymptotic expressions,
in Proposition~\ref{prop:sharpbound} of this section we provide an upper bound for $\Delta (\vu,\theta)$ when $\vu,\theta \in \mathbb{R}$.
On the one hand, we will gain some
intuition about the main problems we will have to overcome and, on the other hand, some of the results proven in
this section will be used in the proof of Theorem \ref{mainthm-intro}.

An straightforward consequence of Theorem~\ref{thmdifpartsolinjective} is that:
\begin{equation}\label{expression-delta}
\Delta(\vu,\theta)
=\cosh^{2/\coef}(\coef\vu)(1+P_1(\vu,\theta))\sum_{l\in\mathbb{Z}}\Upsilon^{[l]}e^{il\xi(\vu,\theta)},
\end{equation}
where $P_1$ and $\xi$ are given in Theorem~\ref{thmdifpartsolinjective} and $\Upsilon^{[l]}$, the Fourier
coefficients of the function $\tilde k(\tau)$, are unknown. They depend on $\delta$ and $\param$ although we do not write it explicitly.

Now we are going to study separately the average $\Upsilon^{[0]}$ in Theorem~\ref{thmktilde0} (in fact this result also deals with $\Upsilon_0^{[0]}$
of the Melnikov function) and the rest of the Fourier
coefficients $\Upsilon^{[l]}$, $l\neq0$ in Lemma~\ref{lemUpsilonlexpsmall}. The sharp upper bound for $\Delta(\vu,\theta)$ is an straightforward consequence
of these results.

\begin{theorem}\label{thmktilde0}
Let $p\geq-2$. Let $\Upsilon^{[0]}$ be the average of the function $\tilde k(\tau)$, given in Theorem~\ref{thmdifpartsolinjective}, and $\Upsilon_0^{[0]}$ be the constant defined in~\eqref{defcoefMelnikov}.
\begin{enumerate}
\item In the conservative case, for all $0\leq\delta\leq\delta_0$ one has:
$$\Upsilon^{[0]}=0,\qquad \Upsilon_0^{[0]}=0.$$
\item In the dissipative case, there exists a curve $\param=\param_*^0(\delta)=\mathcal{O}(\delta^{p+3})$ such that for all $0\leq\delta\leq\delta_0$ one has:
$$\Upsilon^{[0]}=\Upsilon^{[0]}(\delta,\param_*^0(\delta))=0.$$
In addition, given constants $a_1$, $a_2\in\mathbb{R}$ and $a_3>0$, there exists a curve $\param=\param_*(\delta)=\mathcal{O}(\delta^{p+3})$ such that for all $0\leq\delta\leq\delta_0$ one has:
$$\Upsilon^{[0]}=\Upsilon^{[0]}(\delta,\param_*(\delta))=a_1\delta^{a_2}e^{-\frac{a_3\pi}{2\coef\delta}}.$$
Along these curves one has:
$$\Upsilon_0^{[0]}=\Upsilon_0^{[0]}(\delta,\param_*(\delta))=\mathcal{O}(\delta^{p+4}).$$
\end{enumerate}
\end{theorem}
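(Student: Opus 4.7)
The proof splits into three steps: (i) relating the average $\Upsilon^{[0]}$ of $\tilde k$ to the explicit Melnikov coefficient $\Upsilon_0^{[0]}$, (ii) treating the dissipative case via the implicit function theorem, and (iii) exploiting the divergence-free structure for the conservative case.

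For step (i), I expand $\tilde k(\tau)=\sum_l\Upsilon^{[l]}e^{il\tau}$ and average the representation \eqref{expression-delta} of $\Delta$ over $\theta\in[0,2\pi]$ at $\vu=0$. Using $|P_1|\le M\delta^{p+3}/|\cosh(\coef\vu)|$ from \eqref{boundP1-prop}, the smallness of $C$ from Theorem \ref{thmdifpartsolinjective}, and the exponentially small bound on $\Upsilon^{[l]}$ for $l\neq 0$ provided by Lemma \ref{lemUpsilonlexpsmall}, only the $l=0$ mode contributes at polynomial size, giving $\Delta^{[0]}(0)=\Upsilon^{[0]}(1+\mathcal{O}(\delta^{p+3}))$ up to exponentially small terms. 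Writing $\Delta=M+(r_{11}^\uns-r_{11}^\sta)$ and applying the bounds of Theorem \ref{thmoutloc} on $r_{11}^{\uns,\sta}$ yields
\[
\Upsilon^{[0]}\;=\;\Upsilon_0^{[0]}\bigl(1+\mathcal{O}(\delta^{p+3})\bigr)+\mathcal{O}(\delta^{p+4}),
\]
from which the concluding assertion $\Upsilon_0^{[0]}(\delta,\param_*(\delta))=\mathcal{O}(\delta^{p+4})$ is immediate once the curves $\param_*^0,\param_*$ are constructed.

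For step (ii), I substitute \eqref{defopF} into \eqref{defcoefMelnikov} and isolate the $\param$-linear piece
\[
\Upsilon_0^{[0]}(\delta,\param)\;=\;2\param\int_{-\infty}^{\infty}\frac{\hetr(w)}{\cosh^{2/\coef}(\coef w)}\,dw\;+\;\delta^p\,J_0(\delta,\param),
\]
where the first integral is a strictly positive explicit constant and $J_0$ is smooth and bounded in $(\delta,\param)$. Combined with (i), $\partial_\param\Upsilon^{[0]}$ is uniformly bounded away from zero for small $\delta$, so the implicit function theorem applied to the equation $\Upsilon^{[0]}(\delta,\param)=a_1\delta^{a_2}e^{-a_3\pi/(2\coef\delta)}$ (with $a_1=0$ giving $\param_*^0$) produces the required unique smooth curve $\param=\param_*(\delta)$. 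The size estimate $\param_*(\delta)=\mathcal{O}(\delta^{p+3})$ follows from $\Upsilon^{[0]}(\delta,0)=\mathcal{O}(\delta^{p+3})$.

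Step (iii), the conservative case, is the main obstacle. Here $\coef=1$, $\param=0$, and $X_1$ is divergence-free; since in the symplectic cylindric coordinates \eqref{cylindriccoordsunpertub} the volume form is exactly $dr\wedge d\theta\wedge dz$, one obtains $\partial_r\Fb+\partial_\theta\Gb+\partial_z\Hb=0$. Taking the $\theta$-average makes $(\Fb^{[0]},\Hb^{[0]})$ divergence-free as a planar field in $(r,z)$, so a stream function $\Psi(r,z,\delta)$ exists with $\Fb^{[0]}=-\partial_z\Psi$ and $\Hb^{[0]}=\partial_r\Psi$. Evaluating along the unperturbed heteroclinic \eqref{hetr}--\eqref{hetz} and using $\hetr'=-2\hetr\hetz$, $\hetz'=1/\cosh^2(w)$, the integrand of \eqref{defcoefMelnikov} for $l=0$ reduces to $-\delta^{-1}\tfrac{d}{dw}\Psi(\delta\hetr(w),\delta\hetz(w),\delta)$, whose boundary values at $w=\pm\infty$ coincide because $\Fb|_{r=0}\equiv 0$ (by the form \eqref{defFGH} of the cylindrical perturbation) forces $\partial_z\Psi(0,z,\delta)\equiv 0$, yielding $\Upsilon_0^{[0]}=0$. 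The \emph{exact} vanishing of $\Upsilon^{[0]}$ (and not merely $\mathcal{O}(\delta^{p+4})$) requires promoting this conservation law to the full perturbed flow: volume preservation forces $\Delta$ to admit a splitting-potential representation as a $\theta$-derivative of a single-valued $2\pi$-periodic function, so $\tilde k$ has identically zero mean. Making the splitting-potential construction rigorous in this 3D volume-preserving (non-Hamiltonian) setting, where the usual generating-function machinery does not directly apply, is the key technical point.
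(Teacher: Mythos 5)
Your steps (i), (ii), and the $\Upsilon_0^{[0]}=0$ part of step (iii) essentially reproduce the paper's argument. Step (i) is the same averaging at $\vu=0$ that leads, after the change $\tau=\theta+C(0,\theta)$, to the paper's estimate $\Upsilon^{[0]}=M^{[0]}(0)+\mathcal{O}(\delta^{p+4})$ (note $M^{[0]}(0)=\Upsilon_0^{[0]}$). Step (ii) is the paper's implicit-function argument with $I=\frac{\coef+1}{b}\int\cosh^{-2/\coef-2}(\coef w)\,dw>0$ as the leading $\param$-coefficient; the one point you should state explicitly is that $a_3>0$ makes $a_1\delta^{a_2}e^{-a_3\pi/(2\coef\delta)}$ and all its $\delta$-derivatives vanish at $\delta=0$, so the implicit function theorem can be applied at $(\hat\param,\delta)=(-J/I,0)$ after rescaling $\param=\hat\param\delta^{p+3}$. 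Your stream-function $\Psi$ is a reformulation of the paper's first integral $\mathcal{U}(r,z)$, and the boundary argument at $w\to\pm\infty$ (using $\mathcal{U}(0,\pm1)=0$, or equivalently that $\Psi$ is constant on $\{r=0\}$) is the same; so $\Upsilon_0^{[0]}=0$ is fine.

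The genuine gap is the \emph{exact} vanishing $\Upsilon^{[0]}=0$ in the conservative case, which is the hardest — and most important — assertion of the theorem (it is what prevents the average from swamping the exponentially small oscillatory term in Theorem~\ref{mainthm}). Your step (i) gives only $\Upsilon^{[0]}=\mathcal{O}(\delta^{p+4})$, which is not enough. You acknowledge this and appeal to a ``splitting-potential representation as a $\theta$-derivative of a single-valued $2\pi$-periodic function,'' while admitting that ``making this rigorous in this 3D volume-preserving (non-Hamiltonian) setting ... is the key technical point.'' That is not a proof; it is a name for the difficulty. Moreover, it is not the route the paper takes. The paper instead uses a concrete geometric argument: it exploits the fact that in the conservative case the two-dimensional manifolds of $S_+$ and $S_-$ must intersect, picks a section $\Sigma_{\theta_0}$, identifies successive intersection points $p_1,p_2,p_3$ of a heteroclinic orbit with $\Sigma_{\theta_0}$, shows via the explicit form of $\xi$ that $\xi(\vu_3,\theta_0)=\xi(\vu_1,\theta_0)+2\pi$ (Lemma~\ref{lemu1u3}), rewrites $\Upsilon^{[0]}$ as $\frac{1}{2\pi}\int_{u_1}^{u_3}\frac{\Delta(\vu,\theta_0)}{\cosh^2(\vu)}\left(\delta^{-1}\alpha+c\hetz(\vu)+l_3\right)d\vu$ using the conservative choice of $P_1$ (Lemma~\ref{lemupsilon0change}), and then shows this integral is exactly zero by applying the divergence theorem to a three-dimensional region bounded by the lobes $T_1,T_2\subset\Sigma_{\theta_0}$ and the flow tube $T_3$ joining them (Proposition~\ref{propintegral0}). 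None of this appears in your proposal, and without it (or a genuine substitute) the conservative claim $\Upsilon^{[0]}=0$ is unproved.
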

\begin{figure}[ht]
\center
 \includegraphics[width=5cm]{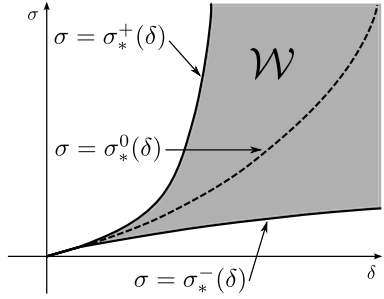}
 \caption[The curve $\param=\param_*^0(\delta)$ and a wedge-shaped domain $\mathcal{W}$ around it.]{The curve $\param=\param_*^0(\delta)$ and a wedge-shaped domain $\mathcal{W}$ around it. Inside this domain, the coefficient $\Upsilon^{[0]}$ is exponentially small.}
 \label{figurewedge}
\end{figure}

For the proof of this theorem we refer the reader to Section~\ref{sectionthmktilde0}.
We stress that this result is standard in the usual scenarios of
Hamiltonian or reversible vector fields and symplectic maps.
However in our setting, its proof involves delicate arguments.

\begin{remark}
In the dissipative case one can see that, given $a_1$, $a_2\in\mathbb{R}$ and $a_3>0$, the curves $\param_*(\delta),\param_*^0(\delta)$
in Theorem~\ref{thmktilde0} satisfy:
\begin{equation}\label{difcurvesparam}
\param_*(\delta)-\param_*^0(\delta)=a_1\delta^{a_2}e^{-\frac{a_3\pi}{2\coef\delta}}\left(1+\mathcal{O}(\delta)\right).
\end{equation}
Let us now fix some constants $a_1^+>0$ and $a_1^-<0$. Fix also $a_2^+,a_2^-\in\mathbb{R}$ and $a_3^+,a_3^->0$.
Define $\param_*^+(\delta)$ as the curve in of Theorem~\ref{thmktilde0} corresponding to the constants $a_1^+$,
$a_2^+$ and $a_3^+$, and $\param_*^-(\delta)$ as the curve in of Theorem~\ref{thmktilde0} corresponding to the constants
$a_1^-$, $a_2^-$ and $a_3^-$. By~\eqref{difcurvesparam} one has that $\param_*^-(\delta)\leq\param_*^+(\delta)$ for $\delta$
sufficiently small. Define the domain:
$$\mathcal{W}:=\{(\delta,\param)\in\mathbb{R}^2\,:\,\param_*^-(\delta)\leq \param \leq \param_*^+(\delta)\}$$
in the parameter plane. This domain is a wedge-shaped domain around $\param_*^0(\delta)$ (see Figure~\ref{figurewedge}).
Moreover there exists $\delta_0>0$ such that for all $0<\delta\leq\delta_0$ and $(\delta,\param)\in \mathcal{W}$, the
coefficient $\Upsilon^{[0]}(\delta,\param)$ is exponentially small. More precisely, let us denote $\bar a_3=\min\{a_3^+,\,a_3^-\}$.
Define $\bar a_1=a_1^+$ and $\bar a_2=a_2^+$ if the minimum is achieved in $a_3^+$, otherwise we take $\bar a_1=a_1^-$ and $\bar a_2=a_2^-$.
Then:
$$
|\Upsilon^{[0]}(\delta,\param)|\leq |\bar a_1|\delta^{\bar a_2}e^{-\frac{\bar{a}_3\pi}{2\coef\delta}},\qquad \qquad
\textrm{if}\quad0<\delta\leq\delta_0,\quad(\delta,\param)\in \mathcal{W}.
$$
\end{remark}

Now we are going to deal with the Fourier coefficients $\Upsilon^{[l]}$, with $l\neq 0$.
From expression~\eqref{expression-delta} of $\Delta$, one can already see that
$\Upsilon^{[l]}$ are exponentially small with respect to $\delta$.
Indeed, as a first exploration, we can consider the case $P_1= \chi\equiv0$. This case can give some insight since,
as one can see from the bounds of $P_1$ and $\chi$, given in \eqref{boundP1-prop} and~\eqref{boundLchi-prop} respectively,
they are ``small'' functions when we take large $\dist$.
If we make this simplification, using expression~\eqref{expression-delta} of $\Delta$:
$$
\Delta(u,\theta) = \cosh^{2/\coef}(\coef\vu)\sum_{l\in\mathbb{Z}}\Upsilon^{[l]}e^{il(\theta + \tilde{\xi}(\vu)}
$$
with $\tilde\xi(\vu)=\delta^{-1}\alpha\vu+\coef^{-1}(c+  \delta^{p+2} \alpha L_0)\log\cosh(\coef\vu)+\alpha L(\vu)$
(see formula~\eqref{defxitheorem} of $\xi$ and take $\chi\equiv0$). We have then that $\Upsilon^{[l]}e^{il\tilde\xi(\vu)}$
are the Fourier coefficients of the function
$\Delta(\vu,\theta)\cosh^{-2/\coef}(\coef\vu)$. In other words:
$$
 \left|\Upsilon^{[l]}\right|=\left|\frac{e^{-il\tilde{\xi}(\vu)}}{2\pi}\int_0^{2\pi}\frac{\Delta(\vu,\theta)e^{-il\theta}}{\cosh^{2/\coef}(\coef\vu)}d\theta\right|\\
\leq\left|e^{-il\tilde{\xi}(\vu)}\right|\sup_{\theta\in[0,2\pi]}\left|\frac{\Delta(\vu,\theta)}{\cosh^{2/\coef}(\coef\vu)}\right|.
$$
We note that this inequality is valid for all $\vu\in\Doutinter=\DoutT{\uns}\cap\DoutT{\sta}$.
In particular, taking $\vu=\vu_+:=i(\pi/(2\coef)-\dist\delta)$ for $l<0$ and $\vu=\vu_-:=-\vu_+$ for $l>0$ and using that the constant $L_0\in \mathbb{R}$
and that   $\im\log\cosh(\coef u_\pm)=\arg(\cosh(\coef u_\pm))=0$, one obtains:
$$
\left|\Upsilon^{[l]}\right|\leq K(\dist\delta)^{-2/\coef}e^{-\left(\frac{\alpha\pi}{2\coef\delta}-\alpha\dist-\alpha|\im L(\vu_\pm)|\right)|l|}\sup_{\theta\in[0,2\pi]}\left|\Delta(\vu_\pm,\theta)\right|.
$$
Recalling that $\Delta=r_1^\uns-r_1^\sta$ and using that $|r_1^{\uns,\sta}(\vu_\pm,\theta)|\leq M\delta^{p}\dist^{-3}$ by Theorem \ref{thmoutloc},
we obtain readily (renaming $K$):
$$
\left|\Upsilon^{[l]}\right|\leq K\frac{\delta^{p-2/\coef}}{\dist^{3+2/\coef}}e^{-\left(\frac{\alpha\pi}{2\coef\delta}-\alpha\dist-\alpha|\im L(\vu_\pm)|\right)|l|}.
$$
In particular, there exists a constant $K$, independent of $l$ such that:
$$
\left|\Upsilon^{[\pm1]}\right|\leq K\frac{\delta^{p-2/\coef}}{\dist^{3+2/\coef}}e^{-\frac{\alpha\pi}{2\coef\delta}+\alpha\dist},
$$
and for $|l|\geq2$:
$$
\left|\Upsilon^{[l]}\right|\leq K\frac{\delta^{p-2/\coef}}{\dist^{3+2/\coef}}e^{-\frac{\alpha\pi}{2\coef\delta}\frac{3|l|}{4}},
$$
where we have used that $\delta |\im L(\vu_\pm)|$ is arbitrarily small by bound~\eqref{boundLchi-prop} and condition~\eqref{conddist} on $\dist$.

The following result, whose proof is postponed to Section~\ref{sec:exponentially}, states that the same exponentially small bounds hold when
$P_1(\vu,\theta)\neq 0$ and $\chi(\vu,\theta)\neq 0$.
\begin{lemma}\label{lemUpsilonlexpsmall}
Let $\Upsilon^{[l]}$, $l\in\mathbb{Z}$, $l\neq0$, be the coefficients appearing in expression~\eqref{expression-delta} of $\Delta$.
Take $\dist$ as in Theorem~\ref{thmoutloc}. There exists a constant $M$ such that:
$$\left|\Upsilon^{[\pm1]}\right|\leq M\frac{\delta^{p-2/\coef}}{\dist^{3+2/\coef}}e^{-\frac{\alpha\pi}{2\coef\delta}+\alpha\dist},\qquad
\left|\Upsilon^{[l]}\right|\leq M\frac{\delta^{p-2/\coef}}{\dist^{3+2/\coef}}e^{-\frac{\alpha\pi}{2\coef\delta}\frac{3|l|}{4}}, \qquad |l|\geq 2 $$
\end{lemma}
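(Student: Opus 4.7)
The plan is to isolate $\tilde k$ from the factorization $\Delta = P\cdot\tilde k\circ\xi$ of Theorem~\ref{thmdifpartsolinjective} and to derive an integral representation of its Fourier coefficients $\Upsilon^{[l]}$ that can be evaluated at complex $\vu$ close to the singularities $\pm i\pi/(2\coef)$ of the unperturbed separatrix. Because $C$ is $2\pi$-periodic in $\theta$, one has $\xi(\vu,\theta+2\pi)=\xi(\vu,\theta)+2\pi$, and the change of variables $\tau=\xi(\vu,\theta)$ inside the Fourier integral for $\tilde k$ yields, for any $\vu\in\Doutinter$,
\begin{equation*}
\Upsilon^{[l]} \;=\; \frac{1}{2\pi}\int_0^{2\pi} \frac{\Delta(\vu,\theta)}{P(\vu,\theta)}\,e^{-il\xi(\vu,\theta)}\,\partial_\theta\xi(\vu,\theta)\,d\theta,
\end{equation*}
the right-hand side being independent of $\vu$ since it merely rewrites the Fourier coefficient of the same $\tilde k$. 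I would then evaluate this identity at $\vu_\mp = \mp i\bigl(\tfrac{\pi}{2\coef}-\dist\delta\bigr)\in\Doutinter$, using $\vu_-$ for $l>0$ and $\vu_+$ for $l<0$ so that the exponential factor becomes small.

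The decisive computation is that $\log\cosh(\coef\vu_\pm) = \log\sin(\dist\delta\coef)\in\mathbb{R}$; combining this with~\eqref{defxitheorem} and~\eqref{formaCtheorem} (and using $L_0\in\mathbb{R}$) one obtains
\begin{equation*}
\im\xi(\vu_\pm,\theta) \;=\; \pm\alpha\Bigl(\frac{\pi}{2\coef\delta}-\dist\Bigr) + \alpha\,\im L(\vu_\pm) + \im\chi(\vu_\pm,\theta),
\end{equation*}
and the bounds~\eqref{boundLchi-prop} control the last two summands by a constant that can be absorbed into a prefactor (being $\mathcal{O}(\delta^{p+2})$ when $p>-2$ and $\mathcal{O}(1)$ when $p=-2$). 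With the sign choice above this yields
\begin{equation*}
|e^{-il\xi(\vu_\mp,\theta)}| \;\leq\; M\, e^{-|l|\bigl(\alpha\pi/(2\coef\delta)-\alpha\dist\bigr)}.
\end{equation*}

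For the algebraic prefactor, note that $|\cosh(\coef\vu_\pm)| = \sin(\dist\delta\coef)$ is comparable to $\dist\delta$: Theorem~\ref{thmoutloc} gives $|\Delta(\vu_\pm,\theta)|\leq M\delta^{p}/\dist^{3}$; the bound~\eqref{boundP1-prop} ensures $|P_1(\vu_\pm,\theta)|\leq 1/2$ for $\delta$ small, whence $|P(\vu_\pm,\theta)|^{-1}\leq 2(\dist\delta)^{-2/\coef}$; and $|\partial_\theta\xi| = |1+\partial_\theta\chi|\leq 2$ follows from a Cauchy estimate in $\theta$ on a narrower strip $\Tout[\omega']$ with $\omega'<\omega$. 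Multiplying these estimates,
\begin{equation*}
|\Upsilon^{[l]}| \;\leq\; M\,\frac{\delta^{p-2/\coef}}{\dist^{3+2/\coef}}\, e^{-|l|(\alpha\pi/(2\coef\delta)-\alpha\dist)},
\end{equation*}
which is precisely the asserted bound for $|l|=1$. For $|l|\ge 2$, condition~\eqref{conddist} forces $\alpha\dist < \tfrac{1}{4}\cdot\alpha\pi/(2\coef\delta)$, so the exponent is at most $-\tfrac{3|l|}{4}\cdot\alpha\pi/(2\coef\delta)$ (any residual bounded contributions being absorbed for $\delta$ small enough), as claimed. The main obstacle is the derivative bound on $\chi$ that is used to control $\partial_\theta\xi$: Theorem~\ref{thmdifpartsolinjective} states only a pointwise bound on $\chi$, but since its $\theta$-domain of analyticity $\Tout$ has width independent of $\delta$ and $\dist$, a standard Cauchy estimate on $\Tout[\omega']$ transfers the bound to $\partial_\theta\chi$ with the same $\vu$-dependence; a minor check is that $P$ does not vanish on the evaluation contour, which is automatic from $|P_1|\leq 1/2$.
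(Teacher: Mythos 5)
Your proof is correct and follows essentially the same route as the paper: evaluate the Fourier coefficient of $\tilde k$ at $\vu_\pm=\pm i(\pi/(2\coef)-\dist\delta)$ near the singularities, exploit the factorization $\Delta=P\cdot\tilde k\circ\xi$, use the crucial facts that $L_0\in\mathbb{R}$ and $\im\log\cosh(\coef\vu_\pm)=0$ to control $\im\xi$, and apply the bounds from Theorems~\ref{thmoutloc} and~\ref{thmdifpartsolinjective}. The only organizational difference is that the paper introduces the diffeomorphism $(w,\theta)=(F(\vu,\theta),\theta)$ with $F=\delta\alpha^{-1}(\xi-\theta)$ and works with the function $\Diffw(w,\theta)=\Delta(G(w,\theta),\theta)/P(G(w,\theta),\theta)$, whose $l$-th Fourier coefficient in $\theta$ is exactly $\Upsilon^{[l]}e^{il\delta^{-1}\alpha w}$; this hides the Jacobian $\partial_\theta\xi$ and so avoids the Cauchy-estimate step you need to control $\partial_\theta\chi$ on a narrower strip $\Tout[\omega']$. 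Both are valid and yield the same bound; your extra step is minor and correctly executed, since the width $\omega$ of the $\theta$-strip is independent of $\delta$ and $\dist$, so Cauchy does transfer the pointwise bound on $\chi$ to $\partial_\theta\chi$ with the same $|\cosh(\coef\vu)|^{-1}$ weight.
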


As a consequence of this result we obtain the sharp upper bound:
\begin{proposition}\label{prop:sharpbound}
Let $\dist$ be as in Theorem~\ref{thmoutloc}.
In the dissipative case we take $\param=\param_*(\delta)$, where $\param^*$ is one of the curves defined in Theorem~\ref{thmktilde0}.
Let $\Upsilon^{[0]}=\Upsilon^{[0]}(\param_*(\delta),\delta)$ be the constant
provided by Theorem~\ref{thmktilde0}. In the conservative case $\Upsilon^{[0]}=0$. Then, for real values of $\vu$ and $\theta$:
$$
|\Delta(\vu,\theta)|\leq \cosh^{2/\coef}(\coef\vu) \left (|\Upsilon^{[0]}|+M\frac{\delta^{p-2/\coef}}{\dist^{3+2/\coef}}e^{-\frac{\alpha\pi}{2\coef\delta}+\alpha\dist}\right ).
$$
for some constant $M$.
\end{proposition}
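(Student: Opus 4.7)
The plan is to combine the representation of $\Delta$ given by Theorem~\ref{thmdifpartsolinjective} with the coefficient bounds from Theorem~\ref{thmktilde0} and Lemma~\ref{lemUpsilonlexpsmall}. Starting from~\eqref{expression-delta}, I would first observe that when $\vu,\theta\in\mathbb{R}$, every factor appearing outside the Fourier sum is real-analytic and real-valued on the real axis. Concretely, $\cosh^{2/\coef}(\coef\vu)>0$; from~\eqref{boundP1-prop} one has $|P_1(\vu,\theta)|\leq M\delta^{p+3}$, so $|1+P_1(\vu,\theta)|\leq 1+M\delta^{p+3}$, which is bounded by $2$ (say) for $\delta$ small; and by~\eqref{defxitheorem}--\eqref{formaCtheorem} the phase $\xi(\vu,\theta)$ is a sum of $\theta$, $\delta^{-1}\alpha\vu$, $\coef^{-1}c\log\cosh(\coef\vu)$, $\delta^{p+2}\alpha\coef^{-1}L_0\log\cosh(\coef\vu)$, $\alpha L(\vu)$ and $\chi(\vu,\theta)$, all of which are real on the real axis by the real-analyticity of $r_1^{\uns,\sta}$. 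In particular $|e^{il\xi(\vu,\theta)}|=1$ for every $l\in\mathbb{Z}$.

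Next I would apply the triangle inequality to the Fourier series in~\eqref{expression-delta}:
$$
|\Delta(\vu,\theta)|\leq \cosh^{2/\coef}(\coef\vu)\,|1+P_1(\vu,\theta)|\left(|\Upsilon^{[0]}|+\sum_{l\neq 0}|\Upsilon^{[l]}|\right).
$$
The value of $|\Upsilon^{[0]}|$ is provided directly by Theorem~\ref{thmktilde0} (it is zero in the conservative case, or pinned on the curve $\param=\param_*(\delta)$ in the dissipative one). The remaining sum splits into the contribution from $l=\pm 1$ and the tail $|l|\geq 2$, both controlled by Lemma~\ref{lemUpsilonlexpsmall}.

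The two $l=\pm 1$ terms give $2M\delta^{p-2/\coef}\dist^{-3-2/\coef}\exp\bigl(-\tfrac{\alpha\pi}{2\coef\delta}+\alpha\dist\bigr)$. For the tail I would use the geometric bound
$$
\sum_{|l|\geq 2}|\Upsilon^{[l]}|\leq 2M\frac{\delta^{p-2/\coef}}{\dist^{3+2/\coef}}\sum_{l\geq 2}e^{-\frac{3\alpha\pi l}{4\coef\delta}}\leq K M\frac{\delta^{p-2/\coef}}{\dist^{3+2/\coef}}e^{-\frac{3\alpha\pi}{2\coef\delta}}.
$$
The key check is that this tail is absorbed into the $|l|=1$ bound. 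This follows from condition~\eqref{conddist}, which gives $\alpha\dist\leq\alpha\pi/(8\coef\delta)$, so that
$$
-\frac{3\alpha\pi}{2\coef\delta}\;\leq\;-\frac{\alpha\pi}{2\coef\delta}+\alpha\dist-\frac{9\alpha\pi}{8\coef\delta},
$$
and hence $e^{-3\alpha\pi/(2\coef\delta)}$ is overwhelmingly smaller than $e^{-\alpha\pi/(2\coef\delta)+\alpha\dist}$. Absorbing the factor $|1+P_1|\leq 2$ and the tail constants into a new $M$ yields the stated inequality.

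I do not expect any real obstacle here, since all the delicate estimates have been done upstream; the only point that requires minor care is verifying the realness of $\xi$ on the real axis, which is what ensures that the Fourier-series bound reduces to $\sum_l|\Upsilon^{[l]}|$ without picking up additional exponential factors. The sharpness of the bound is of course not proven by this argument; it will only become visible when the asymptotic formula in Theorem~\ref{mainthm-intro} is established (via the later analysis of $\Upsilon^{[\pm 1]}$ in terms of the Melnikov coefficients $\mathcal{C}_1,\mathcal{C}_2$ of Theorem~\ref{thmasyformulaMelnikov}).
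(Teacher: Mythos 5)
Your argument is correct and follows the route the paper intends: the paper states Proposition~\ref{prop:sharpbound} as a direct consequence of the preceding results rather than giving a standalone proof, and the only available path is exactly yours — use representation~\eqref{expression-delta}, note that $\xi$ is real on the real axis so $|e^{il\xi}|=1$, apply Theorem~\ref{thmktilde0} for $\Upsilon^{[0]}$ and Lemma~\ref{lemUpsilonlexpsmall} for $l\neq 0$, and absorb the geometric tail over $|l|\geq 2$ into the $|l|=1$ contribution using~\eqref{conddist}.

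One small point worth flagging. The factor $|1+P_1(\vu,\theta)|$ multiplies the \emph{entire} Fourier sum, including the $l=0$ term, and you dispose of it with ``absorbing the factor $|1+P_1|\leq 2$ into a new $M$.'' That absorption only adjusts the constant $M$ in front of the second summand; the first summand in the target bound is exactly $|\Upsilon^{[0]}|$, with no free constant to inflate. What your estimate actually yields is $|\Delta|\leq\cosh^{2/\coef}(\coef\vu)\bigl((1+K\delta^{p+3})|\Upsilon^{[0]}|+M\,\delta^{p-2/\coef}\dist^{-3-2/\coef}e^{-\alpha\pi/(2\coef\delta)+\alpha\dist}\bigr)$. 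The leftover $K\delta^{p+3}|\Upsilon^{[0]}|$ vanishes in the conservative case and whenever $a_1=0$, and it is dominated by the second term when $a_3\geq\alpha$, but when $a_1\neq 0$ and $a_3<\alpha$ it cannot be hidden there, since $e^{-a_3\pi/(2\coef\delta)}\gg e^{-\alpha\pi/(2\coef\delta)+\alpha\dist}$. This is an imprecision in the statement of the proposition itself — the honest first-order form is $|\Upsilon^{[0]}|\bigl(1+\mathcal{O}(\delta^{p+3})\bigr)$, which is exactly how the average reappears in Theorem~\ref{mainthm} — and any proof along these lines inherits it; your reasoning is otherwise exactly right.
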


\subsection{First order of the difference. End of the proof of Theorem~\ref{mainthm-intro}}\label{subsec:firstorderdiff-outer}
In this section we provide Theorem~\ref{mainthm} which gives a first order of the difference $\Delta(\vu,\theta)$ also studied
in Theorem~\ref{thmdifpartsolinjective}. Moreover, we will give the proof of Theorem~\ref{mainthm-intro} as a corollary of
Theorem~\ref{mainthm}.

Recall that $\Upsilon^{[0]}$ is the average of the function $\tilde k(\tau)$ of
Theorem~\ref{thmdifpartsolinjective}. Since we want to obtain (non-sharp) results also in the case $p=-2$ we define:
\begin{equation}\label{defktilde0}
\tilde{k}_0(\tau):=\Upsilon^{[0]}+\sum_{l\neq 0} \hat \Upsilon_0^{[l]}e^{il \tau}, \qquad \hat \Upsilon_0^{[l]}=\Upsilon_0^{[l]}
e^{-il \alpha \coef^{-1} L_0 \delta^{p+2} \log\delta},
\end{equation}
where $L_0\in \mathbb{R}$ is given in Theorem~\ref{thmdifpartsolinjective} and $\Upsilon_0^{[l]}$ are the constants appearing in the Fourier coefficients of
the Melnikov function, defined in~\eqref{defcoefMelnikov}.
Our candidate to be the first order of the difference is:
\begin{equation}\label{defDelta0}
 \Delta_0(\vu,\theta)  = \cosh^{2/\coef}(\coef\vu)(1+P_1(\vu,\theta))\tilde{k}_0(\xi(\vu,\theta)),
\end{equation}
with $\xi$ defined in~\eqref{defxitheorem}.
We note that we have not chosen the average of $\tilde k_0$ to be the coefficient $\Upsilon_0^{[0]}$ appearing in the average of the
Melnikov function (as one might expect) but $\Upsilon^{[0]}$, the average of $\tilde k(\tau)$ in Theorem~\ref{thmdifpartsolinjective}.

Next result shows that $\Upsilon^{[\pm 1]}$, Fourier coefficients of $\tilde k$,
are well approximated by $\hat \Upsilon^{[\pm 1]}$, Fourier coefficients of $\tilde k_0$ defined in~\eqref{defktilde0}.
The proof of this Proposition is done in Section~\ref{secproofproperrorUpsilons}.
\begin{proposition}\label{properrorUpsilons}
Let $p\geq -2$ and $\dist$ a sufficiently large constant.
Let $\Upsilon^{[\pm 1]}$ be the Fourier coefficients of order $\pm 1$ of $\tilde k(\tau)$, in Theorem~\ref{thmdifpartsolinjective},
and $\hat\Upsilon^{[\pm 1]}_0$ the ones given in~\eqref{defktilde0}.
Then there exists a constant $M$ such that:
$$
\left|\Upsilon^{[\pm 1]}-\hat\Upsilon_0^{[\pm 1]}\right|\leq M\left(\frac{|\log \dist|}{\dist^{4+\frac{2}{\coef}}}\delta^{2
\left(p+1-\frac{1}{\coef}\right)}|+\frac{\delta^{p+3-\frac{2}{\coef}}}{\dist^{1+\frac{2}{\coef}}}\right)
e^{-\frac{\alpha}{\delta}\left(\frac{\pi}{2\coef}-\dist\delta\right)},
$$
where we assume that, in the dissipative case, $\param=\param_*(\delta)$ is one of the curves defined in Theorem~\ref{thmktilde0}.
Recall that $\coef=1$ in the conservative case
\end{proposition}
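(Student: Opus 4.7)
The plan is to exploit the representation $\Delta(\vu,\theta)=\cosh^{2/\coef}(\coef \vu)(1+P_1(\vu,\theta))\tilde k(\xi(\vu,\theta))$ from Theorem~\ref{thmdifpartsolinjective}, together with the analogous expression $\Delta_0(\vu,\theta)=\cosh^{2/\coef}(\coef \vu)(1+P_1(\vu,\theta))\tilde k_0(\xi(\vu,\theta))$ given in~\eqref{defDelta0}. Since $\xi(\vu,\cdot)$ is a diffeomorphism of $\mathbb{S}^1$ and both $\tilde k$, $\tilde k_0$ are $2\pi$-periodic, for $l\neq 0$ the differences $\Upsilon^{[l]}-\hat\Upsilon_0^{[l]}$ are the Fourier modes of $\tilde k-\tilde k_0$ and can be written
$$
\Upsilon^{[l]}-\hat\Upsilon_0^{[l]}=\frac{1}{2\pi}\int_0^{2\pi}\frac{(\Delta-\Delta_0)(\vu,\theta)}{\cosh^{2/\coef}(\coef \vu)(1+P_1(\vu,\theta))}\,e^{-il\xi(\vu,\theta)}\,(1+\partial_\theta C(\vu,\theta))\,d\theta,
$$
for \emph{any} admissible $\vu$. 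The strategy is to evaluate this integral at $\vu=\vu_{\pm}:=\pm i(\pi/(2\coef)-\dist\delta)$, taking $\vu_+$ for $l=-1$ and $\vu_-$ for $l=+1$; at these two points $|\cosh(\coef \vu_{\pm})|=\sin(\coef\dist\delta)\asymp\dist\delta$, the logarithm $\log\cosh(\coef \vu_\pm)$ is real, and the imaginary part of $\delta^{-1}\alpha \vu_\pm$ furnishes the targeted factor $|e^{\mp i\xi(\vu_\pm,\theta)}|\leq K\,e^{-\alpha(\pi/(2\coef)-\dist\delta)/\delta}$.

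The first task is to compute $\Delta-\Delta_0$ in a form where the cancellations are manifest. Setting $\tilde C(\vu,\theta):=C(\vu,\theta)-\alpha\coef^{-1}L_0\delta^{p+2}\log\delta$, the very definition of $\hat\Upsilon_0^{[l]}$ in~\eqref{defktilde0} yields the identity $\hat\Upsilon_0^{[l]}e^{il\xi(\vu,\theta)}=\Upsilon_0^{[l]}e^{il(\theta+\zeta(\vu)+\tilde C(\vu,\theta))}$, where $\zeta(\vu):=\delta^{-1}\alpha \vu+c\coef^{-1}\log\cosh(\coef \vu)$. Using the Fourier representation~\eqref{melnifourier2} of the Melnikov function and observing that $\theta+\tilde C(\vu_\pm,\theta)\in\Tout$ when $\theta\in\mathbb{R}$ (since $\log(\cosh(\coef \vu_\pm)/\delta)=\log(\coef\dist)+\mathcal{O}((\dist\delta)^2)$ is real and $|L(\vu_\pm)|$, $|\chi(\vu_\pm,\theta)|$ are $\mathcal{O}(\delta^{p+2})$), one obtains $\sum_{l\neq 0}\Upsilon_0^{[l]}e^{il(\theta+\zeta(\vu)+\tilde C)}=M(\vu,\theta+\tilde C)/\cosh^{2/\coef}(\coef \vu)-\Upsilon_0^{[0]}$. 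Combining this with $\Delta=M+(r_{11}^\uns-r_{11}^\sta)$ and the fundamental theorem of calculus in $\theta$ gives
$$
\Delta-\Delta_0=-\tilde C\!\int_0^1\!\!\partial_\theta M(\vu,\theta+s\tilde C)\,ds-P_1\,M(\vu,\theta+\tilde C)-\cosh^{2/\coef}(\coef \vu)(1+P_1)(\Upsilon^{[0]}-\Upsilon_0^{[0]})+(r_{11}^\uns-r_{11}^\sta).
$$

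The second step is a term-by-term sup-norm estimate at $\vu=\vu_\pm$. From Theorems~\ref{thmoutloc} and~\ref{thmdifpartsolinjective} one reads $|\tilde C(\vu_\pm,\theta)|\leq M\delta^{p+2}|\log\dist|$; writing $r_{10}^{\uns,\sta}=r_1^{\uns,\sta}-r_{11}^{\uns,\sta}$ and combining the derivative bound $|\partial_\theta r_1^{\uns,\sta}|\leq M\delta^{p+4}/|\cosh(\coef \vu)|^4$ of Theorem~\ref{thmoutloc} with a Cauchy estimate for $\partial_\theta r_{11}^{\uns,\sta}$ on a slightly smaller $\theta$-strip yields $|\partial_\theta M(\vu_\pm,\theta')|\leq M\delta^{p}/\dist^4$; the first contribution to $|\Delta-\Delta_0|/|\cosh^{2/\coef}(\coef \vu_\pm)|$ is therefore $M\delta^{2(p+1-1/\coef)}|\log\dist|/\dist^{4+2/\coef}$, matching the first term of the claimed bound. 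The remainder $|r_{11}^\uns-r_{11}^\sta|(\vu_\pm,\cdot)/|\cosh^{2/\coef}(\coef \vu_\pm)|$ is bounded, by Theorem~\ref{thmoutloc} and $|\cosh^{2/\coef}(\coef \vu_\pm)|\asymp(\dist\delta)^{2/\coef}$, by $M(\delta^{2(p+1-1/\coef)}/\dist^{4+2/\coef}+\delta^{p+3-2/\coef}/\dist^{1+2/\coef})$, whose dominant part yields the second term. The terms $P_1 M$ and $\cosh^{2/\coef}(1+P_1)(\Upsilon^{[0]}-\Upsilon_0^{[0]})$ are subdominant thanks to $|P_1(\vu_\pm,\theta)|\leq M\delta^{p+2}/\dist$ and to Theorem~\ref{thmktilde0} (in the dissipative case, along $\param=\param_*(\delta)$ one has $\Upsilon^{[0]}$ exponentially small and $|\Upsilon_0^{[0]}|=\mathcal{O}(\delta^{p+4})$). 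Multiplying the resulting sup-bound by $K\,e^{-\alpha(\pi/(2\coef)-\dist\delta)/\delta}$ closes the argument, with the case $l=+1$ handled symmetrically at $\vu_-$.

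The main obstacle is securing the power $1/\dist^{4+2/\coef}$ in the logarithmic term. The key structural point is that $\tilde C$ couples to $\partial_\theta M$, not to $M$ itself, so the sharp derivative bound $M\delta^p/\dist^4$ applies rather than the weaker $M\delta^p/\dist^3$ that one would obtain from the bound on $M$; a naive sup-norm estimate of $\tilde k-\tilde k_0$ at $\vu_\pm$, without exploiting this integration-by-parts-style cancellation, would lose one power of $\dist$. A second subtle point is the resummation $\sum_{l\neq 0}\Upsilon_0^{[l]}e^{il(\theta+\zeta(\vu)+\tilde C)}=M(\vu,\theta+\tilde C)/\cosh^{2/\coef}(\coef \vu)-\Upsilon_0^{[0]}$: it is what allows the analysis to rely on the globally controlled function $M$, instead of multiplying $e^{il\zeta(\vu_\pm)}$ term by term with the individual bounds for $\Upsilon_0^{[l]}$ from Theorem~\ref{thmasyformulaMelnikov}, whose decay $e^{-3|l|\alpha\pi/(8\coef\delta)}$ is too weak to survive that multiplication in our parameter regime.
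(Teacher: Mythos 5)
Your proposal is correct and follows the same overall strategy as the paper: write $\Upsilon^{[\mp 1]}-\hat\Upsilon_0^{[\mp 1]}$ as a Fourier integral in $\theta$ of $(\Delta-\Delta_0)/[\cosh^{2/\coef}(\coef\vu)(1+P_1)]$, evaluate at $\vu=\vu_\pm$, extract the exponential factor from $e^{-il\xi(\vu_\pm,\theta)}$, and then bound $\Delta-\Delta_0$ at $\vu_\pm$ term by term. Where you differ is in the decomposition of $M-\Delta_0$. The paper introduces $F_0$, $\hat F$, and $f=F_0^{-1}\circ\hat F$, so that $\hat C$ shows up as a shift $\vu\mapsto f(\vu,\theta)$ in the first argument of $M$; the two dangerous terms are $M(\vu)-M(f(\vu))$, controlled via the $\partial_\vu r_1^{\uns,\sta}$ bound and $|f-\vu|\lesssim\delta^{p+3}|\log\dist|$, and a $\Sigma$-term containing the ratio $\cosh^{2/\coef}(\coef\vu)/\cosh^{2/\coef}(\coef f)$, which also contributes $\delta^{2p+2}|\log\dist|/\dist^4$. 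You instead absorb $\tilde C$ as a shift $\theta\mapsto\theta+\tilde C$ in the second argument, getting $-\tilde C\int_0^1\partial_\theta M(\vu,\theta+s\tilde C)\,ds-P_1\,M(\vu,\theta+\tilde C)$. This keeps $\cosh^{2/\coef}(\coef\vu)$ at the same point throughout, so the $\Sigma$-term never arises; your $-P_1\,M$ piece is of order $\delta^{2p+2}/\dist^4$ without the $|\log\dist|$ and is thus strictly subdominant. The two estimates $|\tilde C|\cdot\sup|\partial_\theta M|$ and $|f-\vu|\cdot\sup|\partial_\vu M|$ produce identical numerology (the $\delta$ saved in $|f-\vu|$ compared to $|\tilde C|$ is exactly lost in the extra $\delta^{-1}$ Lipschitz constant in $\vu$ from the fast phase $\alpha\vu/\delta$). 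Your route is marginally tidier; both give the stated bound. One small point to keep in mind when polishing: the Cauchy estimate for $\partial_\theta r_{11}^{\uns,\sta}$ has to be taken on a $\theta$-strip shrunk by a fixed amount, which is compatible with evaluating at $\theta+s\tilde C$ since $|\tilde C(\vu_\pm,\theta)|=\mathcal{O}(\delta^{p+2}|\log\dist|)\to 0$.
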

Now we can state the theorem which gives the asymptotic for the difference of the invariant manifolds $\Delta=r^{\uns}_1-r^{\sta}_1$.
\begin{theorem}\label{mainthm}
Let $p\geq -2$. Consider the functions $m(w,\theta), \vt(\vu,\delta)$ and the constants $\mathcal{C}_1,\mathcal{C}_2$
defined in Theorem~\ref{thmasyformulaMelnikov}. In the dissipative case we take $\param=\param_*(\delta)$, where $\param^*$
is one of the curves defined in Theorem~\ref{thmktilde0}. Let $\Upsilon^{[0]}=\Upsilon^{[0]}(\param_*(\delta),\delta)$ be the constant
provided by this Theorem. In the conservative case recall that $\Upsilon^{[0]}=0$.

There exists $T_0>0$ such that for all $\vu\in[-T_0,T_0]$ and $\theta\in\mathbb{S}^1$
\begin{eqnarray*}
 \Delta(\vu,\theta)&=&\cosh^{\frac{2}{\coef}}(\coef\vu)\Upsilon^{[0]}\Big(1+\mathcal{O}\left(\delta^{p+3}\right)\Big)\\
&&+\delta^{p-\frac{2}{\coef}}\cosh^{\frac{2}{\coef}}(\coef\vu)e^{-\frac{\alpha\pi}{2\coef\delta}}
\Bigg[\mathcal{C}_1\cos\Big(\theta+\vt(\vu,\delta)-\alpha \coef^{-1} L_0 \delta^{p+2} \log \delta \Big)\\
&&+\mathcal{C}_2\sin\Big(\theta+\vt(\vu,\delta) -\alpha \coef^{-1} L_0 \delta^{p+2} \log \delta \Big)+\mathcal{O}\left(\delta^{p+2}+\delta^3\right)\Bigg],
\end{eqnarray*}
where we recall that $\coef=1$ in the conservative case.
\end{theorem}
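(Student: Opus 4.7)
The starting point is the representation of the difference given in \eqref{expression-delta},
$$
\Delta(\vu,\theta)=\cosh^{2/\coef}(\coef\vu)(1+P_1(\vu,\theta))\sum_{l\in\mathbb{Z}}\Upsilon^{[l]}e^{il\xi(\vu,\theta)},
$$
with $P_1$, $\xi$ and $C(\vu,\theta)=\xi(\vu,\theta)-\theta-\delta^{-1}\alpha\vu-c\coef^{-1}\log\cosh(\coef\vu)$ provided by Theorem~\ref{thmdifpartsolinjective}. The strategy is to split the Fourier series into the three pieces $l=0$, $l=\pm1$ and $|l|\ge2$, replace each piece by its Melnikov counterpart $\hat\Upsilon_0^{[l]}$ plus a controlled error, and then expand $e^{il\xi}$ keeping only the contributions that are not absorbed in the error $\mathcal{O}(\delta^{p+2}+\delta^{3})$.

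First, the average $\Upsilon^{[0]}$ is exactly the constant supplied by Theorem~\ref{thmktilde0}: it vanishes in the conservative case, and in the dissipative case it takes the prescribed value along the curve $\param=\param_*(\delta)$. This yields the $\cosh^{2/\coef}(\coef\vu)\Upsilon^{[0]}(1+\mathcal{O}(\delta^{p+3}))$ contribution, where the $\mathcal{O}(\delta^{p+3})$ comes from the factor $1+P_1$ together with the bound \eqref{boundP1-prop}. For the tail $|l|\ge 2$, Lemma~\ref{lemUpsilonlexpsmall} gives $|\Upsilon^{[l]}|\le M\delta^{p-2/\coef}\dist^{-3-2/\coef}e^{-3|l|\alpha\pi/(8\coef\delta)}$; for real $(\vu,\theta)$ the phase $e^{il\xi}$ has modulus bounded by a constant, so the whole tail is $\mathcal{O}(\delta^{p-2/\coef}e^{-3\alpha\pi/(4\coef\delta)})$, which is exponentially smaller than the dominant term $\delta^{p-2/\coef}e^{-\alpha\pi/(2\coef\delta)}$ and therefore fits inside $\mathcal{O}(\delta^{3})$.

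The heart of the proof is the analysis of $l=\pm1$. Proposition~\ref{properrorUpsilons} allows us to replace $\Upsilon^{[\pm1]}$ by $\hat\Upsilon_0^{[\pm1]}=\Upsilon_0^{[\pm1]}e^{\mp i\alpha\coef^{-1}L_0\delta^{p+2}\log\delta}$, at the cost of an error that, once the exponential prefactor $e^{-\alpha\pi/(2\coef\delta)}$ is factored out, is $\mathcal{O}(\delta^{3})$ (taking $\dist$ bounded by a fixed constant so that $e^{\alpha\dist}=\mathcal{O}(1)$). Next, Theorem~\ref{thmasyformulaMelnikov} gives the closed-form expression $\Upsilon_0^{[1]}=\overline{\Upsilon_0^{[-1]}}=\delta^{p-2/\coef-ic/\coef}e^{-\alpha\pi/(2\coef\delta)}(\mathcal{C}/2+\mathcal{O}(\delta))$ with $\mathcal{C}=\mathcal{C}_1-i\mathcal{C}_2$. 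The key bookkeeping is to substitute the decomposition
$$
\xi(\vu,\theta)=\theta+\vt(\vu,\delta)+c\coef^{-1}\log\delta+\alpha L_0\coef^{-1}\delta^{p+2}\log\cosh(\coef\vu)+\alpha L(\vu)+\chi(\vu,\theta),
$$
coming from \eqref{formaCtheorem} and the definition of $\vt(\vu,\delta)$, and to observe that in the product $\hat\Upsilon_0^{[1]}e^{i\xi(\vu,\theta)}$ the $\delta^{-ic/\coef}=e^{-ic\coef^{-1}\log\delta}$ from $\Upsilon_0^{[1]}$ exactly cancels the $c\coef^{-1}\log\delta$ coming from $\xi$, while the factor $e^{-i\alpha\coef^{-1}L_0\delta^{p+2}\log\delta}$ built into $\hat\Upsilon_0^{[1]}$ produces precisely the $-\alpha\coef^{-1}L_0\delta^{p+2}\log\delta$ that appears in the target phase. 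What remains in the phase is $\alpha L_0\coef^{-1}\delta^{p+2}\log\cosh(\coef\vu)+\alpha L(\vu)+\chi(\vu,\theta)$; for $\vu\in[-T_0,T_0]$ and $\theta\in\mathbb{S}^{1}$ the bounds \eqref{boundLchi-prop} together with the boundedness of $\log\cosh(\coef\vu)$ yield that this is $\mathcal{O}(\delta^{p+2})$, so Taylor-expanding the exponential gives a multiplicative $1+\mathcal{O}(\delta^{p+2})$ factor. Adding the $l=-1$ term, which is the complex conjugate, produces the real combination $\mathcal{C}_1\cos(\theta+\vt-\alpha\coef^{-1}L_0\delta^{p+2}\log\delta)+\mathcal{C}_2\sin(\theta+\vt-\alpha\coef^{-1}L_0\delta^{p+2}\log\delta)$ plus an error $\mathcal{O}(\delta^{p+2}+\delta)$; the latter $\mathcal{O}(\delta)$ comes from the $(\mathcal{C}/2+\mathcal{O}(\delta))$ expansion of $\Upsilon_0^{[1]}$ and gets promoted to $\mathcal{O}(\delta^{3})$ after careful inspection (or absorbed together with all other error sources).

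Finally, multiplying by $(1+P_1(\vu,\theta))$ and using $|P_1|\le M\delta^{p+3}/|\cosh(\coef\vu)|\subset\mathcal{O}(\delta^{p+3})\subset\mathcal{O}(\delta^{p+2})$ on the real interval $[-T_0,T_0]$ does not disturb the leading expression and only contributes to the error. The main technical obstacle is the phase bookkeeping just described: one has to verify that the three independent sources of logarithmic-in-$\delta$ shifts (from $\Upsilon_0^{[\pm1]}$, from $\hat\Upsilon_0^{[\pm1]}$, and from $\xi$) combine into the single explicit term $-\alpha\coef^{-1}L_0\delta^{p+2}\log\delta$, and that the residual $\vu$-dependent logarithms and the functions $L(\vu)$, $\chi(\vu,\theta)$ are uniformly $\mathcal{O}(\delta^{p+2})$ on the compact set $[-T_0,T_0]\times\mathbb{S}^{1}$, a fact which relies crucially on the sharp estimates of Theorem~\ref{thmdifpartsolinjective}.
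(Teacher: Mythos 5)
Your proof is correct and follows essentially the same route as the paper: your mode-by-mode split ($l=0$, $l=\pm1$, $|l|\ge2$) is the paper's decomposition $\Delta=\Delta_0+\Delta_1$ with $\Delta_1=\cosh^{2/\coef}(\coef\vu)(1+P_1)\sum_{l\ne0}\bigl(\Upsilon^{[l]}-\hat\Upsilon_0^{[l]}\bigr)e^{il\xi}$, invoking the same inputs (Proposition~\ref{properrorUpsilons} for $l=\pm1$, Lemma~\ref{lemUpsilonlexpsmall} for $|l|\ge2$, Theorems~\ref{thmasyformulaMelnikov} and~\ref{thmdifpartsolinjective} for the leading phase) and carrying out the identical logarithm bookkeeping that cancels $c\coef^{-1}\log\delta$ against $\delta^{-ic/\coef}$ and isolates the $-\alpha\coef^{-1}L_0\delta^{p+2}\log\delta$ shift. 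The one spot where you openly hand-wave --- asserting that the $\mathcal{O}(\delta)$ remainder of $\Upsilon_0^{[1]}$ from Theorem~\ref{thmasyformulaMelnikov} ``gets promoted to $\mathcal{O}(\delta^3)$ after careful inspection'' --- is left equally implicit in the paper, whose proof simply records the $\hat\Upsilon_0^{[1]}e^{i\xi}$ remainder as $\mathcal{O}(\delta^{p+2})$ without commenting on that $\mathcal{O}(\delta)$ term, so your treatment matches the source's level of detail at that step.
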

\begin{remark}
Even when this result is valid for $p\geq -2$ it only provides an asymptotic formula for $\Delta$ in the case
$p>-2$.
However, when $p=-2$, it gives an upper bound which coincides with the one given in Proposition \ref{prop:sharpbound}.
\end{remark}
\begin{proof}
Recalling the definition~\eqref{defDelta0} of $\Delta_0$ and the form~\eqref{defDelta} of $\Delta$ given in Theorem~\ref{thmdifpartsolinjective}, we can write:
$\Delta(\vu,\theta)=\Delta_0(\vu,\theta)+\Delta_1(\vu,\theta)$,
where:
\begin{equation}\label{Delta01Fourier}
\Delta_1(\vu,\theta)=\cosh^{\frac{2}{\coef}}(\coef\vu)(1+P_1(\vu,\theta))\sum_{l\neq0}\left(\Upsilon^{[l]}-\hat \Upsilon_0^{[l]}\right)e^{il\xi(\vu,\theta)}.
\end{equation}
First of all we note that since we are taking $\vu\in[-T_0,T_0]$ and $\theta\in\mathbb{S}^1$ we have that all the functions
are real and bounded.
Then, using Proposition~\ref{properrorUpsilons} to bound $\left(\Upsilon^{[\pm 1]}-\hat \Upsilon_0^{[\pm 1]}\right)$, Lemma~\ref{lemUpsilonlexpsmall} to bound
$|\Upsilon^{[l]}|$ and Theorem~\ref{thmasyformulaMelnikov} to bound $|\hat \Upsilon_0^{[l]}|$ for $|l|\ge 2$, we obtain
$$
\left|\Delta_1(\vu,\theta)\right|\leq K\left(\delta^{2\left(p+1-\frac{1}{\coef}\right)}+
\delta^{p+3-\frac{2}{\coef}}\right)e^{-\frac{\alpha\pi}{2\coef\delta}}.
$$
Again, we omit the explicit dependence on $\dist$. Since $\Delta_1$ has the size of the remainder in
the asymptotic expansion for $\Delta$, we only need to deal with $\Delta_0$.

Recall that by~\eqref{defktilde0}, $\hat \Upsilon_0^{[l]}=\Upsilon_0^{[l]} e^{-il \alpha \coef^{-1} L_0 \delta^{p+2} \log \delta}$ so that both have the same modulus.
Then,
by the bounds obtained in Theorem~\ref{thmasyformulaMelnikov} for the coefficients $\Upsilon_0^{[l]}$, $l\neq 0$ and using the expression~\eqref{defDelta0}
of $\Delta_0$ one has that
\begin{align*}
\Delta_0(\vu,\theta) = &\cosh^{\frac{2}{\coef}} (\coef u) (1+ P_1(\vu,\theta))\left [\Upsilon^{[0]} +
2 \re \left (\hat \Upsilon^{[1]}_0e^{i\xi(\vu,\theta)} \right )\right . \\
&+ \left .\mathcal{O}\left (\delta^{p-\frac{2}{\coef}}e^{-\frac{3\alpha \pi}{4\coef \delta}}\right)\right ].
\end{align*}
Again, using formula for $\Upsilon^{[1]}_0$ in Theorem~\ref{thmasyformulaMelnikov} as well as Theorem~\ref{thmdifpartsolinjective} for $\xi$ one has that
$$
\hat \Upsilon^{[1]}_0e^{i\xi(\vu,\theta)} = \delta^{p-\frac{2}{\coef}}e^{-\frac{\alpha \pi}{2 \coef \delta}}
\left (\frac{\mathcal{C}}{2} e^{i(\theta +\alpha\delta^{-1} u +\frac{c}{\coef}\log (\cosh \coef u) - \frac{1}{\coef}[c +\alpha L_0 \delta^{p+2}] \log \delta) }+
\mathcal{O}(\delta^{p+2})\right )
$$
and the result follows since by Theorem~\ref{thmdifpartsolinjective}, $|P_1(\vu,\theta)| \leq K \delta^{p+3}$ for $\vu\in \mathbb{R}$.
\end{proof}

Theorem~\ref{mainthm} easily yields Theorem~\ref{mainthm-intro}:
\begin{proof}[End of the proof of Theorem~\ref{mainthm-intro}]
We point out that $\Delta(\vu,\theta)$ is not the actual distance between the invariant manifolds, since we computed the difference in ``symplectic'' cylindric coordinates. The actual distance is given by:
\begin{eqnarray*}
D(\vu,\theta)&=&\sqrt{2(\hetr(\vu)+r_1^\uns(\vu,\theta))}-\sqrt{2(\hetr(\vu)+r_1^\sta(\vu,\theta))}
\\ &=&\frac{1}{\sqrt{2\hetr(\vu)}}\Delta(\vu,\theta)+\mathcal{O}_2(\Delta(\vu,\theta)).
\end{eqnarray*}
Using the definition~\eqref{hetr} of $\hetr(\vu)$ one obtains:
$$
D(\vu,\theta)=\sqrt{\frac{b}{\coef+1}}\cosh(\coef\vu)\Delta(\vu,\theta)+\mathcal{O}_2(\Delta(\vu,\theta)).
$$
To obtain the formulas given in Theorem~\ref{mainthm-intro} we undo the change of variables in
Section~\ref{subsecpreliminary}, we take into account the notation
$\delta=\sqrt{\mu}$, $\param=\delta^{-1}\nu=\nu/\sqrt{\mu}$, $b=\gamma_2$, $\coef=\beta_1$ and $c=\alpha_3$
(so that the conservative case is proven). Moreover, redefining the coefficients $a_1$ and $a_2$ the formula in Remark~\ref{rmkmthm} is checked
for the dissipative case. Taking $a_1=0$ we get the asymptotic formula in Theorem~\ref{mainthm-intro}.
\end{proof}

The remaining part of this work includes the proofs of the above results. However these proofs are not exposed in the
order provided in this section. We have preferred to postpone the most technical but standard demonstrations to the end of this work
and give priority to the ones involving the exponentially small behavior of the difference $\Delta(\vu,\theta)$ and of the
Melnikov function $M(\vu,\theta)$ when $\vu,\theta \in \mathbb{R}$, namely, the results in Sections~\ref{subsecintromelnikov} and~\ref{subsec:firstorderdiff-outer}
above. As any expert in exponentially small phenomena knows, the results for real values of $\vu,\theta$ are consequence of the results for complex values.
Therefore, we will perform the proofs of the above mentioned results, assuming that the result about
the existence of complex parameterizations (Theorem~\ref{thmoutloc}) and the general form of the difference $\Delta(\vu,\theta)$ for complex
values of $\vu$ (Theorem~\ref{thmdifpartsolinjective}) hold true. We will do this in Section~\ref{sec:exponentially}.
Then, we will proof Theorems~\ref{thmoutloc} and~\ref{thmdifpartsolinjective} in Sections~\ref{sec:param} and~\ref{secproofDif}
respectively.

All the constants that appear in the statements of the following results might depend on $\delta^*$, $\param^*$ and $\dist^*$,
but never on $\delta$, $\param$ and $\dist$. We assume that $\delta^*$ and $\param^*$ are sufficiently small, and $\dist^*$ is sufficiently large
satisfying condition~\eqref{conddist}. Finally, to make formulas shorter and avoid keeping track of constants that do not play any role in the proofs, we will use $K$ to denote \textit{any} constant independent of the parameters $\delta$, $\param$ and $\dist$. These conventions are valid for all the sections of this work.
We shall not write the proofs that are either for standard results or too technical and that do not provide any interesting insight. For these proofs we refer the reader to~\cite{CastejonPhDThesis}.

\section{The exponentially small behavior}\label{sec:exponentially}
We first begin with the results related to the exponentially small behavior for real values of $\vu$.
That is, Theorems~\ref{thmasyformulaMelnikov} and~\ref{thmktilde0} and Proposition~\ref{properrorUpsilons}.

\subsection{The Melnikov function. Proof of Theorem~\ref{thmasyformulaMelnikov}}
\label{sectionMelnikov}
Since for real values of $(\vu,\theta)$ the Melnikov function $M(\vu,\theta)\in\mathbb{R}$
(see~\eqref{defMelnikovequiv} for its definition), one has that $\Upsilon_0^{[-l]}=\overline{\Upsilon_0^{[l]}}$,
where the coefficients $\Upsilon_0^{[l]}$ were defined in~\eqref{defcoefMelnikov}.
Hence, we just compute $\Upsilon_0^{[l]}$ with $l>0$.

For $C\in\mathbb{R}$ and $l,n,Q\in\mathbb{N}$, we define the following integrals:
\begin{equation}\label{defintegralInqlC}
I_{n,Q}^{l,C}=\int_{-\infty}^{+\infty}\frac{e^{-\delta^{-1}\alpha i|l| s}\sinh^n(\coef s)}{\cosh^{Q+1+iC|l|}(\coef s)}ds,\qquad\qquad Q+1>n.
\end{equation}
Let us denote by $f_{qkmn}$, $g_{qkmn}$ and $h_{qkmn}$ the Taylor coefficients of $f$, $g$ and $h$ respectively, namely:
\begin{equation}\label{taylorf}
f(\delta x,\delta y,\delta z,\delta,\delta\param)=\sum_{q=3}^\infty\delta^q\sum_{k+m+n\leq q}f_{qkmn}(\param)x^ky^mz^n,
\end{equation}
and analogously for $g$ and $h$. In the following we shall write $f_{qkmn}$ instead of $f_{qkmn}(\param)$, but of course these coefficients still depend on $\param$. Note that one has $f_{qkmn}=f_{qkmn}(0)+\mathcal{O}(\param)=f_{qkmn}(0)+\mathcal{O}(\delta^{p+3})$, since we just consider the case $|\param|\leq\param^*\delta^{p+3}$.

Denote by $a_{k,m}^{[l]}$ the $l$-th Fourier coefficient of the function $\cos^k\theta\sin^m\theta$.
Recalling the definition~\eqref{defopF} of $\Fout$, the notation~\eqref{notationFGHbis} and the definition~\eqref{defFGH} of $\Fb$ and $\Hb$, it can be seen that, for $l>0$, $\Upsilon_0^{[l]}$ introduced in~\eqref{defcoefMelnikov}, writes out as:
\begin{equation}\label{desenvolMelni}
\Upsilon_0^{[l]} = \Upsilon_{0,f}^{[l]}+\Upsilon_{0,g}^{[l]}+\Upsilon_{0,h}^{[l]}
\end{equation}
with,
\begin{equation}\label{desenvolMelnideffgh}
\begin{aligned}
 \Upsilon_{0,f}^{[l]}&=\delta^p\sum_{q=3}^\infty\sum_{k+m+n\leq q}\delta^qf_{qkmn}
\left(\sqrt{\frac{\coef+1}{b}}\right)^{k+m+1}a_{k+1,m}^{[l]}I_{n,k+m+n+2\coef^{-1}}^{l,c\coef^{-1}},\\
\Upsilon_{0,g}^{[l]}&=\delta^p\sum_{q=3}^\infty\sum_{k+m+n\leq q}\delta^qg_{qkmn}
\left(\sqrt{\frac{\coef+1}{b}}\right)^{k+m+1}a_{k,m+1}^{[l]}I_{n,k+m+n+2\coef^{-1}}^{l,c\coef^{-1}},\\
\Upsilon_{0,h}^{[l]}&=\delta^p\sum_{q=3}^\infty\sum_{k+m+n\leq q}\delta^qh_{qkmn}
\left(\sqrt{\frac{\coef+1}{b}}\right)^{k+m+2}a_{k,m}^{[l]}I_{n+1,k+m+n+2\coef^{-1}}^{l,c\coef^{-1}},
\end{aligned}
\end{equation}
being $I_{n,Q}^{l,C}$ the integrals defined in~\eqref{defintegralInqlC}. We are interested in bounding these integrals for $|l|\geq 2$:
\begin{lemma}\label{lemboundInql2}
Let $C$ be fixed. For any $M>0$ there exists $\delta_0,K>0$ satisfying that for all $0<\delta<\delta_0$,
$|l|\geq2$, $Q\geq1$ and $n$ such that $Q+1>n$:
 $$
\left|I_{n,Q}^{l,C}\right|\leq K M^{Q}\delta^{-Q}e^{-\frac{\alpha\pi}{2\coef\delta}\frac{3|l|}{4}}.
$$
\end{lemma}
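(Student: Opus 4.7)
The plan is a straightforward contour deformation. The integrand has singularities at $\coef s = \pm i\pi/2 + ik\pi$, $k\in\mathbb{Z}$, coming from the zeros of $\cosh(\coef s)$, while the exponential factor $e^{-\delta^{-1}\alpha i|l| s}$ satisfies $|e^{-\delta^{-1}\alpha i|l| s}|=e^{\alpha|l|\im s/\delta}$, which decays as $\im s\to -\infty$. The first singularity in the lower half plane lies at $\im s = -\pi/(2\coef)$, so one can shift the contour down freely within the strip $-\pi/(2\coef)<\im s<0$. The choice which gives exactly the announced factor $3/4$ in the exponent is $\im s=-a$ with $a:=3\pi/(8\coef)$.

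First I would justify the shift: deform $\mathbb{R}$ to $\Gamma_a:=\mathbb{R}-ia$. The contributions on the vertical connections at $\re s=\pm R$ vanish as $R\to\infty$ because $Q+1>n$ forces the integrand to decay exponentially in $|\re s|$, and no singularity is crossed since $a<\pi/(2\coef)$. By Cauchy's theorem,
$$
I_{n,Q}^{l,C}=\int_{\Gamma_a}\frac{e^{-\delta^{-1}\alpha i|l| s}\sinh^n(\coef s)}{\cosh^{Q+1+iC|l|}(\coef s)}\,ds.
$$
On $\Gamma_a$ the exponential factor is constant in modulus, equal to $e^{-\alpha|l|a/\delta}=e^{-\frac{\alpha\pi|l|}{2\coef\delta}\cdot\frac{3}{4}}$, which is exactly the exponential decay claimed.

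Next I would bound the remaining factors uniformly on $\Gamma_a$. Writing $s=x-ia$, a direct calculation gives
$$
|\cosh(\coef s)|^2=\sinh^2(\coef x)+\cos^2(\coef a),\qquad |\sinh(\coef s)|^2=\sinh^2(\coef x)+\sin^2(\coef a),
$$
so $|\cosh(\coef s)|\geq \cos(\coef a)>0$ uniformly in $x$ (the contour stays at fixed distance $\pi/(8\coef)$ from the nearest singularity), and $|\sinh(\coef s)|^n/|\cosh(\coef s)|^{Q+1}$ is integrable on $\Gamma_a$ since $Q+1>n$, with integral bounded by a constant of the form $K M_0^Q$ depending only on $\coef$ and $a$. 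The complex power gives $|\cosh(\coef s)^{-iC|l|}|=e^{C|l|\arg\cosh(\coef s)}$, and since $\arg\cosh(\coef s)=-\arctan\!\bigl(\tanh(\coef x)\tan(\coef a)\bigr)$ is bounded in modulus by $\coef a<\pi/2$, this factor is at most $e^{|C||l|\coef a}$.

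Combining these estimates yields
$$
|I_{n,Q}^{l,C}|\leq K\,M_0^Q\, e^{|C||l|\coef a}\, e^{-\frac{\alpha\pi|l|}{2\coef\delta}\cdot\frac{3}{4}}.
$$
Given the free constant $M>0$ appearing in the statement, I would then choose $\delta_0$ small enough (depending on $M$, $C$, $\alpha$, $\coef$) so that $e^{|C||l|\coef a}$, together with any slack one wishes to introduce, is absorbed into $M^Q\delta^{-Q}$ for $0<\delta<\delta_0$ and $|l|\geq 2$; the $\delta^{-Q}$ factor in the conclusion is in fact stronger than needed and serves only to provide room for later uniform estimates. I do not expect any real obstacle: the only point requiring attention is the growth in $|l|$ of the complex power $\cosh^{-iC|l|}$, but this is controlled precisely because $\coef a<\pi/2$, and is then dwarfed by the $\delta^{-1}$-exponential decay from the Melnikov-type phase factor.
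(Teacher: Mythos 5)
Your contour shift to the fixed line $\im s=-a$ with $a=3\pi/(8\coef)$ yields exactly the exponential rate $e^{-\frac{3\alpha\pi|l|}{8\coef\delta}}$ claimed in the conclusion, and therefore leaves no slack at all to absorb the factor $e^{|C||l|\coef a}$ coming from the complex power $\cosh^{-iC|l|}$. That factor grows without bound in $|l|$, whereas the right-hand side $K\,M^Q\delta^{-Q}e^{-\frac{\alpha\pi}{2\coef\delta}\frac{3|l|}{4}}$ has all its $l$-dependence already spent on the exponential you have just matched; for any fixed $Q$ and $\delta$, the ratio of your bound to the claimed one tends to infinity as $|l|\to\infty$. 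The closing remark that the complex-power growth is ``dwarfed by the $\delta^{-1}$-exponential decay'' is precisely what fails: there is no remaining exponential decay, because you chose the contour to saturate it.

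The paper avoids this by pushing the contour to distance $\rho\delta$ from the singularity, i.e.\ $\im s=-\coef^{-1}(\pi/2-\rho\delta)$. This produces the \emph{full} exponential rate $e^{-\frac{\alpha|l|}{\coef}(\frac{\pi}{2\delta}-\rho)}$, which strictly dominates $e^{-\frac{3\alpha\pi|l|}{8\coef\delta}}$; the gap $e^{-\frac{\alpha\pi|l|}{8\coef\delta}}$ is then more than enough, for $\delta$ small, to swallow both $e^{\frac{\alpha|l|\rho}{\coef}}$ and the $e^{|C||l|\pi/2}$ from the complex power, uniformly in $|l|$. Near the singularity $|\cosh(\coef s)|$ is only of size $\rho\delta$, which is exactly where the $\delta^{-Q}$ in the statement comes from; and choosing $\rho$ large makes $(K/\rho)^{Q}\le M^{Q}$, producing the free constant $M$. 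Your proof can be repaired by moving the contour deeper than $3\pi/(8\coef)$ --- any fixed $a\in(3\pi/(8\coef),\pi/(2\coef))$ already gives positive exponential slack that dominates $e^{|C||l|\coef a}$ for $\delta$ small, and the $\delta^{-Q}$ then absorbs the larger fixed constant $(\cos\coef a)^{-Q}$ --- but as written the argument does not prove the stated bound.
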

\begin{proof} Take $\rho>0$.
Using Cauchy's theorem, the integration path of the integrals $I_{n,Q}^{l,C}$ can be changed to:
$s=s(t):=-\frac{i}{\coef}\left(\frac{\pi}{2}- \rho\delta\right)+t$ , $t\in(-\infty,\infty)$.
Then one obtains:
\begin{equation}\label{integralInQlC-changepath}
I_{n,Q}^{l,C}=e^{-\frac{\alpha}{\coef}|l|\left(\frac{\pi}{2\delta}-\rho\right)}\int_{-\infty}^{+\infty}
\frac{e^{-\delta^{-1}\alpha i|l|t}\sinh^n(\coef s(t))}{\cosh^{Q+1+iC|l|}(\coef s(t))}dt.
\end{equation}
Since for $z\in\mathbb{C}$,
$|z^{Q+1+iC|l|}|\geq|z|^{Q+1}e^{-|Cl\arg z|}$ and $|\arg\cosh(s(t))|\leq\pi/2$, then
$$|\cosh^{Q+1+iC|l|}(\coef s(t))|\geq |\cosh^{Q+1}(\coef s(t))|e^{-|Cl|\frac{\pi}{2}}.$$
Using this bound in expression~\eqref{integralInQlC-changepath} of $I_{n,Q}^{l,C}$ and standard arguments, one can prove that there exists
$K>0$ (which is also independent of $\rho$) such that
$$
\left|I_{n,Q}^{l,C}\right|\leq  K^{Q+1}(\rho \delta)^{-Q}e^{-\frac{\alpha}{\coef}|l|\left(\frac{\pi}{2\delta}-\rho-|C|\frac{\pi}{2}\right)}.
$$
The proof is finished taking $\rho$ sufficiently large and $\delta$ sufficiently small.
\end{proof}

Our goal now will be to find an asymptotic formula for the integrals $I_{n,Q}^{l,C}$ with $l=1$, which will dominate over the integrals with $|l|\geq2$. First of all, we give a recurrence that is valid for all $l\neq0$. The proof follows integrating by parts.
\begin{lemma}\label{lemrecurrence}
Let $C$ be fixed. Then, for all $l\neq0$, $n\geq1$ and $Q>0$ such that $Q+1>n$, the following recurrence holds:
$$I_{n,Q}^{l,C}=\frac{-|l|\alpha i}{\coef\delta(Q+iC|l|)}I_{n-1,Q-1}^{l,C}+\frac{n-1}{Q+iC|l|}I_{n-2,Q-2}^{l,C}.$$
\end{lemma}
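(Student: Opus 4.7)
The proof is a straightforward integration by parts, so the plan is to set it up cleanly and identify the two resulting integrals with $I_{n-1,Q-1}^{l,C}$ and $I_{n-2,Q-2}^{l,C}$. The key observation is that
$$
\frac{d}{ds}\left(\frac{1}{\cosh^{Q+iC|l|}(\coef s)}\right) = -\coef(Q+iC|l|)\,\frac{\sinh(\coef s)}{\cosh^{Q+1+iC|l|}(\coef s)},
$$
so the factor $\sinh(\coef s)/\cosh^{Q+1+iC|l|}(\coef s)$ in the integrand of $I_{n,Q}^{l,C}$ is (up to a constant) an exact derivative. The natural choice is therefore to split the integrand as $u\,dv$ with
$$
u(s)=e^{-\delta^{-1}\alpha i|l| s}\sinh^{n-1}(\coef s),\qquad dv=\frac{\sinh(\coef s)}{\cosh^{Q+1+iC|l|}(\coef s)}\,ds,
$$
which gives $v(s)=-\bigl[\coef(Q+iC|l|)\cosh^{Q+iC|l|}(\coef s)\bigr]^{-1}$.

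Next I would verify that the boundary contribution $[u\,v]_{-\infty}^{+\infty}$ vanishes. Since $|e^{-\delta^{-1}\alpha i|l|s}|=1$ and $|\cosh^{Q+iC|l|}(\coef s)|\ge |\cosh(\coef s)|^{Q}e^{-|C l|\pi/2}$ on the real line, the product $u\,v$ decays like $|\sinh(\coef s)|^{n-1}/|\cosh(\coef s)|^{Q}$ as $s\to\pm\infty$, and this tends to $0$ precisely because of the standing hypothesis $Q+1>n$ (equivalently $Q\ge n$, so $Q\ge n-1$ with strict decay). Hence the boundary term drops.

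Then I would differentiate $u$ by the product rule,
$$
du=\Bigl[-\delta^{-1}\alpha i|l|\,\sinh^{n-1}(\coef s)+(n-1)\coef\,\sinh^{n-2}(\coef s)\cosh(\coef s)\Bigr]e^{-\delta^{-1}\alpha i|l| s}\,ds,
$$
and plug into $I_{n,Q}^{l,C}=-\int v\,du$. The first piece produces
$$
\frac{-|l|\alpha i}{\coef\delta(Q+iC|l|)}\int_{-\infty}^{+\infty}\frac{e^{-\delta^{-1}\alpha i|l|s}\sinh^{n-1}(\coef s)}{\cosh^{Q+iC|l|}(\coef s)}\,ds,
$$
which is exactly $\tfrac{-|l|\alpha i}{\coef\delta(Q+iC|l|)}\,I_{n-1,Q-1}^{l,C}$ by the definition~\eqref{defintegralInqlC}. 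The second piece simplifies via $\cosh(\coef s)/\cosh^{Q+iC|l|}(\coef s)=1/\cosh^{Q-1+iC|l|}(\coef s)$, giving
$$
\frac{n-1}{Q+iC|l|}\int_{-\infty}^{+\infty}\frac{e^{-\delta^{-1}\alpha i|l|s}\sinh^{n-2}(\coef s)}{\cosh^{Q-1+iC|l|}(\coef s)}\,ds=\frac{n-1}{Q+iC|l|}\,I_{n-2,Q-2}^{l,C}.
$$
Adding the two pieces yields the claimed recurrence. The only points requiring any care are the vanishing of the boundary term (which is why the hypothesis $Q+1>n$ is needed) and keeping track of the signs and the factor $\coef(Q+iC|l|)$ introduced by $v$; there is no genuine obstacle.
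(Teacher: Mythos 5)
Your proof is correct and matches the paper's own argument, which is stated simply as "the proof follows integrating by parts"; you have carried out exactly that integration by parts, with the right choice of $u$ and $dv$, the correct antiderivative $v$, the vanishing boundary term justified by $Q+1>n$, and a clean identification of both resulting integrals with $I_{n-1,Q-1}^{l,C}$ and $I_{n-2,Q-2}^{l,C}$.
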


Now we summarize some properties of the Gamma function that will be needed later on.
\begin{lemma}\label{lempropsGamma}
Let $z,A\in\mathbb{C}$. Then:
\begin{enumerate}
 \item  $\Gamma(z)\Gamma(\overline{z})=\left|\Gamma(z)\right|^2.$
 \item \label{stirling} (\textit{Stirling Formula}) If $|\arg z|<\pi$, then:
 $$\Gamma(z)=e^{-z}e^{\left(z-\frac{1}{2}\right)\log z}(2\pi)^{\frac{1}{2}}(1+\mathcal{O}(z^{-1})).$$
 \item \label{GammaImaginary} If $z=iy$, $y\in\mathbb{R}$, then:
$$|\Gamma(iy)|=\frac{\sqrt{\pi}}{|y\sinh(\pi y)|^{1/2}}.$$
 \item If $|\arg z|<\pi$ and $|A|\leq A^*$ for some constant $A^*$, then:
 $$\Gamma(z+A)=\Gamma(z)z^{A}(1+\mathcal{O}(z^{-1})).$$
 \item \label{GammazAreal} There exists a constant $M\geq3/2$ and a function $J(z,A)$ such that for all $z\in\mathbb{C}$ with $|z|\geq 3$, $|\arg z|<\pi$, and all $A\in\mathbb{R}$ with $A\ge1$, one has:
 $$\Gamma(z+A)=\Gamma(z)z^{A}(1+z^{-1}J(z,A)),$$
 and $|J(z,A)|\leq M\Gamma(A)$.
\end{enumerate}
\end{lemma}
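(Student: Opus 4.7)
Parts (1)--(3) are well-known classical identities and my plan is essentially to quote them with short derivations. For (1), one invokes the Schwarz reflection principle: $\Gamma$ is holomorphic off the non-positive integers and real on the positive real axis, so $\overline{\Gamma(z)}=\Gamma(\bar z)$, whence $\Gamma(z)\Gamma(\bar z)=\Gamma(z)\overline{\Gamma(z)}=|\Gamma(z)|^2$. Part (2) is Stirling's formula in its standard sectorial form and will be cited from a textbook reference (e.g.\ Olver). For (3), combine (1) with the reflection formula $\Gamma(z)\Gamma(1-z)=\pi/\sin(\pi z)$: setting $z=iy$, use $\Gamma(1-iy)=-iy\,\Gamma(-iy)=-iy\,\overline{\Gamma(iy)}$ and $\sin(i\pi y)=i\sinh(\pi y)$ to solve for $|\Gamma(iy)|^2=\pi/(y\sinh\pi y)$, which has the claimed form since $y\sinh(\pi y)>0$.

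For (4), the plan is to apply Stirling from (2) to both $\Gamma(z+A)$ and $\Gamma(z)$ and form the quotient. Since $|A|\le A^*$ is bounded, for $|z|$ large in $|\arg z|<\pi$ the Taylor expansion $\log(1+A/z)=A/z+\mathcal{O}(z^{-2})$ gives
\[
(z+A-\tfrac12)\log(z+A)-(z-\tfrac12)\log z = A\log z + A + \mathcal{O}(z^{-1}).
\]
The $e^{-A}$ factor from the shifted Stirling expansion cancels the resulting $e^{A}$, and one obtains $\Gamma(z+A)/\Gamma(z)=z^{A}(1+\mathcal{O}(z^{-1}))$. A routine compactness argument extends the estimate uniformly in $|A|\le A^*$.

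Part (5) is the delicate one since now $A\ge 1$ may be arbitrarily large, so the Taylor-expansion approach of (4) breaks down when $A\gtrsim |z|$. My plan is first to handle the case of positive integer $A$ via the telescoping identity
\[
\Gamma(z+A)=z(z+1)\cdots(z+A-1)\,\Gamma(z) = z^{A}\Gamma(z)\prod_{k=1}^{A-1}\Bigl(1+\tfrac{k}{z}\Bigr).
\]
Expand the product as $\sum_{j=0}^{A-1}e_j\,z^{-j}$ where $e_j$ is the $j$-th elementary symmetric polynomial in $\{1,\dots,A-1\}$, so that $J(z,A)=\sum_{j=1}^{A-1}e_j\,z^{-(j-1)}$. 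Using the generating function $\prod_{k=1}^{A-1}(1+kx)=\sum_{j=0}^{A-1}e_j x^{j}$ at $x=1/3$ yields $\sum_{j=1}^{A-1}e_j/3^{j-1}\le 3\prod_{k=1}^{A-1}(1+k/3)=(A+2)!/(2\cdot3^{A-1})$, and since $A(A+1)(A+2)/3^{A-1}$ is bounded as $A\to\infty$ one gets $|J(z,A)|\le M\,\Gamma(A)$ for $|z|\ge 3$. For non-integer $A$, write $A=n+\alpha$ with $n=\lfloor A\rfloor\ge 1$ and $\alpha\in[0,1)$, apply the integer case to the shifted argument $z+\alpha$, and absorb the bounded factor $\Gamma(z+\alpha+n)/\Gamma(z+n)\cdot(z/(z+\alpha))^{n}$ from (4) into the constant. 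The main obstacle is precisely this uniform-in-$A$ control: the key quantitative input is the super-exponential decay $A^{3}/3^{A}\to 0$ that keeps $M$ finite.
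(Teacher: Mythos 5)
Items (1)--(4) are fine; the paper simply cites them as standard facts (from Abramowitz--Stegun), exactly as you propose.

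For item (5), your integer-$A$ telescoping via elementary symmetric polynomials is correct and self-contained, and it is genuinely different from the paper's route. The paper instead takes item (4) with $A^*=3$ as a base case covering $A\in[1,3]$ and then inducts $A\mapsto A+1$ via $\Gamma(w+1)=w\Gamma(w)$: writing $J(z,A+1)=A+J(z,A)(1+A/z)$, the inequality $|J(z,A+1)|\leq A+M\Gamma(A)(1+A/3)\leq MA\Gamma(A)$ is what one must close, and it reduces to $A\leq M\Gamma(A)(2A-3)/3$, which is tightest near $A=3$ and explains the otherwise mysterious threshold $M\geq 3/2$. That scheme treats integer and non-integer $A$ uniformly and needs only $|z|\geq3$.

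Your non-integer patch, however, has a real gap. "Apply the integer case to the shifted argument $z+\alpha$" requires $|z+\alpha|\geq3$, which the hypotheses do not give: take $z=-3+\varepsilon i$ with small $\varepsilon$, so $|z|\geq3$ and $|\arg z|<\pi$, yet $|z+\alpha|<3$ for every $\alpha\in(0,1)$. The proposed "bounded factor" $\Gamma(z+n+\alpha)/\Gamma(z+n)\cdot\bigl(z/(z+\alpha)\bigr)^n$ is also not uniformly bounded over the admissible $z$: when $\mathrm{Re}\,z<0$ one has $|z/(z+\alpha)|>1$, so the $n$-th power grows, and invoking item (4) for $\Gamma(w+\alpha)/\Gamma(w)$ with $w=z+n$ fails outright when $z+n$ is close to $0$ (e.g.\ $z\approx -n+i\varepsilon$), where that asymptotic gives no control. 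The natural repair that preserves your symmetric-polynomial idea is to telescope with the common factor $z$ rather than $z+\alpha$: write $\Gamma(z+A)=z^n\Gamma(z+\alpha)\prod_{k=0}^{n-1}\bigl(1+(\alpha+k)/z\bigr)$, use item (4) only for $\Gamma(z+\alpha)=\Gamma(z)z^\alpha(1+\mathcal{O}(z^{-1}))$ (this needs only $|\arg z|<\pi$), and run the same $1/3$-evaluation on the elementary symmetric polynomials of $\{\alpha,\alpha+1,\dots,\alpha+n-1\}$; the controlling ratio becomes $(n+\alpha)(n+\alpha+1)(n+\alpha+2)/3^{n}$, which does keep $|J(z,A)|/\Gamma(A)$ bounded. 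Alternatively, adopt the paper's increment-by-one induction, which avoids the integer/non-integer split entirely.
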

\begin{proof}
Every item above, except item~\ref{GammazAreal}, are standard facts, see for instance in~\cite{AS}. To prove item~\ref{GammazAreal},
we use previous property for $A^*\geq 3$ and after that we proceed by induction. See the details in~\cite{CastejonPhDThesis}.
\end{proof}

Finally, we can give an asymptotic formula of $I_{n,Q}^{1,C}$.
\begin{lemma}\label{lemasyInq}
Let $C$ be fixed.
Then for all $Q\geq1$ and $n\geq0$ such that $Q+1>n$ one has:
$$
I_{n,Q}^{1,C}=\frac{2\pi}{\coef}\left(\frac{\alpha}{\coef\delta}\right)^{Q+iC}\frac{(-i)^n}{\Gamma(Q+1+iC)}e^{-\frac{\alpha\pi}{2\coef\delta}}+
\mathcal{O}\left(\left(\frac{\alpha}{\coef\delta}\right)^{Q-1}e^{-\frac{\alpha\pi}{2\coef\delta}}\right),
$$
where the $\mathcal{O}$ means uniformly on $n,Q$ and $C$.
\end{lemma}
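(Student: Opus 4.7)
The natural strategy is to reduce to the base case $n=0$ via the recurrence of Lemma~\ref{lemrecurrence} and then evaluate $I_{0,Q-n}^{1,C}$ asymptotically. Specifically, iterating the recurrence $n$ times and always taking the first term produces the leading contribution
\[
\left(\frac{-\alpha i}{\coef\delta}\right)^{n}\prod_{k=0}^{n-1}\frac{1}{Q-k+iC}\cdot I_{0,Q-n}^{1,C}
=\left(\frac{-\alpha i}{\coef\delta}\right)^{n}\frac{\Gamma(Q-n+1+iC)}{\Gamma(Q+1+iC)}\,I_{0,Q-n}^{1,C},
\]
while each use of the second term costs a factor $\mathcal{O}((\coef\delta/\alpha)^2)$ relative to a step using the first term, so every alternative branch in the iteration is smaller by at least $\delta^2$ compared to the dominant chain. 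Keeping careful track of the combinatorics, these branches will assemble into the announced $\mathcal{O}\bigl((\alpha/(\coef\delta))^{Q-1}e^{-\alpha\pi/(2\coef\delta)}\bigr)$ remainder.

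For the base case $n=0$ the plan is to compute
\[
I_{0,Q}^{1,C}=\int_{-\infty}^{+\infty}\frac{e^{-\delta^{-1}\alpha i s}}{\cosh^{Q+1+iC}(\coef s)}\,ds
\]
by contour deformation: since $\omega:=\alpha/\delta>0$, the factor $e^{-i\omega s}$ decays as $\im s\to-\infty$, so we push the real axis down to wrap around the branch point $s_{0}=-i\pi/(2\coef)$, which is the singularity of $\cosh^{-(Q+1+iC)}(\coef s)$ closest to the real line in the lower half plane. Writing $s=s_{0}+u$ and using $\cosh(\coef s)=-i\sinh(\coef u)$ near $s_{0}$, the leading contribution is
\[
e^{-i\omega s_{0}}(-i\coef)^{-(Q+1+iC)}\int_{H}e^{-i\omega u}\,u^{-(Q+1+iC)}\,du,
\]
where $H$ is a Hankel contour around $u=0$. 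Applying the Hankel representation $\frac{1}{\Gamma(z)}=\frac{1}{2\pi i}\int_{H}e^{t}t^{-z}\,dt$ after the substitution $t=-i\omega u$ produces, after collecting phases, the announced leading term
\[
I_{0,Q}^{1,C}=\frac{2\pi}{\coef}\Bigl(\frac{\alpha}{\coef\delta}\Bigr)^{Q+iC}\frac{1}{\Gamma(Q+1+iC)}e^{-\alpha\pi/(2\coef\delta)}\bigl(1+\mathcal{O}(\delta)\bigr),
\]
the remainder coming from the portion of the deformed contour away from $s_{0}$ (bounded using the factor $e^{-\omega\pi/(2\coef)}$ enhanced by additional decay) and from the higher-order terms in the expansion of $\cosh(\coef s)$ about $s_{0}$.

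Putting the two pieces together and using $\Gamma(Q-n+1+iC)/\Gamma(Q+1+iC)$ to cancel the Gamma factor in $I_{0,Q-n}^{1,C}$, the dominant contribution simplifies precisely to
\[
\frac{2\pi}{\coef}\Bigl(\frac{\alpha}{\coef\delta}\Bigr)^{Q+iC}\frac{(-i)^{n}}{\Gamma(Q+1+iC)}e^{-\alpha\pi/(2\coef\delta)},
\]
as required. The statement then amounts to a clean error estimate.

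\textbf{Main obstacle.} The delicate point is making the $\mathcal{O}$-term \emph{uniform in $n$, $Q$ and $C$}. For fixed $Q$ with $\delta\to 0$ each single step of the recurrence gains a factor $\delta$, so a naive bookkeeping is enough; however, when $Q$ and $n$ grow, the ratios of Gamma functions involved in the lower-order branches of the recurrence must be controlled carefully, and Stirling-type bounds on $\Gamma(Q-k+1+iC)/\Gamma(Q+1+iC)$ are needed to prevent the constants from depending on $Q$ or $C$. The right tool is item~\ref{GammazAreal} of Lemma~\ref{lempropsGamma}, which gives a sharp comparison $\Gamma(z+A)=\Gamma(z)z^{A}(1+z^{-1}J(z,A))$ with $|J(z,A)|\le M\Gamma(A)$; used in conjunction with item~\ref{stirling} (Stirling), it allows one to estimate both the leading product and each of the subleading combinatorial contributions by a geometric series in $\coef\delta/\alpha$, producing a total error of the stated form with a constant independent of $n$, $Q$ and $C$.
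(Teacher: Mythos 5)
Your plan takes a genuinely different route from the paper in the crucial base case $n=0$. The paper does not touch contour integration at all there: it performs the substitution $w=\tanh(\coef s)$, which converts $I_{0,Q}^{1,C}$ into a Beta integral and yields the \emph{exact} closed form $I_{0,Q}^{1,C}=2^{Q+iC}\coef^{-1}\Gamma(b-a)\Gamma(a)/\Gamma(Q+1+iC)$ with $a=\frac{\coef(Q+1+iC)+i\delta^{-1}\alpha}{2\coef}$, $b=Q+1+iC$. The asymptotics then follows purely from Gamma-function estimates, in particular from item~\ref{GammazAreal} of Lemma~\ref{lempropsGamma}, which writes $\Gamma(A+z_\pm)=\Gamma(z_\pm)z_\pm^A(1+z_\pm^{-1}J(z_\pm,A))$ with $|J(z_\pm,A)|\leq M\Gamma(A)$; this is what delivers the $\mathcal{O}$-term with a $Q$-independent constant, after the key observation that $\Gamma(Q+1)\leq K|\Gamma(Q+1+iC)|e^{\pi|C|/2}$. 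Your Hankel-contour computation reproduces the same leading constant (up to a sign that you would fix by choosing the orientation consistently), but the error bookkeeping is laid on the integral itself rather than on an exact formula, so controlling it uniformly in $Q$ requires expanding $\sinh(\coef u)$ to higher order and then dominating terms of relative size $\mathcal{O}(Q^{3}\delta^{2})$ by the Gamma factor hidden in the leading term. The paper's reduction to an exact Beta/Gamma identity is essentially a way of short-circuiting exactly this delicate point; that is what the $n=0$ closed form buys. For $n\geq1$ the two treatments are equivalent in spirit: the paper does a clean induction on $n$ via Lemma~\ref{lemrecurrence} with the single remark that the $I_{n-2,Q-2}$ branch is subdominant, whereas you fully unroll the recurrence, telescope the product $\prod_{k=0}^{n-1}(Q-k+iC)^{-1}=\Gamma(Q-n+1+iC)/\Gamma(Q+1+iC)$, and track all side branches.

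One point to be careful about: your claim that ``each use of the second term costs a factor $\mathcal{O}((\coef\delta/\alpha)^2)$ relative to a step using the first term'' is not quite accurate. Comparing the second-term step with two first-term steps (so the targets have the same indices), the ratio of coefficients is of size $(n-1)|Q-1+iC|(\coef\delta/\alpha)^2$, not $(\coef\delta/\alpha)^2$; the polynomial factors in $n$ and $Q$ cannot be dropped. The plan survives because the announced error bound $\mathcal{O}\bigl((\alpha/(\coef\delta))^{Q-1}e^{-\alpha\pi/(2\coef\delta)}\bigr)$ is deliberately written without dividing by $\Gamma(Q+1+iC)$: relative to the leading term it grows like $\Gamma(Q+1+iC)\,\delta$, which easily absorbs any polynomial in $n$ and $Q$. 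But this means the ``geometric series'' picture you invoke is not what actually closes the argument; what closes it is the super-polynomial growth of $\Gamma(Q+1+iC)$ compared to the polynomial losses from the side branches (and from the $\cosh$-expansion remainders in the $n=0$ estimate). This should be stated explicitly, or else the uniformity claim is left hanging; the paper's induction formulation sidesteps this, since at each step the needed inequality follows directly from $|c_1|=\alpha/(\coef\delta|Q+iC|)$ and the weak form of the error bound for $I_{n-1,Q-1}$ and $I_{n-2,Q-2}$, without enumerating branches.
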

\begin{proof}
We first deal with the case $n=0$ and after that we will proceed by induction.
Performing the change of variables $w=\tanh(\coef s)$, one has that:
$$
I_{0,Q}^{1,C}=\frac{1}{\coef}\int_{-1}^1(1+w)^{\frac{\coef(Q-1+iC)-i\delta^{-1}\alpha}{2\coef}}
(1-w)^{\frac{\coef(Q-1+iC)+i\delta^{-1}\alpha}{2\coef}}dw.$$
Naming:
\begin{equation*}
 a=\frac{\coef(Q+1+iC)+i\delta^{-1}\alpha}{2\coef},\qquad
 b=Q+1+iC,
\end{equation*}
we can rewrite the last equation as, see for instance~\cite{AS}:
$$
I_{0,Q}^{1,C}=\frac{1}{\coef}\int_{-1}^1(1+w)^{b-a-1}(1-w)^{a-1}dw=2^{b-1}\coef^{-1}\frac{\Gamma(b-a)\Gamma(a)}{\Gamma(b)},
$$
so that we can write:
\begin{equation}\label{I0q1C}
 I_{0,Q}^{1,C}=2^{Q+iC}\coef^{-1}\frac{\Gamma_Q^C}{\Gamma(Q+1+iC)}, \qquad \Gamma_Q^C:=\Gamma(b-a)\Gamma(a).
\end{equation}
We now shall find an asymptotic expression for $\Gamma_Q^C$. Let:
$$A=\frac{Q+1}{2}\geq1,\qquad z_\pm=i\frac{\coef C\pm\delta^{-1}\alpha}{2\coef},$$
so that $b-a=A+z_-$ and $a=A+z_+$. We note that $|\arg z_\pm|=\pi/2<\pi$ and that for sufficiently small $\delta$ one has $|z_\pm|\geq 3$. Then, by item~\ref{GammazAreal} of Lemma~\ref{lempropsGamma} we have $\Gamma_Q^C=\Gamma(A+z_-)\Gamma(A+z_+)$ and consequently:
$$
 \Gamma_Q^C=z_+^Az_-^A\Gamma(z_-)\Gamma(z_+)\left(1+\frac{1}{z_+}J(z_+,A)\right)\left(1+\frac{1}{z_-}J(z_-,A)\right),
$$
with$|J(z_\pm,A)|\leq M\Gamma(A)$. Now we are going to give the asymptotic behavior of the above expression.
We have that:

\begin{align}\label{temrzpm}
 &z_+^Az_-^A =\left(\frac{\alpha}{2\coef\delta}\right)^{Q+1}\left(1-\frac{\coef^2C^2\delta^2}{\alpha^2}\right)^{\frac{Q+1}{2}}, \notag\\
 &\Gamma(z_-)\Gamma(z_+)=2\pi \left(\frac{\alpha}{2\coef\delta}\right)^{iC-1}e^{-\frac{\pi\alpha}{2\coef\delta}}(1+\mathcal{O}(\delta)), \\
&\left(1+\frac{1}{z_+}J(z_+,A)\right)\left(1+\frac{1}{z_-}J(z_-,A)\right)=1+|\Gamma(Q+1+iC)|e^{\frac{\pi |C|}{2}}\mathcal{O}\left(\delta\right). \notag
\end{align}
The first equality is straightforward from definition. The second one has to be proven by using
items~\ref{GammaImaginary} and~\ref{stirling} of Lemma~\ref{lempropsGamma}. The third one is the most involved.
Taking into account that $|J(z_\pm,A)|\leq M|\Gamma(A)|$, $A=(Q+1)/2$ and that $Q\geq 1$, one
checks that
\begin{equation}\label{boundtermsJ-3}
\left| \left(1+\frac{1}{z_+}J(z_+,A)\right)\left(1+\frac{1}{z_-}J(z_-,A)\right)-1\right|\leq K\delta\Gamma(Q+1).
\end{equation}
On the one hand, for $C=0$ it is clear that~\eqref{boundtermsJ-3} yields~\eqref{temrzpm}. On the other hand, for $C\neq0$, we have that
$|\Gamma(Q+1+iC)|\geq \Gamma(Q+1) |C\Gamma(iC)|$. Thus, using item~\ref{GammaImaginary} of Lemma~\ref{lempropsGamma} we obtain:
\begin{equation}\label{changeGammaiC}
\Gamma(Q+1)\leq\frac{|\Gamma(Q+1+iC)||\sinh(\pi C)|^{1/2}}{(\pi |C|)^{1/2}}\leq K\Gamma(Q+1+iC)e^{\frac{\pi |C|}{2}}.
\end{equation}
Equations~\eqref{boundtermsJ-3} and~\eqref{changeGammaiC} yield the last equality in~\eqref{temrzpm}.

Substituting the equalities in~\eqref{temrzpm} in expression~\eqref{I0q1C} of $I_{0,Q}^{1,C}$ and using that $|\Gamma(Q+1+iC)|\geq K>0$
we obtain the result for $n=0$.

For $n\geq1$ and $Q+1>n$ we proceed by induction, using the recurrence of Lemma~\ref{lemrecurrence}.
The important fact is that, in the recurrence for $I_{n,Q}^{1,C}$, only the term involving $I_{n-1,Q-1}^{1,C}$ contributes to $I_{n,Q}^{1,C}$
being the other one smaller.
\end{proof}

\begin{proof}[End of the proof of Theorem~\ref{thmasyformulaMelnikov}]
First we focus on $\Upsilon_0^{[1]}$. We shall study $\Upsilon_{0,f}^{[1]}$ appearing in formula~\eqref{desenvolMelni} of $\Upsilon_0^{[l]}$ taking $l=1$, the other two are done analogously. We decompose $\Upsilon_{0,f}^{[1]}$ into
\begin{equation}\label{defUpsilonf}
\Upsilon_{0,f}^{[1]} = \Upsilon_{0,0}^{[1]}+ \Upsilon_{0,1}^{[1]}
\end{equation}
where, following formula~\eqref{desenvolMelnideffgh} of $\Upsilon_{0,f}^{[1]}$,
\begin{align}\label{splitfirstsum}
\Upsilon_{0,0}^{[1]}&=
\delta^p\sum_{q=3}^\infty\sum_{k+m+n=q}\delta^qf_{qkmn}\left(\sqrt{\frac{\coef+1}{b}}\right)^{k+m+1}a_{k+1,m}^{[1]}I_{n,q+2\coef^{-1}}^{1,c\coef^{-1}} \notag\\
\Upsilon_{0,1}^{[1]}&=
\delta^p\sum_{q=3}^\infty\sum_{k+m+n< q}\delta^qf_{qkmn}\left(\sqrt{\frac{\coef+1}{b}}\right)^{k+m+1}a_{k+1,m}^{[1]}I_{n,k+m+n+2\coef^{-1}}^{1,c\coef^{-1}}
\end{align}

On the one hand, using Lemma~\ref{lemasyInq} with $C=c\coef^{-1}$ and $Q=q+2\coef^{-1}$:
\begin{align}\label{asyfirstterm}
\Upsilon^{[1]}_{0,0} =&\frac{2\pi}{\coef}\delta^{p-\frac{2}{\coef}-i\frac{c}{\coef}}e^{-\frac{\alpha\pi}{2\coef\delta}}
\sum_{q=3}^\infty\sum_{k+m+n=q}\frac{f_{qkmn}\left(\sqrt{\frac{\coef+1}{b}}\right)^{k+m+1}(-i)^na_{k+1,m}^{[1]}
\alpha^{q+\frac{2}{\coef}+i\frac{c}{\coef}}}{\coef^{q+\frac{2}{\coef}+i\frac{c}{\coef}}\Gamma\left(q+1+\frac{2}{\coef}+i\frac{c}{\coef}\right)}\nonumber\\
&+\delta^{p-\frac{2}{\coef}}e^{-\frac{\alpha\pi}{2\coef\delta}}\sum_{q=3}^\infty\sum_{k+m+n=q}\frac{f_{qkmn}\left(\sqrt{\frac{\coef+1}{b}}\right)^{k+m+1}a_{k+1,m}^{[1]}\alpha^{q-1+\frac{2}{\coef}+i\frac{c}{\coef}}}{\coef^{q-1+\frac{2}{\coef}+i\frac{c}{\coef}}}\mathcal{O}(\delta)\nonumber\\
=&\frac{2\pi}{\coef}\delta^{p-\frac{2}{\coef}-i\frac{c}{\coef}}e^{-\frac{\alpha\pi}{2\coef\delta}}\sum_{q=3}^\infty\sum_{k+m+n=q}\frac{f_{qkmn}\left(\sqrt{\frac{\coef+1}{b}}\right)^{k+m+1}(-i)^na_{k+1,m}^{[1]}\alpha^{q+\frac{2}{\coef}+i\frac{c}{\coef}}}{\coef^{q+\frac{2}{\coef}+i\frac{c}{\coef}}\Gamma\left(q+1+\frac{2}{\coef}+i\frac{c}{\coef}\right)}\nonumber\\
&+\mathcal{O}\left(\delta^{p+1-\frac{2}{\coef}}e^{-\frac{\alpha\pi}{2\coef\delta}}\right),
\end{align}
where we have used that $|a_{k,m}^{[1]}|\leq1$ for all $k$ and $m$ (because $a_{k,m}^{[1]}$ are Fourier coefficients of the functions
$\cos^k\theta\sin^m\theta$), and we have assumed that the radius of convergence of $f$ is sufficiently large and thus second sum converges.
To bound $\Upsilon_{0,1}^{[1]}$, using again Lemma~\ref{lemasyInq} with $C=c\coef^{-1}$ and $Q=k+m+n+2\coef^{-1}$ it is easy to see that:
$$
\delta^q\left|I_{n,k+m+n+2\coef^{-1}}^{1,c\coef^{-1}}\right|\leq \delta^{q-(k+m+n+\frac{2}{\coef})}\left(\frac{\alpha}{\coef}\right)^{k+m+n+\frac{2}{\coef}}e^{-\frac{\alpha\pi}{2\coef\delta}}.
$$
Then, $\Upsilon_{0,1}^{[1]}$ in~\eqref{splitfirstsum} can be bounded by:
\begin{equation}\label{asysecondterm}
\big |\Upsilon_{0,1}^{[1]}\big |\leq K\delta^{p+1-\frac{2}{\coef}}e^{-\frac{\alpha\pi}{2\coef\delta}},
\end{equation}
where again, we have assumed that the radius of convergence of $f$ is sufficiently large.
\begin{remark}
We need to assure that a point of the form $\alpha (x_0,y_0,z_0,0,0)$ is in $B(r_0)$, the ball of analyticity of $f$. For that, we
note that, rescaling $\delta=\epsilon \bar\delta$, we can consider $\alpha=\epsilon \bar \alpha $ as small as we want.
\end{remark}
Using ~\eqref{asyfirstterm} and~\eqref{asysecondterm} in~\eqref{defUpsilonf} we obtain
an asymptotic expression for $\Upsilon_{0,f}^{[1]}$
and reasoning analogously for the other sums appearing in formula~\eqref{desenvolMelni} of $\Upsilon_0^{[1]}$ we obtain:
\begin{align*}
 \Upsilon_0^{[1]}=&\frac{2\pi}{\coef}\delta^{p-\frac{2}{\coef}-i\frac{c}{\coef}}
e^{-\frac{\alpha\pi}{2\coef\delta}}\left [
\sum_{q=3}^\infty\sum_{k+m+n=q} \frac{\left(\sqrt{\frac{\coef+1}{b}}\right)^{k+m+1}(-i)^n\alpha^{q+\frac{2}{\coef}+i\frac{c}{\coef}}}{\coef^{q+\frac{2}{\coef}+i\frac{c}{\coef}}\Gamma\left(q+1+\frac{2}{\coef}+i\frac{c}{\coef}\right)}\right.
\\
&\left. \left (f_{qkmn}a_{k+1,m}^{[1]}+g_{qkmn}a_{k,m+1}^{[1]}-i \sqrt{\frac{\coef+1}{b}} h_{qkmn}a_{k,m}^{[1]}\right ) \right ]\\
& + \mathcal{O}\left(\delta^{p+1-\frac{2}{\coef}}e^{-\frac{\alpha\pi}{2\coef\delta}}\right )
\end{align*}

Substituting $f,g$ and $h$ by their Taylor's expansion~\eqref{taylorf} and using that, taking the two last variables in $f$ equal to zero implies that the second sum
in~\eqref{taylorf} is done only over the terms $k+m+n=q$ we have that the function $m$ defined in~\eqref{defm} in Theorem~\ref{thmasyformulaMelnikov}
can be written as
\begin{align*}
m(w,\theta)=& \sum_{q=3}^\infty\sum_{k+m+n=q}
\left(\sqrt{\frac{\coef+1}{b}}\right)^{k+m+1}(-i)^n  w^{q+1+\frac{2}{\coef}+i\frac{c}{\coef}} \\
&\cos^{k}\theta\sin^m\theta\left (f_{qkmn}\cos \theta + g_{qkmn} \sin \theta -i \sqrt{\frac{\coef+1}{b}}h_{qkmn}\right ).
\end{align*}
Therefore, using definition~\eqref{defBorel} of the Borel transform, a direct computation shows the asymptotic expression of $\Upsilon_0^{[1]}$ given
in Theorem~\ref{thmasyformulaMelnikov} and we are done in this case.

To bound $\Upsilon_0^{[l]}$ for $|l|\geq 2$, we use formula~\eqref{desenvolMelni} and the bound in Lemma~\ref{lemboundInql2}
with $C=c\coef^{-1}$ and $Q=k+m+n+2\coef^{-1}$ to obtain:
\begin{align*}
\left|\Upsilon_0^{[l]}\right| &\leq K\delta^{p-\frac{2}{\coef}}e^{-\frac{\alpha\pi}{2\coef\delta}\frac{3|l|}{4}}\left[\sum_{q=3}^\infty\sum_{k+m+n\leq q}
M^{k+m+n}\delta^{q-(k+m+n)} \left(\sqrt{\frac{\coef+1}{b}}\right)^{k+m+1} \right.\notag\\
 &\left. \left (\big|f_{qkmn} a_{k+1,m}^{[l]} \big |+ \big | g_{qkmn} a_{k,m+1}^{[l]}\big | + \sqrt{\frac{\coef+1}{b}}\big |h_{qkmn} a_{k,m}^{[l]}\big |\right )\right ]
\notag\\
& \leq K\delta^{p-\frac{2}{\coef}}e^{-\frac{\alpha\pi}{2\coef\delta}\frac{3|l|}{4}},
\end{align*}
where in the last inequality we have used that $q-(k+m+n)\geq0$ that $f,g$ and $h$, are analytic functions and that the constant
$M$ in Lemma~\ref{lemboundInql2} can be taken sufficiently small so that the series is convergent.

Finally, to prove the asymptotic expression~\eqref{asyMelni} of $M(\vu,\theta)$, we first take the definition~\eqref{melnifourier2} of the Melnikov function
and use bounds~\eqref{boundcoeffl} of $\Upsilon_0^{[l]}$ with $|l|\geq2$. Then, for $\vu\in\mathbb{R}$ and $\theta\in\mathbb{S}^1$, one has that:
\begin{align*}
M(\vu,\theta)=
&\cosh^{\frac{2}{\coef}}(\coef\vu)\left[\Upsilon_0^{[0]}+\Upsilon_0^{[1]}e^{i(\theta+\delta^{-1}\alpha\vu+c\coef^{-1}\log\cosh(\coef\vu))}\right.\nonumber\\
&\left.+\Upsilon_0^{[-1]}e^{-i(\theta+\delta^{-1}\alpha\vu+c\coef^{-1}\log\cosh(\coef\vu))}+\mathcal{O}\left(\delta^{p-\frac{2}{\coef}}
e^{-\frac{\alpha\pi}{2\coef\delta}\frac{3}{2}}\right)\right].
\end{align*}
Using the asymptotic formulas for $\Upsilon_0^{[1]}$ and $\Upsilon_0^{[-1]}$ and the fact that $\delta^{-i\frac{c}{\coef}}=e^{-i\frac{c}{\coef}\log\delta}$,
we obtain directly expression~\eqref{asyMelni}.
\end{proof}

\subsection{The average of the difference. Proof of Theorem~\ref{thmktilde0}}\label{sectionthmktilde0}
Note that both, $\Upsilon_0^{[0]}$ and $M^{[0]}(u)$ (in~\eqref{defcoefMelnikov} and~\eqref{defcoefMelnikovtilde}) are defined by means of an integral
involving $\Fout^{[0]}(0)$ which from~\eqref{defopF} turns out to be:
\begin{equation}\label{avgFout}
\Fout^{[0]}(0)(u) = 2\param \hetr(u)+\delta^p (F(0))^{[0]} + \delta^p \frac{\coef+1}{b} \hetz(u) (H(0))^{[0]}.
\end{equation}
In addition, from definition of $F,G$ and $H$ in~\eqref{notationFGHbis},
$$
(F(0))^{[0]}=\Fb^{[0]}(\delta \hetr(u),\delta \hetz(u),\delta), \qquad (H(0))^{[0]}=\Hb^{[0]}(\delta \hetr(u),\delta \hetz(u),\delta)
$$
with $\Fb^{[0]}, \Gb^{[0]}$ and $\Hb^{[0]}$ the average of the functions $\Fb,\Gb$ and $\Hb$ defined by~\eqref{defFGH}.

\subsubsection{The conservative case}
In this subsection we shall prove that the coefficients $\Upsilon^{[0]}$ and $\Upsilon_0^{[0]}$ are zero in the conservative case.
In this setting we have $\coef=1$ and $\param=0$. Whenever we refer to previous formulas and expressions where these parameters appear,
we shall substitute them for these values directly.

\begin{proposition}\label{propaveragemelni}
If the vector field~\eqref{initsys-outer2DX} is conservative, $\Upsilon_0^{[0]}=0$.
\end{proposition}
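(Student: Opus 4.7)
The plan is to recognize the integrand of $\Upsilon_0^{[0]}$ as a total derivative along the unperturbed heteroclinic and to show that the resulting boundary contribution vanishes by symmetry of the problem at $r=0$.

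First, I would combine~\eqref{defcoefMelnikov} with the specialisation of~\eqref{avgFout} to the conservative regime $\coef=1$, $\param=0$ to write
$$\Upsilon_0^{[0]} = \delta^p \int_{-\infty}^{+\infty} \frac{F^{[0]}(0)(w) + \tfrac{2}{b}\hetz(w)\,H^{[0]}(0)(w)}{\cosh^2 w}\,dw,$$
where, by~\eqref{notationFGHbis}, $F^{[0]}(0)(u)=\Fb^{[0]}(\delta\hetr(u),\delta\hetz(u),\delta,0)$ and similarly for $H^{[0]}(0)$. The key structural input is that in the conservative case the full system~\eqref{syspolar} is divergence-free in the ``symplectic'' cylindric variables $(r,\theta,z)$: conservativity of $X^*$ is inherited through the normal form and through the linear scalings of Section~\ref{subsecpreliminary}, and the change~\eqref{cylindriccoordsunpertub} has Jacobian identically $1$ so it preserves the Euclidean volume form. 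The unperturbed part of~\eqref{syspolar} is already divergence-free precisely when $\coef=1$ and $\param=0$, hence so is $(\Fb,\Gb,\Hb)$. Averaging in $\theta$ annihilates the $\Gb$ contribution and gives, with $\tilde F(r,z):=\Fb^{[0]}(\delta r,\delta z,\delta,0)$ and $\tilde H(r,z):=\Hb^{[0]}(\delta r,\delta z,\delta,0)$,
$$\partial_r \tilde F(r,z) + \partial_z \tilde H(r,z) = 0,$$
so the Poincar\'e lemma on $\mathbb{R}^2$ produces an analytic stream function $\Psi(r,z)$ with $\tilde F=\partial_z\Psi$ and $\tilde H=-\partial_r\Psi$.

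Next, using $\hetr'(u)=-2\hetr(u)\hetz(u)$ together with the identity $1-\hetz^2(u)=b\hetr(u)$, which holds along~\eqref{hetr}--\eqref{hetz} for $\coef=1$, a direct computation gives
$$\frac{d}{du}\Psi(\hetr(u),\hetz(u)) = 2\hetr(u)\hetz(u)\tilde H(\hetr(u),\hetz(u)) + b\hetr(u)\tilde F(\hetr(u),\hetz(u)).$$
Since $1/\cosh^2 u = b\hetr(u)$, the right-hand side is exactly the integrand above. Integrating from $-\infty$ to $+\infty$ then telescopes $\Upsilon_0^{[0]}$ into the boundary expression
$$\Upsilon_0^{[0]} = \delta^p\big[\Psi(0,1) - \Psi(0,-1)\big],$$
since $\hetr(\pm\infty)=0$ and $\hetz(\pm\infty)=\pm 1$.

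The last step is to show that this boundary term is zero. By definition~\eqref{defFGH} the first row of the conversion matrix carries the factor $\sqrt{2r}$, hence $\Fb(0,\theta,Z,\delta,0)\equiv 0$ and consequently $\tilde F(0,z)\equiv 0$ for all $z$. This forces $\partial_z\Psi(0,z)\equiv 0$, so $\Psi$ is constant along the entire $z$-axis and $\Psi(0,1)=\Psi(0,-1)$, yielding $\Upsilon_0^{[0]}=0$. The only genuinely delicate point in this plan is to confirm that the conservativity assumption really survives both the normal form procedure and the scalings of Section~\ref{subsecpreliminary}; once this structural inheritance is in hand, the rest is a Stokes-style identification of a Melnikov average with the variation of a potential between two critical points.
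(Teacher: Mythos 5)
Your argument is correct and reaches the result by the same core mechanism as the paper: use conservativity (through the averaged divergence-free identity $\partial_r\tilde F+\partial_z\tilde H=0$) to recognize the Melnikov average integrand as a total $w$-derivative along the unperturbed heteroclinic, then telescope. The packaging differs slightly. The paper builds an explicit first integral of the \emph{full} averaged radial system,
$\mathcal{U}(r,z)=-r+br^2+rz^2+\delta^p\int_0^r\Hb^{[0]}(\delta s,\delta z,\delta)\,ds$,
which vanishes identically on $\{r=0\}$, so the boundary values $\mathcal{U}(0,\pm1)=0$ are immediate and no extra observation is needed. You instead invoke the Poincar\'e lemma to produce a stream function $\Psi$ for the averaged perturbation alone, and then must close the boundary gap with the additional (correct) remark that $\tilde F(0,z)\equiv 0$ (because $\Fb$ carries the factor $\sqrt{2r}$ in~\eqref{defFGH}), which forces $\partial_z\Psi(0,z)\equiv 0$ and hence $\Psi(0,1)=\Psi(0,-1)$. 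Your version is marginally more abstract but trades the explicit antiderivative for one extra structural observation; both are sound, and the underlying calculation of $\tfrac{d}{du}$ of the antiderivative along $(\hetr,\hetz)$, using $\hetr'=-2\hetr\hetz$ and $1-\hetz^2=b\hetr$ for $\coef=1$, is identical. Also note your checking that conservativity survives the scalings and the divergence-free change~\eqref{cylindriccoordsunpertub} is exactly the point the paper records when it states that system~\eqref{syspolar_average} is still conservative.
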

\begin{proof}
We consider the system
\begin{equation}\label{syspolar_average}
 \begin{aligned}
\frac{dr}{dt}&=\displaystyle -2rz+\delta^p\Fb^{[0]}(\delta r,\delta z,\delta),\\
\frac{d\theta}{dt}&=\displaystyle-\frac{\alpha}{\delta}-cz+\delta^p\Gb^{[0]}(\delta r,\delta z,\delta),\\
\frac{dz}{dt}&=\displaystyle-1+2br+z^2+\delta^p\Hb^{[0]}(\delta r,\delta z,\delta).
 \end{aligned}
\end{equation}
As system~\eqref{initsys-outer2DX} is conservative, system~\eqref{syspolar_average} is still conservative and one has:
\begin{equation}\label{divergence_average}
 \partial_r \Fb^{[0]}(\delta r,\delta z,\delta)=-\partial_z \Hb^{[0]}(\delta r,\delta z,\delta).
\end{equation}
Using~\eqref{divergence_average} one can easily see that system~\eqref{syspolar_average} has the following first integral:
$$
 \mathcal{U}(r,z)=-r+br^2+rz^2+\delta^p\int_0^r\Hb^{[0]}(\delta s,\delta z,\delta)ds.
$$
Note that, using definition~\eqref{defcoefMelnikov} of $\Upsilon_0^{[0]}$, with $\mathcal{F}^{[0]}(0)$ in~\eqref{avgFout},
and property~\eqref{divergence_average}:
$$
 \Upsilon_0^{[0]}=\int_{-\infty}^{+\infty}\frac{\mathcal{F}^{[0]}(0)(w)}{\cosh^2w}dw =
- \int_{-\infty}^{+\infty} \frac{d}{dw}\left(\mathcal{U}(\hetr(w),\hetz(w))\right) dw.
$$
Then, we have:
$$
\Upsilon_0^{[0]}=-\lim_{t\to\infty}\left[\mathcal{U}(\hetr(t),\hetz(t))-\mathcal{U}(\hetr(-t),\hetz(-t))\right].
$$
Noting that $(\hetr(\pm t),\hetz(\pm t))\to(0,\pm 1)$ as $t\to\pm\infty$ and that $\mathcal{U}(0,\pm 1)=0$, we obtain $\Upsilon_0^{[0]}=0$.
\end{proof}

Now we will prove that $\Upsilon^{[0]}=0$. This proof is more involved and requires some previous considerations. We shall use the fact that, in the conservative setting, the 2-dimensional invariant manifolds of $S_+$ and $S_-$ always intersect. This can be seen using standard arguments of volume preservation. Let us introduce some notation concerning this intersection. We fix $\theta_0\in[0,2\pi)$ and consider the following plane:
$$\Sigma_{\theta_0}=\{(x,y,z)\in\mathbb{R}^3\,:\, x\sin\theta_0-y\cos\theta_0=0\}.$$
We define $p_1$ as the first common intersection of the $2-$dimensional invariant manifolds of $S_+$ and $S_-$ contained in the section $\Sigma_{\theta_0}$. This point $p_1$ is $\mathcal{O}(\delta^{p+3})$-close to $(\frac{1}{b}\cos\theta_0,\frac{1}{b}\sin\theta_0,0)$, which is the first intersection in the unperturbed case. The orbit of $p_1$, namely:
\begin{equation}\label{orbitp1}
\Gamma_{p_1}:=\{\varphi_t(p_1),\,t\in\mathbb{R}\},
\end{equation}
where $\varphi_t$ stands for the flow the vector field~\eqref{initsys-outer2D}, is a heteroclinic orbit and for small $\delta$ it intersects many times the section $\Sigma_{\theta_0}$. We define:
$$t_2=\min\{t>0\,:\,\varphi_t(p_1)\in\Sigma_{\theta_0}\},\quad p_2=\varphi_{t_2}(p_1),$$
and:
$$t_3=\min\{t>t_2\,:\,\varphi_t(p_1)\in\Sigma_{\theta_0}\},\quad p_3=\varphi_{t_3}(p_1).$$
\begin{remark}\label{rmkanglep3}
Note that, $\dot\theta<0$ provided that $\delta$ is sufficiently small. Indeed, this can be easily seen since
$\dot{\theta} = -\alpha /\delta - c z +\delta^p G(\delta r, \theta, \delta z, \delta,\delta \param)$. Then $p_2$ has angular variable
$\theta_0-\pi$ and $p_3$ has angular variable $\theta_0-2\pi$.
\end{remark}

We define $z_i$ and $u_i$ as:
\begin{equation}\label{defzui}
z_i=\pi_z(p_i),\qquad \vu_i=\hetz^{-1}(z_i)=\mathrm{atanh}\,(z_i),\qquad i=1,2,3.
\end{equation}
with $\pi_z$ the projection on the third component. See Figure~\ref{figureT1}.
\begin{figure}
        \centering
        \begin{subfigure}[b]{0.45\textwidth}
	  \centering
		\includegraphics[width=5.5cm]{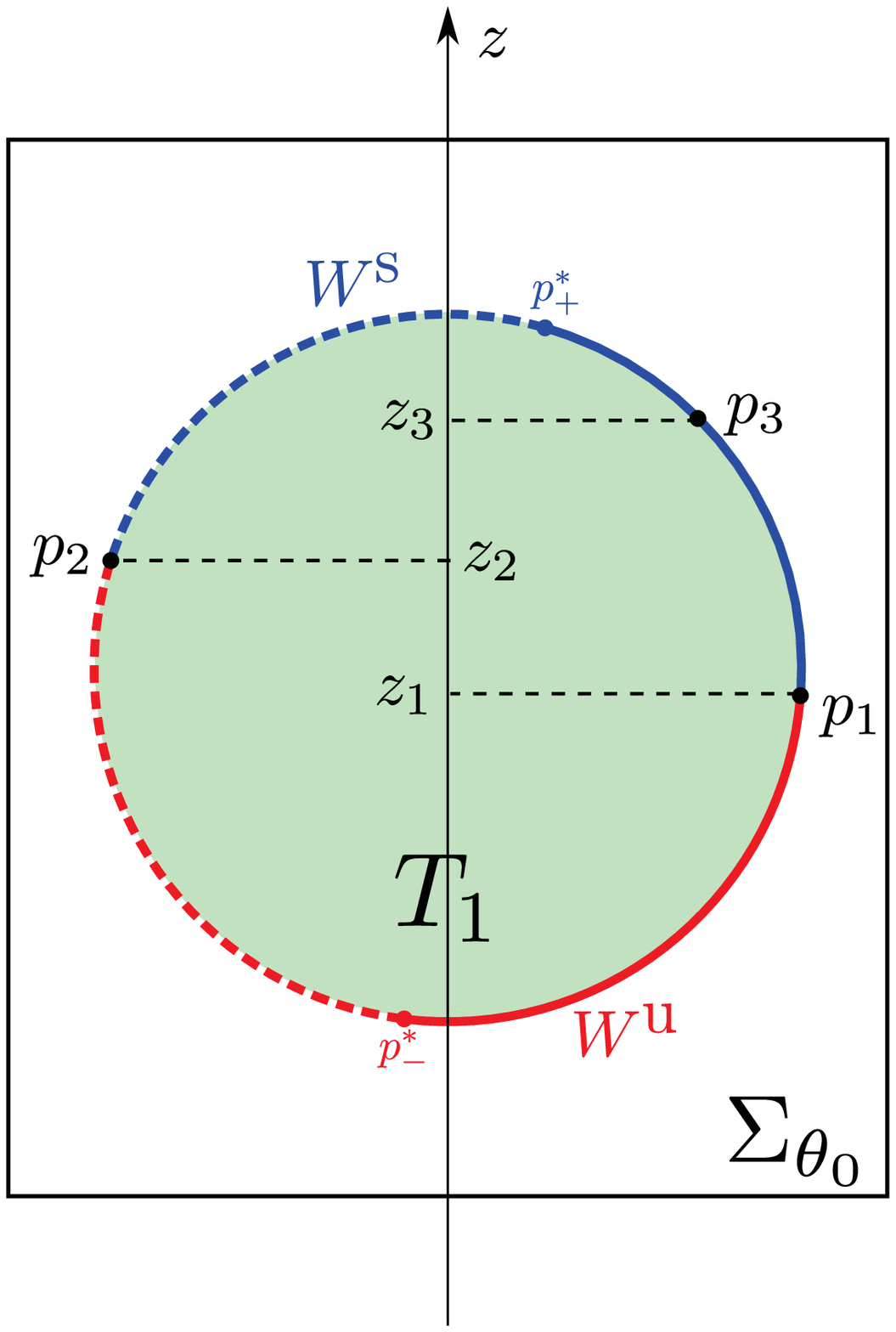}
		\caption{The domain $T_1$ on the section $\Sigma_{\theta_0}$.}
               \label{figureT1}
        \end{subfigure}%
         \qquad
        \begin{subfigure}[b]{0.45\textwidth}
			
	  \centering
		\includegraphics[width=5.5cm]{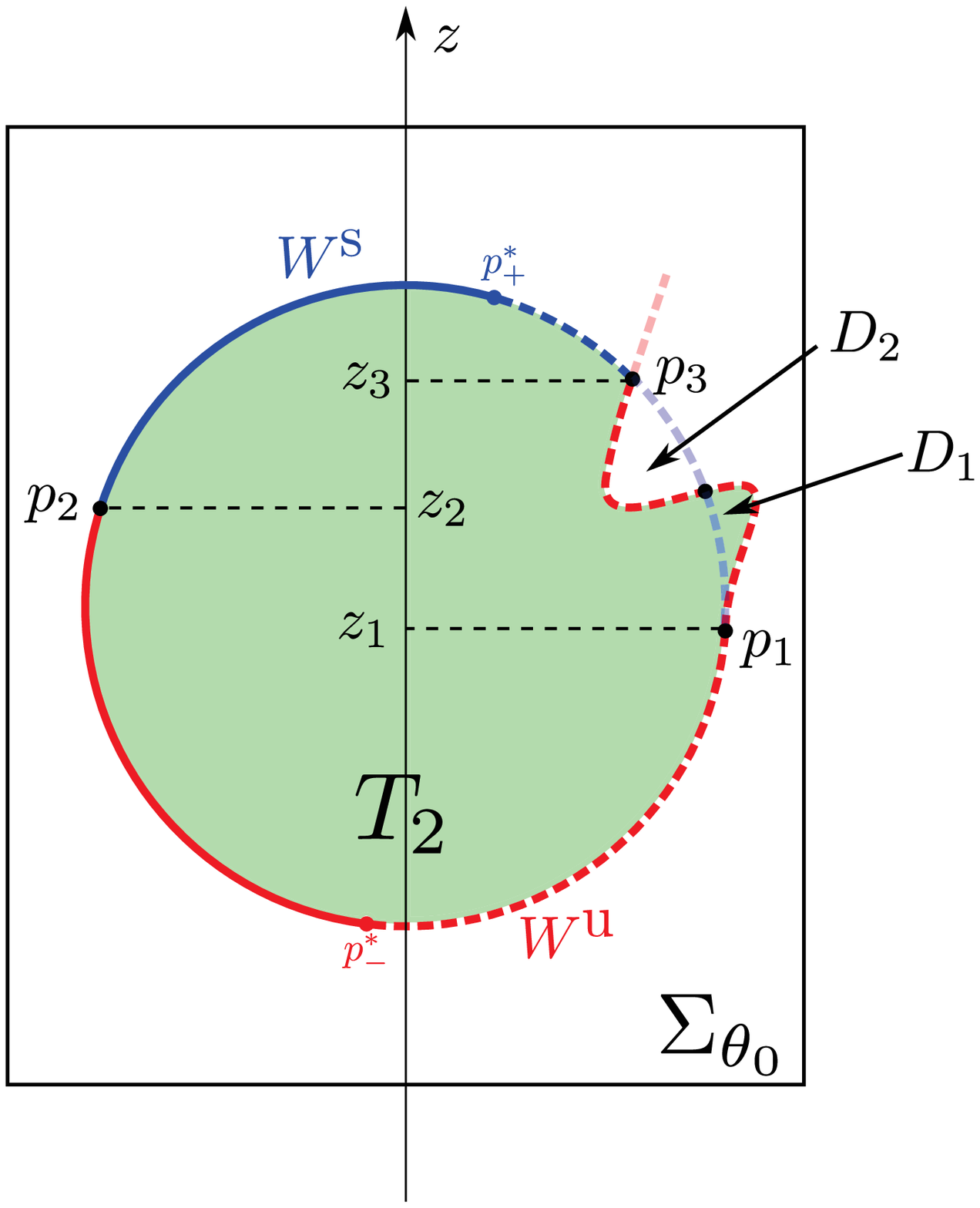}
		\caption{The domain $T_2$ on the section $\Sigma_{\theta_0}$.}
               \label{figureT2}
       \end{subfigure}
       \caption[The domains $T_1$ and $T_2$]{The domains $T_1$ and $T_2$. In red, the unstable manifold of $S_-$, and in blue the stable manifold of $S_+$.
			The continuous (respectively, discontinuous) lines on the left are mapped to the continuous (discontinuous) lines on the right with the same color
			via the flow $\phi$.}\label{figureT1T2}
\end{figure}

We point out that with this notation we can write:
$$\Delta(\vu_i,\theta_0)=0,\qquad i=1,2,3,$$
where as usual $\Delta(\vu,\theta)=r^\uns(\vu,\theta)-r^\sta(\vu,\theta)$.

\begin{lemma}\label{lemu1u3}
Let $\vu_1$ and $\vu_3$ be defined as in~\eqref{defzui}. Define:
$$\tau^*=\xi(\vu_1,\theta_0)=\theta_0+\delta^{-1}\alpha\vu_1+c\log\cosh\vu_1+C(\vu_1,\theta_0),$$
where $\xi(\vu,\theta)$ and $C(\vu,\theta)$ are the functions given in Theorem~\ref{thmdifpartsolinjective}. Then:
$$\xi(\vu_3,\theta_0)=\theta_0+\delta^{-1}\alpha\vu_3+c\log\cosh\vu_3+C(\vu_3,\theta_0)=\tau^*+2\pi.$$
\end{lemma}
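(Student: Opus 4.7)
The strategy I propose rests on a single mechanical fact: the function $\xi(\vu,\theta)$ given in~\eqref{defxitheorem} is, by construction, a solution of the homogeneous PDE~\eqref{PDE_k}, hence a first integral of its characteristic vector field. I would prove that on the common heteroclinic orbit $\Gamma_{p_1}$, where $W^{\uns}(S_-)$ and $W^{\sta}(S_+)$ coincide, this characteristic vector field is nothing but the pushforward of the original ODE flow to $(\vu,\theta)$-variables. Consequently $\xi$ is preserved along $\Gamma_{p_1}$, and the full drop in $\theta$ between $p_1$ and $p_3$ will translate to a jump of exactly $2\pi$ in $\xi$ via the shift property inherited from the $2\pi$-periodicity of $C$ in $\theta$.

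\textbf{Step 1: identification of vector fields on $\Gamma_{p_1}$.} The characteristic system for~\eqref{PDE_k} is
\begin{equation*}
\frac{d\vu}{ds}=1-l_2(\vu,\theta),\qquad\frac{d\theta}{ds}=-\frac{\alpha}{\delta}-c\hetz(\vu)-l_3(\vu,\theta).
\end{equation*}
On the other hand, parameterizing the invariant manifolds by $(\vu,\theta)$ with $z=\hetz(\vu)$ and $r=\hetr(\vu)+r_1^{\uns,\sta}(\vu,\theta)$, the ODE~\eqref{syspolar} becomes, using $dz/d\vu=\coef(1-\hetz^2)$ together with the unperturbed identity $-1+2b\hetr+\hetz^2=\coef(1-\hetz^2)$,
\begin{equation*}
\frac{d\vu}{dt}=1+\frac{2br_1^{\uns,\sta}+\delta^p H(r_1^{\uns,\sta})}{\coef(1-\hetz^2(\vu))},\qquad\frac{d\theta}{dt}=-\frac{\alpha}{\delta}-c\hetz(\vu)+\delta^p G(r_1^{\uns,\sta}).
\end{equation*}
On $\Gamma_{p_1}$ one has $r_1^{\uns}=r_1^{\sta}$, so $r_\lambda$ is independent of $\lambda$ and the integrals in~\eqref{defl2} and~\eqref{defl3} collapse to $l_2=-(2br_1^\uns+\delta^p H(r_1^\uns))/(\coef(1-\hetz^2))$ and $l_3=-\delta^p G(r_1^\uns)$. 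Substituting these expressions shows that on $\Gamma_{p_1}$ we have $1-l_2=d\vu/dt$ and $-\delta^{-1}\alpha-c\hetz-l_3=d\theta/dt$, so (with the reparametrization $s=t$) the orbit is a characteristic curve of~\eqref{PDE_k}.

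\textbf{Step 2: invariance of $\xi$ and the $2\pi$-shift.} Since $\xi$ satisfies~\eqref{PDE_k}, it is constant along any characteristic; in particular $\xi$ is constant along $\Gamma_{p_1}$. By Remark~\ref{rmkanglep3}, $\dot\theta<0$ along the orbit, so lifting $\theta$ continuously to $\mathbb{R}$, as the flow carries $p_1$ to $p_3$ the angular coordinate decreases monotonically from $\theta_0$ to $\theta_0-2\pi$, while the $\vu$-coordinate evolves from $\vu_1$ to $\vu_3$. Invariance of $\xi$ along the trajectory gives
\begin{equation*}
\xi(\vu_1,\theta_0)=\xi(\vu_3,\theta_0-2\pi).
\end{equation*}
Finally, formula~\eqref{defxitheorem} together with the $2\pi$-periodicity of $C(\vu,\theta)$ in $\theta$ yields $\xi(\vu,\theta+2\pi)=\xi(\vu,\theta)+2\pi$. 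Applying this to the right-hand side we obtain $\xi(\vu_3,\theta_0)=\xi(\vu_3,\theta_0-2\pi)+2\pi=\tau^*+2\pi$, as claimed.

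The main technical obstacle is Step~1: the algebraic verification that, when $r_1^\uns$ and $r_1^\sta$ agree, the seemingly ad hoc coefficients $l_2,l_3$ arising from the mean-value theorem collapse precisely to those that make the characteristic field of the abstract PDE~\eqref{PDE_k} coincide with the genuine ODE flow on $\Gamma_{p_1}$. Everything else is then a clean consequence of the first-integral property and the shift relation for $\xi$.
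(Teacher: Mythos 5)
Your argument is correct, and it takes a genuinely different (dual) route from the paper's. The paper goes from $\xi$ to the orbit: it defines $u(s)$ implicitly by $\xi(u(s),\theta_0-2\pi s)=\tau^*$, uses $\tilde k(\tau^*)=0$ (from $\Delta(\vu_1,\theta_0)=0$) to conclude $\Delta(u(s),\theta_0-2\pi s)=0$ for all $s$, so the constructed curve $\gamma_{\rm h}(s)$ lies in $W^{\uns}(S_-)\cap W^{\sta}(S_+)$ and is therefore a piece of $\Gamma_{p_1}$, and finally identifies $u(1)=\vu_3$ from Remark~\ref{rmkanglep3} and finishes with the $2\pi$-periodicity of $C$. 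You go from the orbit to $\xi$: you verify that on $\Gamma_{p_1}$, where $r_1^{\uns}=r_1^{\sta}$, the mean-value-theorem coefficients $l_2,l_3$ from~\eqref{defl2}--\eqref{defl3} collapse exactly so that the $(\vu,\theta)$-projection of the genuine ODE flow on the heteroclinic orbit is an integral curve of the characteristic field of~\eqref{PDE_k}, whence $\xi$ is constant along it; then the same shift identity $\xi(\vu,\theta+2\pi)=\xi(\vu,\theta)+2\pi$ concludes. Your Step~1 carries out an algebraic verification the paper sidesteps, in exchange for making transparent \emph{why} $\xi$ is preserved along heteroclinic orbits: on the intersection, the abstract homogeneous PDE~\eqref{PDE_k} is literally the angular part of the invariance equation, so its characteristics are the flow lines. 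The paper's route avoids that computation but leans on the (tacitly assumed) fact that a smooth curve through $p_1$ contained in $W^{\uns}\cap W^{\sta}$ must agree with $\Gamma_{p_1}$. Both approaches require the piece of $\Gamma_{p_1}$ between $p_1$ and $p_3$ to remain inside the domain of validity of the $(\vu,\theta)$-parameterization, which is ensured because $\dot\theta\approx-\alpha/\delta$ makes the transit time, and hence the excursion of $\vu$ from $\vu_1\approx 0$, of order $\mathcal{O}(\delta)$.
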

\begin{proof}
Let $s_0>1$. For any $s\in[-s_0,s_0]$, we define $u=u(s)$ as the (unique) solution
of:
%\begin{equation}\label{equs}
%\xi(\vu(s),\theta_0-2\pi s)=\theta_0-2\pi s+\delta^{-1}\alpha\vu(s)+c\log\cosh\vu(s)+C(\vu(s),\theta_0-2\pi s)=\tau^*.
%\end{equation}
\begin{equation}\label{equs}
\xi(\vu(s),\theta_0-2\pi s)=\tau^*.
\end{equation}
The fact that equation~\eqref{equs} has a unique solution for all $s\in[-s_0,s_0]$ if
$\delta$ is sufficiently small can be seen, for instance, by the implicit function theorem.
By definition of $\tau^*$, the unique solution at $s=0$ is $u(0)=u_1$.

Now, since $\Delta(\vu_1,\theta_0)=0$ and $\Delta(\vu,\theta)=\cosh^2(\vu)(1+P_1(\vu,\theta))\tilde k(\xi(\vu,\theta))$ by Theorem~\ref{thmdifpartsolinjective}, using  that $\cosh(\vu_1)\neq0$ and that $P_1$ is small we have:
$$0=\tilde k(\xi(\vu_1,\theta_0))=\tilde k(\tau^*)=\tilde k(\xi(\vu(s),\theta_0-2\pi s)).$$
Thus $\Delta(\vu(s),\theta_0-2\pi s)=0$. Hence defining
$$r_{\rm h}(s):=r^\uns(\vu(s),\theta_0-2\pi s)=r^\sta(\vu(s),\theta_0-2\pi s),$$
we have that the curve:
$$\gamma_{\rm h}(s):=(r_{\rm h}(s),\theta_0-2\pi s,\hetz(\vu(s)),\qquad s\in[-s_0,s_0],$$
is part of a heteroclinic orbit expressed in the symplectic polar coordinates. Since $u(0)=u_1$ and $p_1$ in these coordinates is
$(r^\uns(u_1,\theta_0),\theta_0,\hetz(u_1))=\gamma_{\rm h}(0)$, clearly $\gamma_{\rm h}(s)$ is a part of the heteroclinic orbit
$\Gamma_{p_1}$, defined in~\eqref{orbitp1}.

Taking $s=1$, we obtain the point in $\Gamma_{p_1}$ with angular variable $\theta_0-2\pi$. By Remark~\ref{rmkanglep3},
this point is precisely $p_3$. This implies that $u(1)=u_3$, and then equation~\eqref{equs} yields.
$$\theta_0-2\pi+\delta^{-1}\alpha\vu_3+c\log\cosh\vu_3+C(\vu_3,\theta_0-2\pi)=\tau^*,$$
and since $C(\vu,\theta)$ is $2\pi$ periodic in $\theta$ we obtain:
$$\theta_0+\delta^{-1}\alpha\vu_3+c\log\cosh\vu_3+C(\vu_3,\theta_0)=\tau^*+2\pi.$$
\end{proof}

\begin{lemma}\label{lemupsilon0change}
Let $\vu_1$ and $\vu_3$ be the $u-$coordinate of the heteroclinic points $p_1,p_3\in\Sigma_{\theta_0}$ respectively, defined in~\eqref{defzui}. Let $\Upsilon^{[0]}$ be the average of the function $\tilde k(\tau)$ given in Theorem~\ref{thmdifpartsolinjective}. Then one has:
$$\Upsilon^{[0]}=\frac{1}{2\pi}\int_{u_1}^{u_3}\frac{\Delta(\vu,\theta_0)}{\cosh^2(\vu)(1+P_1(\vu,\theta_0))}\left(\delta^{-1}\alpha+c\hetz(\vu)+\partial_\vu C(\vu,\theta_0)\right)du.$$
\end{lemma}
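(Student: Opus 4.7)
The plan is to express the average of the periodic function $\tilde{k}$ as an integral over $u$ via the change of variable $\tau = \xi(u,\theta_0)$ for the fixed angle $\theta_0$, and then to use Lemma~\ref{lemu1u3} to identify the limits of integration with $u_1$ and $u_3$.

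First I would start from the definition of the average and use that $\tilde{k}$ is $2\pi$-periodic, so that for any $\tau^*\in\mathbb{R}$,
\[
\Upsilon^{[0]}=\frac{1}{2\pi}\int_{\tau^*}^{\tau^*+2\pi}\tilde{k}(\tau)\,d\tau.
\]
Then I would take the specific choice $\tau^*=\xi(u_1,\theta_0)$ introduced in Lemma~\ref{lemu1u3} and perform the change of variable $\tau=\xi(u,\theta_0)$. In the conservative setting $\coef=1$, so differentiating~\eqref{defxitheorem} with respect to $u$ gives
\[
\partial_u \xi(u,\theta_0)=\delta^{-1}\alpha + c\hetz(u) + \partial_u C(u,\theta_0),
\]
where I used $c\coef^{-1}(\log\cosh(\coef u))' = c\tanh(u)=c\hetz(u)$. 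For $\delta$ small, the leading term $\delta^{-1}\alpha$ dominates the $\mathcal{O}(1)$ contributions coming from $c\hetz(u)$ and the bound on $\partial_u C$ from Theorem~\ref{thmdifpartsolinjective}, so $u\mapsto \xi(u,\theta_0)$ is strictly increasing on the real line and provides a legitimate change of variable.

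Next I would apply Lemma~\ref{lemu1u3} which guarantees precisely $\xi(u_1,\theta_0)=\tau^*$ and $\xi(u_3,\theta_0)=\tau^*+2\pi$, so that the limits of integration become $u_1$ and $u_3$:
\[
\Upsilon^{[0]}=\frac{1}{2\pi}\int_{u_1}^{u_3}\tilde{k}(\xi(u,\theta_0))\bigl(\delta^{-1}\alpha + c\hetz(u) + \partial_u C(u,\theta_0)\bigr)\,du.
\]
Finally, I would use the representation of $\Delta$ provided by Theorem~\ref{thmdifpartsolinjective}: since $\coef=1$,
\[
\tilde{k}(\xi(u,\theta_0)) = \frac{\Delta(u,\theta_0)}{\cosh^{2}(u)\bigl(1+P_1(u,\theta_0)\bigr)},
\]
(which is well defined because $1+P_1\neq 0$ by the estimate~\eqref{boundP1-prop} and $\cosh u\neq 0$ for real $u$), yielding the claimed identity.

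The only non-trivial point is the strict monotonicity of $u\mapsto\xi(u,\theta_0)$, which ensures that the change of variable is bijective from $[u_1,u_3]$ onto $[\tau^*,\tau^*+2\pi]$; this, however, follows immediately from the smallness of $\partial_u C$ compared to $\delta^{-1}\alpha$ given by~\eqref{boundLchi-prop} and the fact that $|c\hetz(u)|\le c$ on $\mathbb{R}$, so I do not expect a genuine obstacle.
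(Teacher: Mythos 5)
Your proposal is correct and follows essentially the same route as the paper: write $\Upsilon^{[0]}$ as $\frac{1}{2\pi}\int_{\tau^*}^{\tau^*+2\pi}\tilde k(\tau)\,d\tau$ (using $2\pi$-periodicity), change variables $\tau=\xi(u,\theta_0)$, invoke Lemma~\ref{lemu1u3} to identify the endpoints with $u_1,u_3$, and then substitute the expression for $\tilde k\circ\xi$ furnished by Theorem~\ref{thmdifpartsolinjective}. The one thing you add that the paper leaves implicit is the explicit verification that $u\mapsto\xi(u,\theta_0)$ is strictly increasing on $[u_1,u_3]$, justifying the change of variable; that is a worthwhile sanity check and follows, as you say, from $\partial_u C=\mathcal{O}(\delta^{p+2})$ and $|c\hetz|\le c$ being dominated by $\delta^{-1}\alpha$.
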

\begin{proof}
 It can be obtained straightforwardly from the fact that:
$$\Upsilon^{[0]}=\frac{1}{2\pi}\int_{\tau^*}^{\tau^*+2\pi}\tilde k(\tau)d\tau,$$
where $\tau^*=\theta_0+\delta^{-1}\alpha\vu_1+c\log\cosh\vu_1+C(\vu_1,\theta_0)$.
Indeed, one just has to perform the change $\tau=\theta_0+\delta^{-1}\alpha\vu+c\log\cosh\vu+C(\vu,\theta_0)$.
Then, recalling that by Theorem~\ref{thmdifpartsolinjective}:
$$\Delta(\vu,\theta_0)=\cosh^2(\vu)(1+P_1(\vu,\theta_0))\tilde k(\theta_0+\delta^{-1}\alpha u+c\log\cosh(\vu)+C(\vu,\theta_0)),$$
and that by Lemma~\ref{lemu1u3}:
$$\theta_0+\delta^{-1}\alpha\vu_3+c\log\cosh\vu_3+C(\vu_3,\theta_0)=\tau^*+2\pi,$$
one obtains the claim of the lemma.
\end{proof}

\begin{proposition}\label{propintegral0}
One has:
\begin{equation}\label{integralu}
\int_{u_1}^{u_3}\frac{\Delta(\vu,\theta_0)}{\cosh^2(\vu)}\left(\delta^{-1}\alpha+c\hetz(\vu)+l_3(\vu,\theta_0)\right)du=0.
\end{equation}
\end{proposition}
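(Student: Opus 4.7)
The plan is to reinterpret the integral as the flux of the vector field $X$ through a 2-dimensional ``lens'' on the Poincar\'e section $\Sigma_{\theta_0}$ and exploit that, in the conservative case, $X$ is divergence-free and tangent to the invariant manifolds $W^u$ and $W^s$.

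First I would rewrite the integrand. Using the definition~\eqref{defl3} of $l_3$, the substitution $\tilde r=(r_1^\uns+r_1^\sta)/2+\lambda\Delta/2$ (so that $d\tilde r=\frac{\Delta}{2}\,d\lambda$) converts the integral in $\lambda$ into one in $\tilde r$ between $r_1^\sta$ and $r_1^\uns$, yielding $l_3\Delta=-\delta^p\int_{r_1^\sta}^{r_1^\uns}G(\tilde r)(\vu,\theta_0)\,d\tilde r$. Combined with the $\tilde r$-constant contribution and the expression $\dot\theta=-\delta^{-1}\alpha-c\hetz(\vu)+\delta^p G(\tilde r)$ for the $\theta$-component of~\eqref{syspolar}, this gives
\[
\Delta(\vu,\theta_0)\bigl(\delta^{-1}\alpha+c\hetz(\vu)+l_3(\vu,\theta_0)\bigr)=-\int_{r^\sta(\vu,\theta_0)}^{r^\uns(\vu,\theta_0)}\dot\theta\,d\tilde r.
\]
Since $\coef=1$ in the conservative case, $\hetz'(\vu)=1/\cosh^2(\vu)$, so the factor $1/\cosh^2(\vu)$ in~\eqref{integralu} is precisely the Jacobian $dz/d\vu$, and the full integral becomes
\[
\int_{u_1}^{u_3}\frac{\Delta(\vu,\theta_0)}{\cosh^2(\vu)}\bigl(\delta^{-1}\alpha+c\hetz(\vu)+l_3(\vu,\theta_0)\bigr)\,du=-\int_{\mathcal S}\dot\theta\,dr\,dz=\int_{\mathcal S}\omega,
\]
where $\omega:=i_X(dr\wedge d\theta\wedge dz)$ restricts to $-\dot\theta\,dr\wedge dz$ on the half-plane $\{\theta=\theta_0\}$, and $\mathcal S$ is the 2D lens bounded by the arcs $\gamma^u=W^u\cap\{\theta=\theta_0,\,\vu\in[\vu_1,\vu_3]\}$ and $\gamma^s=W^s\cap\{\theta=\theta_0,\,\vu\in[\vu_1,\vu_3]\}$, which meet exactly at $p_1$ and $p_3$ since $\Delta(\vu_1,\theta_0)=\Delta(\vu_3,\theta_0)=0$.

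Next I would exploit the conservative structure. Because $dx\wedge dy\wedge dz=dr\wedge d\theta\wedge dz$ in symplectic cylindric coordinates, the conservative condition $\coef=1,\,\param=0$ is equivalent to $\operatorname{div}X=0$, hence $d\omega=0$. Moreover, as $X$ is tangent to $W^u$ and $W^s$, the form $i_X(\mathrm{vol})$ vanishes when pulled back to each of them, so $\omega|_{W^u}=\omega|_{W^s}=0$. Denote by $\Gamma_{13}$ the arc of the heteroclinic orbit $\Gamma_{p_1}$ going from $p_1$ to $p_3$. Since each of $W^u$ and $W^s$ is the 2-dimensional invariant manifold of a hyperbolic fixed point, and thus topologically an open disc, the loops $\gamma^u\cup(-\Gamma_{13})$ on $W^u$ and $\Gamma_{13}\cup(-\gamma^s)$ on $W^s$ are null-homologous and bound 2-chains $D^u\subset W^u$ and $D^s\subset W^s$ respectively. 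Then $\partial(D^u+D^s)=\gamma^u-\gamma^s=\partial\mathcal S$, so $\mathcal S-(D^u+D^s)$ is a 2-cycle in $\mathbb R^3$ and therefore bounds a 3-chain $V$ (as $H_2(\mathbb R^3)=0$). Stokes' theorem together with $d\omega=0$ gives
\[
\int_{\mathcal S}\omega-\int_{D^u}\omega-\int_{D^s}\omega=\int_{\partial V}\omega=\int_V d\omega=0,
\]
and the last two terms vanish because $\omega$ vanishes on $W^u$ and $W^s$. We conclude $\int_{\mathcal S}\omega=0$, which is the claim.

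The main obstacle is the topological construction of $D^u$ and $D^s$. In the $(\vu,\theta)$ parametrization, $W^u$ looks like a cylinder and the cycle $\gamma^u\cup(-\Gamma_{13})$ has nonzero winding number around the $\theta$-direction, so it does not bound a 2-chain in that cylinder; one must use that $W^u$ actually collapses the end $\vu\to-\infty$ to the single point $S_-$ (and $W^s$ collapses $\vu\to+\infty$ to $S_+$), making each manifold genuinely an open disc in which every loop bounds. Once this topological point is granted, the remainder of the argument is standard, relying only on the divergence theorem and the invariance of $W^{u,s}$ under the flow.
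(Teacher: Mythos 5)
Your proof is correct, and while it rests on the same two ingredients as the paper's — volume conservation ($\operatorname{div}X=0$, hence $d\omega=0$) and the tangency of $X$ to the invariant manifolds, which kills the pullback of $\omega$ to $W^{\uns}$ and $W^{\sta}$ — the 3-dimensional domain over which Stokes/divergence is applied is built in a genuinely different way. The paper's domain $V$ has two Poincaré-section caps $T_1,T_2\subset\Sigma_{\theta_0}$ (the second being the image of $\partial T_1$ under the first-return map) joined by a lateral flow-tube $T_3$; the integral to be killed then arises as $\iint_{T_1}-\iint_{T_2}$. You instead take the single cap $\mathcal{S}$, close it off with pieces $D^{\uns}\subset W^{\uns}$ and $D^{\sta}\subset W^{\sta}$ bounded by $\gamma^{\uns,\sta}$ and the heteroclinic arc $\Gamma_{13}$, and then invoke $H_2(\mathbb{R}^3)=0$ to fill in the resulting 2-cycle $\mathcal{S}-D^{\uns}-D^{\sta}$ with an abstract 3-chain. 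This homological route avoids the return-time bookkeeping of the paper's proof (well-definedness and continuity of $\tau(p)$, the sign accounting on $D_1,D_2$), at the cost of the topological observation — which you correctly flag as the crux — that $W^{\uns}(S_-)$ and $W^{\sta}(S_+)$ are globally topological discs because the parametrizing cylinder collapses at $\vu\to\mp\infty$ to the fixed point, so that the cycles $\gamma^{\uns}-\Gamma_{13}$ and $\Gamma_{13}-\gamma^{\sta}$, despite winding once in $\theta$, really do bound. One minor wording issue: saying ``$\coef=1$, $\param=0$ is equivalent to $\operatorname{div}X=0$'' overstates it — those parameter values are a consequence of the conservative hypothesis, which also constrains the higher-order terms.
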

\begin{proof}
Let us denote by $\tilde r^\uns(z,\theta):=r^\uns(\hetz^{-1}(z),\theta)$ the $r-$component of the unstable manifold of $S_-$ as a function of $z$ and $\theta$,
and similarly $\tilde r^\sta(z,\theta)$ for the stable manifold of $S_+$. We denote:
$$\tilde G(r,z)=\Gb(\delta r,\theta_0,\delta z,\delta),$$
where $\Gb$ is the function defined in~\eqref{defFGH} (recall that in the conservative case there is no dependence on the parameter $\param$).
We shall prove the following:
\begin{equation}\label{integralz}
 \int_{z_1}^{z_3}\int_{\tilde r^\sta(z,\theta_0)}^{\tilde r^\uns(z,\theta_0)}(\delta^{-1}\alpha+cz-\delta^p\tilde G(r,z))drdz=0,
\end{equation}
with $z_1$ and $z_3$ defined in~\eqref{defzui}.
This yields claim~\eqref{integralu}. Indeed, assume~\eqref{integralz} is true. Then we make the change:
$$
r=\tilde r_\lambda:=\frac{1}{2}(\tilde r^\uns(z,\theta_0)+\tilde r^\sta(z,\theta_0))+
\frac{\lambda}{2}(\tilde r^\uns(z,\theta_0)-\tilde r^\sta(z,\theta_0)),\quad\lambda\in[-1,1],
$$
and, denoting $\tilde \Delta(z,\theta)=\tilde r^\uns(z,\theta_0)-\tilde r^\sta(z,\theta_0)$, equation~\eqref{integralz} becomes:
$$ \int_{z_1}^{z_3}\left(\delta^{-1}\alpha+cz-\frac{1}{2}\int_{-1}^{1}\delta^p\tilde G(\tilde r_\lambda,z)d\lambda\right)\tilde \Delta(z,\theta_0)dz=0.$$
Then, we perform the change $z=\hetz(\vu)$ and recalling the definition~\eqref{defzui} of $u_1$ and $u_3$, and the definition~\eqref{defl3} of $l_3$
we obtain~\eqref{integralu}.

To prove~\eqref{integralz} we shall use basically that the system is divergence-free, and apply the divergence theorem in a suitable $3-$dimensional domain.
However, we first need to introduce some notation. Consider the intersection of the $2-$dimensional unstable manifold of $S_-$ and $\Sigma_{\theta_0}$.
The lower part of this intersection is a curve that joins $p_1$ and $p_2$, having a shape close to an arch of ellipse.
Similarly, if we consider the intersection of the $2-$dimensional stable manifold of $S_+$ and $\Sigma_{\theta_0}$, its upper part is a curve that also joins
$p_1$ and $p_2$, with a similar shape. We define $T_1\subset\Sigma_{\theta_0}$
as the domain bounded by these two curves (see Figure~\ref{figureT1}).

As in~\eqref{initsys-outer2DX} we denote by $X$ the vector field defining our system and $X_x$, $X_y$ and $X_z$ each of its components. We note that if $p\in\partial T_1$ and:
$$X_x(p)\sin\theta_0-X_y(p)\cos\theta_0\neq0$$
then there exists a unique $\tau(p)>0$ such that $\varphi_{\tau(p)}(p)$ is the next intersection of the orbit going through $p$ and $\Sigma_{\theta_0}$. This is clear from the fact that the orbits inside $W^\uns(S_-)$ are $\mathcal{O}(\delta)-$close to the orbits of the heteroclinic connection of the unperturbed system
for $t\in(-\infty,T]$, for some constant $T$, and the same happens for the orbits inside $W^\sta(S_+)$ and $t\in[T,\infty)$. Moreover there are just two points $p_-^*,p_+^*\in\Sigma_{\theta_0}$ (close to $S_-$ and $S_+$ respectively) such that:
$$X_x(p_\pm^*)\sin\theta_0-X_y(p_\pm^*)\cos\theta_0=0.$$
See Figure~\ref{figureT1}. For such points we can define $\tau(p_\pm^*)=0$. With this definition, the function $\varphi_{\tau(p)}(p)$
is continuous for $p\in\partial T_1$. Then we define $T_2\subset\Sigma_{\theta_0}$ (see Figure~\ref{figureT2}) as the domain bounded by $\partial T_2$, where:
$$\partial T_2=\{\varphi_{\tau(p)}(p)\,:\,p\in\partial T_1\}.$$
Finally we define:
$$T_3=\{\varphi_{t}(p)\,:\,p\in\partial T_1,\,t\in(0,\tau(p))\}.$$
We point out that $T_3$ is tangent to the flow of $X$. Moreover, $T_1$, $T_2$ and $T_3$ are the boundary of a closed $3-$dimensional domain. That is, there exists a closed domain $V\subset\mathbb{R}^3$ such that
$T_1\cup T_2\cup T_3=\partial V$. Now we use the divergence theorem in this domain $V$. Since $\textrm{div} X\equiv 0$ we have:
\begin{eqnarray}\label{divthm}
 0&=&\iiint_V\textrm{div}XdV=\iint_{\partial V}X\cdot\vec{n}_{\partial V}dS\nonumber\\
 &=&\iint_{T_1}X\cdot\vec{n}_{T_1}dS+\iint_{T_2}X\cdot\vec{n}_{T_2}dS+\iint_{T_3}X\cdot\vec{n}_{T_3}dS,
\end{eqnarray}
where $\vec{n}_{\partial V}$ denotes the unitary normal vector to $\partial V$ pointing outside $V$, and the same with $\vec{n}_{T_i}$, $i=1,2,3$.
Since $T_3$ is tangent to the flow, $X\cdot\vec{n}_{T_3}=0$ and moreover,
$\vec{n}_{T_1}=(-\sin\theta_0,\cos\theta_0,0)=-\vec{n}_{T_2}$.
Thus~\eqref{divthm} becomes:
\begin{equation}\label{difT1T2}
 0=\iint_{D_1}(X_x\sin\theta_0-X_y\cos\theta_0)dS -\iint_{D_2}(X_x\sin\theta_0-X_y\cos\theta_0)dS,
\end{equation}
where $D_1=T_2\setminus T_1$ and $D_2=T_1\setminus T_2$ (see Figure~\ref{figureT2}). We take the parameterization:
$$x=\sqrt{2r}\cos\theta_0,\quad y=\sqrt{2r}\sin\theta_0,\quad z=z$$
and we note that
\begin{align*}
X_x(\sqrt{2r}\cos\theta_0,\sqrt{2r}\sin\theta_0,z)\sin\theta_0&-X_y(\sqrt{2r}\cos\theta_0,\sqrt{2r}\sin\theta_0,z)\cos\theta_0\\
&=\sqrt{2r}\left(\delta^{-1}\alpha+cz-\delta^p\tilde G(r,z)\right).
\end{align*}
With this parameterization, equality~\eqref{difT1T2} yields~\eqref{integralz}.
\end{proof}

\begin{proof}[End of the proof of Theorem~\ref{thmktilde0} (conservative case)]
Proposition~\ref{propaveragemelni} says that $\Upsilon_0^{[0]}=0$. To see that $\Upsilon^{[0]}=0$ we note that from item 2. in
Theorem~\ref{thmdifpartsolinjective},
we choose $P_1$ such that:
$$\frac{\delta^{-1}\alpha+c\hetz(\vu)+\partial_\vu C(\vu,\theta_0)}{1+P_1(\vu,\theta_0)}=\delta^{-1}\alpha+c\hetz(\vu)+l_3(\vu,\theta_0).$$
Then, substituting this in the equality of Lemma~\ref{lemupsilon0change} we get:
$$\Upsilon^{[0]}=\frac{1}{2\pi}\int_{u_1}^{u_3}\frac{\Delta(\vu,\theta_0)}{\cosh^2(\vu)}\left(\delta^{-1}\alpha+c\hetz(\vu)+l_3(\vu,\theta_0)\right)du.$$
Finally, Proposition~\ref{propintegral0} yields that $\Upsilon^{[0]}=0$, and the proof is finished.
\end{proof}

\subsubsection{The dissipative case}
In this section we will prove the statements about the coefficients $\Upsilon^{[0]}_0$ and $\Upsilon^{[0]}$ in Theorem~\ref{thmktilde0}.
We have:
\begin{equation}\label{upsilon0}
\Upsilon^{[0]}=\frac{1}{2\pi}\int_0^{2\pi}\tilde k(\tau)d\tau.
\end{equation}
We perform the change $\tau=\theta+C(0,\theta)$ in the previous integral, where $C(\vu,\theta)$ is  the function in Theorem~\ref{thmdifpartsolinjective} and we use that by Theorem~\ref{thmdifpartsolinjective} we have:
$$\tilde k(\theta+\delta^{-1}\alpha u+c\coef^{-1}\log\cosh(\coef\vu)+C(\vu,\theta))=\frac{\Delta(\vu,\theta)}{\cosh^{2/\coef}(\coef\vu)(1+P_1(\vu,\theta))}.$$
After this change~\eqref{upsilon0} becomes:
\begin{equation}\label{upsilon0v2}
\Upsilon^{[0]}=\frac{1}{2\pi}\int_{\theta_1}^{\theta_2}\frac{\Delta(0,\theta)}{1+P_1(0,\theta)}(1+\partial_\theta C(0,\theta))d\theta,
\end{equation}
where, using bounds for $C(0,\theta)$ obtained in Theorem~\ref{thmdifpartsolinjective},
\begin{equation}\label{boundstheta12}
 \theta_1=0+\mathcal{O}\left(\delta^{p+3}\right)\qquad \theta_2=2\pi+\mathcal{O}\left(\delta^{p+3}\right).
\end{equation}
Now, on the one hand, by Theorem~\ref{thmoutloc} we have:
\begin{equation}\label{boundDeltarough}
|\Delta(0,\theta)|\leq |r_1^\uns(0,\theta)|+|r_1^\sta(0,\theta)|\leq K\delta^{p+3}.
\end{equation}
and recalling the notation $M(\vu,\theta)=r^\uns_{10}(\vu,\theta)-r^\sta_{10}(\vu,\theta)$:
$$
|\Delta(0,\theta)-M(0,\theta)|\leq |r^\uns_{11}(0,\theta)|+|r^\sta_{11}(0,\theta)|\leq K\left(\delta^{2p+6}+\delta^{p+4}\right),
$$
where we have used the bounds of $r_{11}^{\uns,\sta}(\vu,\theta)$ given in Theorem~\ref{thmoutloc}. On the other hand, by Theorem~\ref{thmdifpartsolinjective}:
\begin{equation}\label{boundpartial}
 |\partial_\theta C(0,\theta)|\leq K\delta^{p+3}, \qquad \left|\frac{1}{1+P_1(0,\theta)}-1\right|\leq K|P_1(0,\theta)|\leq K\delta^{p+3}.
\end{equation}
Thus, using bounds~\eqref{boundstheta12},~\eqref{boundDeltarough} and~\eqref{boundpartial} in equation~\eqref{upsilon0v2} we obtain:
\begin{equation}\label{upsilon0v3}
\Upsilon^{[0]}=\frac{1}{2\pi}\int_{0}^{2\pi}M(0,\theta)d\theta+\mathcal{O}(\delta^{p+4})=M^{[0]}(0)+\mathcal{O}(\delta^{p+4}),
\end{equation}
where we have used that $p\geq-2$.

We introduce the following notation:
\begin{align*}
I&=\frac{\coef+1}{b}\int_{-\infty}^{+\infty}\frac{1}{\cosh^{\frac{2}{\coef}+2}(\coef w)}dw,
\\J&=\delta^{-3}\int_{-\infty}^{+\infty}\frac{(F(0))^{[0]}+\frac{\coef+1}{b}\hetz(w)(H(0))^{[0]}}{\cosh^{\frac{2}{\coef}}(\coef w)}dw
\end{align*}
and observe that for all $w\in\mathbb{R}$:
$$|F(0)|=|\Fb(\delta\hetr(w),\theta,\delta\hetz(w),\delta,\delta\param)|\leq K\delta^{3}$$
and also $|H(0)|\leq K\delta^3$, so that $J$ is bounded as $\delta\to 0$.
Now, by formula~\eqref{defcoefMelnikovtilde} of $M^{[l]}(\vu)$ and expression~\eqref{avgFout} of $\Fout^{[0]}$, we get:
$M^{[0]}(0) = \param I + \delta^{p+3} J$. We rewrite~\eqref{upsilon0v3} as:
$$\Upsilon^{[0]}=\param I+\delta^{p+3}J+\mathcal{O}(\delta^{p+4}).$$
Then, putting $\param=\hat\param\delta^{p+3}$, we have that $\Upsilon^{[0]}=a_1\delta^{a_2}e^{-\frac{a_3\pi}{2\coef\delta}}$
if:
$$f(\hat\param,\delta):=\hat\param I+J+\mathcal{O}(\delta)-a_1\delta^{a_2-p-3}e^{-\frac{a_3\pi}{2\coef\delta}}=0.$$
It is clear that $I\neq0$, and thus:
$$f\left(-\frac{J}{I},0\right)=0,\qquad \frac{\partial f}{\partial\hat\param}\left(-\frac{J}{I},0\right)=I\neq0,$$
where we have used that $a_3>0$ so that the last term and all its derivatives vanish at $\delta=0$.  Then we can apply the implicit function theorem, so that there exists $\delta_0$ and a curve $\hat\param_*(\delta)=-J/I+\mathcal{O}(\delta)$ such that $f(\hat\param_*(\delta),\delta)=0$ for all $0\leq\delta\leq\delta_0$. The curve $\param_*(\delta):=\hat\param_*(\delta)\delta^{p+3}$ is the one in the statement of the lemma.

Clearly, since $\Upsilon_0^{[0]}=M^{[0]}(0)=\param I+\delta^{p+3}J$, one has:
$$
\Upsilon_0^{[0]}=\Upsilon_0^{[0]}(\delta,\param_*(\delta))=
\Upsilon_0^{[0]}\left(\delta,-\frac{J}{I}\delta^{p+3}+\mathcal{O}(\delta^{p+4})\right)=\mathcal{O}(\delta^{p+4}).
$$

\subsection{The exponentially smallness of $\Upsilon^{[l]}$. Proof of Lemma~\ref{lemUpsilonlexpsmall}} \label{subsec:sharpbound}
Let us to introduce the function
\begin{equation*}
F(\vu,\theta)=\delta \alpha^{-1}(\xi(\vu,\theta)-\theta)=\vu+\delta\alpha^{-1}\left[c\coef^{-1}\log\cosh(\coef\vu)+C(\vu,\theta)\right],
\end{equation*}
where $\xi$ and $C$ are defined in Theorem~\ref{thmdifpartsolinjective}.
In this result is proven that $(\xi(\vu,\theta),\theta)$
is injective in $\Doutinter\times\Tout$ then $(F(\vu,\theta),\theta)$ is also injective in the same domain.
In particular, for all $(\vu,\theta)\in\Doutinter\times\mathbb{S}^1$, the change $(w,\theta)=(F(u,\theta),\theta)$ is a diffeomorphism between $\Doutinter\times\mathbb{S}^1$ and its image $\Doutintertilde\times\mathbb{S}^1$, with inverse $(\vu,\theta)=(G(w,\theta),\theta)$. Then, if we define the function:
$$
\Diffw(w,\theta)=\sum_{l\in\mathbb{Z}}\Upsilon^{[l]}e^{il(\theta+\delta^{-1}\alpha w)}.
$$
one has that $G(w,\theta)$ satisfies:
\begin{equation}\label{DeltaDeltatilde-inner}
 \Diffw(w,\theta)=\frac{\Delta(G(w,\theta),\theta)}{\cosh^{2/\coef}(\coef G(w,\theta))(1+P_1(G(w,\theta),\theta))}.
\end{equation}
Note that $\Diffw(w,\theta)$ is $2\pi-$periodic in $\theta$, and its $l-$th Fourier coefficient is:
$$
\Diffw^{[l]}(w)=\Upsilon^{[l]}e^{il\delta^{-1}\alpha w}.
$$
Hence we know that for all $w\in \Doutintertilde$:
\begin{equation}\label{boundinterupsilons-inner}
\left|\Upsilon^{[l]}\right|=\frac{1}{2\pi}\left|e^{-i\delta^{-1}\alpha wl}
\int_0^{2\pi}\Diffw(w,\theta)e^{-il\theta}d\theta\right|
\leq\left|e^{-i\delta^{-1}\alpha wl}\right|\sup_{\theta\in\mathbb{S}^1}\left|\Diffw(w,\theta)\right|.
\end{equation}
This inequality is valid for all $\vw\in\Doutintertilde$. Let us denote $\vu_\pm=\pm i\left(\frac{\pi}{2\coef}-\dist\delta\right)$. Then, if in \eqref{boundinterupsilons-inner} we take $w=w_+:=F\left(u_+,\theta\right)\in\Doutintertilde$ for $l<0$ and $w=w_-:=F\left(u_-,\theta\right)\in\Doutintertilde$ for $l>0$, one obtains:
\begin{equation}\label{boundinterupsilons-inner-v2}
\left|\Upsilon^{[l]}\right|\leq e^{-\left(\frac{\alpha\pi}{2\coef\delta}-\alpha\dist-|\im C(\vu_\pm,\theta)|\right)|l|}
\sup_{\theta\in\mathbb{S}^1}\left|\Diffw(w_\pm,\theta)\right|.
\end{equation}
Recall that $F$ is the inverse of $G$, so that from \eqref{DeltaDeltatilde-inner} we obtain:
$$
\Diffw(w_\pm,\theta)=\frac{\Delta(\vu_\pm,\theta)}{\cosh^{2/\coef}(\coef\vu_\pm)(1+P_1(\vu_\pm,\theta))}.
$$
Thus, using bound~\eqref{boundP1-prop} for $P_1$, that $|\cosh (\coef \vu_\pm) |\geq K \delta \dist$, and taking $\dist$ sufficiently large,
bound~\eqref{boundinterupsilons-inner-v2} writes out as:
\begin{equation}\label{boundinterupsilons-supDelta}
\left|\Upsilon^{[l]}\right|\leq \frac{K}{\delta^{\frac{2}{\coef}}\dist^{\frac{2}{\coef}}}
e^{-\left(\frac{\alpha\pi}{2\coef\delta}-\alpha\dist-|\im C(\vu_\pm,\theta)|\right)|l|}\sup_{\theta\in\mathbb{S}^1}\left|\Delta(\vu_\pm,\theta)\right|.
\end{equation}

Now, on the one hand, taking into account that the constant $L_0$, given in Theorem \ref{thmdifpartsolinjective}, satisfies $L_0\in\mathbb{R}$, we have:
$$
|\im C(\vu_\pm,\theta)|\leq \coef^{-1}(c+\alpha L_0)|\im\log\cosh(\coef\vu_\pm)|+\alpha |L(\vu_\pm)|+|\chi(\vu_\pm,\theta)|.
$$
Since $\vu_\pm$ is purely imaginary, $\im\log\cosh(\coef u_\pm)=\arg(\cosh(\coef u_\pm))=0.$
Then, using~\eqref{boundLchi-prop} in Theorem \ref{thmdifpartsolinjective}, we obtain $|\im C(\vu_\pm,\theta)|\leq K \delta^{p+2}.$
Therefore:
\begin{equation}\label{boundexp-distC-l1}
 \left|e^{-\left(\frac{\alpha\pi}{2\coef\delta}-\alpha\dist-|\im C(\vu_\pm,\theta)|\right)}\right|\leq Ke^{-\frac{\alpha\pi}{2\coef\delta}+\alpha\dist}.
\end{equation}
Moreover, we take $\delta$ sufficiently small so that:
$$1-\frac{2\coef\delta}{\alpha\pi}\left(\alpha\dist+|\im C(\vu_\pm,\theta)|\right)\geq \frac{3}{4},$$
and then, for $|l|\geq2$, one has:
\begin{equation}\label{boundexp-distC-lgeq2}
 \left|e^{-\left(\frac{\alpha\pi}{2\coef\delta}-\alpha\dist-|\im C(\vu_\pm,\theta)|\right)|l|}\right|\leq e^{-\frac{\alpha\pi}{2\coef\delta}\frac{3|l|}{4}}.
\end{equation}
On the other hand, by Theorem \ref{thmoutloc} we have:
\begin{equation}\label{boundsupDeltatilde}
|\Delta(\vu_\pm,\theta)|\leq |r_1^\uns(\vu_\pm,\theta)|+|r_1^\sta(\vu_\pm,\theta)|\leq \frac{K\delta^{p+3}}{|\cosh(\coef\vu_\pm)|^3}\leq K\frac{\delta^{p}}{\dist^3}.
\end{equation}

To obtain the claim of the lemma for $|l|=1$, we use bounds \eqref{boundexp-distC-l1} and \eqref{boundsupDeltatilde} in equation \eqref{boundinterupsilons-supDelta}.
Similarly, for $|l|\geq2$ we use bounds~\eqref{boundexp-distC-lgeq2} and~\eqref{boundsupDeltatilde} in equation~\eqref{boundinterupsilons-supDelta}.

\subsection{Fourier coefficients of $\Delta_1$. Proof of Proposition~\ref{properrorUpsilons}}\label{secproofproperrorUpsilons}
Consider the function $\Delta_1(\vu,\theta)=\Delta(\vu,\theta)-\Delta_0(\vu,\theta)$ defined in~\eqref{Delta01Fourier}:
$$
\Delta_1(\vu,\theta)=\cosh^{2/\coef}(\coef\vu)(1+P_1(\vu,\theta))\sum_{l\neq0}\left(\Upsilon^{[l]}-\hat \Upsilon_0^{[l]}\right)e^{il \xi(\vu,\theta)},
$$
with $\xi(\vu,\theta)=\theta+\delta^{-1}\alpha\vu+\coef^{-1}\log\cosh(\coef\vu)+C(\vu,\theta)$ defined in~\eqref{defxitheorem} and
$\Upsilon^{[l]}, \hat \Upsilon_0^{[l]}$ the Fourier coefficients of $\tilde{k}$ and $\tilde{k}_0$ in~\eqref{defDelta} and~\eqref{defktilde0} respectively.
We point out that in order to obtain sharp bounds for $\Upsilon^{[\pm 1]}-\hat \Upsilon_0^{[\pm 1]}$ we need to take
$\vu\in\Doutinter\subset\mathbb{C}$ (see \eqref{Doutinter}), but $\theta$ can be taken real.
Thus, we will take $\theta\in\mathbb{S}^1$.

Proceeding as in beginning of the previous section~\ref{subsec:sharpbound}, one can prove the following bound for
$|\Upsilon^{[\pm 1]}-\hat \Upsilon^{[\pm 1]}_0|$
which similar to the one for $|\Upsilon^{[l]}|$ in~\eqref{boundinterupsilons-supDelta}:
$$
\left|\Upsilon^{[\mp 1]}-\hat \Upsilon^{[\mp 1]}_0\right|\leq \frac{K}{\delta^{\frac{2}{\coef}}\dist^{\frac{2}{\coef}}}
e^{-\left(\frac{\alpha\pi}{2\coef\delta}-\alpha\dist-|\im C(\vu_\pm,\theta)|\right)}\sup_{\theta\in\mathbb{S}^1}\left|\Delta_1(\vu_\pm,\theta)\right|,
$$
where $\vu_\pm=\pm i\left(\frac{\pi}{2\coef}-\dist\delta\right)$.
Using bound~\eqref{boundexp-distC-l1}, we obtain:
\begin{equation}\label{boundUpsilons1-1}
\left|\Upsilon^{[\mp 1]}-\hat \Upsilon^{[\mp 1]}_0\right|\leq \frac{K}{\delta^{\frac{2}{\coef}}\dist^{\frac{2}{\coef}}}
e^{-\frac{\alpha\pi}{2\coef\delta}+\alpha\dist}\sup_{\theta\in\mathbb{S}^1}\left| \Delta_1(\vu_\pm,\theta)\right|.
\end{equation}

We claim that exists a constant $K$ such that for all $\theta\in\mathbb{S}^1$:
\begin{equation}\label{boundDelta1rough}
|\Delta_1(\vu_\pm,\theta)|\leq K\left(\frac{\delta^{2(p+1)} |\log \dist|}{\dist^4}+\frac{\delta^{p+3}}{\dist}\right).
\end{equation}
Indeed, first we write $\Delta_1=\Delta-\Delta_0$ in a more adequate form.
We recall that, by definition~\eqref{defMelnikov} of the Melnikov function $M$,
$\Delta = M + r_{11}^{\uns}-\r_{11}^{\sta}$. Then,
\begin{equation*}
\Delta_1(\vu,\theta) = M(\vu,\theta)-\Delta_0(\vu,\theta) + r_{11}^{\uns}(\vu,\theta)-\r_{11}^{\sta}(\vu,\theta).
\end{equation*}
It is clear that, by Theorem~\ref{thmoutloc},
\begin{equation*}
|r_{11}^\uns(\vu_\pm,\theta)-r_{11}^\sta(\vu_\pm,\theta)|\leq K\left( \frac{\delta^{2(p+1)}}{\dist^4}+\frac{\delta^{p+3}}{\dist}\right),
\end{equation*}
which is smaller than the upper bound in~\eqref{boundDelta1rough}. Therefore, to prove~\eqref{boundDelta1rough},
it only remains to study the difference between $M$ and $\Delta_0$.

We introduce some notation:
\begin{equation}\label{defCtilde}
\begin{aligned}
F_0(\vu) = u+ \delta \alpha^{-1} c \coef^{-1} &\log\cosh (\coef u),\qquad \hat C(\vu,\theta) = C(\vu,\theta)- \alpha \coef^{-1} L_0 \delta^{p+2} \log (\delta) \\
&\hat F(\vu,\theta) = F_0(\vu) +\delta \alpha^{-1} \hat C(\vu,\theta).
\end{aligned}
\end{equation}
Notice that $F_0(\vu)$ is injective so that it has an inverse. We also introduce the function
\begin{equation*}
f(\vu,\theta) = F_0^{-1} (\hat F(\vu,\theta))
\end{equation*}
and we note that, since by  \eqref{formaCtheorem} in Theorem~\ref{thmdifpartsolinjective}, $|\hat C(\vu_\pm,\theta)|\leq K\delta^{p+2} |\log \dist|$
(see~\eqref{defCtilde})
\begin{equation}\label{boundfu}
|f(\vu_\pm,\theta) -u_\pm |\leq K\delta^{p+3} |\log \dist|.
\end{equation}
Now we rewrite $M(\vu,\theta)$ in~\eqref{melnifourier2} as
\begin{equation*}
 M(\vu,\theta)=\cosh^{\frac{2}{\coef}}(\coef\vu)\sum_{l\in\mathbb{Z}}\Upsilon_0^{[l]}e^{il(\theta+\delta^{-1}\alpha F_0(\vu))}
\end{equation*}
and we observe that
\begin{equation}\label{melnif}
M(f(\vu,\theta),\theta) = \cosh^{\frac{2}{\coef}}(\coef f(\vu,\theta))\sum_{l\in\mathbb{Z}}\Upsilon_0^{[l]}e^{il(\theta+\delta^{-1}\alpha \hat F(\vu,\theta))}.
\end{equation}
In addition, the function $\Delta_0$ in~\eqref{defDelta0} is:
$$
\Delta_0(\vu,\theta) = \cosh^{2/\coef}(\coef\vu)(1+P_1(\vu,\theta))\left (\Upsilon^{[0]} + \sum_{l \neq 0} \Upsilon^{[l]}_0 e^{il(\theta+\delta^{-1} \alpha \hat F(\vu,\theta))}\right )
$$
where we have used that $\hat \Upsilon^{[l]}_0 = \Upsilon^{[l]}_0 e^{-il \alpha \coef^{-1}  L_0 \delta^{p+2}\log \delta}$.
As a consequence
\begin{align}\label{descompDelta1}
M&(\vu,\theta) -\Delta_0(\vu,\theta) = M(\vu,\theta)-M(f(\vu,\theta),\theta) \notag\\
&+  \cosh^{2/\coef}(\coef f(\vu,\theta))\Upsilon_0^{[0]} -\cosh^{2/\coef}(\coef\vu)(1+P_1(\vu,\theta))\Upsilon^{[0]}
\\ &+ \left (\sum_{l \neq 0} \Upsilon^{[l]}_0 e^{il(\theta+\delta^{-1} \alpha \hat F(\vu,\theta))}\right ) \left (\cosh^{2/\coef}(\coef f(\vu,\theta)) -
\cosh^{2/\coef}(\coef\vu)(1+P_1(\vu,\theta))\right )\notag
\end{align}
We shall prove bound~\eqref{boundDelta1rough} bounding each term in~\eqref{descompDelta1}, with $\vu=\vu_\pm$.

Recall that $M=r_{10}^{\uns}-r_{10}^{\sta}$ so that, by Theorem~\ref{thmoutloc} and~\eqref{boundfu}
\begin{equation}\label{boundMMf}
\begin{aligned}
|M(\vu_\pm,\theta) -M(f(\vu_\pm,\theta),\theta) | &\leq   K\delta^{p+3} |\vu_\pm-f(\vu_\pm,\theta)| \sup_{\vu\in\Doutinter} \frac{1}{|\cosh(\coef\vu)|^4} \\
&\leq
|\log \kappa|\frac{\delta^{2p+2}}{\kappa^{4}}
\end{aligned}
\end{equation}
so this term satisfies bound~\eqref{boundDelta1rough}.

By Theorem~\ref{thmktilde0}, the terms involving $\Upsilon_0^{[0]}$ and $\Upsilon^{[0]}$ in~\eqref{descompDelta1} are zero in the conservative case.
In the dissipative case, since we take $\param=\param_*(\delta)$ these terms satisfy:
$$
|\cosh^{2/\coef}(\coef\vu_\pm)(1+P_1(\vu_\pm,\theta))\Upsilon^{[0]}|\leq K \delta^{a_2}e^{-\frac{a_3\pi}{2\coef\delta}},
\qquad \left|\cosh^{2/\coef}(\coef\vu_\pm)\Upsilon_0^{[0]}\right| \leq K\delta^{p+4}
$$
where we have used that, from Theorem~\ref{thmdifpartsolinjective},
$|P_1(\vu_\pm,\theta)|\leq K\delta^{p+2}\dist^{-1}$.

Finally we deal with the last term which (see~\eqref{melnif}) we rewrite as
$$
\Sigma(\theta):=\left (M(f(\vu_\pm ,\theta),\theta) - \cosh^{2/\coef}(\coef \vu_{\pm})\Upsilon_{0}^{[0]} \right ) \left [
1- \frac{\cosh^{2/\coef}(\coef \vu_{\pm})}{\cosh^{2/\coef} (\coef f(\vu_\pm,\theta))} (1+P_1(\vu_{\pm}\theta))\right ].
$$
We first note that, since $M=r_{10}^{\uns}-r_{10}^{\sta}$, using Theorem~\ref{thmoutloc} to bound $r_{10}^{\uns,\sta}$, Theorem~\ref{thmktilde0} to bound $\Upsilon_0^{[0]}$
and~\eqref{boundMMf} one has that
$$
|\Sigma(\theta)|\leq K\frac{\delta^{p}}{\kappa^3} \left |
1- \frac{\cosh^{2/\coef}(\coef \vu_{\pm})}{\cosh^{2/\coef} (\coef f(\vu_\pm,\theta))} (1+P_1(\vu_{\pm}\theta))\right |.
$$
Therefore, using bound~\eqref{boundfu} of $|f(\vu_\pm,\theta)-\vu_\pm|$, one has that
$$
\left |\left [\frac{\cosh^{2/\coef}(\coef \vu_{\pm})}{\cosh^{2/\coef}(\coef f(\vu_\pm,\theta))} \right ] - 1 \right |  \leq K \frac{\delta^{p+2}|\log \dist|}{\dist}
$$
and, since $|P_1(\vu_\pm,\theta)|\leq K\delta^{p+2}\dist^{-1}$, we conclude that
$$
|\Sigma(\theta)| \leq K\frac{\delta^{2(p+1)}|\log \dist|}{\dist^4}
$$
and bound~\eqref{boundDelta1rough} for $\Delta_1(\vu_{\pm},\theta)$ is proven.

Finally we use bound~\eqref{boundDelta1rough} in~\eqref{boundUpsilons1-1} and the proposition is proven.

\section{Parameterizations of the invariant manifolds}\label{sec:param}
We will prove Theorem~\ref{thmoutloc} in Section~\ref{secproofthmoutloc}, as a non trivial consequence of the existence result Proposition~\ref{propout}.
This proposition is proven by using the fixed point theorem and its proof is given in Section~\ref{secproofthmout} below.

The complete proofs of the results in the present section are extremely technical and can be found with all the details in~\cite{CastejonPhDThesis}.
Here we present a summary of the methodology used in the proof.

We will use the notation and properties stated in Subsections~\ref{subsecpreliminary}, and~\ref{secthmoutloc}. Moreover, as usual, $\pi_{x},\pi_{y}, \pi_{z}$
will denote, respectively, the projection over the $x,y$ and $z$-component of a given vector.

\subsection{Existence of complex parameterizations}\label{secproofthmout}
As we explained in Section~\ref{secthmoutloc}, on the one hand the parameterizations $r^{\uns,\sta}(\vu,\theta)$ in~\eqref{defrunssta} can not be extended up to
the unbounded domains $\vu\in \Dout{\uns,\sta}$. On the other hand, the characterization of the unstable and stable manifolds is given for $\vu \to \pm \infty$.
To overcome this disagreement, we deal separately with the unstable or the stable manifold of the critical points $S_{-}(\delta,\param)$ or $S_{+}(\delta,\param)$ respectively, of the vector
field $X$ in~\eqref{initsys-outer2DX}.

\subsubsection{Setting and the existence of invariant manifolds result}\label{subsubsec:settingparam}
We perform two different linear changes of variables, $C^{\uns}$ and $C^{\sta}$, to the vector field $X$, such that they put
\begin{itemize}
\item the critical points $S_{\mp}(\delta,\param)$
at $(0,0,\mp 1)$ and
\item the linear part $DX(S_{\mp}(\delta,\param),\delta, \delta \param)$ in their Jordan form.
\end{itemize}
Here have taken the sign $-$ for the $-\uns-$ case and $+$ in the case $-\sta-$.

\begin{lemma}\label{lemchangCuCs}
Let $|\param|\leq \delta^{p+3}\param^*$. We write $\hat{S}_{\mp}=(0,0,\mp 1)$ and $\zeta =(x,y,z)$.
The two critical points $S_\mp(\delta,\param)=(x_\mp(\delta,\param),y_\mp(\delta,\param),z_\mp(\delta,\param))$ of
the vector field $X$ in~\eqref{initsys-outer2DX} are of the form:
$$
x_\mp(\delta,\param)=\mathcal{O}(\delta^{p+5}),\qquad y_\mp(\delta,\param)=\mathcal{O}(\delta^{p+5}),\qquad z_\mp(\delta,\param)=\pm 1+\mathcal{O}(\delta^{p+4}).
$$

There are two linear changes of variables $C^{\uns}$ and $C^{\sta}$ of the form
\begin{equation}\label{changeC1C2}
\hat{\zeta} = C^{\uns,\sta}(\zeta,\delta,\param) = M_{\mp}(\delta,\param) \big(\zeta +S_{\mp}(\delta , \param)\big) -\hat{S}_{\mp},
\end{equation}
where $M_{\mp}(\delta,\param)=\mathrm{Id}+\mathcal{O}(\delta^{p+5})$ (and therefore $C^{\uns,\sta} = \mathrm{Id}+\mathcal{O}(\delta^{p+4})$), such that
\begin{equation}\label{initsys_usX}
\frac{d {\hat{\zeta}}}{d t}=X^{\uns,\sta}(\delta \hat \zeta,\delta,\delta \param) = X_0(\hat \zeta,\delta,\param) + \delta^p X_1^{\uns,\sta}(\delta \hat \zeta,\delta,\delta\param),
\end{equation}
with $X_0$ the same as in~\eqref{initsys-outer2DX} and
\begin{enumerate}
\item the vector field $X_1^{\uns, \sta}(\delta \hat \zeta,\delta,\delta\param)=\mathcal{O}_3(\delta \hat \zeta,\delta ,\delta \param )$
and it is real analytic in $B^3(\hat r_0)\times B(\hat \delta_0)\times B(\hat \param_0)\subset \mathbb{C}^3 \times \mathbb{C}^2$.
\item $\hat{S}_{\mp}=(0,0,\mp 1)$ are critical points of $X^{\uns,\sta}$ respectively and
the linear part is in real Jordan form, that is
\begin{equation*}
%\label{propsfghuns-derivada}
\begin{aligned}
X_1^{\uns}(\delta \hat{S}_-,\delta, \delta \sigma)&=X_1^{\sta}(\delta \hat{S}_+,\delta,\delta\sigma)=0,\\
DX_1^{\uns,\sta}(\delta \hat{S}_{\mp},\delta, \delta \sigma) &= \left(\begin{array}{ccc}\mathcal{O}(\delta^{p+3}) & \mathcal{O}(\delta^{p+3}) & 0\\ \mathcal{O}(\delta^{p+3}) & \mathcal{O}(\delta^{p+3}) & 0 \\ 0 & 0 & \mathcal{O}(\delta^{p+3})\end{array}\right).
\end{aligned}
\end{equation*}
\end{enumerate}
\end{lemma}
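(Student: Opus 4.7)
The plan is first to locate the critical points via the analytic implicit function theorem, and then to compose a translation with a near-identity linear change of variables that puts the linear part at the critical point into the real Jordan form of $DX_{0}$ at $(0,0,\mp 1)$.

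For the first step, observe that $X_{0}((0,0,\mp 1),\delta,0)=0$ and that the Jacobian
$$
DX_{0}((0,0,\mp 1),\delta,0)=\begin{pmatrix}\pm\coef & \alpha/\delta\mp c & 0\\ -(\alpha/\delta\mp c) & \pm\coef & 0\\ 0 & 0 & \mp 2\end{pmatrix}
$$
is invertible and already in real Jordan form. The analytic implicit function theorem applied to $X(\zeta,\delta,\param)=0$ therefore produces unique analytic critical points $S_{\mp}(\delta,\param)$ close to $(0,0,\mp 1)$. The orders stated in the lemma come from estimating the forcing $\delta^{p}X_{1}(\delta\zeta,\delta,\delta\param)$ at $\zeta=(0,0,\mp 1)$: the structure of $f,g$ in~\eqref{sistema-NForder3} forces their $x,y$-components to vanish to order $\mathcal{O}_{4}$ when $x=y=0$, and the shift $z=\delta^{-1}\bar z+\delta^{p+3}h_{3}/2$ performed in Section~\ref{subsecpreliminary} was chosen precisely to cancel the pure-$\bar z^{3}$ contribution in $\bar h$; all three components of the forcing are thus $\mathcal{O}(\delta^{p+4})$. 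Inverting the $2\times 2$ block of $DX_{0}((0,0,\mp 1),\delta,0)$, whose dominant entries are $\pm i\alpha/\delta$, gains an extra factor $\delta$ in the $x,y$-directions and yields $x_{\mp},y_{\mp}=\mathcal{O}(\delta^{p+5})$, while the eigenvalue $\mp 2$ yields no such gain and gives $z_{\mp}-(\mp 1)=\mathcal{O}(\delta^{p+4})$.

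For the second step, translate by $\hat S_{\mp}-S_{\mp}$ so that $\hat S_{\mp}$ becomes the critical point of the translated field. Its linearisation at $\hat S_{\mp}$ is $DX(S_{\mp},\delta,\param)=DX_{0}(\hat S_{\mp},\delta,0)+\Delta$, where $\Delta$ gathers the $\param$-dependence of $DX_{0}$, the displacement term $DX_{0}(S_{\mp})-DX_{0}(\hat S_{\mp})$, and $\delta^{p+1}[DX_{1}](\delta S_{\mp},\delta,\delta\param)$. Revisiting the normal form structure once more shows that the cross-entries of $\Delta$ between the $(x,y)$-plane and the $z$-axis are $\mathcal{O}(\delta^{p+4})$, while the intra-block entries are only $\mathcal{O}(\delta^{p+3})$. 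Since $DX_{0}(\hat S_{\mp},\delta,0)$ is block-diagonal with a spectral gap of order $\alpha/\delta$ between its two blocks, the Sylvester equation that block-diagonalises $DX_{0}(\hat S_{\mp},\delta,0)+\Delta$ is solvable, and its solution furnishes $M_{\mp}=\mathrm{Id}+\mathcal{O}(\delta^{p+5})$ (the cross-entry size $\mathcal{O}(\delta^{p+4})$ divided by the gap $\alpha/\delta$). Composing the translation with the linear change $M_{\mp}$ gives $C^{\uns,\sta}$ as in~\eqref{changeC1C2}.

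In the new coordinates one writes $X^{\uns,\sta}=X_{0}+\delta^{p}X_{1}^{\uns,\sta}$ by direct substitution. Analyticity of $X_{1}^{\uns,\sta}$ on a slightly shrunk polydisc is inherited from $X_{1}$ through the affine change $C^{\uns,\sta}$, whose inverse is bounded thanks to $M_{\mp}$ being close to the identity; the vanishing $X_{1}^{\uns,\sta}(\delta\hat S_{\mp},\delta,\delta\param)=0$ is automatic since $\hat S_{\mp}$ is a critical point of both $X^{\uns,\sta}$ and $X_{0}$; the zeros of $DX_{1}^{\uns,\sta}(\delta\hat S_{\mp},\delta,\delta\param)$ in the $(1,3),(2,3),(3,1),(3,2)$ entries are exactly the block-diagonal structure arranged by $M_{\mp}$; and the $\mathcal{O}(\delta^{p+3})$ bound on the surviving entries follows from the intra-block estimate on $\Delta$. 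The main obstacle is the refined size estimate on the cross-entries of $\Delta$ — the sharp $\mathcal{O}(\delta^{p+4})$ rather than the naive $\mathcal{O}(\delta^{p+3})$ — which requires tracking how the normal-form structure of $f,g,h$ and the specific location of $S_{\mp}$ interact when differentiating $\delta^{p}X_{1}(\delta\zeta,\delta,\delta\param)$; once this is in hand, the bounds on $M_{\mp}-\mathrm{Id}$ and on $DX_{1}^{\uns,\sta}$ are immediate.
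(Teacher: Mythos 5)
The paper does not actually give a proof of this lemma---it defers to~\cite{CastejonPhDThesis} and only hints that the argument "uses that the vector field $X$ is written up to its normal form of order three." Your outline is correct and matches what that hint indicates: locate $S_{\mp}$ by the implicit function theorem, observing that the forcing $\delta^p X_1$ evaluated on the axis $x=y=0$ is $\mathcal{O}(\delta^{p+4})$ precisely because the degree-three part of $\bar f,\bar g$ carries an $\xb$- or $\yb$-factor (normal form structure) and because the constant shift $z\mapsto z-\delta^{p+3}h_3/2$ removed the $h_3\zb^3$ contribution from $\bar h$; then exploit the $\mathcal{O}(\alpha/\delta)$ gain from inverting the rotational block to get $x_{\mp},y_{\mp}=\mathcal{O}(\delta^{p+5})$; and finally compose the translation $S_{\mp}\mapsto\hat S_{\mp}$ with a near-identity linear map obtained from a Sylvester equation, the spectral gap of order $\alpha/\delta$ between the two blocks of $DX_0(\hat S_{\mp})$ dividing the $\mathcal{O}(\delta^{p+4})$ off-block perturbation to produce $M_{\mp}=\mathrm{Id}+\mathcal{O}(\delta^{p+5})$. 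You correctly identify the only delicate point---that the off-block entries of $DX_1$ at $\hat S_{\mp}$ are $\mathcal{O}(\delta^{p+4})$ rather than the naive $\mathcal{O}(\delta^{p+3})$, again a consequence of the order-three normal form---and the remaining verifications (vanishing of $X_1^{\uns,\sta}$ at $\hat S_{\mp}$, analyticity on a shrunken polydisc, $\mathcal{O}_3$ order) follow as you say. This is the same route the paper envisages; no gap to report.
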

\begin{proof} The proof of this result can be encountered in~\cite{CastejonPhDThesis} and uses that the vector field $X$ is written up to its
normal form of order three, see Remark~\ref{remarkFormanormal3}.
\end{proof}

Now we perform the symplectic cylindric change~\eqref{cylindriccoordsunpertub} to system~\eqref{initsys_usX}:
 \begin{align*}
 \frac{dr}{dt}&=2r(\param-\coef z)+\delta^p \Fb^{\uns,\sta} (\delta r,\theta,\delta z,\delta,\delta\param),\\
\frac{d\theta}{dt}&=-\frac{\alpha}{\delta}-cz+\delta^p \Gb^{\uns,\sta} (\delta r,\theta,\delta z,\delta,\delta\param),\\
\frac{dz}{dt}&=-1+2br+z^2+\delta^p \Hb^{\uns,\sta} (\delta r,\theta,\delta z,\delta,\delta\param),
 \end{align*}
where $\mathbf{X}^{\uns,\sta}_1=( \Fb^{\uns,\sta}, \Gb^{\uns,\sta}, \Hb^{\uns,\sta})$ is defined as
\begin{equation}\label{defFGHuns}
\mathbf{X}^{\uns,\sta}_1 (\delta r, \theta, \delta z,\delta,\delta \param) = \left (
\begin{array}{ccc} \sqrt{2r} \cos \theta & \sqrt{2r} \sin \theta & 0 \\
-\frac{1}{\sqrt{2r}} \sin \theta & \frac{1}{\sqrt{2r}} \cos \theta & 0 \\
0 & 0 & 1 \end{array}\right ) X_1^{\uns,\sta}(\delta \hat{\zeta},\delta,\delta\param)
\end{equation}
and $\hat{\zeta}$ denotes $\hat \zeta =(\sqrt{2r} \cos \theta, \sqrt{2r}\sin \theta, z)$.

The invariant manifolds associated to $\hat{S}_{\mp}=(0,0,\mp 1)$ will be parameterized as:
$$
r=R^{\uns,\sta} (\vv,\theta),\quad z=\hetz(\vv),\qquad \qquad\vv\in\mathbb{R},\,\theta\in\mathbb{S}^1
$$
with $R^{\uns,\sta} (\vv, \theta)\to 0$ as $v\to \mp \infty$ respectively. As the critical points are $\hat{S}_{\mp}$, one has
$$
(\sqrt{2 R^{\uns,\sta}(\vv,\theta)}\cos \theta, \sqrt{2 R^{\uns,\sta}(\vv,\theta)}\sin \theta,\hetz(\vv))\to \hat{S}_{\mp} \qquad \text{as  }\vv \to \mp \infty.
$$

We look for the parameterizations $R^{\uns,\sta}$ of the form $R^{\uns,\sta}=\hetr + R^{\uns,\sta}_1$.
We introduce the analogous notation to the one in~\eqref{notationFGHbis}:
\begin{equation}\label{notationFGH}
\begin{aligned}
\bar{X}^{\uns,\sta}_1(R)(\vv,\theta)&=\mathbf{X}^{\uns,\sta}_1 (\delta (\hetr(\vv)+R(\vv,\theta)),\theta,\delta\hetz(\vv),\delta,\delta\param),\\
\bar{X}_1^{\uns,\sta}&= (F^{\uns,\sta},G^{\uns,\sta},H^{\uns,\sta}).
\end{aligned}
\end{equation}
As we did in Section~\ref{secthmoutloc}, we will omit the dependence on $(\vv,\theta)$ if there is not danger of confusion.
One can easily check that $R_1^{\uns,\sta}$ have to satisfy the
invariance equation given by
$\Lout(R_1^{\uns,\sta} )=\Fout^{\uns,\sta} (R_1^{\uns,\sta} )$, where $\Lout$ is the linear differential operator in~\eqref{defopL}:
$$
 \Lout(R):=\left(-\delta^{-1}\alpha-c\hetz(\vv)\right)\partial_\theta R +\partial_\vv R -2\hetz(\vv) R
$$
and $\Fout^{\uns,\sta}$ are:
\begin{align}\label{defopFus}
\Fout^{\uns,\sta}(R):=&2\param(\hetr(\vv)+R)+\delta^pF^{\uns,\sta} (R)+\delta^p\frac{\coef+1}{b}\hetz(\vv)H^{\uns,\sta} (R)\nonumber\\
&-\delta^pG^{\uns,\sta} (R)\partial_\theta R-\left(\frac{2bR+\delta^pH^{\uns,\sta} (R)}{\coef(1-\hetz^2(\vv))}\right)\partial_\vv R.
\end{align}

The functions $R^{\uns,\sta}=\hetr+R_1^{\uns,\sta}$ lead to parameterizations of the invariant manifolds if $R_1^{\uns,\sta}$ satisfy respectively:
\begin{equation}\label{problemuns}
\Lout(R_1^\uns)=\Fout^\uns(R_1^\uns),\qquad \lim_{\vv\rightarrow-\infty}R_1^\uns(\vv,\theta)=0,
\end{equation}
\begin{equation}\label{problemsta}
\Lout(R_1^\sta)=\Fout^\sta(R_1^\sta),\qquad \lim_{\vv\rightarrow+\infty}R_1^\sta(\vv,\theta)=0.
\end{equation}

Problems~\eqref{problemuns} and~\eqref{problemsta} can be written as fixed point equations using suitable right inverses of the operator $\Lout$.
These right inverses can be found easily solving the ordinary differential equations satisfied by the Fourier coefficients $R^{[l]}(\vv)$ of any
periodic function in $\theta$, $R(\vv,\theta)$ that is a solution of $\Lout(R)=\phi$, for a given function $\phi$. Indeed, given $\phi(\vv,\theta)$,
we define:
$$
 \Gout^\us(\phi)(\vv,\theta):=\sum_{l\in\mathbb{Z}}{\Gout^\us}^{[l]}(\phi)(\vv)e^{il\theta}, \qquad  \us=\uns,\sta,
$$
with ${\Gout^\us}^{[l]}$ as:
\begin{equation}\label{defcoefsGunsintro}
{\Gout^\us}^{[l]}(\phi)(\vv) = \cosh^{\frac{2}{\coef}}(\coef \vv)\int_{\mp\infty}^0
\frac{e^{-il\delta^{-1}  (\pu^{\mp}(\vv +s)-\pu^{\mp}(\vv) )}}{\cosh^{\frac{2}{\coef}}(\coef (\vv+s))}\phi^{[l]}(\vv+s)ds, \qquad  \us=\uns,\sta,
\end{equation}
where, on the one hand, we take $-$ sign in the unstable case and $+$ in the stable one and on the other hand, we have used the notation $\pu^{\mp}$
introduced in Theorem~\ref{thmoutloc}:
$$
\pu^{\mp}(w) = \alpha w \mp \delta( cw \mp c \coef^{-1} \log(1+ e^{\pm 2 \coef w})).
$$
We stress that a compact expression for $\Gout^{\us}$ is given by:
\begin{equation}\label{defGuscompact}
\Gout^{\us}(\phi)(\vv,\theta) =  \cosh^{\frac{2}{\coef}}(\coef \vu) \int_{\mp \infty}^{u}
\frac{\phi\left (w,\theta-\delta^{-1} \big (\pu^{\mp}(w) - \pu^{\mp}(u)\big )\right)}{\cosh^{\frac{2}{\coef}}(\coef w)}dw,  \qquad  \us=\uns,\sta,
\end{equation}
\begin{remark}
Using that, if $w$ is real $\pu^{\mp} (w) = \alpha w + c \coef^{-1} \log (2 \cosh w)$ one obtains the more natural expression for the Fourier coefficients:
\begin{equation}\label{defcoefsGunsreal}
{\Gout^\us}^{[l]}(\phi)(\vv)=\cosh^{\frac{2}{\coef}}(\coef \vv)\int_{\mp\infty}^0
\frac{e^{-il\left(\delta^{-1}\alpha s+\frac{c}{\coef}\log\frac{\cosh(\coef(\vv+s))}{\cosh(\coef\vv)}\right)}}
{\cosh^{\frac{2}{\coef}}(\coef (\vv+s))}\phi^{[l]}(\vv+s)ds , \qquad  \us=\uns,\sta.
\end{equation}
However, expressions in~\eqref{defcoefsGunsreal} are not well defined when we take complex values of $\vv$. For this reason we take definitions
\eqref{defcoefsGunsintro}, which for real values of $\vv$ coincide with the ones in~\eqref{defcoefsGunsreal}, and are well defined when we take
$\vv\in \Dout{\uns,\sta}$.
\end{remark}

\begin{lemma}
%\label{lemrightinverse}
One has $\Lout\circ\Gout^{\uns,\sta} =\textup{Id}$.
Moreover, if we define the operators:
$$
 \Foutfp^{\uns,\sta} :=\Gout^{\uns,\sta} \circ\Fout^{\uns,\sta} ,
$$
with $\Fout^{\uns,\sta}$ given in~\eqref{defopFus}, we have that if $R_1^{\uns,\sta}$ satisfy the fixed point equations:
\begin{equation}\label{fpeq}
 R_1^\uns=\Foutfp^\uns(R_1^\uns),\qquad  R_1^\sta=\Foutfp^\sta(R_1^\sta),
\end{equation}
then they are solutions of problems~\eqref{problemuns} and~\eqref{problemsta} respectively.
\end{lemma}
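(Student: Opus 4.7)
\medskip

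The plan is to reduce the problem to a family of scalar ODEs via Fourier expansion in $\theta$, and then to check that the explicit formula defining $\Gout^{\us}$ is precisely the variation-of-constants integral for those ODEs, with the lower limit of integration chosen to encode the desired asymptotic behaviour. Writing $R(\vv,\theta)=\sum_l R^{[l]}(\vv)e^{il\theta}$ and similarly for $\phi$, the operator $\Lout$ acts diagonally on Fourier modes:
\begin{equation*}
\Lout(R)^{[l]}(\vv) = \partial_\vv R^{[l]}(\vv) - \bigl[2\hetz(\vv) + il\bigl(\delta^{-1}\alpha + c\hetz(\vv)\bigr)\bigr]R^{[l]}(\vv).
\end{equation*}
This is a first-order linear ODE in $\vv$, and its homogeneous solutions are scalar multiples of $h_l(\vv):=\cosh^{2/\coef}(\coef\vv)\,e^{il\delta^{-1}\pu^{\mp}(\vv)}$.

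The crucial computational step is to verify the identity
\begin{equation*}
\partial_\vv \pu^{\mp}(\vv) = \alpha + \delta c\,\hetz(\vv),
\end{equation*}
which I would check directly from the definition of $\pu^{\mp}$, using the algebraic manipulation $2e^{\pm 2\coef\vv}/(1+e^{\pm 2\coef\vv})=1\pm\tanh(\coef\vv)$. Combined with $\partial_\vv\cosh^{2/\coef}(\coef\vv)=2\hetz(\vv)\cosh^{2/\coef}(\coef\vv)$, this yields $h_l'(\vv)=[2\hetz(\vv)+il(\delta^{-1}\alpha+c\hetz(\vv))]h_l(\vv)$, confirming $h_l$ is the homogeneous solution. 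Rewriting ${\Gout^{\us}}^{[l]}(\phi)(\vv)$ after the substitution $w=\vv+s$,
\begin{equation*}
{\Gout^{\us}}^{[l]}(\phi)(\vv) = h_l(\vv)\int_{\mp\infty}^{\vv}\frac{\phi^{[l]}(w)}{h_l(w)}\,dw,
\end{equation*}
differentiating under the integral sign and using $h_l'/h_l$ as computed above gives $\Lout({\Gout^{\us}}^{[l]}(\phi))=\phi^{[l]}$. Summing over $l$ establishes $\Lout\circ\Gout^{\us}=\mathrm{Id}$.

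For the second statement, if $R_1^{\us}$ is a fixed point of $\Foutfp^{\us}=\Gout^{\us}\circ\Fout^{\us}$, I would apply $\Lout$ to both sides of $R_1^{\us}=\Gout^{\us}(\Fout^{\us}(R_1^{\us}))$ and invoke the first part to obtain $\Lout(R_1^{\us})=\Fout^{\us}(R_1^{\us})$, which is the invariance equation in \eqref{problemuns} or \eqref{problemsta}. It remains to verify the boundary condition $R_1^{\uns}(\vv,\theta)\to 0$ as $\vv\to-\infty$ (and symmetrically for the stable case). From the integral representation, for each Fourier coefficient
\begin{equation*}
R_1^{\uns,[l]}(\vv) = \cosh^{2/\coef}(\coef\vv)\int_{-\infty}^{0}\frac{e^{-il\delta^{-1}(\pu^{-}(\vv+s)-\pu^{-}(\vv))}}{\cosh^{2/\coef}(\coef(\vv+s))}\,\Fout^{\uns,[l]}(R_1^{\uns})(\vv+s)\,ds,
\end{equation*}
and I would estimate, using that $\cosh^{2/\coef}(\coef\vv)/\cosh^{2/\coef}(\coef(\vv+s))\sim e^{2s}$ as $\vv\to-\infty$ for $s\in(-\infty,0)$, that the integral converges and tends to zero once one checks (in the fixed point set-up of Proposition~\ref{propout}) that $\Fout^{\us}(R_1^{\us})$ decays adequately at $\mp\infty$. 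The main obstacle here is not the verification itself (which is formal), but rather ensuring consistency with the functional framework in which the fixed point is sought; this is where the decay rates in the weighted Banach spaces used for Proposition~\ref{propout} play a role.
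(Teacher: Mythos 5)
Your proof is correct and supplies exactly the standard variation-of-constants verification that the paper omits as routine (referring instead to~\cite{CastejonPhDThesis} for such standard facts): you correctly compute $\partial_w\pu^{\mp}(w)=\alpha+\delta c\,\hetz(w)$, identify $h_l(\vv)=\cosh^{2/\coef}(\coef\vv)e^{il\delta^{-1}\pu^{\mp}(\vv)}$ as the homogeneous solution of the Fourier-coefficient ODE, and observe that $\Lout\circ\Gout^{\uns,\sta}=\mathrm{Id}$ follows by differentiating under the integral sign. Your closing caveat about the boundary condition is the right one: the vanishing of $R_1^{\uns,\sta}$ at $\vv\to\mp\infty$ is not a separate computation but comes for free once one works in the weighted spaces $\Bsoutdiff{\uns,\sta}{n,m}$ with $m\geq 1$, where $|R_1^{\uns,\sta}(\vv,\theta)|\leq K|\cosh(\coef\vv)|^{-m}\to 0$.
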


Since we will find solutions of the fixed point equation~\eqref{fpeq} by means of the fixed point theorem, we now
set the Banach spaces we work with. For the unstable case
we will consider functions $\phi:\Dout{\uns} \times\Tout\rightarrow\mathbb{C}$, where the domain $\Dout{\uns}$ is defined in~\eqref{defdoutuns}
and $\Tout$ is defined in~\eqref{defTout} (see also Figure~\ref{figureDout}). They can be written in their Fourier series:
$\phi(\vv,\theta)=\sum_{l\in\mathbb{Z}}\phi^{[l]}(\vv)e^{il\theta}$. We define the norms:
\begin{align*}
\left\|\phi^{[l]}\right\|_{n,m}^\uns& =\sup_{\vv\in\DoutT{\uns}}\left|\cosh^n(\coef\vv)\phi^{[l]}(\vv)\right|+\sup_{\vv\in\Doutinf{\uns}}\left|\cosh^m(\coef\vv)\phi^{[l]}(\vv)\right| \\
\|\phi\|_{n,m,\ost}^\uns&=\sum_{l\in\mathbb{Z}}\left\|\phi^{[l]}\right\|_{n,m}^\uns e^{|l|\ost}, \\
\llfloor \phi\rrfloor_{n,m,\ost}^\uns&=\|\phi\|_{n,m,\ost}^\uns+\|\partial_\vv \phi\|_{n+1,m,\ost}^\uns+\delta^{-1}\|\partial_\theta \phi\|_{n+1,m,\ost}^\uns.
\end{align*}
and we consider the Banach spaces:
\begin{align*}
\mathcal{X}^{\uns}_{n,m}&:=\left\{\phi:\Dout{\uns}\rightarrow\mathbb{C} \,:\, \phi\textrm{ is analytic and } \|\phi\|_{n,m}^\uns<+\infty\right\},\\
\Bsout{\uns}{n,m}&:=\left\{\phi:\Dout{\uns}\times\Tout\rightarrow\mathbb{C} \,:\, \phi\textrm{ is analytic and } \|\phi\|_{n,m,\ost}^\uns<+\infty\right\},\\
\Bsoutdiff{\uns}{n,m}&:=\left\{\phi:\Dout{\uns}\times\Tout\rightarrow\mathbb{C} \,:\, \phi\textrm{ is analytic and }
\llfloor \phi\rrfloor_{n,m,\ost}^\uns<+\infty\right\}.
\end{align*}
For functions $\Phi=(\phi_1,\phi_2,\phi_3)\in
\Bsout{\uns,\times}{n,m}:=\Bsout{\uns}{n,m}\times\Bsout{\uns}{n,m}\times\Bsout{\uns}{n,m}$, belonging to the product space,
we will take the norm:
\begin{equation*}
\|\Phi\|_{n,m,\ost}^{\uns,\times}:=\max\left\{\|\phi_1\|_{n,m,\ost}^\uns,\,\|\phi_2\|_{n,m,\ost}^\uns,\,\|\phi_3\|_{n,m,\ost}^\uns\right\}.
\end{equation*}
For the stable case, we consider norms and Banach spaces analogously defined in the corresponding domains $\Dout{\sta}$ and
for $\phi:\Tout\rightarrow\mathbb{C}$, we will take the Fourier norm:
$\|\phi\|_{\ost}:=\sum_{l\in\mathbb{Z}}\left|\phi^{[l]}\right|e^{|l|\ost}$.

Now we can state the result which guarantees the existence of solutions of the fixed point equations~\eqref{fpeq}.
We will devote the rest of the section to prove it.
During this section we will modify the value of the parameters $\dist$, $\beta$ $T$ and $\ost$ of the domains
$\Dout{\uns}\times\Tout$ and $\Dout{\sta}\times\Tout$ a finite number of times.
We will abuse notation and use the same letters for the modified values.
\begin{proposition}\label{propout}
Let $p\geq-2$ and $0<\beta<\pi/2$ be any constants. There exist $\param^*,\delta^*>0$ and $\dist^*\geq1$, such that
for all $0<\delta\leq\delta^*$ if $\dist=\dist(\delta)$ satisfies condition~\eqref{conddist} and $\param$ satisfies
$|\param|\leq\param^*\delta^{p+3}$, the fixed point equations in~\eqref{fpeq} have solutions $R_1^{\uns,\sta}$
defined respectively in $\Dout{\uns,\sta}\times\Tout$.

Moreover, they satisfy that $R_1^{\uns,\sta}=R_{10}^{\uns,\sta} +R_{11}^{\uns,\sta} $ with the following properties:
\begin{enumerate}
 \item $R_{10}^{\uns,\sta} =\Foutfp^{\uns,\sta}(0)\in\Bsoutdiff{\uns,\sta} {3,2}$ and there exists $M>0$:
$$\llfloor R_{10}^{\uns,\sta} \rrfloor_{3,2,\ost}^{\uns,\sta} \leq M\delta^{p+3}.$$
\item $R_{11}^{\uns,\sta} \in\Bsoutdiff{\uns,\sta} {4,2}$, and there exists a constant $M$ such that:
$$\llfloor R_{11}^{\uns,\sta} \rrfloor_{4,2,\ost}^{\uns,\sta} \leq M\delta^{p+3}\llfloor R_{10}^{\uns,\sta} \rrfloor_{3,2,\ost}^{\uns,\sta}.$$
\end{enumerate}
\end{proposition}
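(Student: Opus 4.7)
The plan is to apply Banach's fixed point theorem to each of the two equations in~\eqref{fpeq} separately, treating the unstable and stable cases symmetrically. Following the announced decomposition, I would set $R_{10}^{\us}:=\Foutfp^{\us}(0)$ and then look for $R_{11}^{\us}$ as the fixed point of the operator
\[
\mathcal{T}^{\us}(R):=\Foutfp^{\us}(R_{10}^{\us}+R)-R_{10}^{\us},
\]
on a closed ball of the Banach space $\Bsoutdiff{\us}{4,2}$, centered at zero, of radius proportional to $\delta^{p+3}\llfloor R_{10}^{\us}\rrfloor^{\us}_{3,2,\ost}$. Existence of $R_1^{\us}=R_{10}^{\us}+R_{11}^{\us}$ then follows from a standard contraction mapping argument, provided (i) the right inverse $\Gout^{\us}$ satisfies appropriate continuity estimates between the weighted spaces, (ii) $R_{10}^{\us}$ lies in $\Bsoutdiff{\us}{3,2}$ with the claimed size, and (iii) $\mathcal{T}^{\us}$ is Lipschitz with a small constant on the prescribed ball and gains one extra cosh-power.

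The central linear lemma I would prove first is a boundedness statement of the form
\[
\big\|\Gout^{\us}(\phi)\big\|^{\us}_{n-1,m-1,\ost}\le K\,\|\phi\|^{\us}_{n,m,\ost},
\]
together with a companion estimate for the derivative norm $\llfloor\cdot\rrfloor$. The proof splits into two regimes, corresponding to the two subdomains $\DoutT{\us}$ and $\Doutinf{\us}$. On $\Doutinf{\us}$, where $\re\vv$ is large and negative (resp.\ positive), the integration path in~\eqref{defGuscompact} is kept on the real direction and the cosh-weights decay exponentially, giving the required $m$-bound by direct estimation. On $\DoutT{\us}$, one deforms the contour to run parallel to the real axis at height $\im\vv$, which is admissible by the triangular shape of $\Dout{\us}$; controlling $|\cosh^{-2/\coef}(\coef w)|$ uniformly on that path yields the $n$-bound. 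For the Fourier modes with $l\neq 0$, the oscillatory factor $e^{-il\delta^{-1}(\pu^{\mp}(w)-\pu^{\mp}(\vv))}$ is harmless for what we need here since we only want polynomial-in-$\delta$ norm bounds; the genuine exponential gain will be extracted later in Section~\ref{sec:exponentially}.

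Next I would bound $R_{10}^{\us}=\Gout^{\us}(\Fout^{\us}(0))$. From~\eqref{defopFus},
\[
\Fout^{\us}(0)(\vv,\theta)=2\param\hetr(\vv)+\delta^{p}F^{\us}(0)+\delta^{p}\frac{\coef+1}{b}\hetz(\vv)H^{\us}(0),
\]
and since $F^{\us}(0)=\Fb^{\us}(\delta\hetr(\vv),\theta,\delta\hetz(\vv),\delta,\delta\param)$ with $\mathbf{X}_1^{\us}=\mathcal{O}_3$ (by Lemma~\ref{lemchangCuCs}), each of these terms is of order $\delta^{p+3}$ with appropriate cosh decay: $\hetr$ gives $\cosh^{-2}$, and $\delta^p F^{\us}(0)$, $\delta^p\hetz\,H^{\us}(0)$ give at least $\delta^{p+3}\cosh^{-k}$ with $k\ge 2$. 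Invoking the linear estimate on $\Gout^{\us}$ then places $R_{10}^{\us}\in\Bsoutdiff{\us}{3,2}$ with $\llfloor R_{10}^{\us}\rrfloor^{\us}_{3,2,\ost}\le M\delta^{p+3}$, which is item~1 of the conclusion.

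For the contraction step I would expand $\Fout^{\us}(R_{10}^{\us}+R)-\Fout^{\us}(R_{10}^{\us})$ term by term, using that: (a) the factor $2\param$ is $\mathcal{O}(\delta^{p+3})$; (b) the partial derivatives $\partial_r F^{\us}$, $\partial_r G^{\us}$, $\partial_r H^{\us}$ produce an extra factor $\delta$ from the chain rule combined with $\mathcal{O}(\delta^2)$ from $\mathbf{X}_1^{\us}=\mathcal{O}_3$, hence a net $\delta^{p+3}$; (c) the quadratic-type terms $G^{\us}(R)\partial_\theta R$ and $(2bR+\delta^p H^{\us}(R))\partial_\vv R/[\coef(1-\hetz^2)]$ are genuinely quadratic in elements of the ball and therefore produce the contraction factor automatically. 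Together with the gain of cosh-powers provided by the quadratic structure near the core of $\DoutT{\us}$, this yields both invariance of the ball in $\Bsoutdiff{\us}{4,2}$ and a contraction constant of order $\delta^{p+3}$, delivering item~2. The main technical obstacle will be the linear estimate on $\Gout^{\us}$ itself: one must manage simultaneously the two different weights $n$ on $\DoutT{\us}$ and $m$ on $\Doutinf{\us}$, ensure the contour deformation is compatible with the triangular shape of $\Dout{\us}$ uniformly as $\dist$ grows, and track the loss/gain of cosh-powers so that the nonlinear terms in $\Fout^{\us}$ fit the fixed-point scheme; the derivative-norm component $\llfloor\cdot\rrfloor$ requires, in addition, differentiating under the integral and absorbing the extra $\cosh$-factor produced by $\partial_\vv$.
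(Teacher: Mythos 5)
Your overall strategy is the same as the paper's: set $R_{10}^{\us}=\Foutfp^{\us}(0)$, establish a linear boundedness estimate for the right inverse $\Gout^{\us}$ between the weighted Banach spaces, bound the independent term, and close a contraction with a Lipschitz estimate that gains one $\cosh$-power. The paper runs its fixed point directly in $\Bsoutdiff{\us}{3,2}$ and then reads off the $\Bsoutdiff{\us}{4,2}$ bound for $R_{11}^{\us}=\Foutfp^{\us}(R_1^{\us})-\Foutfp^{\us}(0)$ from the Lipschitz estimate, while you propose the (equivalent, shifted) operator $\mathcal{T}^{\us}$ on a ball in $\Bsoutdiff{\us}{4,2}$; either route works once the right linear estimate is in hand.

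The linear estimate you announce, however, is too weak and would cause the whole scheme to fail. You propose $\|\Gout^{\us}(\phi)\|_{n-1,m-1,\ost}^{\us}\le K\|\phi\|_{n,m,\ost}^{\us}$, i.e.\ a loss of one $\cosh$-power in the weight over $\DoutT{\us}$ \emph{and} in the weight over $\Doutinf{\us}$. The correct statement (and what the paper proves) preserves the $m$-index: $\|\Gout^{\us}(\phi)\|_{n-1,m,\ost}^{\us}\le K\|\phi\|_{n,m,\ost}^{\us}$. The distinction is not cosmetic. First, the independent term $\Fout^{\us}(0)$ genuinely has only $m=2$ decay on $\Doutinf{\us}$ because $2\param\hetr(\vv)=2\param\,\frac{\coef+1}{2b}\cosh^{-2}(\coef\vv)$; it does not belong to $\Bsout{\us}{4,3}$. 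So your estimate would only place $R_{10}^{\us}$ in $\Bsoutdiff{\us}{3,1}$, weaker than the claimed $\Bsoutdiff{\us}{3,2}$. Second, and more fatally, the $m$-index is not regenerated by the nonlinearity: $\Fout^{\us}(\cdot)$ gains a $\cosh$-power in the $n$-direction (from the quadratic structure near the singularity) but not in the $m$-direction, so if $\Gout^{\us}$ loses an $m$-power on each application, the iterates $\mathcal{T}^{\us}(R)$ would lie in $\Bsoutdiff{\us}{\cdot,1}$, not $\Bsoutdiff{\us}{4,2}$, and your fixed-point map would not send the ball into itself. The $m$-preservation is precisely what convolution-type integrals from $\mp\infty$ give you on $\Doutinf{\us}$: if $\phi(w)=\mathcal{O}(e^{\mp m\coef\,\re w})$ there, then $\cosh^{2/\coef}(\coef\vv)\int_{\mp\infty}^{\vv}\cosh^{-2/\coef}(\coef w)\phi(w)\,dw$ is again $\mathcal{O}(e^{\mp m\coef\,\re \vv})$, with no loss. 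You should state and prove that version of the lemma; the rest of your plan (contour deformation inside the triangle, separate treatment of $l=0$ and $l\neq 0$, size and Lipschitz estimates for $\Fout^{\us}$ via Lemma~\ref{lemchangCuCs}) then goes through as you outline.
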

This result yields the following corollary:
\begin{corollary}\label{corpropout}
Under the same assumptions of Proposition~\ref{propout}, the two dimensional invariant manifolds of system~\eqref{initsys_usX} can be
parameterized, in symplectic polar coordinates as:
$$
r=R^{\uns,\sta}(\vv,\theta)=\hetr(\vv) + R^{\uns,\sta}_1(\vv,\theta),\qquad z=\hetz(\vv),\qquad (\vv,\theta) \in \Dout{\uns,\sta}\times \Tout,
$$
with $R^{\uns,\sta}_1$ satisfying the properties in Proposition~\ref{propout}.
\end{corollary}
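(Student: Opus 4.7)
The plan is to deduce the corollary directly from Proposition~\ref{propout} by checking two things: (i) the surface parameterized by $r=R^{\uns,\sta}(\vv,\theta)$, $z=\hetz(\vv)$, $\theta=\theta$ is invariant under the flow of~\eqref{initsys_usX} written in symplectic cylindric coordinates; and (ii) its real trace asymptotes to the critical points $\hat S_\mp=(0,0,\mp 1)$ as $\vv\to\mp\infty$. Together with the saddle-focus spectrum at $\hat S_\mp$ (inherited from Lemma~\ref{lemchangCuCs}), these two properties identify the image with an open piece of the $2$-dimensional unstable (resp.\ stable) manifold.

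For the invariance, I would start from the fact that by construction $\Gout^{\uns,\sta}$ is a right inverse of the operator $\Lout$ defined in~\eqref{defopL}, so any solution of the fixed point equation~\eqref{fpeq} satisfies $\Lout(R_1^{\uns,\sta})=\Fout^{\uns,\sta}(R_1^{\uns,\sta})$. Using the two identities
$$\hetr'(\vv)=-2\coef\hetr(\vv)\hetz(\vv),\qquad -1+2b\hetr(\vv)+\hetz^2(\vv)=\coef(1-\hetz^2(\vv)),$$
this PDE is, by the same algebra that led to~\eqref{PDEequal} from~\eqref{PDEinisimp}, precisely the condition that the graph $(r,\theta,z)=(R^{\uns,\sta}(\vv,\theta),\theta,\hetz(\vv))$ is tangent to the vector field in symplectic cylindric coordinates. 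The analyticity on $\Dout{\uns,\sta}\times\Tout$ granted by Proposition~\ref{propout} justifies the computation and hence yields a genuine analytic invariant surface.

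For the asymptotic behavior I would use the weight structure of the Banach spaces. In the unstable case the bound $\llfloor R_{10}^{\uns}\rrfloor_{3,2,\ost}^{\uns}\leq M\delta^{p+3}$ together with the refinement on $R_{11}^{\uns}$ gives, on $\Doutinf{\uns}$, an estimate of the form $|R_1^{\uns}(\vv,\theta)|\leq K\delta^{p+3}|\cosh(\coef\vv)|^{-2}$, so $R_1^{\uns}(\vv,\theta)\to 0$ as $\re\vv\to-\infty$. Since $\hetr(\vv)\to 0$ and $\hetz(\vv)\to -1$, the Cartesian image
$$(\sqrt{2R^{\uns}(\vv,\theta)}\cos\theta,\ \sqrt{2R^{\uns}(\vv,\theta)}\sin\theta,\ \hetz(\vv))\longrightarrow \hat S_-=(0,0,-1).$$
Analogously, in the stable case the corresponding bounds force convergence to $\hat S_+$. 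The saddle-focus character of $DX^{\uns,\sta}(\hat S_\mp)$ provided by Lemma~\ref{lemchangCuCs} guarantees that orbits on the invariant surface contract (resp.\ expand) exponentially toward (resp.\ from) $\hat S_\mp$, so by uniqueness of the local unstable (resp.\ stable) manifold of a hyperbolic fixed point the image coincides with it.

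The main obstacle will be handling the mild coordinate singularity of the symplectic polar change at $r=0$: as $\vv\to\mp\infty$ the polar parameterization degenerates even though the Cartesian image stays smooth. I would resolve this by working in Cartesian coordinates in a small neighborhood of $\hat S_\mp$, checking that the map $(\vv,\theta)\mapsto (\sqrt{2R^{\uns,\sta}}\cos\theta,\sqrt{2R^{\uns,\sta}}\sin\theta,\hetz(\vv))$ extends to a smooth graph over the two-dimensional unstable (resp.\ stable) subspace of $DX^{\uns,\sta}(\hat S_\mp)$, and then undoing the linear change $C^{\uns,\sta}$ to return to the original variables. All remaining properties of $R_1^{\uns,\sta}$ claimed by the corollary are inherited verbatim from Proposition~\ref{propout}.
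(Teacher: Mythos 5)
Your proposal is correct and follows essentially the same route the paper takes: Proposition~\ref{propout} produces solutions of the fixed point equations~\eqref{fpeq}, the preceding lemma identifies those with solutions of problems~\eqref{problemuns}--\eqref{problemsta} (the invariance PDE plus the boundary condition $R_1^{\uns,\sta}\to 0$ as $\vv\to\mp\infty$), and the weighted norms force decay to $\hat S_\mp$, which pins down the stable/unstable manifolds by uniqueness. The paper states the corollary without proof, regarding exactly this chain as immediate; you simply spell it out, including the minor point about the polar coordinate degeneracy near $r=0$.
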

In the following we will sketch the proof of Proposition~\ref{propout} in the unstable case.

\subsubsection{Solutions of the fixed point equation. Proof of Proposition~\ref{propout}}\label{Banachout}
Fist, in Lemma~\ref{lempropsopG}, we study the behaviour of the linear operator $\Gout^{\uns,\sta}$ acting on functions $\phi$ belonging to the Banach spaces
$\Bsout{\uns}{n,m}$. Secondly, in Lemma~\ref{lemfitaindepterm},
we deal with the independent term $\Foutfp^{\uns}(0)$. Finally, in Lemma~\ref{lemLipconst}, we check that the
operator $\Foutfp^{\uns,\sta}$ is a contraction.

\begin{lemma}\label{lempropsopG}
 Let $n\geq0$, $m\geq0$ and $\phi\in\Bsout{\uns}{n,m}$. There exists a constant $M$ such that for all $l\in\mathbb{Z}$:
\begin{enumerate}
\item If $n\geq1$, then $\|{\Gout^\uns}^{[l]}(\phi)\|_{n-1,m}^\uns\leq M\left\|\phi^{[l]}\right\|_{n,m}^\uns.$
\item  If $l\neq0$ and $n\geq0$, then $\displaystyle\|{\Gout^\uns}^{[l]}(\phi)\|_{n,m}^\uns\leq \frac{\delta M\left\|\phi^{[l]}\right\|_{n,m}^\uns}{|l|}.$
\item  As a consequence we have that if $n\geq1$, $\|\Gout^\uns(\phi)\|_{n-1,m,\ost}^\uns\leq M\|\phi\|_{n,m,\ost}^\uns$.
Moreover, if $\phi^{[0]}(\vv)=0$, then for all $n\geq0$:
$$\|\Gout^\uns(\phi)\|_{n,m,\ost}^\uns\leq M\delta\|\phi\|_{n,m,\ost}^\uns.$$
\item If $n\geq0$, $\|\partial_\theta\Gout^\uns(\phi)\|_{n,m,\ost}^\uns\leq M\delta\|\phi\|_{n,m,\ost}^\uns.$
\item If $n\geq1$, $\|\partial_\vv\Gout^\uns(\phi)\|_{n,m,\ost}^\uns\leq M\|\phi\|_{n,m,\ost}^\uns.$
\end{enumerate}
\item  In conclusion, if $\phi\in\Bsout{\uns}{n,m}$, $n\geq1$, then $\Gout^\uns(\phi)\in\Bsoutdiff{\uns}{n-1,m}$ and:
$$\llfloor \Gout^\uns(\phi)\rrfloor_{n-1,m,\ost}^\uns\leq M\|\phi\|_{n,m,\ost}^\uns.$$
\end{lemma}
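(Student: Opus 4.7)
The plan is to prove the bounds Fourier coefficient by Fourier coefficient and then reassemble the $\|\cdot\|_{n,m,\ost}^{\uns}$ norms as the weighted $\ell^1$-sum over $l$. For fixed $\vv \in \Dout{\uns}$ I would integrate along the horizontal path $w = \vv + s$, $s\in(-\infty, 0]$, which by the triangular geometry of $\Dout{\uns}$ remains inside the domain. On this path the oscillatory factor $|e^{-il\delta^{-1}(\pu^{-}(w)-\pu^{-}(\vv))}|$ admits a uniform bound independent of $l$, because $\pu^{-}$ is holomorphic and essentially equal to $\alpha w$ up to a correction of size $\delta$; for claim 1 this factor can therefore simply be absorbed into the constant. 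Inserting the pointwise bound coming from $\|\phi^{[l]}\|_{n,m}^{\uns}$ (with exponent $n$ on $\re w \ge -T$ and exponent $m$ on $\re w\le -T$), the estimate reduces to the integral inequality
\[
|\cosh(\coef \vv)|^{2/\coef}\int_{-\infty}^{0} \frac{|ds|}{|\cosh(\coef(\vv+s))|^{n+2/\coef}} \le M\, |\cosh(\coef \vv)|^{-(n-1)},\qquad n \ge 1,
\]
together with the analogous variant with exponent $m$ on the far-left subdomain.

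For claim 2 with $l\neq 0$, the improvement $\delta/|l|$ is produced by a single integration by parts against the oscillatory exponential: since a direct computation gives $(\pu^{-})'(w)=\alpha+\delta c\,\hetz(w)$, and $\delta|\hetz(w)|\le K/\dist$ on $\Dout{\uns}$ because of the exclusion of the $\dist\delta$-neighbourhood of the singularities, $(\pu^{-})'$ is bounded away from zero. Thus one factor $\delta/(il(\pu^{-})')$ is extracted; the resulting boundary term at $s=0$ is bounded directly by $\delta K/|l|\cdot\|\phi^{[l]}\|_{n,m}^{\uns}$, while the interior term is again controlled by the integral inequality of claim 1, combined with a Cauchy estimate on $(\phi^{[l]})'$. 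Claim 3 then follows by summing claim 1 (first bound) or claim 2 (second bound, where the hypothesis $\phi^{[0]}\equiv 0$ kills the only non-oscillatory mode) over $l$ with weight $e^{|l|\ost}$, and claim 4 is immediate from the Fourier identity $(\partial_\theta \Gout^{\uns}(\phi))^{[l]} = il\,{\Gout^{\uns}}^{[l]}(\phi)$ combined with claim 2.

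For claim 5 the cleanest route avoids differentiating the integral: using $\Lout \circ \Gout^{\uns}=\mathrm{Id}$ at the $l$-th Fourier mode gives
\[
\partial_\vv {\Gout^{\uns}}^{[l]}(\phi)(\vv) = \phi^{[l]}(\vv) + 2\hetz(\vv){\Gout^{\uns}}^{[l]}(\phi)(\vv) + il\bigl(\delta^{-1}\alpha + c\,\hetz(\vv)\bigr){\Gout^{\uns}}^{[l]}(\phi)(\vv),
\]
and each of the three terms is bounded in $\|\cdot\|_{n,m}^{\uns}$: the first by hypothesis; the second by claim 1 combined with the identity $|\hetz(\vv)\cosh(\coef\vv)|=|\sinh(\coef\vv)|$ bounded on $\DoutT{\uns}$ and $|\hetz|$ bounded on $\Doutinf{\uns}$; and the third by claim 2, where $|l|\delta^{-1}\alpha$ directly cancels the $\delta/|l|$ gain and the $|l|c\,\hetz$ piece is tamed by $\delta|\hetz|\le K/\dist$ on $\DoutT{\uns}$ (and $|\hetz|$ bounded on $\Doutinf{\uns}$). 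Summing in $l$ against $e^{|l|\ost}$ yields the bound on $\|\partial_\vv \Gout^{\uns}(\phi)\|_{n,m,\ost}^{\uns}$, and assembling the three pieces produces the conclusion $\llfloor \Gout^{\uns}(\phi)\rrfloor_{n-1,m,\ost}^{\uns} \le M\|\phi\|_{n,m,\ost}^{\uns}$.

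The hard part will be the pointwise integral inequality above on the complex domain $\Dout{\uns}$, with a constant $M$ independent of both $\delta$ and $\dist$. The blow-up of $|\cosh(\coef w)|^{-1}$ near the singularities $\pm i\pi/(2\coef)$ has to be compensated by the triangular shape of $\Dout{\uns}$, so that along the horizontal path from $-\infty$ to $\vv$ the denominator $|\cosh(\coef w)|$ stays bounded below by a fixed multiple of $|\cosh(\coef\vv)|$; this requires splitting the path into a bounded piece near $s=0$ and an exponentially decaying tail, and using the opening angle $\beta$ to avoid any slivers where $|\cosh|$ could become artificially small. A secondary subtlety is that the oscillatory exponential must be genuinely holomorphic on the domain: this is precisely why the kernel is written with the holomorphic branch $\pu^{-}(w)=\alpha w-\delta(cw-c\coef^{-1}\log(1+e^{2\coef w}))$ rather than with the naive real-axis expression $\alpha w+\delta c\coef^{-1}\log\cosh(\coef w)$, whose logarithm develops branch points exactly at the boundary singularities $\pm i\pi/(2\coef)$ of $\Dout{\uns}$.
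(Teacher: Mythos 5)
Your setup is right in several places — the holomorphic branch of $\pu^{-}$, the derivative $(\pu^{-})'(w)=\alpha+\delta c\,\hetz(w)$, and the clean route to item~5 via $\Lout\circ\Gout^{\uns}=\mathrm{Id}$ rather than differentiating under the integral — but the central estimate fails. You assert that along the horizontal ray $w=\vv+s$, $s\in(-\infty,0]$, the factor $\bigl|e^{-il\delta^{-1}(\pu^{-}(\vv+s)-\pu^{-}(\vv))}\bigr|$ is bounded uniformly in $l$. It is not. For $s$ real,
\[
\Im\bigl(\pu^{-}(\vv+s)-\pu^{-}(\vv)\bigr)=\delta c\coef^{-1}\,\Im\Bigl[\log\bigl(1+e^{2\coef(\vv+s)}\bigr)-\log\bigl(1+e^{2\coef\vv}\bigr)\Bigr],
\]
and the bracket is a bounded but generically nonzero $\mathcal{O}(1)$ quantity once $\Im\vv\neq0$. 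Multiplying by $l\delta^{-1}$ cancels the $\delta$ exactly, so the modulus can grow like $e^{K|l|}$ with $K\sim c/\coef$. That cannot be absorbed into the constant $M$ of items~1 and~2, which must be uniform in $l$ precisely because the $\ost$-norms sum over $l$ against the fixed weight $e^{|l|\ost}$. The integration by parts you invoke for item~2 does not repair this: the boundary term at $s=0$ is fine, but the interior integral still carries the same kernel with the same $e^{K|l|}$ growth, and claim~1 (which you plan to reuse there) has already broken down.

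The paper opens the proof by eliminating this obstruction with a genuine contour deformation, not a straight path. One replaces the horizontal ray by the curve $z=s_\pm(t,\vv)$, $t\in[0,\infty)$, defined implicitly by $\pu^{-}(\vv+s_\pm)-\pu^{-}(\vv)=-\alpha t\,e^{\pm i\beta/2}$, the sign matched to the sign of $l$; the opening angle $\beta$ of the triangular domain $\Dout{\uns}$ is exactly what guarantees $\vv+s_\pm(t,\vv)\in\Dout{\uns}$. Along this curve the exponential has modulus $e^{-|l|\delta^{-1}\alpha\sin(\beta/2)\,t}\le 1$, so item~1 comes with an $l$-independent constant, and the elementary identity $\int_0^\infty e^{-|l|\delta^{-1}\alpha\sin(\beta/2)t}\,dt=\delta/(|l|\alpha\sin(\beta/2))$ delivers the $\delta/|l|$ gain of item~2 with no integration by parts at all. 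The remaining technical point — comparing $|\cosh(\coef(\vv+s_\pm(t,\vv)))|$ to $|\cosh(\coef\vv)|$ along the deformed path — is the pointwise integral inequality you correctly flag, and is what the paper then dispatches as ``standard arguments''.
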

\begin{proof}[Sketch of the proof]
The main idea to prove this result is to redefine adequately the Fourier coefficients ${\Gout^\uns}^{[l]}(\phi)$ changing the
path of integration. Take $\vv\in\Dout{\uns}$ fixed and consider $s=s_\pm(t,\vv)$ defined implicitly by (see Figure~\ref{figurecurve-s_pm}):
\begin{figure}
\center
 \includegraphics[width=6cm]{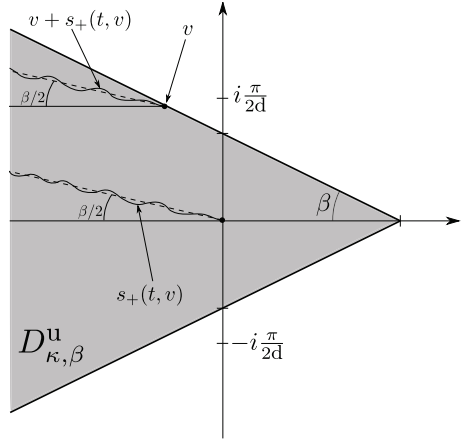}
 \caption[The domain $\Dout{\uns}$ with an example of the curves $s_+(t,\vv)$ and $v+s_+(t,\vv)$.]{The domain $\Dout{\uns}$ with an example of the curves $s_+(t,\vv)$ and $v+s_+(t,\vv)$. The discontinuous lines are $-te^{i\beta/2}$ and $v-te^{i\beta/2}$ respectively.}
 \label{figurecurve-s_pm}
\end{figure}
$$
s_\pm-\frac{c\delta}{\alpha}s_\pm+\frac{c\delta}{\coef\alpha}\left(\log\left(1+e^{2\coef(\vv+s_\pm)}\right)-
\log\left(1+e^{2\coef\vv}\right)\right)=-te^{\pm i\frac{\beta}{2}}.
$$
It can be proven that the function $s_\pm(t,\vv)$ is well-defined for all $t\in[0,+\infty)$ and $\vv\in\Dout{\uns}$ and moreover
that $\vv+s_\pm(t,\vv)\in\Dout{\uns}$. Consider the curve (see Figure~\ref{figurecurve-s_pm}):
$$\Gamma^R_\pm:=\left\{z\in\mathbb{C}\,:\,z=s_\pm(t,\vv),\, t\in[0,R]\right\}.$$
Then, one can prove that, if $m>0$ and $\phi\in\Bsout{\uns}{n,m}$ one has:
$$
{\Gout^\uns}^{[l]}(\phi)(\vv)=-\lim_{R\rightarrow+\infty}\cosh^{\frac{2}{\coef}}(\coef\vv)
\int_{\Gamma^R_\pm}\frac{e^{-il\delta^{-1} \left(\pu^{-}(\vv+z)- \pu^{-}(\vv)\right )}}{\cosh^{\frac{2}{\coef}}(\coef(\vv+z))}\phi^{[l]}(\vv+z)dz,
$$
where the coefficients ${\Gout^\uns}^{[l]}$ were defined in~\eqref{defcoefsGunsintro}, and we take the integral over $\Gamma_+^R$ for $l\geq0$ and over $\Gamma_-^R$ otherwise.

The proof of Lemma~\ref{lempropsopG} follows now from standard arguments.
\end{proof}

Now we are going to bound the independent term of the fixed point equation~\eqref{fpeq} which is
$\Foutfp^\uns(0)=\Gout^\uns\circ\Fout^\uns(0)$ (see~\eqref{defopFus}) with
$$
\Fout^\uns(0)=2\param\hetr(\vv)+\delta^p F^{\uns}(0) +\delta^p\frac{\coef+1}{b}\hetz(\vv) H^{\uns}(0).
$$

\begin{lemma}\label{lemfitaHiF}
Let $C_R$ be some constant, and $R$ such that $\|R\|_{2,2,\ost}\leq C_R$. There exists a constant $M$ such that:
$$
\|F^{\uns}(R-R_0)\|_{4,2,\ost}^\uns, \;\; \|G^{\uns}(R-R_0)\|_{2,0,\ost}^\uns, \;\; \|H^{\uns}(R-R_0)\|_{3,2,\ost}^\uns \leq M\delta^3,
$$
with $F^{\uns},G^{\uns},H^{\uns}$ defined in~\eqref{notationFGH}.
In particular, this holds for $R=\hetr$.
\end{lemma}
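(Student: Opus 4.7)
The plan is to extract all three estimates directly from formula~\eqref{defFGHuns}, combined with the vanishing properties of $X_1^{\uns}$ at the critical point $\hat S_- = (0,0,-1)$ provided by Lemma~\ref{lemchangCuCs}. Writing $\hat\zeta(\vv,\theta) = (\sqrt{2R}\cos\theta,\,\sqrt{2R}\sin\theta,\,\hetz(\vv))$, the functions $F^{\uns}(R-\hetr)$, $G^{\uns}(R-\hetr)$, $H^{\uns}(R-\hetr)$ are just the three components of $X_1^{\uns}(\delta\hat\zeta,\delta,\delta\param)$ multiplied respectively by $\sqrt{2R}$, $1/\sqrt{2R}$, and $1$.

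The first step is to check that $\delta\hat\zeta$ stays inside the analyticity ball $B^3(\hat r_0)$ of $X_1^{\uns}$. The hypothesis $\|R\|_{2,2,\ost}\leq C_R$ yields $|\sqrt{R}| \leq K|\cosh(\coef \vv)|^{-1}$, so near the singularities $\pm i\pi/(2\coef)$, where $|\cosh(\coef\vv)| \geq K\dist\delta$ by~\eqref{conddist}, we obtain $|\delta\sqrt{R}| \leq K\dist^{-1}$, which is below $\hat r_0$ for $\dist^*$ large enough; away from the singularities this is obvious.

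The core step is a pointwise estimate on $X_1^{\uns}$. Since $X_1^{\uns}$ is analytic and $\mathcal{O}_3$ in its arguments, and vanishes at $(\delta\hat S_-,\delta,\delta\param)$, a first order Taylor expansion gives
$$|X_1^{\uns}(\delta\hat\zeta,\delta,\delta\param)| \leq K\delta^3 |\hat\zeta - \hat S_-|(1+|\hat\zeta|^2).$$
The Jordan-form structure in Lemma~\ref{lemchangCuCs} forces the linear parts $\partial_{\hat\zeta_1}\pi_z X_1^{\uns}$ and $\partial_{\hat\zeta_2}\pi_z X_1^{\uns}$ at $\hat S_-$ to vanish, giving the sharper bound
$$|\pi_z X_1^{\uns}(\delta\hat\zeta,\delta,\delta\param)| \leq K\delta^3 \bigl(R+(\hetz(\vv)+1)^2\bigr)(1+|\hat\zeta|^2),$$
which is what ultimately produces the $\cosh^{-2}(\coef\vv)$ decay of $H^{\uns}$ in $\Doutinf{\uns}$.

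Finally I would substitute the decay estimates $|\sqrt{R}| \leq K|\cosh(\coef \vv)|^{-1}$ together with $|\hetz(\vv)+1| = |\tanh(\coef\vv)+1|$, the latter behaving like $|\cosh(\coef\vv)|^{-1}$ in $\DoutT{\uns}$ and like $|\cosh(\coef\vv)|^{-2}$ in $\Doutinf{\uns}$, into the bounds above, then multiply by the prefactors $\sqrt{2R}, 1/\sqrt{2R}, 1$ from~\eqref{defFGHuns}; this yields the claimed indices $(4,2), (2,0), (3,2)$ in the two regions. The Fourier weights $e^{|l|\ost}$ are absorbed by the analyticity of $f, g, h$ in the strip $|\im\theta|\leq\ost$. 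The main technical obstacle is the bookkeeping near the singularities: there $|\hat\zeta|$ itself grows like $|\cosh(\coef \vv)|^{-1}$, so the $(1+|\hat\zeta|^2)$ factor in the pointwise bound has to be balanced exactly against the prefactors and the $|\hat\zeta - \hat S_-|$ factor to reach the precise decay indices in the statement. The special case $R=\hetr$ is immediate since $\|\hetr\|_{2,2,\ost}^{\uns}$ is obviously finite.
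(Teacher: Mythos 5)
Your plan follows the paper's own route: pass through the explicit formula~\eqref{defFGHuns}, use the vanishing of $X_1^{\uns}$ at $\hat S_-$ and the $\mathcal O_3$ property, use the extra Jordan-form vanishing $\partial_x h^{\uns}=\partial_y h^{\uns}=0$ for the third component, and then feed in the decay of $\sqrt{R}$ and of $\hetz(\vv)+1$ in each sub-domain. The analyticity-ball check and the discussion of the two regimes of $|\hetz+1|$ are also the right bookkeeping steps. However, there is an arithmetic gap in the pointwise bound you use for the $z$-component, and it makes the $H^{\uns}$ estimate fail exactly where it matters.

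The bound you state,
\begin{equation*}
|\pi_z X_1^{\uns}(\delta\hat\zeta,\delta,\delta\param)| \leq K\delta^3 \bigl(R+(\hetz(\vv)+1)^2\bigr)\bigl(1+|\hat\zeta|^2\bigr),
\end{equation*}
carries the factor $(1+|\hat\zeta|^2)$ over from the \emph{first}-order Taylor estimate, where it is legitimate because $DX_1^{\uns}$ is $\mathcal O_2$ in its argument and is evaluated at points of size $\mathcal O(\delta(1+|\hat\zeta-\hat S_-|))$, so that $|DX_1^{\uns}|\lesssim \delta^2(1+|\hat\zeta-\hat S_-|)^2\sim\delta^2(1+|\hat\zeta|^2)$. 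But once you exploit the vanishing of $\partial_{\hat\zeta_1}\pi_z X_1^{\uns}$ and $\partial_{\hat\zeta_2}\pi_z X_1^{\uns}$ and pass to the quadratic remainder, the relevant derivative is $D^2X_1^{\uns}$, which is only $\mathcal O_1$ in its argument, so the correct factor is $\delta(1+|\hat\zeta-\hat S_-|)$, i.e.\ one power of $|\cosh(\coef\vv)|^{-1}$ \emph{less} near the singularities. Moreover a linear term of size $K\delta^3|\hetz(\vv)+1|$ survives from $\partial_{\hat\zeta_3}\pi_z X_1^{\uns}$, which does \emph{not} vanish; it happens to be harmless (index $1$ in $\DoutT{\uns}$, and $|\cosh^3(\coef\vv)|$ times it is $\lesssim\delta^3(\dist\delta)^2\leq\delta^3$), but it must be kept.

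Concretely, in $\DoutT{\uns}$ near $\pm i\pi/(2\coef)$ one has $R\sim(\hetz+1)^2\sim|\cosh(\coef\vv)|^{-2}$ and $|\hat\zeta|\sim|\cosh(\coef\vv)|^{-1}$, so your stated bound yields $|H^{\uns}|\lesssim\delta^3|\cosh(\coef\vv)|^{-4}$, i.e.\ index $4$, whereas the lemma asserts index $3$. Weighting by $\cosh^3(\coef\vv)$ then gives $\delta^3/|\cosh(\coef\vv)|\lesssim\delta^2/\dist$, which is \emph{not} bounded by $M\delta^3$; so the claimed estimate $\|H^{\uns}(R-R_0)\|^{\uns}_{3,2,\ost}\leq M\delta^3$ does not follow from your pointwise bound. (For $F^{\uns}$ you get away with the loose factor because the extra prefactor $\sqrt{2R}$ supplies one additional power; for $H^{\uns}$ there is no such help.) With the corrected quadratic-remainder factor $K\delta^3(R+(\hetz+1)^2)(1+|\hat\zeta-\hat S_-|)$ plus the linear term $K\delta^3|\hetz+1|$, the $\DoutT{\uns}$ contribution is $\lesssim\delta^3|\cosh(\coef\vv)|^{-3}$, which gives precisely index $3$ and closes the argument; that is, in essence, the computation the paper is appealing to when it invokes the additional vanishing of $\partial_x h^{\uns}$ and $\partial_y h^{\uns}$.

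One further point you pass over, as does the paper: the estimate for $G^{\uns}$ multiplies $f^{\uns},g^{\uns}$ by $1/\sqrt{2R}$, and the hypothesis $\|R\|_{2,2,\ost}\leq C_R$ provides only an \emph{upper} bound on $R$. To control $1/\sqrt{2R}$ one implicitly needs $R$ to stay comparable to $\hetr$ (as it does in every application, where $R=\hetr+\phi$ with $\phi$ small relative to $\hetr$); worth noting, but not a defect specific to your write-up.
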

\begin{proof}
We will use the properties of $X_1^{\uns}:=(f^{\uns},g^{\uns},h^{\uns})$ stated in Lemma~\ref{lemchangCuCs}. We first prove
the bound for $F^{\uns}$ being the one for $G^{\uns}$ analogous. By definitions~\eqref{notationFGH} and~\eqref{defFGHuns}
\begin{align*}
F^{\uns}&(R-R_0)(\vv,\theta)=\Fb^\uns(\delta R(\vv,\theta),\theta,\delta\hetz(\vv),\delta,\delta\param)  \\
&=\sqrt{2R(\vv,\theta)}\cos\theta f^\uns(\Phi(\vv,\theta),\delta,\delta \param)+
\sqrt{2R(\vv,\theta)}\sin\theta g^\uns(\Phi(\vv,\theta),\delta,\delta \param).
\end{align*}
with
$$
\Phi(\vv,\theta)=\big (\delta\sqrt{2R(\vv,\theta)}\cos\theta,\delta\sqrt{2R(\vv,\theta)}\sin\theta,\delta\hetz(\vv)\big).
$$
By Lemma~\ref{lemchangCuCs}, $f^{\uns}$ is of order three in all their variables and $f^\uns(0,0,-\delta,\delta,\delta\param)=0$ for all $\delta$ and $\param$.
Therefore, since $\|\sqrt{2R}\|_{1,1,\ost}<+\infty$ and $\hetz(\vu) = \tanh(\coef \vu)$, we have that
$$
\|f^\uns(\Phi(\vv,\theta),\delta,\delta\param)\|_{3,1,\ost}^\uns \leq K\delta^{3}.
$$
Reasoning analogously, we obtain the same bound for $g^{\uns}$ and thus:
$$
\|F^\uns(R-R_0)\|_{4,2,\ost}^\uns\leq K\delta^3.
$$

With respect to $H^{\uns}$, we have:
$$H^{\uns}(R-R_0)(\vv,\theta)=\Hb^\uns(\delta R(\vv,\theta),\theta,\delta\hetz(\vv),\delta,\delta\param)=
h^\uns(\Phi(\vv,\theta),\delta,\delta\param).$$
Again, $h^\uns$ is of order three in all their variables, $h^\uns(0,0,-\delta,\delta,\delta\param)=0$ and moreover $\partial_xh^\uns(0,0,-\delta,\delta,\delta\param)=\partial_yh^\uns(0,0,-\delta,\delta,\delta\param)=0$. The bound for $H^{\uns}(R-R_0)$ follows by using the same arguments as above.
\end{proof}

Now we deal with the independent term $\Foutfp^\uns(0)$.
\begin{lemma}\label{lemfitaindepterm}
Let $|\param|\leq\param^*\delta^{p+3}$. There exists $M>0$ such that $\llfloor \Foutfp^\uns(0)\rrfloor_{3,2,\ost}^\uns\leq M\delta^{p+3}$.
\end{lemma}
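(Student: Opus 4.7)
The plan is to bound the independent term $\Fout^\uns(0)$ in the $\|\cdot\|_{4,2,\ost}^\uns$ norm and then invoke the final statement of Lemma~\ref{lempropsopG}, which gives
$$
\llfloor \Gout^\uns(\phi)\rrfloor_{3,2,\ost}^\uns \leq M\|\phi\|_{4,2,\ost}^\uns
$$
for every $\phi \in \Bsout{\uns}{4,2}$. Since $\Foutfp^\uns(0) = \Gout^\uns(\Fout^\uns(0))$, it then suffices to prove $\|\Fout^\uns(0)\|_{4,2,\ost}^\uns \leq K\delta^{p+3}$. Recall from~\eqref{defopFus} that
$$
\Fout^\uns(0)(\vv,\theta) = 2\param\hetr(\vv) + \delta^p F^{\uns}(0)(\vv,\theta) + \delta^p\tfrac{\coef+1}{b}\hetz(\vv) H^{\uns}(0)(\vv,\theta),
$$
so I would bound each summand separately.

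For the first summand I would use the explicit formula $\hetr(\vv)=(\coef+1)/(2b\cosh^2(\coef \vv))$ together with the fact that $|\cosh(\coef \vv)|$ stays bounded above on $\DoutT{\uns}$ (by a constant depending only on $T$ and $\beta$); this gives $\|\hetr\|_{4,2,\ost}^\uns \leq K$, and combined with the hypothesis $|\param|\leq \param^*\delta^{p+3}$ yields $\|2\param\hetr\|_{4,2,\ost}^\uns \leq K\delta^{p+3}$. For the second summand, Lemma~\ref{lemfitaHiF} applied with $R=\hetr$ (so that the argument of $F^\uns$ vanishes) provides $\|F^\uns(0)\|_{4,2,\ost}^\uns \leq K\delta^3$, hence $\|\delta^p F^\uns(0)\|_{4,2,\ost}^\uns \leq K\delta^{p+3}$.

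The third summand is the only place where one has to be mildly careful: $\hetz(\vv)=\tanh(\coef \vv)$ has simple poles precisely at $\vv=\pm i\pi/(2\coef)$, i.e.\ at the singularities bounding $\DoutT{\uns}$, so one has the a priori bound $|\hetz(\vv)|\leq K/|\cosh(\coef \vv)|$ on $\DoutT{\uns}$, while on $\Doutinf{\uns}$ the function $\hetz$ is uniformly bounded since $\tanh(\coef \vv)\to \pm 1$ as $\re \vv \to \pm \infty$. Feeding this into the bound $\|H^\uns(0)\|_{3,2,\ost}^\uns \leq K\delta^3$ from Lemma~\ref{lemfitaHiF} produces $\|\hetz H^\uns(0)\|_{4,2,\ost}^\uns \leq K\delta^3$, so the third summand is also $\mathcal{O}(\delta^{p+3})$ in the $\|\cdot\|_{4,2,\ost}^\uns$ norm. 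Adding the three contributions and applying Lemma~\ref{lempropsopG} then closes the argument. The only step that requires some care is the weight bookkeeping in the last summand, which is exactly what forces the natural target norm of $\Fout^\uns(0)$ to be $\|\cdot\|_{4,2,\ost}^\uns$ rather than $\|\cdot\|_{3,2,\ost}^\uns$; this extra power of $\cosh$ is then precisely compensated by the fact that $\Gout^\uns$ reduces the first index by one.
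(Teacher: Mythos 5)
Your proof is correct and follows the same route as the paper: it reduces to bounding $\|\Fout^\uns(0)\|_{4,2,\ost}^\uns$ via the final item of Lemma~\ref{lempropsopG}, then controls the three summands of $\Fout^\uns(0)$ by the hypothesis on $\param$, the explicit decay of $\hetr$, and Lemma~\ref{lemfitaHiF}. The only cosmetic difference is that the paper packages your $\hetz$-bookkeeping as the single product inequality $\|\hetz\,H^\uns(0)\|_{4,2,\ost}^\uns\leq\|\hetz\|_{1,0}^\uns\|H^\uns(0)\|_{3,2,\ost}^\uns$, which is exactly the pointwise estimate you carry out separately on $\DoutT{\uns}$ and $\Doutinf{\uns}$.
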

\begin{proof}
By Lemma~\ref{lempropsopG} it is enough to prove that
$\|\Fout^\uns(0)\|_{4,2,\ost}^\uns\leq M\delta^{p+3}$ (recall that $\Foutfp^\uns=\Gout^\uns\circ\Fout^\uns$).
This is clear, by Lemma~\ref{lemfitaHiF}:
\begin{align*}
 \|\Fout^\uns(0)\|_{4,2,\ost}^\uns&\leq \|2\param\hetr\|_{4,2,\ost}^\uns+\delta^{p} \|F^{\uns}(0) \|_{4,2,\ost}^\uns+
\delta^p\frac{\coef+1}{b}\|\hetz \cdot H^\uns(0)\|_{4,2,\ost}^\uns \\
&\leq K\left (\param + \delta^{p} \|F^{\uns}(0) \|_{4,2,\ost}^\uns + \delta^p\|\hetz\|_{1,0}^\uns\|H^\uns(0)\|_{3,2,\ost}^\uns \right )\\
&\leq K(\param+\delta^{p+3})\leq K\delta^{p+3}.
\end{align*}
\end{proof}

We prove now that the operator $\Foutfp^{\uns}$ is contractive in an appropriate Banach space.
More precisely we prove the following result:
\begin{lemma}\label{lemLipconst}
Let $|\param|\leq\param^*\delta^{p+3}$. Assume that $\phi_1$, $\phi_2\in\Bsoutdiff{\uns}{3,2}$ satisfy for $C>0$ and $i=1,2$ that
$\llfloor\phi_i\rrfloor_{3,2,\ost}^\uns\leq C\delta^{p+3}$. Then there exists $M>0$ such that:
$$\llfloor\Foutfp^\uns(\phi_1)-\Foutfp^\uns(\phi_2)\rrfloor_{4,2,\ost}^\uns\leq M\delta^{p+3}\llfloor\phi_1-\phi_2\rrfloor_{3,2,\ost}^\uns.$$
\end{lemma}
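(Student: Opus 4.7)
The plan is to reduce everything to an estimate on $\Fout^\uns(\phi_1)-\Fout^\uns(\phi_2)$ and then invoke the regularizing property of $\Gout^\uns$ from Lemma~\ref{lempropsopG}. Concretely, since $\Foutfp^\uns=\Gout^\uns\circ\Fout^\uns$, part 6 of Lemma~\ref{lempropsopG} will give
$$
\llfloor \Foutfp^\uns(\phi_1)-\Foutfp^\uns(\phi_2)\rrfloor_{4,2,\ost}^{\uns}\leq M\,\|\Fout^\uns(\phi_1)-\Fout^\uns(\phi_2)\|_{5,2,\ost}^{\uns},
$$
so the whole lemma is reduced to bounding the right-hand side by $M\delta^{p+3}\llfloor \phi_1-\phi_2\rrfloor_{3,2,\ost}^{\uns}$.

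To produce that bound I would expand $\Fout^\uns(\phi_1)-\Fout^\uns(\phi_2)$ using the definition~\eqref{defopFus} and the algebraic identity $AB-A'B'=(A-A')B+A'(B-B')$ applied to the two bilinear terms $G^\uns(R)\partial_\theta R$ and $(2bR+\delta^p H^\uns(R))\partial_\vv R$. This produces five families of terms:
\begin{enumerate}
\item $2\param(\phi_1-\phi_2)$, trivially controlled by $|\param|\leq \param^*\delta^{p+3}$;
\item $\delta^p\bigl[F^\uns(\phi_1)-F^\uns(\phi_2)\bigr]$ and $\delta^p\frac{\coef+1}{b}\hetz\bigl[H^\uns(\phi_1)-H^\uns(\phi_2)\bigr]$, which I would write via the mean value theorem as $\delta^p(\phi_1-\phi_2)\int_0^1 \partial_R F^\uns(\phi_\lambda)\,d\lambda$ (and similarly for $H^\uns$), with $\phi_\lambda=\phi_2+\lambda(\phi_1-\phi_2)$;
\item the differences $\delta^p[G^\uns(\phi_1)-G^\uns(\phi_2)]\partial_\theta\phi_1$ and $\delta^p G^\uns(\phi_2)\bigl[\partial_\theta\phi_1-\partial_\theta\phi_2\bigr]$;
\item the analogous pair coming from the $\partial_\vv R$ term, plus the purely polynomial piece $\frac{2b}{\coef(1-\hetz^2)}\bigl[(\phi_1-\phi_2)\partial_\vv\phi_1+\phi_2\partial_\vv(\phi_1-\phi_2)\bigr]$.
\end{enumerate}
The normal form carried out up to order three in Lemma~\ref{lemchangCuCs} forces $X_1^\uns=\mathcal{O}_3(\delta\hat\zeta,\delta,\delta\param)$ and $DX_1^\uns(\delta\hat S_-,\delta,\delta\param)=\mathcal{O}(\delta^{p+3})$ block-wise, so the very same Cauchy-estimate/Taylor-expansion argument used in Lemma~\ref{lemfitaHiF} yields $\|\partial_R F^\uns(\phi_\lambda)\|_{2,0,\ost}^\uns,\ \|\partial_R H^\uns(\phi_\lambda)\|_{1,0,\ost}^\uns,\ \|G^\uns(\phi_2)\|_{2,0,\ost}^\uns\leq M\delta^{3}$, uniformly for $\lambda\in[0,1]$ as long as $\llfloor \phi_i\rrfloor_{3,2,\ost}^\uns\leq C\delta^{p+3}$ keeps $\phi_\lambda$ in the domain of analyticity of $\bar f,\bar g,\bar h$.

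Now I would multiply these bounds, using the Banach-algebra property of the product norms together with the explicit decays $\|\hetr\|_{2,2}^\uns,\|\hetz\|_{0,0}^\uns,\|(1-\hetz^2)^{-1}\|_{-2,0}^\uns\leq K$, and pay attention to the weight additions. Each of the five groups ends up bounded by $M\delta^{p+3}\llfloor\phi_1-\phi_2\rrfloor_{3,2,\ost}^\uns$ in the norm $\|\cdot\|_{5,2,\ost}^\uns$; the weight $(5,2)$ comes out naturally because on $\DoutT{\uns}$ the coefficient $(1-\hetz^2)^{-1}$ absorbs two extra powers of $\cosh(\coef\vv)$ and because $\llfloor\cdot\rrfloor_{3,2,\ost}^\uns$ already controls $\partial_\vv(\phi_1-\phi_2)$ in weight $4$ and $\delta^{-1}\partial_\theta(\phi_1-\phi_2)$ in weight $4$, while on $\Doutinf{\uns}$ the weight stays $2$ because $\hetr,\partial_\vv\phi_i,\partial_\theta\phi_i$ all decay like $\cosh^{-2}$ there. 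The main technical obstacle is precisely this last bookkeeping for the $\partial_\vv$-group, where the $(1-\hetz^2(\vv))^{-1}$ creates a double pole at $\vv=\pm i\pi/(2\coef)$ that must be compensated by one power of $\phi_i$ (or $\partial_\vv\phi_i$) and the factor of $\delta^{p+3}$ coming from the size of $\phi_i$; the choice of the normal form of order three and the hypothesis $\llfloor\phi_i\rrfloor_{3,2,\ost}^\uns\leq C\delta^{p+3}$ are exactly tailored so that this compensation works. Summing the five estimates and applying Lemma~\ref{lempropsopG}, part 6, closes the argument.
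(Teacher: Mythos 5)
Your proof follows the paper's argument essentially verbatim: reduce via the linearity of $\Gout^\uns$ and item~6 of Lemma~\ref{lempropsopG} to a bound on $\|\Fout^\uns(\phi_1)-\Fout^\uns(\phi_2)\|_{5,2,\ost}^\uns$, split $\Fout^\uns$ into its linear, bilinear and quadratic pieces, bound the resulting differences of $F^\uns,G^\uns,H^\uns$ by Lemma~\ref{lemfitaHiF}-type arguments (recorded in the paper as~\eqref{bounddifGHu}), and finish with the weight bookkeeping you describe. Your mean-value/parametrized-homotopy formulation via $\phi_\lambda$ is a cosmetic variant of the paper's ``proceeding as in the proof of Lemma~\ref{lemfitaHiF}'' and introduces no new content.
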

\begin{proof}
We skip tedious computations and only give an sketch of the proof.
See details in~\cite{CastejonPhDThesis}.
Using that $\Gout^\uns$ is linear and Lemma~\ref{lempropsopG}:
$$
\llfloor\Foutfp^\uns(\phi_1)-\Foutfp^\uns(\phi_2)\rrfloor_{4,2,\ost}^\uns
\leq M \| \Fout^{\uns}(\phi_1) -\Fout^{\uns}(\phi_2))\|_{5,2,\ost}^{\uns}.
$$
It is only necessary to prove that if $\llfloor\phi_i\rrfloor_{3,2,\ost}^\uns\leq C\delta^{p+3}$, then:
\begin{equation}\label{LipboundFoutuns}
\|\Fout^\uns(\phi_1)-\Fout^\uns(\phi_2)\|_{5,2,\ost}^\uns\leq K\delta^{p+3}\llfloor\phi_1-\phi_2\rrfloor_{3,2,\ost}^\uns.
\end{equation}
We decompose the operator $\Fout^{\uns}$ in~\eqref{defopFus} into $\Fout^{\uns}=\Fout^{\uns}_1+\Fout^{\uns}_2+\Fout^{\uns}_3+\Fout^{\uns}_4$ with
\begin{align*}
\Fout^{\uns}_1(\phi)&=2\param (R_0(\vv)+\phi) + \delta^p F^\uns(\phi) + \delta^p \frac{\coef +1}{b} \hetz(\vv) H^{\uns}(\phi) \\
\Fout^{\uns}_2(\phi)&= -\delta^p G^{\uns}(\phi) \partial_{\theta} \phi  \\
\Fout^{\uns}_3(\phi)&= -\delta^p \frac{1}{\coef (1-\hetz^2(\vv))} H^{\uns}(\phi) \partial_{\vv} \phi \\
\Fout^{\uns}_4(\phi)&= \frac{2b}{\coef (1-\hetz^2(\vv))} \phi \partial_{\vv}\phi .
\end{align*}
Proceeding as in the proof of Lemma~\ref{lemfitaHiF}, one can prove (after tedious but easy computations):
\begin{align}
&\| F^{\uns}(\phi_1)-F^\uns(\phi_2)\|_{5,2,\ost}^\uns\leq M\delta^3\|\phi_1-\phi_2\|_{3,2,\ost}^\uns. \label{bounddifGHu}\\
&\|G^{\uns}(\phi_1) - G^{\uns}(\phi_2) \|_{3,0,\ost},  \|H^{\uns}(\phi_1) - H^{\uns}(\phi_2) \|_{4,2,\ost}\leq M \delta^{3} \| \phi_1 - \phi_2\|_{3,2,\ost}. \notag
\end{align}
As a consequence, the operator $\Fout^{\uns}_1$ satisfies the bound in~\eqref{LipboundFoutuns} provided $\|\hetz\|_{1,0}\leq K$.

With respect to $\Fout^{\uns}_2$, we write:
$$
\Fout^{\uns}_2(\phi_1)-\Fout^{\uns}_2(\phi_2) = \big (G^{\uns}(\phi_1)-G^{\uns}(\phi_2)\big ) \partial_{\theta}\phi_1 + G^{\uns}(\phi_2)
\partial_{\theta}\big (\phi_{1}-\phi_2).
$$
Then, since $\phi_i \in \Bsoutdiff{\uns}{3,2}$,
\begin{equation}\label{Fout2rewrite}
\begin{aligned}
\|\Fout^{\uns}_2(\phi_1)-\Fout^{\uns}_2(\phi_2)\|_{5,2,\ost}^{\uns}\leq&  \|G^{\uns}(\phi_1)-G^{\uns}(\phi_2)\|_{3,0,\uns}^{\uns} \|\partial_{\theta}\phi_1\|_{2,2,\ost}^{\uns}
\\&+ \|G^{\uns}(\phi_2) \|_{2,0,\ost}^{\uns} \|\partial_{\theta}\big (\phi_{1}-\phi_2) \|_{3,2,\ost}^{\uns}.
\end{aligned}
\end{equation}
Now we note that:
\begin{equation}\label{boundlipcon12}
 \|\partial_\theta {\phi}_1\|_{2,2,\ost}^\uns\leq
K\frac{\left\|\partial_\theta {\phi}_1\right\|_{4,2,\ost}^\uns}{\delta^2\dist^2}
\leq K\frac{\llfloor {\phi}_1\rrfloor_{3,2,\ost}^\uns}{\delta\dist^2}\leq
K\frac{\delta^{p+2}}{\dist^2}\leq K.
\end{equation}
and:
\begin{equation}\label{boundlipcon15}
 \|\partial_\theta({\phi}_1-{\phi}_2)\|_{3,2,\ost}^\uns\leq
\frac{K}{\delta\dist}\|\partial_\theta({\phi}_1-{\phi}_2)\|_{4,2,\ost}^\uns\leq
\frac{K}{\dist}\llfloor {\phi}_1-{\phi}_2\rrfloor_{3,2,\ost}^\uns.
\end{equation}
Note that since $\| \phi_i\|_{3,2,\ost}^{\uns} \leq C \delta^{p+3}$, then $\| \phi_i \|_{2,2,\ost}^{\uns}\leq C \kappa^{-1} \delta^{p+2}$.
Therefore, Lemma~\ref{lemfitaHiF} with $R=\phi_2$ applies.
Thus, using this lemma and the bounds in~\eqref{boundlipcon12}, \eqref{boundlipcon15} and~\eqref{bounddifGHu} in
inequality~\eqref{Fout2rewrite}, we also obtain that the operator $\Fout^{\uns}_2$ satisfies bound in~\eqref{LipboundFoutuns}.

We leave to the reader to check that $\Fout^{\uns}_3$ and $\Fout^{\uns}_4$ also satisfy bound~\eqref{LipboundFoutuns} provided
$\|(1-Z_0^{2})^{-1}\|_{-2,-2,\ost}^{\uns} \leq K$.
\end{proof}

\begin{proof}[End of proof of Proposition~\ref{propout}]
Proposition~\ref{propout} is a corollary of Lemmas~\ref{lemfitaindepterm} and~\ref{lemLipconst}. Indeed,
let $p\geq-2$ and $|\param|\leq\param^*\delta^{p+3}$. Define $\varrho:=2\llfloor\Foutfp^\uns(0)\rrfloor_{3,2,\ost}^\uns$ and
$B(\varrho)\subset\Bsoutdiff{\uns}{3,2}$ the ball of radius $\varrho$ centered at zero.

We claim that $\Foutfp^\uns$ has a unique fixed point in $B(\varrho)$. Indeed, we point out that $\varrho\leq K\delta^{p+3}$
by Lemma~\ref{lemfitaindepterm}. We first check that $\Foutfp^\uns$ is contractive. By the properties of the norm
$\llfloor.\rrfloor_{n,m,\ost}^\uns$ and Lemma~\ref{lemLipconst}, for $\phi_1,\phi_2\in B(\varrho)$:
$$
\llfloor\Foutfp^\uns(\phi_1)-\Foutfp^\uns(\phi_2)\rrfloor_{3,2,\ost}^\uns\leq
\frac{K}{\delta\dist}\llfloor\Foutfp^\uns(\phi_1)-\Foutfp^\uns(\phi_2)\rrfloor_{4,2,\ost}^\uns
\leq\frac{K\delta^{p+2}}{\dist}\llfloor\phi_1-\phi_2\rrfloor_{3,2,\ost}^\uns.
$$
Clearly, since $p\geq-2$ and $\dist^*$ is large enough, $\Foutfp^\uns$ is contractive in $B(\varrho)$.

It remains to check that $\Foutfp^\uns:B(\varrho)\rightarrow B(\varrho)$.
If $\phi\in B(\varrho)$, by Lemma~\ref{lemLipconst}:
$$
\llfloor\Foutfp^\uns(\phi)\rrfloor_{3,2,\ost}^\uns\leq
\llfloor\Foutfp^\uns(\phi)-\Foutfp^\uns(0)\rrfloor_{3,2,\ost}^\uns+\llfloor\Foutfp^\uns(0)\rrfloor_{3,2,\ost}^\uns
\leq K\frac{\delta^{p+2}}{\dist}\llfloor\phi\rrfloor_{3,2,\ost}^\uns+\frac{1}{2}\varrho.
$$
Taking $\kappa^* \leq  \kappa$ big enough, $\llfloor\Foutfp^\uns(\phi)\rrfloor_{3,2,\ost}^\uns<\varrho$.
That is, $\Foutfp^\uns:B(\varrho)\rightarrow B(\varrho)$. Therefore, by the fixed point theorem $\Foutfp$ has a unique fixed point in $B(\varrho)$.

It is clear that $R_1^\uns\in B(\varrho)$ is the fixed point of $\Foutfp^\uns$ obtained before.
Then, item 1 of Proposition~\ref{propout} is a direct consequence of Lemma~\ref{lemfitaindepterm}.
To prove item 2, we just need to note that:
$
R_{11}^\uns=R_1^\uns-R_{10}^\uns=\Foutfp^\uns(R_1^\uns)-\Foutfp^\uns(0).
$
Using Lemma~\ref{lemLipconst} and that $\llfloor R_1^\uns\rrfloor_{3,2,\ost}^\uns \leq  K\llfloor R_{10}^\uns\rrfloor_{3,2,\ost}^\uns$,
we obtain that:
$$
\llfloor R_{11}^\uns\rrfloor_{4,2,\ost}^\uns\leq K\delta^{p+3}\llfloor R_1^\uns\rrfloor_{3,2,\ost}^\uns \leq K \delta^{p+3} \llfloor R_{10}^\uns\rrfloor_{3,2,\ost}^\uns.
$$
\end{proof}

\subsection{Suitable complex parameterizations. Theorem~\ref{thmoutloc}}\label{secproofthmoutloc}

In this section we shall prove Theorem~\ref{thmoutloc} concerning the functions $r_1^\uns$ and $r_1^\sta$.
The fact that $R_1^{\uns}$ and $R_1^\sta$ satisfy different equations is not adequate for our purposes of comparing them.
We will now proceed to obtain new parameterizations $r^{\uns,\sta}=\hetr+r^{\uns,\sta}_1$ of the invariant manifolds which
will be solutions of the same functional equation. To obtain such a parameterizations we i) undo the changes~\eqref{changeC1C2} until we get a parameterization of
system~\eqref{initsys-outer2D} and ii) perform the symplectic polar change of coordinates.

The technical proofs can be encountered in~\cite{CastejonPhDThesis}.

\subsubsection{Setting}
Let $R_1^{\uns,\sta}$ be the functions given
by Proposition~\ref{propout}. We consider
$R^{\uns,\sta}=R_0+R_1^{\uns,\sta}$ and we introduce the parameterizations of the invariant manifolds of the equilibrium points $\hat{S}_\mp=(0,0,\mp1)$
of the vector field $X^{\uns,\sta}$ in~\eqref{initsys_usX}:
\begin{equation}\label{defXYuns}
\hat \zeta^{\uns,\sta}(\vv,\theta) := \big (\sqrt{2R^{\uns,\sta}(\vv,\theta)}\cos\theta, \sqrt{2R^{\uns,\sta}(\vv,\theta)}\sin\theta,\hetz(\vv)\big ).
\end{equation}
We define:
\begin{equation}\label{defxyzunvv}
{\zeta}^{\uns,\sta}(\vv,\theta):=(C^{\uns,\sta})^{-1}\hat\zeta^{\uns,\sta}(\vv,\theta)= (x^{\uns,\sta}(\vv,\theta),y^{\uns,\sta}(\vv,\theta),z^{\uns,\sta}(\vv,\theta))
\end{equation}
where $C^{\uns,\sta}$ are given in~\eqref{changeC1C2}.
These are parameterizations of the two dimensional unstable (respectively stable) manifold associated to the equilibrium points $S_\mp(\delta,\param)$ of the
original system~\eqref{initsys-outer2D}.

To compare $(x^\uns(\vv,\theta),y^\uns(\vv,\theta))$ and $(x^\sta(\vv,\theta),y^\sta(\vv,\theta))$ on the $z-$plane (or equivalently in the $\vu-$plane given
by $z=\hetz(\vu)$), we
implicitly define the functions $\vv^{\uns,\sta}$ as:
$$
\hetz(\vu)=z^\uns(\vv^{\uns}(\vu,\theta),\theta),\qquad \hetz(\vu)=z^\sta(\vv^{\sta}(\vu,\theta),\theta).
$$
The result about the existence of functions $\vv^{\uns,\sta}$ is given below. Its proof is an elementary application of the
fixed point theorem.
\begin{lemma}\label{lemfunv}
Let $\dist=\dist(\delta)$ given in Proposition~\ref{propout}.
Fix $m>0$ a constant independent of $\delta$ and $\param$. Let $\bar\dist=\bar\dist(\delta)$
satisfying condition~\eqref{conddist} and such that $\bar\dist>\dist+m$, and let $z^{\uns,\sta}(\vv,\theta)$ be the functions defined in~\eqref{defxyzunvv}
for $(v,\theta) \in \DoutT[\dist][T]{\uns,\sta}\times \Tout$. Let $\bar T$ be a constant such that $0<\bar T <T$. Then,
if $\delta$ is sufficiently small, the functions $\vv^{\uns,\sta}$ defined implicitly by:
$$
\hetz(\vu)=z^{\uns,\sta}(\vv^{\uns,\sta}(\vu,\theta),\theta)
$$
are well defined for all $\vu\in\DoutT[\bar\dist][\bar T]{\uns,\sta}$ and $\theta\in\Tout$, and there exists a constant $M$ such that:
$$
|\vv^{\uns,\sta}(\vu,\theta)-\vu|\leq M\delta^{p+4}|\cosh(\coef\vu)|^2.
$$
\end{lemma}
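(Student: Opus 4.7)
The implicit equation $\hetz(\vu) = z^{\uns,\sta}(\vv,\theta)$ is to be inverted for $\vv$. I would begin by establishing the key decomposition
\begin{equation*}
z^{\uns,\sta}(\vv,\theta) = \hetz(\vv) + \psi^{\uns,\sta}(\vv,\theta), \qquad |\psi^{\uns,\sta}(\vv,\theta)| \leq K\delta^{p+4},
\end{equation*}
valid uniformly on $\DoutT[\dist][T]{\uns,\sta}\times\Tout$. This follows from~\eqref{defxyzunvv} by explicitly computing the third component of $(C^{\uns,\sta})^{-1}\hat\zeta^{\uns,\sta}$ via the form $C^{\uns,\sta} = \mathrm{Id} + \mathcal{O}(\delta^{p+4})$ granted by Lemma~\ref{lemchangCuCs}, together with the bound $\sqrt{R^{\uns,\sta}(\vv,\theta)} \leq K|\cosh(\coef\vv)|^{-1}$ coming from Proposition~\ref{propout}. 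Differentiating the same formula yields analogous estimates $|\partial_\vv\psi^{\uns,\sta}|, |\partial_\theta\psi^{\uns,\sta}| = \mathcal{O}(\delta^{p+4})$ after division by $\hetz'$.

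Setting $\vv = \vu + w$ and exploiting that $\hetz'(\vu) = \coef\,\mathrm{sech}^2(\coef\vu)$ does not vanish on $\DoutT[\bar\dist][\bar T]{\uns,\sta}$ (guaranteed by condition~\eqref{conddist}), the equation rewrites as a fixed point equation
\begin{equation*}
w = \mathcal{T}(w) := -\frac{\psi^{\uns,\sta}(\vu+w,\theta) + \bigl[\hetz(\vu+w) - \hetz(\vu) - \hetz'(\vu)w\bigr]}{\hetz'(\vu)}.
\end{equation*}
Since $|\hetz'(\vu)|^{-1} = \coef^{-1}|\cosh(\coef\vu)|^2$, it is natural to seek the fixed point in the ball
\begin{equation*}
\mathcal{B} := \Bigl\{\,w \text{ analytic on } \DoutT[\bar\dist][\bar T]{\uns,\sta}\times\Tout \;:\; |w(\vu,\theta)| \leq M\delta^{p+4}|\cosh(\coef\vu)|^2\,\Bigr\}
\end{equation*}
for $M$ sufficiently large. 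Self-mapping is immediate from the uniform bound on $\psi^{\uns,\sta}$ combined with the fact that the Taylor remainder $\hetz(\vu+w) - \hetz(\vu) - \hetz'(\vu)w$ is quadratic in $w$. Contraction follows because $\mathcal{T}'(w)$ is controlled by $(|\partial_\vv\psi^{\uns,\sta}| + |\hetz''|\cdot|w|)/|\hetz'(\vu)|$, which is $\mathcal{O}(\delta^{p+4})$ and thus much smaller than $1$ for $\delta$ small. The Banach fixed point theorem then produces $w$, and analyticity in $(\vu,\theta)$ is preserved in the limit.

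The main obstacle will be verifying that the displaced argument $\vu + w(\vu,\theta)$ actually lies in $\DoutT[\dist][T]{\uns,\sta}$, the domain on which $z^{\uns,\sta}$ (and hence $\psi^{\uns,\sta}$) is defined; without this, the composition inside $\mathcal{T}$ is meaningless. The nested construction of the domains is designed precisely for this: the condition $\bar T < T$ supplies an $\mathcal{O}(1)$ buffer in the real direction, while $\bar\dist > \dist + m$ together with~\eqref{conddist} supplies an $m\delta$ buffer along the triangular sides. Since $|\cosh(\coef\vu)| \leq K$ uniformly on the bounded set $\DoutT[\bar\dist][\bar T]{\uns,\sta}$, the uniform estimate $|w(\vu,\theta)| \leq KM\delta^{p+4}$ is much smaller than $m\delta$ for $\delta$ small (as $p \geq -2$ gives $\delta^{p+4} \leq \delta^2$), so $\vu + w$ indeed stays in $\DoutT[\dist][T]{\uns,\sta}$. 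This closes the fixed point argument and simultaneously delivers the claimed bound $|\vv^{\uns,\sta}(\vu,\theta) - \vu| \leq M\delta^{p+4}|\cosh(\coef\vu)|^2$.
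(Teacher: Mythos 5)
Your proof is correct and follows the route the paper itself indicates: the authors state that Lemma~\ref{lemfunv} ``is an elementary application of the fixed point theorem'' and defer the details to~\cite{CastejonPhDThesis}, which is exactly the argument you supply — the decomposition $z^{\uns,\sta} = \hetz + \psi^{\uns,\sta}$ with $|\psi^{\uns,\sta}| = \mathcal{O}(\delta^{p+4})$ (coming from $C^{\uns,\sta} = \mathrm{Id} + \mathcal{O}(\delta^{p+4})$ and $|\hat\zeta^{\uns,\sta}| \leq K|\cosh(\coef\vv)|^{-1}$, then using $|\cosh(\coef\vv)|\geq K\delta$), the contraction on the ball $|w|\leq M\delta^{p+4}|\cosh(\coef\vu)|^2$, and the essential check that the displacement stays inside $\DoutT[\dist][T]{\uns,\sta}$ thanks to the $\mathcal{O}(\delta)$ buffer provided by $\bar\dist>\dist+m$ and the $\mathcal{O}(1)$ buffer from $\bar T<T$.
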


We take $m>0$ fixed and $\dist, \bar{\dist}, T, \bar{T}$ and ${\beta}$ as in Lemma~\ref{lemfunv}.

Note that, as $\zeta^{\uns,\sta}(\vv,\theta)$ in~\eqref{defxyzunvv}, the functions $\zeta^{\uns,\sta}(\vv^{\uns,\sta}(\vu,\theta),\theta)$
are other parameterizations of the unstable and stable manifolds of $S_\mp(\delta,\param)$ respectively.
We define $r^{\uns,\sta}(\vu,\theta)$ as:
\begin{equation}\label{defruslocal}
r^{\uns,\sta}(\vu,\theta)=\frac{1}{2}\left[(x^{\uns,\sta}(\vv^{\uns,\sta}(\vu,\theta),\theta))^2+(y^{\uns,\sta}(\vv^{\uns,\sta}(\vu,\theta),\theta))^2\right].
\end{equation}

We claim that there exists $K>0$ such that for all $(\vu,\theta) \in \DoutT[\bar\dist][\bar T]{\uns,\sta}\times \Tout$,
\begin{equation}\label{exprrnew}
r^{\uns,\sta}(\vu,\theta) =
\hetr(\vu) + R^{\uns,\sta}_1(\vu,\theta) + r_{2}^{\uns,\sta}(\vu,\theta),\qquad |\cosh (\coef \vu) r_2^{\uns,\sta}(\vu,\theta)|\leq K\delta^{p+4},
\end{equation}
where $R^{\uns,\sta}_1$ are given in Proposition~\ref{propout}.
We deal only with the unstable case being the stable one analogous.
We first begin by study the difference between $\pi_{x,y}\hat \zeta^{\uns}$ and $\pi_{x,y}{\zeta}^{\uns}$ defined respectively
in~\eqref{defXYuns} and~\eqref{defxyzunvv}.
We note that, from Lemma~\ref{lemchangCuCs}, $\pi_{x,y}S_{-}(\delta,\param) = \mathcal{O}(\delta^{p+5})$
and the matrices $M_{-}(\delta,\param)$ in the same lemma have inverse of the form
$M_{-}^{-1}(\delta,\param)=\text{Id} + \delta^{p+5} \hat{M}_{-}^{-1}(\delta,\param)$ having $\hat{M}_-^{-1}(\delta,\param)$
bounded entries. We denote by $\pi_{x,y} \hat{M}_-^{-1}$ the two first rows of $\hat{M}_-^{-1}$. Then, using the form of the change $C^{\uns}$ in~\eqref{changeC1C2}:
$$
\pi_{x,y} {\zeta}^{\uns}(\vv,\theta)= \pi_{x,y} \hat \zeta^{\uns}(\vv,\theta) + k \delta^{p+5} + \delta^{p+5}\pi_{x,y} \hat{M}_-^{-1}\hat \zeta^{\uns}(\vv,\theta)
$$
for some bounded coefficients $k:=k(\delta,\param)$. By Corollary~\ref{corpropout}:
\begin{equation}\label{boundsXY}
|\hat \zeta^\uns(\vv,\theta)|\leq \frac{K}{|\cosh(\coef\vv)|}, \qquad (\vv,\theta)\in\DoutT{\uns}\times\Tout
\end{equation}
and therefore, if $(\vv,\theta)\in\DoutT{\uns}\times\Tout$:
\begin{equation}\label{expxunsyuns}
\pi_{x,y}{\zeta}^\uns(\vv,\theta)=\pi_{x,y}\hat \zeta^\uns(\vv,\theta)+\mathcal{O}(\delta^{p+4}).
\end{equation}
By definition~\eqref{defruslocal} of $r^{\uns,\sta}$ we are interested in computing ${\zeta}^{\uns}(\vv^{\uns}(u,\theta),\theta)$. For that
we also need to study the difference between $\hat \zeta^{\uns}(u,\theta)$ and $\hat \zeta^{\uns}(\vv^{\uns}(u,\theta),\theta)$.
We emphasize that by Lemma~\ref{lemfunv}, taking $\delta$ sufficiently small we can ensure
that
$\vv_{\lambda}^{\uns}(\vu,\theta):=\vv^{\uns}(\vu,\theta)+\lambda(\vv^{\uns}(\vu,\theta)-\vu)\in\DoutT{\uns}$
for $\vu\in\DoutT[\bar\dist][\bar T]{\uns}$ and $\lambda\in[0,1]$.
Then using Proposition~\ref{propout}, Lemma~\ref{lemfunv} and the mean value theorem:
\begin{align*}
|\pi_{x,y}\hat \zeta^\uns(\vv^{\uns}(\vu,\theta),\theta)&-\pi_{x,y}\hat \zeta^\uns(\vu,\theta)|\leq
\sup_{\lambda\in[0,1]}|\partial_\vv \pi_{x,y}\hat\zeta (\vv_{\lambda}^{\uns}(u,\theta))||\vv^{\uns}(\vu,\theta)-\vu| \\
& \leq K\delta^{p+4}\sup_{\lambda\in[0,1]}\frac{|\cosh(\coef\vu)|^2}{|\cosh(\coef(\vv_{\lambda}^{\uns}(\vu,\theta)))|^2}
\leq K\delta^{p+4}.
\end{align*}
Using this expression in~\eqref{expxunsyuns} one obtains:
$$
\pi_{x,y}{\zeta}^\uns(\vv^{\uns}(\vu,\theta),\theta)=\pi_{x,y}\hat \zeta^\uns(\vu,\theta)+\mathcal{O}(\delta^{p+4}).
$$
Then, recalling that $\pi_{x,y}{\zeta}^{\uns} = (x^{\uns},y^{\uns})$ and using~\eqref{boundsXY}, we obtain:
\begin{eqnarray*}
r^\uns(\vu,\theta)&=&\frac{1}{2}\left[(\pi_x \hat \zeta^\uns(\vu,\theta))^2+(\pi_y\hat \zeta^\uns(\vu,\theta))^2\right]+
\mathcal{O}\left(\frac{\delta^{p+4}}{\cosh(\coef\vu)}\right)\\
&=&R_0(\vu)+R_1^\uns(\vu,\theta)+\mathcal{O}\left(\frac{\delta^{p+4}}{\cosh(\coef\vu)}\right)
\end{eqnarray*}
and~\eqref{exprrnew} is proven.
We introduce $r_{1}^{\uns} = R_1^{\uns} + r_2^{\uns}$ and therefore, $r^{\uns}$ is of the form
$r^{\uns}=\hetr + \r^{\uns}_1$.
Note that, by construction, $r_1^\uns$ satisfies the partial differential equation~\eqref{PDEequal}.

In addition, by the compact expression of $\Gout^{\uns}$ in~\eqref{defGuscompact}, the dominant part, $r_{10}^{\uns}$, of $r_{1}^{\uns}$, given
in Theorem~\ref{thmoutloc} is $r_{10}^{\uns}=\Gout^{\uns}\circ\Fout(0)$,
where $\Fout$ is the operator defined in~\eqref{defopF}. Then, by using the expression for $R_1^{\uns}$ in Proposition~\ref{propout}, we obtain
the decomposition:
\begin{equation}\label{exprr1}
r_1^\uns=r_{10}^{\uns}+r_{11}^{\uns}, \qquad r_{10}^{\uns} = \Gout^{\uns}\circ\Fout(0),\qquad r_{11}^{\uns}=\Gout^\uns\circ(\Fout^\uns(0)-\Fout(0))+R_{11}^\uns+r_2^\uns,
\end{equation}
where we have used that the operator $\Gout^{\uns}$ is linear.

\subsubsection{End of the proof of Theorem~\ref{thmoutloc}}
It remains to prove the bounds for $r_{10}^{\uns}$ and $r_{11}^{\uns}$ in Theorem~\ref{thmoutloc}
on $\DoutT[\bar\dist][\bar T] [\bar \beta]{\uns,\sta}\times \Tout[\bar \ost]$. For this, it is convenient to define the auxiliary norms for functions $\phi:\DoutT{\uns,\sta}\times \Tout \to \mathbb{C}$:
$$
\|\phi\|_{n,\ost}^{\dist,\beta,T}=\sup_{\substack{\theta\in\Tout\\\vv\in\DoutT{\uns}}}\left|\cosh^n(\coef\vv)\phi(\vv,\theta)\right|
$$
which satisfies $\|\phi \|_{n,\ost}^{\dist,\beta,T} \leq  \| \phi \|_{n,0,\ost}^{\uns}$ if $\phi \in \Bsout{\uns}{n,0}$. Moreover, if $m>0$,
then
\begin{equation}\label{boundpartialuphi}
\|\partial_\vu\phi\|_{n+1,\bar\ost}^{\bar \dist, \bar T, \bar \beta}\leq M\|\phi\|_{n,\ost}^{\dist, T, \beta}
\end{equation}
with $\bar \dist >\dist+m$ satisfying condition~\eqref{conddist}, $0<\bar \beta <\beta$, $0<\bar T <T$ and $0<\bar \ost<\ost$. This fact can be checked as
Lemma 4.3 in~\cite{BaldomaInner}.

Along this proof we will use that, if $\vu \in \Dout{\uns}$ (see~\eqref{defdoutuns}), then  $|\cosh (\coef \vu)| \geq K \delta$ for some constant $K$.
We also use decomposition~\eqref{exprr1} without mention.

It can be straightforwardly proven (see~\cite{CastejonPhDThesis}) that there exists $M>0$, such that
$$
\left\|\Fout(0)\right\|_{4,0,\ost}^{\uns,\sta}\leq M\delta^{p+3},\qquad
\left\|\Fout(0)-\Fout^{\uns,\sta}(0)\right\|_{2,0,\ost}^{\uns,\sta}\leq M\delta^{p+4}.
$$
Therefore, using the properties of $\Gout^\uns$
in Lemma~\ref{lempropsopG} we obtain
$$
\llfloor \Gout^\uns\circ\Fout(0)\rrfloor_{3,0,\ost}^\uns\leq K\delta^{p+3},\qquad
\llfloor \Gout (\Fout(0)-\Fout^{\uns,\sta}(0))\rrfloor_{1,0,\ost}^{\uns}\leq K\delta^{p+4},
$$
so that the bounds for $r_{10}^{\uns}$ are done using that
$\| \cdot \|_{n,\ost}^{\dist,\beta,T} \leq  \|\cdot \|_{n,0,\ost}^{\uns}$. In addition,
with respect to $R_{11}^{\uns}$, using Proposition~\ref{propout}, we have
$ \llfloor R_{11}^\uns\rrfloor_{4,2,\ost}^\uns\leq K\delta^{2p+6}$ and with respect to
$r_2^{\uns}$ defined in~\eqref{exprrnew},
$\|r_2^\uns\|_{1,\ost}^{\bar\dist, \beta, \bar T}\leq K\delta^{p+4}$.
Therefore, using also property~\eqref{boundpartialuphi}, we have that
$$
\|R_{11}\|_{4,\ost}^{\dist,T,\beta} , \|\partial_\vu R_{11}\|_{5,\ost}^{\dist, T, \beta}\leq  K \delta^{2p+6}\qquad \|r_2\|_{1,\ost}^{\bar\dist ,\beta, \bar T} ,
\|\partial_u r_2\|_{1,\ost}^{\bar\dist ,\bar \beta, \bar T} \leq K \delta^{p+4}.
$$
We point out that we have abused notation, using the same $\bar\dist$ and $\bar T$ although they are different from the previous ones.
However, they still satisfy $\bar\dist-\dist>m$ and $0<\bar T<T$.
Now we are almost done. Notice that by definition of $r_{11}^{\uns}$
$$
|r_{11}^{\uns}(\vu,\theta)| \leq
|\cosh (\coef \vu) | \big (\llfloor \Gout (\Fout(0)-\Fout^{\uns,\sta}(0))\rrfloor_{1,0,\ost}^{\uns}+\|r_2\|_{1,\ost}^{\bar\dist ,\beta, \bar T} \big ) +
|\cosh (\coef \vu) |^4 \llfloor R_{11}^{\uns}\rrfloor_{4,2,\ost}^\uns
$$
and then, using the above bounds, the statement for $r_{11}^{\uns}$ in Theorem~\ref{thmoutloc} is checked.
The bound  $\|\partial_u r_{1}^{\uns}\|_{4,\bar \ost}^{\bar \dist,\bar \beta,\bar T} \leq K\delta^{p+3}$
is straightforward from the above bounds for $\partial_u r_{10}^{\uns}= \partial_u \Gout^\uns\circ\Fout(0)$ and $\partial_u r_{11}^{\uns}$ and from
definition $r^{\uns}_1=r_{10}^{\uns}+r_{11}^{\uns}$.
Finally, using that $r_1^\uns$ satisfies equation~\eqref{PDEequal}, we easily obtain
$\|\partial_\theta r_1\|_{4,\bar\ost}^{\bar \dist,\bar \beta, \bar T}\leq K\delta^{p+4}$.

\section{The difference $\Delta(u,\theta)$. Proof of Theorem~\ref{thmdifpartsolinjective}}\label{secproofDif}
In this section we will prove Theorem~\ref{thmdifpartsolinjective}.
It is clear that the difference $\Delta(\vu,\theta)=r_1^\uns(\vu,\theta)-r_1^\sta(\vu,\theta)$ is defined on
$(\vu,\theta)\in\Doutinter\times\Tout$, where
$\Doutinter=\DoutT{\uns}\cap\DoutT{\sta}$ (see Figure~\ref{figureDoutinter})so this will be our domain from now on.

\subsection{Preliminary considerations}
As we explained in Section~\ref{secDifference}, the difference $\Delta=r_1^{\sta}-r_{1}^{\uns}$ satisfies the linear PDE~\eqref{PDE_difference_intro}
and can be expressed of the form~\eqref{defDelta}: $\Delta(\vu,\theta)=P(\vu,\theta)\tilde{k}(\xi(\vu,\theta))$, with
\begin{equation}\label{xi}
\xi(\vu,\theta)=\theta+\delta^{-1}\alpha\vu+c\coef^{-1}\log\cosh(\coef\vu)+C(\vu,\theta),
\end{equation}
a solution of~\eqref{PDE_k} and
\begin{equation}\label{formaP}
P(\vu,\theta)=\cosh^{2/\coef}(\coef\vu)(1+P_1(\vu,\theta))
\end{equation}
a solution of~\eqref{PDE_difference_intro}.
An straightforward computation shows that if $C$ is a solution of
\begin{align}\label{PDE_C}
 (-\delta^{-1}\alpha-c\hetz(\vu))\partial_\theta C+\partial_\vu C=&l_2(\vu,\theta)(\delta^{-1}\alpha+c\hetz(\vu)+\partial_\vu C)\notag \\
&+l_3(\vu,\theta)(1+\partial_\theta C),
\end{align}
then, $\xi$ is a solution of~\eqref{PDE_k}. Conversely, if $P_1$ is a solution of
\begin{align}\label{PDEpartsol1}
 \left(-\delta^{-1}\alpha-c\hetz(\vu)\right)\partial_\theta P_1+\partial_\vu P_1=&(2\param+l_1(\vu,\theta)+2\hetz(\vu)l_2(\vu,\theta))(1+ P_1)\nonumber\\
&+l_2(\vu,\theta)\partial_\vu P_1+l_3(\vu,\theta)\partial_\theta P_1,
\end{align}
then $P$ as in~\eqref{formaP} is  a solution of~\eqref{PDE_difference_intro}.

Therefore, we focus to prove the existence and the properties stated in
Theorem~\ref{thmdifpartsolinjective} of the functions $C$ and $P_1$.
To this aim, first we point out that the linear operator on the left hand side of equations~\eqref{PDE_C} and~\eqref{PDEpartsol1} is in both cases:
\begin{equation}\label{defLpart}
\hat{\mathcal{L}}(\phi)=(-\delta^{-1}\alpha-c\hetz(\vu))\partial_\theta\phi+\partial_\vu\phi.
\end{equation}
In order to prove the existence of $C$ and $P_1$, we will use a right inverse
of the operator $\Lpart$, which we will call $\Gpart$.

As we did with $\Gout^{\uns,\sta}$, we look for an expression of $\Gpart$ by solving the ordinary differential equations that its
Fourier coefficients satisfy. Proceeding in this way and taking into account that our functions are
defined in $\Doutinter\times\Tout$,
we define $\Gpart$ as the operator acting on functions $\phi$ defined in $\Doutinter\times\Tout$ as:
\begin{equation}\label{defGpart}
\Gpart(\phi)(\vu,\theta)=\sum_{l\in\mathbb{Z}}\Gpart^{[l]}(\phi)(\vu)e^{il\theta},
\end{equation}
where:
\begin{eqnarray}\label{defGpartcoefs}
 \Gpart^{[l]}(\phi)(\vu)&=&\int_{\vu_+}^\vu e^{-il\delta^{-1}\alpha(w-\vu)-ilc\coef^{-1}\log\left(\frac{\cosh(\coef w)}{\cosh(\coef\vu)}\right)}\phi^{[l]}(w)dw, \qquad \textrm{if } l<0,\bigskip\nonumber\\
 \Gpart^{[0]}(\phi)(\vu)&=&\int_{\vu_\textup{R}}^\vu \phi^{[0]}(w)dw,\bigskip\\
 \Gpart^{[l]}(\phi)(\vu)&=&\int_{\vu_-}^\vu e^{-il\delta^{-1}\alpha(w-\vu)-ilc\coef^{-1}\log\left(\frac{\cosh(\coef w)}{\cosh(\coef\vu)}\right)}\phi^{[l]}(w)dw, \qquad \textrm{if } l>0,\nonumber
\end{eqnarray}
and $\vu_\pm=\pm i(\pi/(2\coef)-\delta\dist)$ and $\vu_\textup{R}\in\mathbb{R}$ is the point of $\Doutinter$ with largest real part
(see Figure~\ref{figureDoutinter} in Section~\ref{secDifference}).

In the next section, we introduce the Banach spaces we will work with, give some bounds of the functions $l_i$
and finally check that the operator $\Gpart$ is well defined and has appropriate properties.

\subsection{Banach spaces and properties of $\Gpart$}
We will consider functions $\phi:\Dout{}\times\Tout\rightarrow\mathbb{C}$. Again, they can be written in their Fourier series
$\phi(\vv,\theta)=\sum_{l\in\mathbb{Z}}\phi^{[l]}(\vv)e^{il\theta}$.
In a similar way as we did in Section~\ref{Banachout} we define the norms:
\begin{align*}
\left\|\phi^{[l]}\right\|_{n}&=\sup_{\vv\in\Dout{}}\left|\cosh^n(\coef\vv)\phi^{[l]}(\vv)\right|, \qquad
\|\phi\|_{n,\ost}=\sum_{l\in\mathbb{Z}}\left\|\phi^{[l]}\right\|_{n}e^{|l|\ost},\\
\llfloor \phi\rrfloor_{n,\ost}&=\|\phi\|_{n,\ost}+\|\partial_\vv \phi\|_{n+1,\ost}+\delta^{-1}\|\partial_\theta \phi\|_{n+1,\ost}
\end{align*}
and we consider the Banach spaces endowed with these norms:
\begin{align*}
\Bsout{}{n}&:=\left\{\phi:\Dout{}\times\Tout\rightarrow\mathbb{C} \,:\, \phi\textrm{ is analytic, such that } \|\phi\|_{n,\ost}<+\infty\right\},\\
\Bsoutdiff{}{n}&:=\left\{\phi:\Dout{}\times\Tout\rightarrow\mathbb{C} \,:\, \phi\textrm{ is analytic, such that }
\llfloor \phi\rrfloor_{n,\ost}<+\infty\right\}.
\end{align*}

\begin{remark}
From Theorem~\ref{thmoutloc}, $r_1^{\uns,\sta}\in\Bsoutdiff{}{3}$ and that there exists a constant $M$ such that
$\llfloor r_1^{\uns,\sta}\rrfloor_{3,\ost}\leq M\delta^{p+3}$.
\end{remark}

Next lemma provides bounds for $l_1,l_2$ and $l_3$. Its proof is given in~\cite{CastejonPhDThesis}.
\begin{lemma}\label{lemnormint}
Let $l_i(\vu,\theta)$, $i=1,2,3$, be the functions defined in~\eqref{defl1}--\eqref{defl3}. There exists a constant $M$ such that:
$$\|l_1\|_{2,\ost}\leq M\delta^{p+3},\qquad \|l_2\|_{1,\ost}\leq M\delta^{p+3},\qquad\|l_3\|_{2,\ost}\leq M\delta^{p+3}.$$
\end{lemma}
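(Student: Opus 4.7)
The plan is to estimate each of the three functions by combining the bounds from Theorem~\ref{thmoutloc} for $r_1^{\uns,\sta}$ and its first derivatives with bounds of Lemma~\ref{lemfitaHiF}-type for $F, G, H$ and their radial derivatives, transferred from the half-domains $\DoutT{\uns}$ and $\DoutT{\sta}$ to the intersection $\Doutinter \subset \DoutT{\uns}\cap\DoutT{\sta}$. The key kinematic observation that will pin down the weight indices $n=1,2$ in the norms is that $1-\hetz^2(\vu)=\cosh^{-2}(\coef\vu)$, so the factor $(1-\hetz^2(\vu))^{-1}$ appearing in $l_1$ and $l_2$ equals exactly $\cosh^2(\coef\vu)$ and amounts to shifting the weight index by $-2$ in $\|\cdot\|_{n,\ost}$.

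I would begin with $l_3$, which is immediate: since $l_3=-\tfrac{\delta^p}{2}\int_{-1}^1 G(r_\lambda)\,d\lambda$ and the convex combination $r_\lambda=(r_1^\uns+r_1^\sta)/2+\lambda(r_1^\uns-r_1^\sta)/2$ inherits the size estimates of $r_1^{\uns,\sta}$, the bound $\|G(r_\lambda)\|_{2,0,\ost}^\uns\leq K\delta^3$ from Lemma~\ref{lemfitaHiF} restricts to $\|G(r_\lambda)\|_{2,\ost}\leq K\delta^3$ on $\Doutinter$, yielding $\|l_3\|_{2,\ost}\leq M\delta^{p+3}$. Next I would handle $l_2$: its term involving $r_1^\uns+r_1^\sta$ is controlled by $\|r_1^{\uns,\sta}\|_{3,\ost}\leq M\delta^{p+3}$ together with the factor $\cosh^2(\coef\vu)$ coming from $(1-\hetz^2)^{-1}$, which shifts the weight from $3$ to $1$; the other term uses $\|H(r_\lambda)\|_{3,\ost}\leq K\delta^3$ analogously. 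For $l_1$ the same pattern applies summand by summand: the last summand uses $\|\partial_\vu r_1^{\uns,\sta}\|_{4,\ost}\leq M\delta^{p+3}$ together with the two-unit weight drop from $(1-\hetz^2)^{-1}$ to give a $\|\cdot\|_{2,\ost}$-bound of $M\delta^{p+3}$, while the other four combine $\delta^3$-estimates on the radial derivatives of $F, G, H$ in appropriate weighted norms with the bounds $\|\partial_\theta r_\lambda\|_{4,\ost}\leq M\delta^{p+4}$ and $\|\partial_\vu r_\lambda\|_{4,\ost}\leq M\delta^{p+3}$ from Theorem~\ref{thmoutloc}, and with the factor $\hetz$ or $(1-\hetz^2)^{-1}$ where appropriate.

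The main technical obstacle lies in deriving the bounds on $\partial_r F, \partial_r G, \partial_r H$ that parallel Lemma~\ref{lemfitaHiF}. Differentiating the symplectic polar change, for example $\Fb=\sqrt{2R}(\cos\theta\, f+\sin\theta\, g)$ with $R=\hetr(\vu)+r_\lambda(\vu,\theta)$, produces contributions carrying a factor $1/\sqrt{R}$ times $f$ or $g$, which a priori looks singular as $R\to 0$. Since $\hetr(\vu)=\tfrac{\coef+1}{2b}\cosh^{-2}(\coef\vu)$ and $\|r_\lambda\|_{3,\ost}\leq M\delta^{p+3}$, one has $R\sim \hetr\sim \cosh^{-2}(\coef\vu)$ on $\Doutinter$, hence $1/\sqrt{R}\leq K|\cosh(\coef\vu)|$ for some constant $K$; combining this with the order-$3$ vanishing of $X_1=(f,g,h)$ at the critical points and the boundedness of $\Doutinter$ yields the required $\delta^3$-estimates on $\partial_r F, \partial_r G, \partial_r H$. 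Once these are in hand, the remainder of the argument is a careful but routine bookkeeping of powers of $\cosh$ and $\delta$.
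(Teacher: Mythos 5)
The paper does not actually include a proof of this lemma: the text following the statement reads ``Its proof is given in~\cite{CastejonPhDThesis},'' so there is no argument in the paper for a direct comparison. Judged on its own, your proposal is a correct reconstruction of the intended proof. The decisive ingredients are precisely the ones you isolate: the identity $1-\hetz^2(\vu)=\cosh^{-2}(\coef\vu)$, which turns the prefactor $(1-\hetz^2)^{-1}$ into a clean two-unit drop in the weight index; the bounds $\|r_1^{\uns,\sta}\|_{3,\ost}\leq M\delta^{p+3}$, $\|\partial_\vu r_1^{\uns,\sta}\|_{4,\ost}\leq M\delta^{p+3}$, $\|\partial_\theta r_1^{\uns,\sta}\|_{4,\ost}\leq M\delta^{p+4}$ from Theorem~\ref{thmoutloc} (inherited by the convex combinations $r_\lambda$); the $\delta^3$-bounds on $F,G,H$ in the style of Lemma~\ref{lemfitaHiF}; and the estimate $1/\sqrt{R}\leq K|\cosh(\coef\vu)|$ on $\Doutinter$, valid because $|r_\lambda/\hetr|\leq K\delta^{p+2}\dist^{-1}$ is small when $\dist^*$ is large. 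With these, the summand-by-summand bookkeeping you describe closes, including the bilinear terms $\partial_r G\,\partial_\theta r_\lambda$ and $\partial_r H\,\partial_\vu r_\lambda$.

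Two points deserve to be made explicit rather than left implicit. First, Lemma~\ref{lemfitaHiF} as stated concerns $F^\uns,G^\uns,H^\uns$, the polar transforms of $X_1^{\uns,\sta}$ after the normalising linear changes $C^{\uns,\sta}$ of Lemma~\ref{lemchangCuCs}, while $l_1,l_2,l_3$ are built from $F,G,H$ coming from $X_1$ directly. Since $M_\mp=\mathrm{Id}+\mathcal{O}(\delta^{p+5})$ the two differ by a higher-order correction and the bounds carry over (this is in effect the estimate $\|\Fout(0)-\Fout^{\uns,\sta}(0)\|_{2,0,\ost}^{\uns,\sta}\leq M\delta^{p+4}$ used later), but the transfer should be stated. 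Second, when you differentiate the polar change the single ``singular'' term $\tfrac{1}{\sqrt{R}}(\cos\theta f+\sin\theta g)$ is not the whole story: the chain rule through the argument $\delta\sqrt{2R}\cos\theta$ contributes terms of the schematic form $\delta\cdot\tfrac{1}{2R}\,\partial_x f$, and it is the balance between the extra explicit factor $\delta$ and the one-order drop from $f\in\mathcal{O}_3$ to $\partial_x f\in\mathcal{O}_2$ that again delivers $\mathcal{O}(\delta^3)$ at the correct weight. This cancellation is the reason $\partial_r G$, $\partial_r H$ end up $\mathcal{O}(\delta^3)$ rather than $\mathcal{O}(\delta^2)$, so it is worth writing out instead of burying it under ``routine bookkeeping.''
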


To finish this section we enunciate the properties of the linear operator $\Gpart$ which turns out to be very similar to the ones
of $\Gout$ in Lemma~\ref{lempropsopG}. Its proof involves several technicalities and can be found in~\cite{CastejonPhDThesis}.
\begin{lemma}\label{lemGpart}
 Let $l\in\mathbb{Z}$, $n\geq1$ and $\phi\in\Bsout{}{n}$. There exists $M>0$ such that:
\begin{enumerate}
\item \label{lemfitacoefGpartpol} If $n>1$, then $\|{\Gpart}^{[l]}(\phi)\|_{n-1}\leq M\left\|\phi^{[l]}\right\|_{n}.$
\item\label{lemfitacoefGpartdelta}  If $l\neq0$, then $\displaystyle\|{\Gpart}^{[l]}(\phi)\|_{n}\leq \frac{\delta M\left\|\phi^{[l]}\right\|_{n}}{|l|}.$
\item \label{corfitaGpart} As a consequence, if $n>1$, $\|\Gpart(\phi)\|_{n-1,\ost}\leq M\|\phi\|_{n,\ost}$.
Moreover, if $\phi^{[0]}(\vv)=0$, then for all $n\geq1$, $\|\Gpart(\phi)\|_{n,\ost}\leq \delta M\|\phi\|_{n,\ost}$.
\item \label{lemfitacoefGpartialtheta}$\|\partial_\theta\Gpart(\phi)\|_{n,\ost}\leq \delta M\|\phi\|_{n,\ost}.$
\item \label{lemfitacoefGpartialu}$\|\partial_\vu\Gpart(\phi)\|_{n,\ost}\leq M\|\phi\|_{n,\ost}.$
\item \label{corfitaGdiffpart} In conclusion, if $n>1$ and $\phi\in\Bsout{}{n}$, then $\Gpart(\phi)\in\Bsoutdiff{}{n-1}$ and there exists $M>0$ such that:
$$\llfloor \Gpart(\phi)\rrfloor_{n-1,\ost}\leq M\|\phi\|_{n,\ost}.$$
\end{enumerate}
\end{lemma}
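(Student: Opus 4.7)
The plan is to prove this lemma in direct parallel with the proof of Lemma~\ref{lempropsopG} for $\Gout^{\uns,\sta}$, the main tool being careful path deformation of the integrals defining the Fourier coefficients in~\eqref{defGpartcoefs}. First I would introduce, for each $l\neq 0$, a family of integration curves $\gamma^{\pm}(\vu)$ joining $\vu_{\pm}$ to $\vu$, obtained implicitly in the spirit of the curves $s_{\pm}(t,\vv)$ used in Section~\ref{secproofthmout}, so that along them (i) the weight $|\cosh(\coef w)|$ is monotone and bounded below by a quantity comparable to $|\cosh(\coef\vu)|$ and $|\cosh(\coef\vu_{\pm})|$ at the endpoints, and (ii) the imaginary part of $-il\delta^{-1}\alpha(w-\vu)-ilc\coef^{-1}\log(\cosh(\coef w)/\cosh(\coef \vu))$ is $\leq 0$, so that the oscillatory factor is uniformly bounded in $l$. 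The geometric fact to be checked is that such curves remain inside $\Doutinter$ for every $\vu\in\Doutinter$, which follows from the convex-like shape of the intersection of the two triangles $\DoutT{\uns}$ and $\DoutT{\sta}$.

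With the paths chosen, item~\ref{lemfitacoefGpartpol} is a direct estimate: bound the exponential by a constant, bound $|\phi^{[l]}(w)|\leq \|\phi^{[l]}\|_n|\cosh(\coef w)|^{-n}$, and integrate along $\gamma^{\pm}(\vu)$; since $n>1$, the integral of $|\cosh(\coef w)|^{-n}$ converges and loses one power of $\cosh$, giving the weight $|\cosh(\coef\vu)|^{-(n-1)}$. For item~\ref{lemfitacoefGpartdelta} the factor $\delta/|l|$ is obtained by one integration by parts of the oscillatory factor $e^{-il\delta^{-1}\alpha(w-\vu)}$: the boundary term at $\vu_{\pm}$ vanishes in the relevant modulus thanks to the sign of the imaginary part of the phase, the boundary term at $w=\vu$ gives $\delta/(il\alpha)\cdot \phi^{[l]}(\vu)\cdot|\cosh(\coef\vu)|^{-n}$ times an $\mathcal{O}(1)$ factor, and the remaining integral is controlled in the same way as before after absorbing one power of $\cosh$ into the derivative of $\cosh^{-ilc/\coef}(\coef w)\phi^{[l]}(w)$.

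Items~\ref{corfitaGpart}--\ref{lemfitacoefGpartialu} follow by summing against the Fourier weight $e^{|l|\ost}$. For~\ref{corfitaGpart} one combines: the $l=0$ term via~\ref{lemfitacoefGpartpol} (which is the only one requiring $n>1$, due to the non-oscillatory kernel), the $l\neq 0$ terms via~\ref{lemfitacoefGpartdelta} when $\phi^{[0]}=0$. For~\ref{lemfitacoefGpartialtheta}, note $\partial_\theta\Gpart(\phi)^{[l]}=il\Gpart^{[l]}(\phi)$, so bound~\ref{lemfitacoefGpartdelta} precisely kills the factor $|l|$ and produces the $\delta$-gain (the $l=0$ mode vanishes identically). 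For~\ref{lemfitacoefGpartialu} I would avoid a direct computation and use instead that $\Gpart$ is a right inverse of the operator $\Lpart$ in~\eqref{defLpart}, so
\[
 \partial_\vu \Gpart(\phi) = \phi + \bigl(\delta^{-1}\alpha + c\hetz(\vu)\bigr)\,\partial_\theta \Gpart(\phi),
\]
and then apply item~\ref{lemfitacoefGpartialtheta}: the $\delta^{-1}\alpha$ factor is absorbed by the $\delta$-gain in that estimate, and $\|c\hetz\|_{0}\leq K$, giving the stated bound. Item~\ref{corfitaGdiffpart} is immediate from the previous ones.

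The main obstacle will be the construction and control of the integration paths inside $\Doutinter$. Unlike in Lemma~\ref{lempropsopG}, the endpoints $\vu_{\pm}$ are not at infinity but at the finite, $\mathcal{O}(\delta)$-close-to-singular points $\pm i(\pi/(2\coef)-\dist\delta)$, so the paths and their length depend sensitively on the parameter $\dist$. One has to choose the family $\gamma^{\pm}(\vu)$ so that no negative power of $\dist$ is produced by the integration (in particular the portion of the path near $\vu_{\pm}$, where $|\cosh(\coef w)|\sim\dist\delta$, must be traversed in time $\mathcal{O}(\dist\delta)$), and simultaneously the imaginary part of the phase has the correct sign along the entire curve; this is the technical core of the proof and the place where the assumption that $\dist^{*}$ is sufficiently large is used.
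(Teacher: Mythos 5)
The paper defers the proof of this lemma to \cite{CastejonPhDThesis}, so the natural comparison is against the sketched proof of the analogous Lemma~\ref{lempropsopG}, and your proposal does follow that template in spirit; you also correctly single out the $\mathcal{O}(\dist\delta)$ proximity of $\vu_\pm$ to the singularities of $\cosh^{-1}$ as the new difficulty. However, your argument for item~\ref{lemfitacoefGpartpol} has a genuine gap for $l\neq 0$: if you only ``bound the exponential by a constant,'' then $|\Gpart^{[l]}(\phi)(\vu)|\le\|\phi^{[l]}\|_n\int_{\gamma^\pm(\vu)}|\cosh(\coef w)|^{-n}|dw|$, and this integral is dominated by the neighbourhood of the fixed endpoint $\vu_\pm$, where $|\cosh(\coef w)|\sim\dist\delta$; that contribution is of size $(\dist\delta)^{1-n}$, which is not $\lesssim|\cosh(\coef\vu)|^{1-n}$ once $\vu$ is away from the corners (take $\vu=0$, $\delta\to 0$). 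The oscillatory factor cannot merely be bounded: one must use that it \emph{decays} along the curve near $\vu_\pm$, or, more economically, deduce item~\ref{lemfitacoefGpartpol} for $l\neq 0$ from item~\ref{lemfitacoefGpartdelta} together with the comparison $\|\psi\|_{n-1}\le K(\dist\delta)^{-1}\|\psi\|_n$ valid on $\Doutinter$. (The real content of item~\ref{lemfitacoefGpartpol} is the $l=0$ term, where there is no oscillation but the endpoint $\vu_{\textup{R}}$ lies on the real axis far from the singularities, which is what rescues the $l=0$ estimate.)

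Relatedly, in item~\ref{lemfitacoefGpartdelta} the boundary term at $w=\vu_\pm$ produced by the integration by parts does \emph{not} vanish: in modulus it is $\frac{\delta}{|l|\alpha}\,|\phi^{[l]}(\vu_\pm)|\,e^{l\delta^{-1}\alpha\,\im(\vu_\pm-\vu)}\,e^{\mp l c\coef^{-1}\arg\cosh(\coef\vu)}$, and $|\phi^{[l]}(\vu_\pm)|$ may be as large as $\|\phi^{[l]}\|_n(\dist\delta)^{-n}$. It is only after a case distinction --- exponential decay of the first factor when $\vu$ is far from $\vu_\pm$, and comparability $|\cosh(\coef\vu)|\sim|\cosh(\coef\vu_\pm)|$ when $\vu$ is near $\vu_\pm$, with $\dist^*$ taken large enough to absorb the intermediate regime --- that one obtains the required bound $\frac{\delta M}{|l|}\|\phi^{[l]}\|_n|\cosh(\coef\vu)|^{-n}$. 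The remainder of your proposal (items~\ref{corfitaGpart}--\ref{corfitaGdiffpart}, in particular obtaining item~\ref{lemfitacoefGpartialu} from the identity $\Lpart\Gpart=\mathrm{Id}$ combined with item~\ref{lemfitacoefGpartialtheta} and the uniform bound $|\hetz(\vu)|\le K(\dist\delta)^{-1}$ on $\Doutinter$) is sound once items~\ref{lemfitacoefGpartpol} and~\ref{lemfitacoefGpartdelta} are in place.
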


In the following two sections we will prove the results related to the functions $C$ and $P_1$.
\subsection{Existence and properties of $C$}\label{secpropconst}
We enunciate the results about the function $C$ we will prove. They give a more precise information than
the ones in Theorem~\ref{thmdifpartsolinjective}.
\begin{proposition}\label{propconst}
There exists a particular solution $C$ of~\eqref{PDE_C} of the form:
\begin{equation}\label{defCl2}
C(\vu,\theta)=\delta^{-1}\alpha\int_0^\vu l_2^{[0]}(w)dw+C_1(\vu,\theta),
\end{equation}
with $l_2^{[0]}(\vu)$ the average of the function $l_2$ defined in~\eqref{defl2}.
\begin{equation}\label{boundC1-const}
\|C_1\|_{1,\ost}\leq M\delta^{p+3}, \qquad \|\partial_\vu C\|_{1,\ost}\leq M\delta^{p+2},\qquad \|\partial_\theta C\|_{1,\ost}\leq M\delta^{p+3}.
\end{equation}
Finally $(\xi(\vu,\theta),\theta)$, with $\xi$ given by~\eqref{xi}, is injective in $\Doutinter\times\Tout$.
\end{proposition}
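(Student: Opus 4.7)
The plan is to reduce equation~\eqref{PDE_C} to a fixed-point equation in $\Bsoutdiff{}{1}$ for a remainder $C_1$, after isolating the zero-Fourier-mode obstruction produced by the term $\delta^{-1}\alpha\, l_2$ into the explicit piece $C_0(\vu):=\delta^{-1}\alpha\int_0^\vu l_2^{[0]}(w)\,dw$. Since $\partial_\vu C_0=\delta^{-1}\alpha\, l_2^{[0]}$ and $\partial_\theta C_0=0$, substituting the ansatz $C=C_0+C_1$ into~\eqref{PDE_C} turns it into
\[
\Lpart(C_1) = \delta^{-1}\alpha\bigl(l_2-l_2^{[0]}\bigr) + l_2\bigl(c\hetz(\vu)+\delta^{-1}\alpha\, l_2^{[0]}+\partial_\vu C_1\bigr) + l_3\bigl(1+\partial_\theta C_1\bigr).
\]
The point of this splitting is that the only remaining $\delta^{-1}$-factor multiplies the zero-average function $l_2-l_2^{[0]}$, on which $\Gpart$ gains an extra factor of $\delta$ via item 3 of Lemma~\ref{lemGpart}, exactly cancelling the singular prefactor.

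I would then formulate the fixed-point equation $C_1=\mathcal{N}(C_1)$, where $\mathcal{N}(\phi)$ is the result of applying $\Gpart$ to the right-hand side above with $C_1$ replaced by $\phi$, and solve it in a small ball of $\Bsoutdiff{}{1}$. The verification rests on Lemma~\ref{lemnormint}, items 3--6 of Lemma~\ref{lemGpart}, and the elementary facts that on the bounded domain $\Doutinter$ one has $\|\hetz\|_{1,\ost}\le K$ and $|\cosh(\coef\vu)|$ is bounded below by $K\delta\dist$. These yield a coarse bound $\llfloor\mathcal{N}(0)\rrfloor_{1,\ost}\le K\delta^{p+2}$ together with the sharper zero-mean estimate $\|\mathcal{N}(0)\|_{1,\ost}\le K\delta^{p+3}$ from item 3, and a Lipschitz constant of order $\delta^{p+3}$. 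Banach's theorem then produces $C_1\in\Bsoutdiff{}{1}$ satisfying the same estimates. The bounds~\eqref{boundC1-const} are then read off as follows: $\|C_1\|_{1,\ost}\le M\delta^{p+3}$ is immediate from the sharpened zero-mean estimate combined with the contraction; since $\partial_\vu C = \delta^{-1}\alpha\, l_2^{[0]}+\partial_\vu C_1$, Lemma~\ref{lemnormint} gives $\|\delta^{-1}\alpha\, l_2^{[0]}\|_{1,\ost}\le M\delta^{p+2}$, while $\|\partial_\vu C_1\|_{1,\ost}$ is controlled by the $\llfloor\cdot\rrfloor_{1,\ost}$-bound of $C_1$, yielding a total of order $\delta^{p+2}$; and for $\partial_\theta C=\partial_\theta C_1$, item 4 of Lemma~\ref{lemGpart} provides an extra factor of $\delta$ that sharpens the estimate to $M\delta^{p+3}$.

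For the injectivity statement, I would rewrite
\[
\xi(\vu,\theta) = \theta + \delta^{-1}\alpha\, F(\vu,\theta), \qquad F(\vu,\theta) := \vu + \frac{\delta c}{\coef\alpha}\log\cosh(\coef\vu) + \frac{\delta}{\alpha}\, C(\vu,\theta),
\]
so that injectivity of $(\vu,\theta)\mapsto(\xi(\vu,\theta),\theta)$ on $\Doutinter\times\Tout$ reduces to injectivity of $F(\cdot,\theta)$ on $\Doutinter$ for each fixed $\theta$. Since $\Doutinter$ is convex and
\[
\partial_\vu F - 1 = \frac{\delta c}{\alpha}\tanh(\coef\vu) + \frac{\delta}{\alpha}\partial_\vu C,
\]
with $|\tanh(\coef\vu)|\le K/(\delta\dist)$ and $|\partial_\vu C|\le K\delta^{p+2}/|\cosh(\coef\vu)|\le K\delta^{p+1}/\dist$ throughout $\Doutinter$, the Lipschitz seminorm of $F-\mathrm{id}$ on $\Doutinter$ is at most $K/\dist+K\delta^{p+3}/\dist$, strictly less than one for $\dist^*$ large enough; the standard perturbation-of-the-identity argument on a convex set then yields injectivity. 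The main technical obstacle throughout is the Fourier-mode bookkeeping that guarantees the zero-mode of the right-hand side of~\eqref{PDE_C} is exactly absorbed by the hand-picked $C_0$, since any residual zero-mode would destroy the $\delta$-gain of $\Gpart$ and spoil the $\delta^{p+3}$-scaling of $\|C_1\|_{1,\ost}$.
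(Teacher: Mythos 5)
Your high-level plan matches the paper: you split off the explicit zero-mode piece $C_0=\delta^{-1}\alpha\int_0^\vu l_2^{[0]}(w)\,dw$, obtain the same equation for $C_1$, solve it by a fixed point via the right inverse $\Gpart$, and then conclude injectivity via a perturbation-of-the-identity argument (equivalent to the paper's mean-value estimate). That much is correct.

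However, there is a genuine gap in the choice of Banach space. You set up the contraction in $\Bsoutdiff{}{1}$, but the paper works in $\Bsoutdiff{}{0}$, and this is not a cosmetic difference. Notice that the dominant term $\delta^{-1}\alpha\,\hat l_2$ in $\Nconstnew(0)$ decays only like $\cosh^{-1}$ (since $\|l_2\|_{1,\ost}\leq M\delta^{p+3}$, not $\|l_2\|_{2,\ost}$), so the only genuinely sharp bound on $\Nconstnew(0)$ is $\|\Nconstnew(0)\|_{1,\ost}\leq K\delta^{p+2}$. Applying items 4--5 of Lemma~\ref{lemGpart} at that level gives $\|\partial_\vu\Gpart(\Nconstnew(0))\|_{1,\ost}\leq K\delta^{p+2}$ and $\|\partial_\theta\Gpart(\Nconstnew(0))\|_{1,\ost}\leq K\delta^{p+3}$, i.e.\ exactly the $\llfloor\cdot\rrfloor_{0,\ost}$ bound. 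If instead you place the fixed point in $\Bsoutdiff{}{1}$, the resulting control on the derivatives is $\|\partial_\vu C_1\|_{2,\ost}\leq K\delta^{p+2}$, and downgrading that to $\|\partial_\vu C_1\|_{1,\ost}$ costs a factor $(\delta\dist)^{-1}$: you only get $\|\partial_\vu C_1\|_{1,\ost}\lesssim\delta^{p+1}/\dist$. This matters concretely for $p=-2$: the bootstrap $C_1=\Gpart(\Nconstnew(0))+\Gpart(l_2\partial_\vu C_1+l_3\partial_\theta C_1)$ then yields $\|l_2\partial_\vu C_1\|_{2,\ost}\lesssim\delta^{p+3}\cdot\delta^{p+1}/\dist=\delta^{2p+4}/\dist$, which for $p=-2$ is $\mathcal{O}(1/\dist)$ and does \emph{not} close to $\mathcal{O}(\delta^{p+3})$. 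With the paper's $\llfloor\cdot\rrfloor_{0,\ost}$ choice, one has instead $\|\partial_\vu C_1\|_{1,\ost}\leq K\delta^{p+2}$ directly, giving $\delta^{2p+5}\leq\delta^{p+3}$ as needed for all $p\geq-2$.

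Two smaller issues: the claim that $\|C_1\|_{1,\ost}\leq M\delta^{p+3}$ is ``immediate from the sharpened zero-mean estimate combined with the contraction'' hides the actual argument; the fixed-point theorem only hands you the coarse $\|C_1\|_{0,\ost}\lesssim\delta^{p+2}$, and the improved $\|\cdot\|_{1,\ost}$ bound requires the a posteriori rewriting $C_1=\Gpart(\Nconstnew(0))+\Gpart(l_2\partial_\vu C_1+l_3\partial_\theta C_1)$ together with the derivative bounds above. Also, the stated Lipschitz constant ``of order $\delta^{p+3}$'' is not right; the correct constant is $K\delta^{p+2}/\dist$, which for $p=-2$ is $\mathcal{O}(1/\dist)$ and therefore genuinely relies on taking $\dist^*$ large (not on $\delta$ being small).
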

\begin{remark} Assuming that $C$ actually exists and recalling that $\Delta$ has the form~\eqref{Deltaforma2}, the function
$$
k(\vu,\theta):= \tilde k(\theta+\delta^{-1}\alpha\vu+c\coef^{-1}\log\cosh(\coef\vu)+C(\vu,\theta)),
$$
has to be $2\pi-$periodic in $\theta$, which implies that $\tilde k(\tau)$ is $2\pi-$periodic in $\tau$.
\end{remark}

Now we make some further considerations on the integral $\int_0^\vu l_2^{[0]}(\vw)d\vw$.
First of all, we point out that using Lemma~\ref{lemnormint} and the fact that for $w\in\Doutinter$ one has
$|\cosh(\coef w)|\geq K|w^2-\pi^2/(2\coef)^2|$, one obtains:
$$
\left|\delta^{-1}\alpha\int_0^\vu l_2^{[0]}(w)dw\right|\leq K\delta^{p+2}\int_0^\vu|\cosh(\coef w)|^{-1}dw\leq K\delta^{p+2}|\log(\delta\dist)|.
$$
Hence, in the regular case $p>-2$ this integral is small, even for complex values of $\vu\in \Doutinter$, and one can avoid to take into account its
contribution to the function $C(\vu,\theta)$ defined in~\eqref{defCl2}. Notice that when $\vu\in \mathbb{R}$, this
integral is $\mathcal{O}(\delta^{p+2})$. However, in the singular case $p=-2$,
one needs to have some more precise knowledge of its behavior.

The following result deals with that integral. Its proof is given with detail in~\cite{CastejonPhDThesis}.
\begin{lemma}
Define $L_0$ as the following limit, that is well defined:
$$
L_0=\lim_{\vu\to i\frac{\pi}{2\coef}}\lim_{\delta\to0}\delta^{-p-3}l_2^{[0]}(\vu)\tanh^{-1}(\coef\vu).
$$
Then, there exist functions $L(\vu)$ and $\Lambda(\vu)$ such that for all $\vu\in\Doutinter$:
$$\int_0^\vu l_2^{[0]}(\vw)dw=\delta^{p+3}\coef^{-1}L_0\log\cosh(\coef\vu)+\delta L(\vu)+\delta\Lambda(\vu).$$
Moreover, $L_0\in\mathbb{R}$, $L(0)=0$ and $L(\vu)$ is defined on the limit $\vu\to i\pi/(2\coef)$
and
$\|L\|_0\leq M\delta^{p+2}$, $\|L'\|_0\leq M\delta^{p+2}$ and $\|\Lambda\|_1\leq M\delta^{p+3}$, for some $M>0$.
\end{lemma}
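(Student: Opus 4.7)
The plan is to extract the dominant singularity of $l_2^{[0]}(\vu)$ at $\vu=\pm i\pi/(2\coef)$, read off $L_0$ as its coefficient, and integrate the pieces separately. Rewriting definition~\eqref{defl2} using $1-\hetz^2(\vu) = \cosh^{-2}(\coef\vu)$ together with the decomposition $r_1^{\uns,\sta}=r_{10}^{\uns,\sta}+r_{11}^{\uns,\sta}$ of Theorem~\ref{thmoutloc} gives
$$l_2^{[0]}(\vu) = -\frac{b}{\coef}\cosh^2(\coef\vu)\bigl(r_{10}^{\uns,[0]}+r_{10}^{\sta,[0]}\bigr)(\vu) + \tilde l_2(\vu),$$
where $\tilde l_2$ collects the contributions of $r_{11}^{\uns,\sta}$ and of the $H$-term; by the sharper bounds of Theorem~\ref{thmoutloc} on $r_{11}^{\uns,\sta}$ and the estimate $H(r_\lambda)=\mathcal{O}(\delta^3)$, $\tilde l_2$ is an order-$\delta^{p+4}$ perturbation of the leading term and contains no first-order $\tanh$ singularity. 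Next, using the explicit integral representation of $r_{10}^{\uns,\sta}$ and a contour shift at the poles $w=\pm i\pi/(2\coef)$ analogous to the one employed in Lemma~\ref{lemasyInq}, I expand
$$l_2^{[0]}(\vu) = \delta^{p+3} L_0\tanh(\coef\vu) + \mathcal{R}(\vu), \qquad |\mathcal{R}(\vu)|\leq M\delta^{p+3}|\cosh(\coef\vu)|^{-2} + M\delta^{p+4},$$
where $L_0$ is precisely the double limit in the statement; the existence of this limit follows from the Borel-type asymptotic expansion underpinning Theorem~\ref{thmasyformulaMelnikov}, applied to the zeroth Fourier coefficient rather than the first.

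Integration from $0$ is then almost immediate: $\int_0^\vu \tanh(\coef w)\,dw = \coef^{-1}\log\cosh(\coef\vu)$ produces the leading $\log\cosh$ term. In the primitive of the remainder, the $\cosh^{-2}(\coef w)$ part integrates to a function proportional to $\tanh(\coef\vu)$, which behaves like $|\cosh(\coef\vu)|^{-1}$ near the singularities; after factoring out the $\delta$ prefactor, this piece is identified as $\delta\Lambda(\vu)$ and it satisfies $\|\Lambda\|_1 \leq M\delta^{p+3}$ because $|\cosh(\coef\vu)\tanh(\coef\vu)| = |\sinh(\coef\vu)|$ is uniformly bounded on $\Doutinter$. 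The remaining integrand is uniformly of order $\delta^{p+3}$ on a bounded path in $\Doutinter$, so its primitive is of uniform size $\delta^{p+3}$ and defines $\delta L(\vu)$ with $\|L\|_0 \leq M\delta^{p+2}$; the derivative bound $\|L'\|_0\leq M\delta^{p+2}$ is simply the statement that $\delta L'$ equals that bounded integrand, and $L(0)=0$ comes from the choice of basepoint. The continuous extension of $L(\vu)$ up to $\vu\to i\pi/(2\coef)$ follows because the integrand defining $L$ is dominated by an $\vu$-independent $\mathcal{O}(\delta^{p+2})$ quantity, so its primitive admits a limit as the endpoint tends to the singularity along any admissible path.

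The main obstacle is the reality $L_0\in\mathbb{R}$. My intended argument combines two symmetries. First, the real-analyticity of system~\eqref{initsys-outer2D} and of the changes leading to~\eqref{syspolar} forces $l_2^{[0]}(\vu)\in\mathbb{R}$ when $\vu\in\mathbb{R}$, hence by Schwarz reflection $\overline{l_2^{[0]}(\vu)} = l_2^{[0]}(\bar\vu)$ on $\Doutinter$. Second, the unperturbed connection satisfies $\hetr(-\vu)=\hetr(\vu)$ and $\hetz(-\vu)=-\hetz(\vu)$, which together with the pairing between the unstable and stable integral representations for $r_{10}^{\uns,\sta}$ interchanges the asymptotic contributions at the two conjugate singularities $\pm i\pi/(2\coef)$ while mapping $\tanh(\coef\vu)$ to its negative. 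Comparing the two leading asymptotics produced by these two symmetries at $\vu=i\pi/(2\coef)$ then forces $L_0=\overline{L_0}$. This is in the same spirit as the argument making $\mathcal{C}_1,\mathcal{C}_2\in\mathbb{R}$ in Theorem~\ref{thmasyformulaMelnikov}, but transported to the $0$-th Fourier mode, where verifying that the symmetries descend correctly is the place to be careful.
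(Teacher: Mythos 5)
The overall shape of your argument — isolate the simple pole of $l_2^{[0]}$ at $\vu=\pm i\pi/(2\coef)$ as a $\tanh(\coef\vu)$ term, read off $L_0$ as its coefficient, and integrate the pieces separately — is the right one, and it is what the paper (via the explicit formulas of Remark~\ref{rmkL0L}) effectively does. But the bookkeeping that makes the lemma's estimates come out correct is precisely where you are short by a power of $\delta$. From Theorem~\ref{thmoutloc}, $|r_{10}^{\uns,\sta}|\leq M\delta^{p+3}|\cosh(\coef\vu)|^{-3}$, so $\cosh^2(\coef\vu)r_{10}^{\uns,[0]}$ is $\mathcal{O}(\delta^{p+3}|\cosh|^{-1})$, and after subtracting $\delta^{p+3}L_0\tanh(\coef\vu)$ (also $\mathcal{O}(\delta^{p+3}|\cosh|^{-1})$) you need to justify that the surviving pole residue carries an \emph{extra} factor of $\delta$, i.e.\ that $\mathcal{R}$ behaves like $\delta^{p+4}|\cosh|^{-2}$, not $\delta^{p+3}|\cosh|^{-2}$ as you wrote. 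With your stated bound for $\mathcal{R}$, integrating the $\cosh^{-2}$ part gives $|\delta\Lambda(\vu)|\leq M\delta^{p+3}|\cosh(\coef\vu)|^{-1}$, hence only $\|\Lambda\|_1\leq M\delta^{p+2}$, one order weaker than the lemma claims. (The gain of $\delta$ comes from replacing $\Fout(0)$ by its $\delta\to 0$ limit, as in the definitions of $\mathcal{F}_1$ and $H_1$ in Remark~\ref{rmkL0L}; you invoke a ``contour shift analogous to Lemma~\ref{lemasyInq}'', but that lemma treats oscillatory integrals for the $l=1$ Fourier mode with $e^{-i\alpha\delta^{-1}s}$ in the integrand — the $l=0$ mode has no such oscillation and the analogy does not apply.) Also, your initial claim that the $H$-term sits in $\tilde l_2$ as a $\delta^{p+4}$ perturbation is wrong: by Lemma~\ref{lemfitaHiF} one has $|H(r_\lambda)|\lesssim \delta^3|\cosh(\coef\vu)|^{-3}$, so $\delta^p\cosh^2(\coef\vu)H^{[0]}$ is $\mathcal{O}(\delta^{p+3}|\cosh|^{-1})$, the same order as the $r_{10}$ contribution — it must be included in the pole extraction (it produces $H_0$ in the formula for $L_0$ of Remark~\ref{rmkL0L}).

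The reality argument for $L_0$ is the most serious gap. Schwarz reflection $\overline{l_2^{[0]}(\vu)}=l_2^{[0]}(\bar\vu)$ only tells you that the residue at $-i\pi/(2\coef)$ is $\overline{L_0}$; it does \emph{not} by itself force $L_0=\overline{L_0}$. To close the gap you appeal to the $\vu\to-\vu$ symmetry of the unperturbed heteroclinic, but this is not a symmetry of the perturbed system: $\Fout(0)$ involves $f,g,h$ evaluated at $\delta\hetz(w)=\delta\tanh(\coef w)$, and replacing $w\mapsto -w$ replaces $z$ by $-z$ inside $f,g,h$, which for a generic unfolding yields a completely different function — the terms $h_{3003}z^3$, $h_{3021},h_{3201}$ etc.\ that actually build $H_0$ and $\rho_0$ in Remark~\ref{rmkL0L} are odd in $z$ and precisely obstruct any such pairing. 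The paper's route is simply to compute: Remark~\ref{rmkL0L} exhibits $L_0$ as an explicit rational combination of finitely many \emph{real} Taylor coefficients $f_{qkmn},g_{qkmn},h_{qkmn}$, from which $L_0\in\mathbb{R}$ is immediate. Your comparison with the reality of $\mathcal{C}_1,\mathcal{C}_2$ in Theorem~\ref{thmasyformulaMelnikov} also does not transfer — there the definition $\mathcal{C}=\mathcal{C}_1-i\mathcal{C}_2$ simply names the real and imaginary parts of a generically complex number; it is not a reality statement at all.
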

\begin{remark}\label{rmkL0L} One can obtain explicit expressions for $L_0$, $L(\vu)$ and $\Lambda(\vu)$ depending only on $\Fout(0)$ or equivalently, on
$F(0),G(0)$ and $H(0)$ defined in~\eqref{notationFGHbis}. See~\cite{CastejonPhDThesis} for details.

We write the formulae for $L_0$ and $L(\vu)$. We define the constants
\begin{align*}
\rho_0 = &\frac{(\coef+1)}{2b\coef(3\coef+2)}\left[\frac{(\coef+1)}{4b}\left(f_{3120}+g_{3210}+3f_{3300}+3g_{3030}\right)-\left(f_{3102}+g_{3012}\right)\right.\\
&\left.-\frac{\coef+1}{b}\left(h_{3201}+h_{3021}\right)+2h_{3003}\right] \\
H_0=&-h_{3003}+\frac{\coef+1}{2b}\left(h_{3021}+h_{3201}\right),
\end{align*}
where the coefficients $f_{qkmn},g_{qkmn}$ and $h_{qkmn}$ were defined as in~\eqref{taylorf}.

In the conservative case, $L_0=-h_{3003}$ meanwhile in the dissipative one:
$$
L_0=-\frac{2b}{\coef} \rho_0 -\frac{1}{\coef}H_0.
$$
With respect to $L(\vu)$, we define
\begin{align*}
H_1(\vu)=&\frac{\cosh^3(\coef \vu) \lim_{\delta\to 0} \delta^{-3} \big (H(0)\big )^{[0]}-H_0\sinh(\coef\vu)}{\cosh(\coef\vu)},\\
\rho_1^{\uns,\sta}(\vu)=&\delta^{-(p+3)}\param\int_{\mp\infty}^\vu \frac{R_0(w)}{\cosh^{4+\frac{2}{\coef}}(\coef w)}dw \\
&+\cosh^{\frac{2}{\coef}}(\coef\vu)\int_{\mp\infty}^\vu\frac{\mathcal{F}_1(w)}{\cosh^{4+\frac{2}{\coef}}(\coef w)}dw-\rho_0\frac{\tanh(\coef\vu)}{\cosh^2(\coef\vu)},\\
\mathcal{F}_1(\vu)=&\lim_{\delta \to 0} \delta^{-p-3} \cosh^4 (\vu) \big[\Fout^{[0]}(\vu) - 2\param \hetr(\vu)\big],
\end{align*}
where we take $-$ for the unstable case and $+$ for the stable one. Then
$$
L(\vu)=-\delta^{p+2}\int_0^\vu \frac{b}{\coef}\cosh^2(\coef w)(\rho_1^\uns(\vu)+\rho_1^\sta(w))+\frac{1}{\coef}H_1(w)dw.
$$
\end{remark}

\subsubsection{Proof of Proposition~\ref{propconst}}
Let us define:
$$\hat l_2(\vu,\theta)=l_2(\vu,\theta)-l_2^{[0]}(\vu).$$
It is easy to see that in order that $C$ defined in~\eqref{defCl2} satisfies~\eqref{PDE_C} it is enough that $C_1$  satisfies the following equation:
\begin{align}\label{PDE_C1}
 (-\delta^{-1}\alpha-c\hetz(\vu))\partial_\theta C_1+\partial_\vu C_1=&\delta^{-1}\alpha \hat l_2(\vu,\theta)+l_3(\vu,\theta)(1+\partial_\theta C_1)\\
&
+l_2(\vu,\theta)(c\hetz(\vu)+\delta^{-1}\alpha l_2^{[0]}(\vu)+\partial_\vu C_1).\nonumber
\end{align}
We define the operator $\Nconstnew$ as:
\begin{equation}\label{defNconstnew}
 \Nconstnew(\phi)=\frac{\alpha}{\delta} \hat l_2(\vu,\theta)+l_2(\vu,\theta)(c\hetz(\vu)+\frac{\alpha}{\delta} l_2^{[0]}(\vu)+\partial_\vu \phi)+l_3(\vu,\theta)(1+\partial_\theta \phi).
\end{equation}
Then equation~\eqref{PDE_C1} can be rewritten as $\Lpart(C_1)=\Nconstnew(C_1)$,
where $\Lpart$ was defined in~\eqref{defLpart}. It is enough then to solve the fixed point equation:
$$
C_1=\tilde{\Nconstnew}(C_1),
$$
where $\tilde{\Nconstnew}=\Gpart\circ\Nconstnew$, and $\Gpart$ is the operator defined in~\eqref{defGpart}.

\begin{lemma}\label{lemfpfunC1}
For $\dist$ big enough and $p\geq-2$, the operator $\tilde{\Nconstnew}:\Bsoutdiff{}{0}\rightarrow\Bsoutdiff{}{0}$.
Moreover, there exists a constant $M$ such that $\llfloor\tilde{\Nconstnew}(0)\rrfloor_{0,\ost}\leq M\delta^{p+2}$,
and $\tilde{\Nconstnew}$ has a unique fixed point in the ball $B\left(2\llfloor\tilde{\Nconstnew}(0)\rrfloor_{0,\ost}\right)\subset\Bsoutdiff{}{0}$.
\end{lemma}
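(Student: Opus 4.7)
The plan is to apply the Banach fixed point theorem to the operator $\tilde{\Nconstnew}=\Gpart\circ\Nconstnew$ on the closed ball $B\bigl(2\llfloor\tilde{\Nconstnew}(0)\rrfloor_{0,\ost}\bigr)\subset\Bsoutdiff{}{0}$. The structure of $\Nconstnew$ in \eqref{defNconstnew} is linear in $\phi$ and couples to $\phi$ only through the two terms $l_2\,\partial_\vu\phi$ and $l_3\,\partial_\theta\phi$, so the argument reduces cleanly to two standard ingredients: a size estimate for $\tilde{\Nconstnew}(0)$ and a Lipschitz estimate for $\tilde{\Nconstnew}$.

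For the independent term I would decompose
$$
\Nconstnew(0)=\frac{\alpha}{\delta}\hat l_2+c\,l_2\hetz+\frac{\alpha}{\delta}l_2\,l_2^{[0]}+l_3
$$
and estimate each piece using Lemma~\ref{lemnormint} for the sizes of $l_2,l_3$ and their averages, and the six items of Lemma~\ref{lemGpart}. The first summand has zero $\theta$-average, so items~3 and~4 of Lemma~\ref{lemGpart} supply the extra $\delta$ and yield an $\llfloor\cdot\rrfloor_{0,\ost}$ bound of order $\delta^{p+2}$. The term $l_3$ is handled directly by item~6. The quadratic summand has size $\delta^{-1}(\delta^{p+3})^2=\delta^{2p+5}$, hence is subdominant for $p\geq-2$. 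The most delicate piece is $c\,l_2\hetz$, whose zero Fourier mode gives a primitive that is not globally bounded but grows at worst like $(\delta\dist)^{-1}$ near $\pm i\pi/(2\coef)$; combined with the bound $\|l_2\hetz\|_{2,\ost}\leq \|l_2\|_{1,\ost}\|\hetz\|_1\leq K\delta^{p+3}$ (noting that $\|\hetz\|_1\leq M$ on $\Doutinter$ since $|\hetz(\vu)\cosh(\coef\vu)|=|\sinh(\coef\vu)|$ stays bounded) and the loss-of-one-power item~1 of Lemma~\ref{lemGpart}, this contributes $K\delta^{p+2}/\dist$, acceptable once $\dist$ is taken large. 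Collecting the four pieces gives $\llfloor\tilde{\Nconstnew}(0)\rrfloor_{0,\ost}\leq M\delta^{p+2}$.

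For the Lipschitz estimate, subtraction yields $\Nconstnew(\phi_1)-\Nconstnew(\phi_2)=l_2\,\partial_\vu(\phi_1-\phi_2)+l_3\,\partial_\theta(\phi_1-\phi_2)$, and from the definition of $\llfloor\cdot\rrfloor_{0,\ost}$ together with Lemma~\ref{lemnormint} one obtains $\|l_2\,\partial_\vu(\phi_1-\phi_2)\|_{2,\ost}\leq K\delta^{p+3}\llfloor\phi_1-\phi_2\rrfloor_{0,\ost}$ and an analogous bound of order $\delta^{p+4}$ for the $l_3$ piece. Applying item~6 of Lemma~\ref{lemGpart} and paying the standard factor $(\delta\dist)^{-1}$ to convert the resulting $\llfloor\cdot\rrfloor_{1,\ost}$ bound into the stronger $\llfloor\cdot\rrfloor_{0,\ost}$ norm, the Lipschitz constant is of order $\delta^{p+2}/\dist$, strictly less than $1/2$ for $\dist$ large enough (uniformly for small $\delta$ and in the range $p\geq-2$). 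This simultaneously certifies the contraction property and the invariance of the ball, and the Banach fixed point theorem delivers the unique $C_1\in\Bsoutdiff{}{0}$ claimed in the statement.

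The main obstacle is the accounting between the loss-of-one-power estimate for $\Gpart$ and the strong norm $\|\cdot\|_{0,\ost}$ in which everything must be measured: each conversion from $\|\cdot\|_{n,\ost}$ to $\|\cdot\|_{0,\ost}$ costs a factor $(\delta\dist)^{-n}$, and every term of $\Nconstnew$ must be assigned to the item of Lemma~\ref{lemGpart} that compensates this factor either by the $\delta$ gained on zero-mean components or by the smallness $\delta^{p+3}$ of $l_2,l_3$. This is precisely what forces the hypotheses `$\dist$ big enough' and `$p\geq-2$', and it is the structural reason why the potentially logarithmically large contribution coming from the non-zero $\theta$-average of $l_2$ had to be peeled off in advance as the explicit term $\delta^{-1}\alpha\int_0^\vu l_2^{[0]}(w)\,dw$ in the decomposition of $C$ in~\eqref{defCl2}.
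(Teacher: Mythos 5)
Your proposal is correct and follows essentially the same route as the paper: the same decomposition of $\Nconstnew(0)$, the same estimates from Lemmas~\ref{lemnormint} and~\ref{lemGpart}, and the same Banach fixed-point argument with contraction constant $K\delta^{p+2}/\dist$. The only cosmetic difference is that you organize the bound on $\Gpart(\Nconstnew(0))$ term by term in the structural decomposition, whereas the paper works Fourier coefficient by Fourier coefficient (the zero mode with weight $\cosh^2$ and no $\delta$-gain, the nonzero modes with weight $\cosh^1$ but an extra $\delta$ from item~2); the two bookkeeping schemes yield identical estimates.
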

\begin{proof}
First of all we shall prove that:
\begin{equation}\label{boundfirstitNconstnew}
\llfloor\tilde{\Nconstnew}(0)\rrfloor_{0,\ost}\leq K\delta^{p+2}.
\end{equation}
We have:
$$\Nconstnew(0)=\delta^{-1}\alpha\hat l_2(\vu,\theta)+l_2(\vu,\theta)(c\hetz(\vu)+\delta^{-1}\alpha l_2^{[0]}(\vu))+l_3(\vu,\theta).$$
To prove~\eqref{boundfirstitNconstnew} we shall bound the Fourier coefficients of $\Nconstnew(0)$ and then use Lemma~\ref{lemGpart}.
On the one hand, since by definition $\hat l_2$ has zero average, one has:
$$\Nconstnew^{[0]}(0)=l_2^{[0]}(\vu)(c\hetz(\vu)+\delta^{-1}\alpha l_2^{[0]}(\vu))+l_3^{[0]}(\vu).$$
Using Lemma~\ref{lemnormint} and the properties of the norm:
\begin{equation}\label{boundaverageNconstnew}
 \|\Nconstnew^{[0]}(0)\|_2\leq \|l_2^{[0]}\|_1(c\|\hetz\|_1+\delta^{-1}\alpha\|l_2^{[0]}\|_1)+\|l_3^{[0]}\|_2\leq K\delta^{p+3}.
\end{equation}
Then, by item~\ref{lemfitacoefGpartpol} of Lemma~\ref{lemGpart} one has:
\begin{equation}\label{B00}
 \|\Gpart^{[0]}(\Nconstnew(0))\|_1\leq K\|\Nconstnew^{[0]}(0)\|_2\leq K\delta^{p+3}.
\end{equation}
On the other hand, for the remaining Fourier coefficients one has:
$$\Nconstnew^{[l]}(0)=l_2^{[l]}(\vu)(\delta^{-1}\alpha+c\hetz(\vu)+\delta^{-1}\alpha l_2^{[0]}(\vu))+l_3^{[l]}(\vu)\qquad \qquad l\neq0.$$
Again, using Lemma~\ref{lemnormint} and the properties of the norm, we obtain:
\begin{equation}\label{boundlNconstnew}
 \|\Nconstnew^{[l]}(0)\|_1\leq \|l_2^{[l]}\|_1(\delta^{-1}\alpha+c\|\hetz\|_0+\delta^{-1}\alpha\|l_2^{[0]}\|_0)+\|l_3^{[l]}\|_1\leq K\delta^{p+2}.
\end{equation}
Then by item~\ref{lemfitacoefGpartdelta} of Lemma~\ref{lemGpart} and taking into account that $l\neq0$, we have:
\begin{equation}\label{B0l}
 \|\Gpart^{[l]}(\Nconstnew(0))\|_1\leq \frac{K\delta \|\Nconstnew^{[l]}(0)\|_1}{|l|}\leq \frac{K\delta^{p+3}}{|l|}.
\end{equation}
From~\eqref{B00} and~\eqref{B0l} we obtain:
\begin{equation}\label{boundB0-norm1}
 \|\tilde\Nconstnew(0)\|_{1,\ost}\leq K\delta^{p+3},
\end{equation}
and as a consequence:
\begin{equation}\label{boundB0}
 \|\tilde\Nconstnew(0)\|_{0,\ost}\leq K\frac{\delta^{p+2}}{\dist}\leq K\delta^{p+2}.
\end{equation}
We note that from bounds~\eqref{boundaverageNconstnew} and~\eqref{boundlNconstnew} we have
$ \|\Nconstnew(0)\|_{1,\ost}\leq K\delta^{p+2}$,
and then from items~\ref{lemfitacoefGpartialtheta} and~\ref{lemfitacoefGpartialu} of Lemma~\ref{lemGpart} we obtain directly:
\begin{equation}\label{boundpartialuB0}
 \|\partial_\vu\tilde\Nconstnew(0)\|_{1,\ost}\leq K\|\Nconstnew(0)\|_{1,\ost}\leq K\delta^{p+2},
\end{equation}
and:
\begin{equation}\label{boundpartialthetaB0}
 \|\partial_\theta\tilde\Nconstnew(0)\|_{1,\ost}\leq K\delta \|\Nconstnew(0)\|_{1,\ost}\leq K\delta^{p+3}.
\end{equation}
From bounds~\eqref{boundB0},~\eqref{boundpartialuB0} and~\eqref{boundpartialthetaB0} one obtains bound~\eqref{boundfirstitNconstnew}.

It is not difficult to check that given two functions $\phi_1,\phi_2\in\Bsoutdiff{}{0}$:
\begin{equation}\label{lipconstN}
\llfloor\tilde\Nconstnew(\phi_1)-\tilde\Nconstnew(\phi_2)\rrfloor_{0,\ost} \leq\frac{K}{\dist}\delta^{p+2}\llfloor\phi_1-\phi_2\rrfloor_{0,\ost}.
\end{equation}

To finish the proof, we take $\dist$ sufficiently large such that the Lipschitz constant in~\eqref{lipconstN} is smaller than 1. Then $\tilde\Nconstnew$ sends $B\left(2\llfloor\tilde\Nconstnew(0)\rrfloor_{0,\ost}\right)$ to itself and since it is contractive, it has a unique fixed point in this ball.
\end{proof}

\begin{proof}[End of the proof of Proposition~\ref{propconst}]
Let us define $C_1$ as the unique fixed point of the operator $\tilde{\Nconstnew}$ in the ball $B\left(2\llfloor\tilde{\Nconstnew}(0)\rrfloor_{0,\ost}\right)$,
whose existence is proven by Lemma~\ref{lemfpfunC1}. Let $C$ be defined as in~\eqref{defCl2} and
$\xi$ the function defined as~\eqref{xi}. It remains to check that bounds~\eqref{boundC1-const} hold and that $(\xi(\theta,\vu),\theta)$ is injective.

First we shall see that $C_1$ satisfies the bound in~\eqref{boundC1-const}.
We point out that this is not given directly by Lemma~\ref{lemfpfunC1}, but it can be obtained \textit{a posteriori}. Indeed, by definition $C_1$ satisfies:
$C_1=\Gpart(\Nconstnew(C_1))$.
By the definition~\eqref{defNconstnew} of the operator $\Nconstnew$, and since $\Gpart$ is linear, we can write:
\begin{equation}\label{rewrite-C1}
C_1=\Gpart(\Nconstnew(0))+\Gpart(l_2(\vu,\theta)\partial_\vu C_1+l_3(\vu,\theta)\partial_\theta C_1).
\end{equation}
On the one hand, we recall bound~\eqref{boundB0-norm1} which stated:
\begin{equation}\label{boundB0-norm1-recall}
\|\Gpart(\Nconstnew(0))\|_{1,\ost}\leq K\delta^{p+3}.
\end{equation}
On the other hand, since $C_1\in B\left(2\llfloor\tilde{\Nconstnew}(0)\rrfloor_{0,\ost}\right)$, by the definition of the norm $\llfloor.\rrfloor_{0,\ost}$ and the bound of $\llfloor\tilde{\Nconstnew}(0)\rrfloor_{0,\ost}$ provided by Lemma~\ref{lemfpfunC1}, one has:
\begin{equation}\label{bounds-C1-deriv-norm1}
 \|\partial_\vu C_1\|_{1,\ost}\leq K\delta^{p+2},\qquad \|\partial_\theta C_1\|_{1,\ost}\leq K\delta^{p+3}.
\end{equation}
Then, using Lemma~\ref{lemnormint} and bounds~\eqref{bounds-C1-deriv-norm1} it is easy to see that:
$$\|l_2(\vu,\theta)\partial_\vu C_1+l_3(\vu,\theta)\partial_\theta C_1\|_{2,\ost}\leq K\delta^{2p+5},$$
so that by item~\ref{corfitaGpart} of Lemma~\ref{lemGpart} we obtain:
\begin{equation}\label{bound-partialsC1-ls}
\|\Gpart(l_2(\vu,\theta)\partial_\vu C_1+l_3(\vu,\theta)\partial_\theta C_1)\|_{1,\ost}\leq K\delta^{2p+5}.
\end{equation}
Using that $p\geq-2$ and bounds~\eqref{boundB0-norm1-recall} and~\eqref{bound-partialsC1-ls} in equation~\eqref{rewrite-C1}, we obtain
$\|C_1\|_{1,\ost}\leq K\delta^{p+3}$, and then bound~\eqref{boundC1-const} is obtained.

The bounds in~\eqref{boundC1-const} for $C$ are consequence of~\eqref{bounds-C1-deriv-norm1} and Lemma~\ref{lemnormint}.
It only remains to prove that $(\xi(\theta,\vu),\theta)$ is injective. Let us assume $\xi(\vu_1,\theta)=\xi(\vu_2,\theta)$. This means:
$$
 \vu_1-\vu_2=\delta\coef^{-1}\alpha^{-1}c(\log\cosh(\coef\vu_1)-\log\cosh(\coef\vu_2))+\delta\alpha^{-1}(C(\vu_1,\theta)-C(\vu_2,\theta)).
$$
On the one hand, for $\vu_1, \vu_2\in\Doutinter$ we have:
$$
 \delta\coef^{-1}\alpha^{-1}c|\log\cosh(\coef\vu_1)-\log\cosh(\coef\vu_2)|\leq \frac{K}{\dist}|\vu_1-\vu_2|.
$$
On the other hand, using the mean value theorem and bound~\eqref{boundC1-const}:
$$
 \delta|C(\vu_1,\theta)-C(\vu_2,\theta)|\leq \frac{K\delta^{p+2}}{\dist}|\vu_1-\vu_2|,
$$
Thus, since $p\geq-2$, we know that there exists a constant $K$ such that:
$|\vu_1-\vu_2|\leq\dist^{-1} K|u_1-u_2|$.
Taking $\dist$ sufficiently large yields $\vu_1=\vu_2$.
\end{proof}

\subsection{Existence and properties of $P_1$}
Our goal is to find a particular solution of the equation~\eqref{PDEpartsol1} satisfying the properties stated in
Theorem~\ref{thmdifpartsolinjective}. We introduce the operator
\begin{equation}\label{defopA}
 \Apart(\phi)=(2\param+l_1(\vu,\theta)+2\hetz(\vu)l_2(\vu,\theta))(1+ \phi)+l_2(\vu,\theta)\partial_\vu \phi+l_3(\vu,\theta)\partial_\theta \phi,
\end{equation}
in such a way that equation~\eqref{PDEpartsol1} can be written as:
\begin{equation}\label{PDEpartsol1short}
 \Lpart(P_1)=\Apart(P_1).
\end{equation}

We distinguish between the dissipative and the conservative case, since in the first case $P_1$ will be found using a fixed point equation, while in the latter it will be defined in terms of the function $C$ of Theorem~\ref{thmdifpartsolinjective}.

\subsubsection{The dissipative case}
In this subsection we will follow the same steps as in Section~\ref{secpropconst} to prove the existence of $C_1$.
Since $P_1$ has to be a solution of~\eqref{PDEpartsol1short}, we shall do it by solving the fixed point equation:
\begin{equation}\label{fppartsol}
 P_1=\tilde{\Apart}(P_1),
\end{equation}
where $\tilde{\Apart}=\Gpart\circ\Apart$, and $\Gpart$ is the operator defined by~\eqref{defGpart} and~\eqref{defGpartcoefs}, and $\Apart$ is defined in~\eqref{defopA}.

\begin{lemma}\label{lemfppartsol}
 For $\dist$ big enough and $p\geq-2$, the operator $\tilde{\Apart}:\Bsoutdiff{}{1}\rightarrow\Bsoutdiff{}{1}$, and it has a unique fixed point in the ball $B\left(2\llfloor\tilde\Apart(0)\rrfloor_{1,\ost}\right)\subset\Bsoutdiff{}{1}$. Moreover, there exists a constant $M$ such that $\llfloor\tilde{\Apart}(0)\rrfloor_{1,\ost}\leq K\delta^{p+3}$.
\end{lemma}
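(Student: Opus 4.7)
The plan is to mimic the strategy used for Lemma~\ref{lemfpfunC1}, treating $\tilde \Apart$ as a perturbative operator and applying the Banach fixed point theorem in the ball $B(2\llfloor \tilde \Apart(0)\rrfloor_{1,\ost})\subset \Bsoutdiff{}{1}$. The two ingredients are: (i) a bound on the independent term $\tilde \Apart(0)=\Gpart(\Apart(0))$, and (ii) a Lipschitz bound for $\tilde \Apart$ with constant less than $1$ provided $\dist$ is large.

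First I would estimate $\Apart(0)=2\param+l_1(\vu,\theta)+2\hetz(\vu)l_2(\vu,\theta)$. By hypothesis $|\param|\leq \param^*\delta^{p+3}$, Lemma~\ref{lemnormint} gives $\|l_1\|_{2,\ost}\leq M\delta^{p+3}$ and $\|l_2\|_{1,\ost}\leq M\delta^{p+3}$, and $\|\hetz\|_{1,0}\leq K$. Combining them I expect $\|\Apart(0)\|_{2,\ost}\leq K\delta^{p+3}$ (the constant term $2\param$ fits into $\|\cdot\|_{2,\ost}$ because $|\cosh(\coef\vu)|^2$ is bounded on $\Doutinter$). Then item~\ref{corfitaGdiffpart} of Lemma~\ref{lemGpart} directly yields $\llfloor \tilde \Apart(0)\rrfloor_{1,\ost}\leq K\delta^{p+3}$, which is the quantitative claim in the statement.

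Next, for $\phi_1,\phi_2\in \Bsoutdiff{}{1}$ in the ball of radius $2\llfloor \tilde \Apart(0)\rrfloor_{1,\ost}$, I would write
\begin{equation*}
\Apart(\phi_1)-\Apart(\phi_2)=(2\param+l_1+2\hetz l_2)(\phi_1-\phi_2)+l_2\,\partial_\vu(\phi_1-\phi_2)+l_3\,\partial_\theta(\phi_1-\phi_2),
\end{equation*}
and use the product rule for $\|\cdot\|_{n,\ost}$, together with the bounds of Lemma~\ref{lemnormint} and the definition of $\llfloor\cdot\rrfloor_{1,\ost}$, to obtain $\|\Apart(\phi_1)-\Apart(\phi_2)\|_{2,\ost}\leq K\delta^{p+3}\llfloor\phi_1-\phi_2\rrfloor_{1,\ost}$. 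Applying once more item~\ref{corfitaGdiffpart} of Lemma~\ref{lemGpart} yields
\begin{equation*}
\llfloor \tilde \Apart(\phi_1)-\tilde \Apart(\phi_2)\rrfloor_{1,\ost}\leq \tfrac{K}{\dist}\delta^{p+2}\llfloor \phi_1-\phi_2\rrfloor_{1,\ost},
\end{equation*}
where the extra factor $\dist^{-1}$ comes from absorbing one power of $|\cosh(\coef\vu)|\geq K\dist\delta$ when passing from the norm with weight $2$ to the one with weight $1$, exactly as in the proof of Lemma~\ref{lemfpfunC1}.

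Choosing $\dist^*$ sufficiently large (and $\delta^*$ small enough, using $p\geq -2$) makes the Lipschitz constant $\frac{K}{\dist}\delta^{p+2}<\frac{1}{2}$, so $\tilde \Apart$ maps the closed ball $B(2\llfloor \tilde \Apart(0)\rrfloor_{1,\ost})$ into itself and is a contraction there. The Banach fixed point theorem then yields the unique fixed point. The only real delicate point I anticipate is getting the product and derivative estimates in the Lipschitz bound sharp enough that the smallness factor $\delta^{p+2}/\dist$ (and not just $\delta^{p+2}$) appears after applying $\Gpart$; as in Lemma~\ref{lemfpfunC1}, this requires carefully tracking which weight of the norm absorbs the loss coming from the derivatives in $\partial_\vu\phi$ and $\partial_\theta\phi$.
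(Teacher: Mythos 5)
Your plan follows essentially the same route as the paper: bound $\|\Apart(0)\|_{2,\ost}$ via Lemma~\ref{lemnormint} and $|\param|\leq\param^*\delta^{p+3}$, apply item~\ref{corfitaGdiffpart} of Lemma~\ref{lemGpart} to get $\llfloor\tilde\Apart(0)\rrfloor_{1,\ost}\leq K\delta^{p+3}$, then establish a Lipschitz bound $\tfrac{K}{\dist}\delta^{p+2}$ and invoke the fixed point theorem. One small inaccuracy in your account of the Lipschitz step: item~\ref{corfitaGdiffpart} of Lemma~\ref{lemGpart} takes $\|\cdot\|_{n,\ost}$ to $\llfloor\cdot\rrfloor_{n-1,\ost}$ without producing any $\dist^{-1}$ factor, so the smallness factor $\dist^{-1}$ is not picked up ``when passing from weight $2$ to weight $1$'' via $\Gpart$; rather, in the paper it is extracted \emph{before} applying $\Gpart$, by using the lower-weight bounds $\|l_1\|_{1,\ost},\|l_2\|_{0,\ost},\|l_3\|_{1,\ost}\leq K\delta^{p+2}/\dist$ (which follow from Lemma~\ref{lemnormint} and $|\cosh(\coef\vu)|\geq K\dist\delta$ on $\Doutinter$) so that already $\|\Apart(\phi_1)-\Apart(\phi_2)\|_{2,\ost}\leq K\tfrac{\delta^{p+2}}{\dist}\llfloor\phi_1-\phi_2\rrfloor_{1,\ost}$. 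This is exactly the ``delicate weight-tracking'' issue you flagged, and it matters because for $p=-2$ the $\delta$-power alone ($\delta^{p+2}=1$) does not give contractivity --- one must take $\dist$ large. Once this is rephrased, the argument matches the paper's.
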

\begin{proof}
First we deal with $\tilde{\Apart}(0)$.
Indeed, using Lemma~\ref{lemnormint} and that $|\param|\leq \param^*\delta^{p+3}$, it is straightforward to prove that there exists a constant $K$ such that:
$\|\Apart(0)\|_{2,\ost}\leq K\delta^{p+3}$.
Then item~\ref{corfitaGdiffpart} of Lemma~\ref{lemGpart} yields
$\llfloor\tilde{\Apart}(0)\rrfloor_{1,\ost}\leq K\delta^{p+3}$.

Next step is to find the Lipschitz constant of the operator $\tilde\Apart$. To do so, let us fix $\phi_1,\phi_2\in B\left(2\llfloor\tilde\Apart(0)\rrfloor_{1,\ost}\right)$.
We have:
\begin{align}\label{apart1-2sep}
 \|\Apart(\phi_1)-&\Apart(\phi_2)\|_{2,\ost}\leq 2|\param|\|\phi_1-\phi_2\|_{2,\ost}+\|l_1\|_{1,\ost}\|\phi_1-\phi_2\|_{1,\ost}\nonumber\\
&+2\|\hetz\|_{1,\ost}\|l_2\|_{0,\ost}\|\phi_1-\phi_2\|_{1,\ost}+\|l_2\|_{0,\ost}\|\partial_\vu(\phi_1-\phi_2)\|_{2,\ost}\nonumber\\
&+\|l_3\|_{1,\ost}\|\partial_\theta(\phi_1-\phi_2)\|_{1,\ost}.
\end{align}
First we note that, since $\param=\mathcal{O}(\delta^{p+3})$:
\begin{equation}\label{boundparamp12}
 |\param|\|\phi_1-\phi_2\|_{2,\ost}\leq K\frac{\delta^{p+2}}{\dist}\|\phi_1-\phi_2\|_{1,\ost}\leq K\frac{\delta^{p+2}}{\dist}\llfloor \phi_1-\phi_2\rrfloor_{1,\ost}.
\end{equation}
Similarly, by Lemma~\ref{lemnormint}:
\begin{equation}\label{boundsl123}
 \|l_1\|_{1,\ost}\leq K\frac{\delta^{p+2}}{\dist},\qquad \|l_2\|_{0,\ost}\leq K\frac{\delta^{p+2}}{\dist},\qquad\|l_3\|_{1,\ost}\leq K\frac{\delta^{p+2}}{\dist}.
\end{equation}
Finally, we just need to note that:
\begin{equation}\label{boundsp12}
%  \|p_1-p_2\|_{1,\ost}&\leq&\llfloor p_1-p_1\rrfloor_{1,\ost},\nonumber\\
 \|\partial_\theta(\phi_1-\phi_2)\|_{1,\ost}\leq \frac{K}{\delta\dist}\|\partial_\theta(\phi_1-\phi_2)\|_{2,\ost}\leq K\llfloor \phi_1-\phi_1)\rrfloor_{1,\ost}.
%  \|\partial_\vu(p_1-p_2)\|_{2,\ost}&\leq& \llfloor\partial_\vu(p_1-p_1)\rrfloor_{2,\ost}.\nonumber
\end{equation}
Using the definition of the norm $\llfloor .\rrfloor_{1,\ost}$, the fact that $\|\hetz\|_{1,\ost}\leq K$ and the previous
bounds~\eqref{boundparamp12},~\eqref{boundsl123} and~\eqref{boundsp12} in equation~\eqref{apart1-2sep} we immediately obtain
$$
\|\Apart(\phi_1)-\Apart(\phi_2)\|_{2,\ost}\leq K\frac{\delta^{p+2}}{\dist}\llfloor \phi_1-\phi_2\rrfloor_{1,\ost}.
$$
Again,  by item~\ref{corfitaGdiffpart} of Lemma~\ref{lemGpart}
\begin{equation}\label{LipConstA}
 \llfloor \tilde{\Apart}(\phi_1)-\tilde{\Apart}(\phi_2)\rrfloor_{1,\ost}\leq K\frac{\delta^{p+2}}{\dist}\llfloor \phi_1-\phi_2\rrfloor_{1,\ost} .
\end{equation}
To finish the proof, we take $\dist$ large enough such that the Lipschitz constant in~\eqref{LipConstA} is smaller than 1. Then the fixed point theorem
yields the result.
\end{proof}

The fact $P_1$ satisfies equation~\eqref{PDEpartsol1short} (and consequently~\eqref{PDEpartsol1}) is clear since it is a solution of equation~\eqref{fppartsol}.
Clearly, using Lemma~\ref{lemfppartsol}, one has,
$\llfloor P_1\rrfloor_{1,\ost}\leq2\llfloor\tilde\Apart(0)\rrfloor_{1,\ost}\leq K\delta^{p+3}$.
Since:
$$\sup_{(\vu,\theta)\in\Doutinter\times\Tout}|P_1(\vu,\theta)|\leq\llfloor P_1\rrfloor_{1,\ost}\sup_{(\vu,\theta)\in\Doutinter\times\Tout}|\cosh^{-1}(\coef\vu)|\leq K\frac{\delta^{p+2}}{\dist},$$
taking $\dist$ sufficiently large we obtain $|1+P_1(\vu,\theta)|\geq 1-K\dist^{-1} \delta^{p+2}\neq0$. Finally,
since $\cosh^{2/\coef}(\coef\vu)\neq0$ for $\vu\in\Doutinter$ we can ensure that, for $(\vu,\theta)\in\Doutinter\times\Tout$, $P(\vu,\theta)=\cosh^{2/\coef}(\coef\vu)(1+P_1(\vu,\theta))\neq0$.

\subsubsection{The conservative case}
We recall that in the conservative case we have $\coef=1$ and $\param=0$. Let:
\begin{equation}\label{defP1conservative}
P_1(\vu,\theta)=\frac{\partial_\vu C(\vu,\theta)-l_3(\vu,\theta)}{\delta^{-1}\alpha+c\hetz(\vu)+l_3(\vu,\theta)},
\end{equation}
where $C(\vu,\theta)$ is the function given by Proposition~\ref{propconst} and $l_3(\vu,\theta)$ is defined in~\eqref{defl3}. First let us check that it satisfies
bound~\eqref{boundP1-prop}, that is:
\begin{equation}\label{boundP1_prop-recall}
|P_1(\vu,\theta)|\leq \frac{K}{\dist}\delta^{p+2},
\end{equation}
for all $(\vu,\theta)\in\Doutinter\times\Tout$. On the one hand, note that by Proposition~\ref{propconst} and Lemma~\ref{lemnormint} we have :
\begin{equation}\label{boundpartialuC-l3}
|\partial_\vu C(\vu,\theta)|\leq \frac{K}{\dist}\delta^{p+1},\qquad  |l_3(\vu,\theta)|\leq\frac{K}{\dist^2}\delta^{p+1}.
\end{equation}
On the other hand, taking $\dist$ sufficiently large, we also have:
\begin{equation}\label{boundfrac}
 \left|\frac{1}{\delta^{-1}\alpha+c\hetz(\vu)+l_3(\vu,\theta)}\right|\leq K\delta.
\end{equation}
Then~\eqref{boundP1_prop-recall} follows directly from using~\eqref{boundpartialuC-l3} and~\eqref{boundfrac} in~\eqref{defP1conservative}.

It only remains to prove that $P_1$ defined in~\eqref{defP1conservative} satisfies equation~\eqref{PDEpartsol1}:
\begin{align}\label{PDEpartsol1-recall}
  \left(-\delta^{-1}\alpha-c\hetz(\vu)\right)\partial_\theta P_1+\partial_\vu P_1=&(l_1(\vu,\theta)+2\hetz(\vu)l_2(\vu,\theta))(1+ P_1)\nonumber\\
&+l_2(\vu,\theta)\partial_\vu P_1+l_3(\vu,\theta)\partial_\theta P_1.
\end{align}
Tedious but standard computations yields that $P_1$ is a solution of
\begin{align}\label{PDE_P1conservative_simple}
 (-\delta^{-1}\alpha-&c\hetz(\vu)-l_3(\vu,\theta))\partial_\theta P_1+\partial_\vu P_1\nonumber\\
 =&(\partial_\vu l_2(\vu,\theta)+\partial_\theta l_3(\vu,\theta))(1+P_1)+l_2(\vu,\theta)\partial_\vu P_1.
\end{align}
Therefore, equations~\eqref{PDE_P1conservative_simple} and~\eqref{PDEpartsol1-recall} are the same, if and only if:
$$
l_1(\vu,\theta)+2\hetz(\vu)l_2(\vu,\theta)=\partial_\vu l_2(\vu,\theta)+\partial_\theta l_3(\vu,\theta).
$$
This equality can be checked using the definitions of $l_2$ and $l_3$ as well as the fact that the vector field is
divergence free, that is:
$$\partial_zH(r)+\partial_\theta G(r)=-\partial_rF(r).$$

\section{Acknowledgments}
The authors are in debt with Jordi Villanueva for his help in the proof of Theorem~\ref{thmktilde0}.

The authors have been partially supported by the Spanish
MINECO-FEDER Grant MTM2015-65715-P and the Catalan Grant 2014SGR504.
Tere M-Seara is also supported by the Russian Scientific Foundation grant 14-41-00044. and European  Marie Curie Action FP7-PEOPLE-2012-IRSES: BREUDS.

%\cleardoublepage
\bibliography{references}

\def\cprime{$'$}
\begin{thebibliography}{BFGS12}

\bibitem[AS72]{AS}
M.~Abramowitz and I.~A. Stegun.
\newblock {\em {Handbook of mathematical functions: with formulas, graphs, and
  mathematical tables}}.
\newblock Number~55. Courier Dover Publications, 1972.

\bibitem[Bal06]{BaldomaInner}
I.~Baldom{\'a}.
\newblock {The inner equation for one and a half degrees of freedom rapidly
  forced Hamiltonian systems}.
\newblock {\em Nonlinearity}, 19(6):1415--1446, 2006.

\bibitem[BCS13]{BCS13}
I.~Baldom{\'a}, O.~Castej{\'o}n, and T.~M. Seara.
\newblock {Exponentially small heteroclinic breakdown in the generic Hopf-Zero
  singularity}.
\newblock {\em Journal of Dynamics and Differential Equations}, 25(2):335--392,
  2013.

\bibitem[BCS16]{BCS16b}
I.~Baldom{\'a}, O.~Castej{\'o}n, and T.~M. Seara.
\newblock {Breakdown of a 2D heteroclinic connection in the Hopf-zero
  singularity (II). The generic case}.
\newblock {\em preprint}, 2016.

\bibitem[BF04]{BF04}
I.~Baldom{\'a} and E.~Fontich.
\newblock {Exponentially small splitting of invariant manifolds of parabolic
  points}.
\newblock {\em Mem. Amer. Math. Soc.}, 167(792):x--83, 2004.

\bibitem[BF05]{BF05}
I.~Baldom{\'a} and E.~Fontich.
\newblock {Exponentially small splitting of separatrices in a weakly hyperbolic
  case}.
\newblock {\em J. Differential Equations}, 210(1):106--134, 2005.

\bibitem[BFGS12]{BFGS12}
I.~Baldom{\'a}, E.~Fontich, M.~Guardia, and T.~M. Seara.
\newblock {Exponentially small splitting of separatrices beyond {M}elnikov
  analysis: rigorous results}.
\newblock {\em J. Differential Equations}, 253(12):3304--3439, 2012.

\bibitem[BM12]{BM12}
I.~Baldom{\'a} and P.~Mart{\'i}n.
\newblock {The inner equation for generalized standard maps}.
\newblock {\em SIAM J. Appl. Dyn. Syst.}, 11(3):1062--1097, 2012.

\bibitem[BO93]{BO93}
A.~Benseny and C.~Oliv{\'e}.
\newblock {High precision angles between invariant manifolds for radpidly
  forced hamiltonian systems}.
\newblock {\em Proceedings Equadiff91}, pages 315--319, 1993.

\bibitem[BS06]{BaSe06}
I.~Baldom{\'a} and T.~M. Seara.
\newblock {Breakdown of heteroclinic orbits for some analytic unfoldings of the
  Hopf-zero singularity.}
\newblock {\em J. Nonlinear Sci.}, 16(6):543--582, 2006.

\bibitem[BS08]{BaSe08}
I.~Baldom{\'a} and T.~M. Seara.
\newblock {The inner equation for generic analytic unfoldings of the Hopf-zero
  singularity}.
\newblock {\em Discrete Contin. Dyn. Syst., Ser. B}, 10(2-3):323--347, 2008.

\bibitem[BT86]{BT86}
H.~W. Broer and F.~M. Tangerman.
\newblock {From a differentiable to a real analytic perturbation theory,
  applications to the {K}upka {S}male theorems}.
\newblock {\em Ergodic Theory Dynam. Systems}, 6(3):345--362, 1986.

\bibitem[BT89]{BT89}
H.~W. Broer and F.~Takens.
\newblock {Formally symmetric normal forms and genericity}.
\newblock In {\em {Dynamics reported, {V}ol.\ 2}}, volume~2 of {\em {Dynam.
  Report. Ser. Dynam. Systems Appl.}}, pages 39--59. Wiley, Chichester, 1989.

\bibitem[BV84]{BV84}
H.~W. Broer and G.~Vegter.
\newblock {Subordinate \v{S}il'nikov bifurcations near some singularities of
  vector fields having low codimension.}
\newblock {\em Ergodic Theory Dyn. Syst.}, 4:509--525, 1984.

\bibitem[Cas15]{CastejonPhDThesis}
O.~Castej{\'o}n.
\newblock {\em {Study of invariant manifolds in two different problems: the
  Hopf-zero singularity and neural synchrony}}.
\newblock PhD thesis, 2015.

\bibitem[DIKS13]{diks}
F.~Dumortier, S.~Ib{\'a}{\~n}ez, H.~Kokubu, and C.~Sim{\'o}.
\newblock {About the unfolding of a {H}opf-zero singularity}.
\newblock {\em Discrete Contin. Dyn. Syst.}, 33(10):4435--4471, 2013.

\bibitem[DRR99]{DR-R99}
A.~Delshams and R.~Ram{\'i}rez-Ros.
\newblock {Singular separatrix splitting and the {M}elnikov method: an
  experimental study}.
\newblock {\em Experiment. Math.}, 8(1):29--48, 1999.

\bibitem[DS92]{DS92}
A.~Delshams and T.~M. Seara.
\newblock {An asymptotic expression for the splitting of separatrices of the
  rapidly forced pendulum}.
\newblock {\em Comm. Math. Phys.}, 150(3):433--463, 1992.

\bibitem[DS97]{DS97}
A.~Delshams and T.~M. Seara.
\newblock {Splitting of separatrices in {H}amiltonian systems with one and a
  half degrees of freedom}.
\newblock {\em Math. Phys. Electron. J.}, 3:Paper 4, 40 pp. (electronic), 1997.

\bibitem[Fon95]{Fo95}
E.~Fontich.
\newblock {Rapidly forced planar vector fields and splitting of separatrices}.
\newblock {\em J. Differential Equations}, 119(2):310--335, 1995.

\bibitem[FS90a]{FS90-2}
E.~Fontich and C.~Sim{\'o}.
\newblock {Invariant manifolds for near identity differentiable maps and
  splitting of separatrices}.
\newblock {\em Ergodic Theory Dynam. Systems}, 10(2):319--346, 1990.

\bibitem[FS90b]{FS90}
E.~Fontich and C.~Sim{\'o}.
\newblock {The splitting of separatrices for analytic diffeomorphisms}.
\newblock {\em Ergodic Theory Dynam. Systems}, 10(2):295--318, 1990.

\bibitem[Gav78]{G78}
N.~K. Gavrilov.
\newblock {On some bifurcations of an equilibrium with one zero and a pair of
  pure imaginary eigenvalues}.
\newblock In E.~A. Leontovich-Andronova, editor, {\em {Methods of the
  qualitative theory of differential equations ({R}ussian)}}, pages 33--40.
  Gorky State University, Gorky, 1978.

\bibitem[Gav85]{G85}
N.~K. Gavrilov.
\newblock {On bifurcations of codimension two equilibria of divergence-free
  vector fields}.
\newblock In E.~A. Leontovich-Andronova, editor, {\em {Methods of the
  qualitative theory of differential equations ({R}ussian)}}, pages 46--54.
  Gorky State University, Gorky, 1985.

\bibitem[GB08]{GB08}
V.~G. Gelfreich and N.~Br{\"a}nnstrom.
\newblock {Asymptotic series for the splitting of separatrices near a
  Hamiltonian bifurcation}.
\newblock {\em arXiv preprint arXiv:0806.2403}, 2008.

\bibitem[Gel94]{Gel94}
V.~G. Gelfreich.
\newblock {Separatrices splitting for the rapidly forced pendulum}.
\newblock In {\em {Seminar on Dynamical Systems (St.\ Petersburg, 1991)}},
  volume~12 of {\em {Progr. Nonlinear Differential Equations Appl.}}, pages
  47--67. Birkh{\"a}user, Basel, 1994.

\bibitem[Gel97a]{Gel97}
V.~G. Gelfreich.
\newblock {Melnikov method and exponentially small splitting of separatrices}.
\newblock {\em Phys. D}, 101(3-4):227--248, 1997.

\bibitem[Gel97b]{Gel97-2}
V.~G. Gelfreich.
\newblock {Reference systems for splittings of separatrices}.
\newblock {\em Nonlinearity}, 10(1):175--193, 1997.

\bibitem[Gel99]{Gel99}
V.~G. Gelfreich.
\newblock {A proof of the exponentially small transversality of the
  separatrices for the standard map}.
\newblock {\em Comm. Math. Phys.}, 201(1):155--216, 1999.

\bibitem[Gel00]{Gel00}
V.~G. Gelfreich.
\newblock {Separatrix splitting for a high-frequency perturbation of the
  pendulum}.
\newblock {\em Russ. J. Math. Phys.}, 7(1):48--71, 2000.

\bibitem[GG11]{GG11}
J.~P. Gaiv{\~a}o and V.~G. Gelfreich.
\newblock {Splitting of separatrices for the {H}amiltonian-{H}opf bifurcation
  with the {S}wift-{H}ohenberg equation as an example}.
\newblock {\em Nonlinearity}, 24(3):677--698, 2011.

\bibitem[GH90]{GH90}
J.~Guckenheimer and P.~Holmes.
\newblock {\em {Nonlinear oscillations, dynamical systems, and bifurcations of
  vector fields}}, volume~42.
\newblock Springer Verlag, 1990.

\bibitem[GOS10]{GOS10}
M.~Guardia, C.~Oliv{\'e}, and T.~M. Seara.
\newblock {Exponentially small splitting for the pendulum: a classical problem
  revisited}.
\newblock {\em J. Nonlinear Sci.}, 20(5):595--685, 2010.

\bibitem[GR83]{GR83}
N.~K. Gavrilov and N.~V. Roshchin.
\newblock {On stability of an equilibrium with one zero and a pair of pure
  imaginary eigenvalues}.
\newblock In E.~A. Leontovich-Andronova, editor, {\em {Methods of the
  qualitative theory of differential equations}}, pages 41--49. Gorky State
  University, Gorky, 1983.

\bibitem[GS01]{GS01}
V.~G. Gelfreich and D.~Sauzin.
\newblock {Borel summation and splitting of separatrices for the {H}{\'e}non
  map}.
\newblock {\em Ann. Inst. Fourier (Grenoble)}, 51(2):513--567, 2001.

\bibitem[GS08]{GS08}
V.~G. Gelfreich and C.~Sim{\'o}.
\newblock {High-precision computations of divergent asymptotic series and
  homoclinic phenomena}.
\newblock {\em Discrete Contin. Dyn. Syst. Ser. B}, 10(2-3):511--536, 2008.

\bibitem[GSV13]{GSV13}
V.~Gelfreich, C.~Simó, and A.~Vieiro.
\newblock Dynamics of symplectic maps near a double resonance.
\newblock {\em Physica D: Nonlinear Phenomena}, 243(1):92 -- 110, 2013.

\bibitem[Gua13]{Guar13}
M.~Guardia.
\newblock {Splitting of separatrices in the resonances of nearly integrable
  {H}amiltonian systems of one and a half degrees of freedom}.
\newblock {\em Discrete Contin. Dyn. Syst.}, 33(7):2829--2859, 2013.

\bibitem[Guc81]{Guc81}
J.~Guckenheimer.
\newblock {On a codimension two bifurcation}.
\newblock {\em Dynamical Systems and Turbulence, Warwick 1980}, pages 99--142,
  1981.

\bibitem[HMS88]{HMS88}
P.~Holmes, J.~Marsden, and J.~Scheurle.
\newblock {Exponentially small splittings of separatrices with applications to
  {KAM} theory and degenerate bifurcations}.
\newblock In {\em {Hamiltonian dynamical systems}}, volume~81 of {\em {Contemp.
  Math.}} 1988.

\bibitem[KS91]{KS91}
M.~D. Kruskal and H.~Segur.
\newblock {Asymptotics beyond all orders in a model of crystal growth}.
\newblock {\em Stud. Appl. Math.}, 85(2):129--181, 1991.

\bibitem[Laz]{Lazutkin}
V.~F. Lazutkin.
\newblock {Splitting of separatrices for the Chirikov standard map. VINITI
  6372/82, 1984}.
\newblock {\em Preprint (Russian)}.

\bibitem[{Laz}03]{La03}
J.~T. {Lazaro Ochoa}.
\newblock {\em {On Normal Forms and Splitting of Separatrices in Reversible
  Systems}}.
\newblock PhD thesis, Universitat Polit{\`e}cnica de Catalunya, 2003.

\bibitem[LMS03]{LMS03}
P.~Lochak, J.-P. Marco, and D.~Sauzin.
\newblock {On the splitting of invariant manifolds in multidimensional
  near-integrable {H}amiltonian systems}.
\newblock {\em Mem. Amer. Math. Soc.}, 163(775):viii+145, 2003.

\bibitem[Lom00]{Lom00}
E.~Lombardi.
\newblock {\em {Oscillatory integrals and phenomena beyond all algebraic
  orders}}, volume 1741 of {\em {Lecture Notes in Mathematics}}.
\newblock Springer-Verlag, Berlin, 2000.
\newblock With applications to homoclinic orbits in reversible systems.

\bibitem[Mel63]{Mel63}
V.~K. Mel{\cprime}nikov.
\newblock {On the stability of a center for time-periodic perturbations}.
\newblock {\em Trudy Moskov. Mat. Ob\v s\v c.}, 12:3--52, 1963.

\bibitem[MSS11]{MSS11}
Pau Mart{\'i}n, David Sauzin, and Tere~M. Seara.
\newblock {Exponentially small splitting of separatrices in the perturbed
  {M}c{M}illan map}.
\newblock {\em Discrete Contin. Dyn. Syst.}, 31(2):301--372, 2011.

\bibitem[MSV13]{MSV13}
Narc{\'i}s Miguel, Carles Sim{\'o}, and Arturo Vieiro.
\newblock From the h{\'e}non conservative map to the chirikov standard map for
  large parameter values.
\newblock {\em Regular and Chaotic Dynamics}, 18(5):469--489, 2013.

\bibitem[Ne{\u\i}84]{Neishtadt}
A.~I. Ne{\u\i}shtadt.
\newblock {The separation of motions in systems with rapidly rotating phase}.
\newblock {\em Prikl. Mat. Mekh.}, 48(2):197--204, 1984.

\bibitem[OSS03]{OSS03}
C.~Oliv{\'e}, D.~Sauzin, and T.~M. Seara.
\newblock {Resurgence in a {H}amilton-{J}acobi equation}.
\newblock In {\em {Proceedings of the {I}nternational {C}onference in {H}onor
  of {F}r{\'e}d{\'e}ric {P}ham ({N}ice, 2002)}}, volume~53, pages 1185--1235,
  2003.

\bibitem[Poi90]{Poincare}
H.~Poincar{\'e}.
\newblock {Sur le probl{\`e}me des trois corps et les {\'e}quations de la
  dynamique}.
\newblock {\em Acta mathematica}, 13(1):A3--A270, 1890.

\bibitem[Sau01]{Sau01}
D.~Sauzin.
\newblock {A new method for measuring the splitting of invariant manifolds}.
\newblock {\em Ann. Sci. {\'E}cole Norm. Sup. (4)}, 34(2):159--221, 2001.

\bibitem[SV09]{SV09}
C~Simó and A~Vieiro.
\newblock Resonant zones, inner and outer splittings in generic and low order
  resonances of area preserving maps.
\newblock {\em Nonlinearity}, 22(5):1191, 2009.

\bibitem[Tak73]{Tak73}
F.~Takens.
\newblock {A nonstabilizable jet of a singularity of a vector field}.
\newblock In {\em {Dynamical systems ({P}roc. {S}ympos., {U}niv. {B}ahia,
  {S}alvador, 1971)}}, pages 583--597. Academic Press, New York, 1973.

\bibitem[Tak74]{Tak74}
F.~Takens.
\newblock {Singularities of vector fields}.
\newblock {\em Publications Math{\'e}matiques de l'IHES}, 43(1):47--100, 1974.

\bibitem[Tre97]{Tre97}
D.~V. Treschev.
\newblock {Splitting of separatrices for a pendulum with rapidly oscillating
  suspension point}.
\newblock {\em Russian J. Math. Phys.}, 5(1):63--98 (1998), 1997.

\bibitem[\v{S}65]{Shil65}
L.~P. \v{S}il{\cprime}nikov.
\newblock {A case of the existence of a denumerable set of periodic motions}.
\newblock {\em Dokl. Akad. Nauk SSSR}, 160:558--561, 1965.

\end{thebibliography}
\bibliographystyle{alpha}

\end{document}